\documentclass[11pt]{amsart}
\usepackage{amssymb,latexsym, amscd,pb-diagram}
\usepackage{sseq}
\usepackage[all]{xy}
\usepackage[square, numbers]{natbib}
\usepackage{graphicx}
\usepackage{mathrsfs}
\usepackage{color}
\usepackage{amsmath}
\usepackage{multirow}
\usepackage{hyperref}
\usepackage{todonotes}

\usepackage[a4paper, total={6in, 8in}]{geometry}
\usepackage[utf8]{inputenc}
\usepackage[T1]{fontenc}
\usepackage{amsfonts,  amsthm}
\usepackage{mathtools}
\usepackage{txfonts}
\usepackage{mathrsfs}
\usepackage{ textcomp }


\vfuzz2pt 

 \newtheorem{theorem}{Theorem}[section]
 \newtheorem{corollary}[theorem]{Corollary}
  
 \newtheorem{lemma}[theorem]{Lemma}
 \newtheorem{proposition}[theorem]{Proposition}
\newtheorem*{theorem*}{Theorem}
 \theoremstyle{definition}
 \newtheorem{definition}[theorem]{Definition}
\theoremstyle{definition}
\newtheorem*{definition*}{Definition}
 \theoremstyle{definition}
 \newtheorem{example}[theorem]{Example}

 \newtheorem{remark}[theorem]{Remark}
 
 \numberwithin{equation}{section}

\newcommand{\MRep}[2][;\C]{\mathbf{Rep}(#2,\phi #1)}

\newcommand{\Rep}{\operatorname{Rep}}

\newcommand{\MGL}{\operatorname{MGL}}

\newcommand{\K}{\mathbf{K}}

\newcommand{\res}{\mathfrak{res}}

\newcommand{\mg}[1]{{#1}}

\newcommand{\MRF}[3][;\C]{\mathbf{R}(#2,\phi #1 ;#3)}

\newcommand{\rK}{\widetilde{\K}}

\newcommand{\Lm}{\mathbb{L}}

\newcommand{\Hom}{\operatorname{Hom}}

\newcommand{\End}{\operatorname{End}}

\newcommand{\IrrepF}[3][;\C]{\operatorname{Irrep}(#2,\phi #1; #3)}

\newcommand{\Cliff}[2]{\operatorname{Cliff}(\mathbb{R}^{#1,#2})}

\newcommand{\MRR}[2][;\C]{\mathbf{R}(#2,\phi #1)}

\newcommand{\MGrp}{\mathbf{MGrp}}

\newcommand{\SF}{\operatorname{SchurF}}

\newcommand{\MVec}{\mathbf{Vect}}

\newcommand{\Ind}[2][(\mg{G},\phi)]{\operatorname{Ind}_{#2}^{#1}}

\newcommand{\PS}{\operatorname{P}}

\newcommand{\GL}{\operatorname{GL}}

\newcommand{\id}{\operatorname{id}}

\newcommand{\U}{\operatorname{U}}
\newcommand{\SU}{\operatorname{SU}}
\newcommand{\SO}{\operatorname{SO}}

\newcommand{\Map}{\operatorname{Map}}

\newcounter{commentcounter}


\begin{document}

\title[Magnetic Equivariant K-theory]{magnetic Equivariant K-theory}


\author{Higinio Serrano}
\address{Departamento de Matem\'{a}ticas, CINVESTAV, Av. IPN \# 2508, Col. San Pedro Zacatenco, M\'exico, CDMX. CP 07360, M\'exico}
\email{hserrano@math.cinvestav.mx}

\author{Bernardo Uribe}
\address{Departamento de Matem\'{a}ticas y Estad\'istica, Universidad del Norte, Km.5 V\'ia Antigua a Puerto Colombia, 
Barranquilla, 081007, Colombia.}
\email{bjongbloed@uninorte.edu.co}

\author{Miguel Xicot\'encatl}
\address{Departamento de Matem\'{a}ticas, CINVESTAV, Av. IPN \# 2508, Col. San Pedro Zacatenco, M\'exico, CDMX. CP 07360, M\'exico}
\email{xico@math.cinvestav.mx}


\subjclass[2020]{
(primary) 19L47, 19L50, (secondary) 20C35}
\date{\today}
\keywords{Magnetic group, twisted equivariant K-theory, real K-theory, symplectic K-theory, corepresentation }
\begin{abstract}
We present the fundamental properties of the K-theory groups of complex vector bundles endowed with actions of magnetic groups. In this work we show that the magnetic equivariant K-theory groups define an equivariant cohomology theory, we determine its coefficients, we show Bott's, Thom's and the degree shift isomorphism, we present the Atiyah-Hirzeburh spectral sequence, and we explicitly calculate two magnetic equivariant K-theory groups in order to showcase its applicability.
These magnetic equivariant K-theory
groups are relevant in condensed matter physics since they provide topological invariants of gapped Hamiltonians in magnetic crystals. 

\end{abstract}

\maketitle

\tableofcontents

\section*{Introduction} 

Symmetry in physics has long been a cornerstone for understanding the laws of nature, providing a profound link between invariance and conservation laws, as elegantly formalized in Emmy Noether’s theorem. This fundamental result establishes that every differentiable symmetry of the action of a physical system corresponds to a conserved quantity, such as energy, momentum, or angular momentum. These principles not only underpin classical and quantum mechanics but also extend to more intricate systems, where symmetry governs complex interactions and emergent phenomena.

Magnetic symmetries play a critical role in a wide array of physical systems, from the electronic properties of materials to the classification of topological phases of matter. These symmetries, often described through space magnetic groups, extend the familiar framework of crystallographic symmetry by incorporating time-reversal operations. The study of representations of magnetic groups provides a foundational understanding of how such symmetries manifest in physical systems, particularly in contexts like electronic band structures and spin configurations.

Equally significant is the role of vector bundles in mathematics and physics. When enriched with symmetries, as in the case of magnetic equivariant vector bundles, these structures become central to topological approaches in condensed matter physics, including the classification of topological insulators and superconductors. The application of K-theory, a tool from algebraic topology designed to understand
topological invariants of bundles, allows for a systematic study of electronic band bundles (Bloch bundles), leading to profound insights into their properties and invariants.

In the quest to understand the phases of matter, traditional classifications relied on concepts such as symmetry breaking. However, in the recent decades, a new paradigm has emerged based on topological features that remain unchanged under continuous deformations. These properties have reshaped our understanding of matter in profound ways.

Topological invariants, mathematical quantities that remain constant under such transformations, have proven crucial in distinguishing different phases of matter beyond conventional classifications.

The journey toward recognizing the role of topology in condensed matter physics began with theoretical predictions in the 1980s \cite{PhysRevLett.49.405} \cite{PhysRevLett.61.2015} \cite{Chang2013}, particularly in the study of the quantum Hall effect. In this phenomenon, the electrical conductivity of a two-dimensional electron system exhibits quantized plateaus, a behavior explained by a topological invariant known as the Chern number of the vector bundle of valence states, or Bloch bundle, of the material.

Further explorations \cite{PhysRevLett.95.226801} \cite{PhysRevLett.96.106802} revealed a broader class of materials, including topological insulators, characterized by conducting surfaces despite being insulating in their interior. This behavior defied previous theoretical expectations and highlighted the power of topological invariants in predicting physical properties.  Here the invariant which characterizes a state as trivial or non trivial band insulator is a number in the group $\mathbb{Z}/2$ (regardless if the state exhibits or does not exhibit a quantum spin Hall effect). This number is not zero whenever the valence bands generate the $\mathbb{Z}/2$-invariant which was shown to live in Atiyah’s real K-theory of the 3-dimensional torus \cite{PhysRevLett.95.146802}. This invariant is known as the Kane-Mele invariant.

The theoretical and experimental advancements was highlighted in the 2016 Nobel Prize in Physics, awarded to David J. Thouless, F. Duncan M. Haldane, and J. Michael Kosterlitz \cite{Gibney2016}. Their pioneering work on topological phases of matter and phase transitions unveiled new quantum states governed by topological principles. This recognition underscored the profound interplay between mathematics and physics, showcasing how abstract mathematical concepts can illuminate the structure of the physical universe.

The study of topological invariants of crystals, whether magnetic or not, relies on an explicit understanding of the symmetry group of the crystal and the equivariant K-theory groups of the 2-dimensional  and 3-dimensional torus. The relevant symmetry groups include the spatial symmetries of the crystal as well as symmetries that, when composed with the time-reversal operator, preserve the Hamiltonian. In the physics literature, these groups are known as magnetic groups or Shubnikov groups \cite{Shubnikov1964colored}, and in some mathematics literature as Orientifold groups \cite{MR2929686}. Adopting the physics term "magnetic groups" for this groups, we define their associated equivariant K-theory groups which we will refer to as ``Magnetic Equivariant K-theory groups'' to distinguish these groups from the classical complex equivariant K-theory groups \cite{segal}. 

The development of magnetic equivariant K-theory could be traced back to Atiyah's seminal 1966 article on K-theory and reality \cite{atiyahreal}. In this foundational work, Atiyah studied the smallest magnetic point group (the cylic group with two elements) and complex vector bundles with such symmetry, known as vector bundles with involution.  One year later, Atiyah and Segal \cite{atiyahsegalequivariant} extended this framework to include magnetic groups formed by the semidirect product of a group with $\mathbb{Z}/2$, where the first factor acts linearly and the second antilinearly on the complex vector bundles. This generalization allowed for the inclusion of both spatial and time-reversal symmetries in vector bundles. Subsequently, in 1970, Karoubi \cite{10.1007/BFb0059024} further generalized the theory to encompass any magnetic point group.

In the 1980s and 1990s, researchers began using additional mathematical tools, such as noncommutative geometry and the K-theory of $C^*$-algebras (algebraic K-theory), to study condensed matter systems. These approaches offered new insights and methodologies, as seen in works like \cite{ktheoryofC}, \cite{noncalg} and the references therein, highlighting the growing intersection of mathematical physics and topology.

In 2013, Freed and Moore \cite{twistedmat} applied magnetic equivariant K-theory to the study of topological phases of matter, demonstrating its significance in describing physical systems with symmetries. Building on this foundation, Gomi \cite{Gomi2017FreedMooreK}  expanded upon the K-theory framework introduced by Freed and Moore, developing it in greater detail. He introduced general twistings on the groups as cocycles and within the structure of vector bundles, extending the theory to encompass a wider range of physical phenomena. These advancements further enriched the applicability and mathematical depth of magnetic equivariant K-theory, solidifying its role in both mathematics and physics.

Our main objective on this work is to showcase the properties of the magnetic equivariant K-theory
in a simple, clear and concise way so that it could be used as a reference for non-experts in algebraic topology.
In particular we had in mind the condensed matter physicist interested in determining the topological
invariants of prescribed magnetic crystals.   We have thus avoided the use of categorical language such
as the one of groupoids done in \cite{Gomi2017FreedMooreK} and we have focused our attention
on highlighting the most interesting features of the Magnetic Equivariant K-theory. 

We decided to write our work in an style as similar as possible to the one Atiyah and Segal had in their
papers  \cite{atiyahreal, segal, atiyahsegalequivariant}. In some sense this work simply puts Atiyah and Segal's work in the framework of magnetic groups and
we show that all of their results in the  real and equivariant K-theory  generalize to the magnetic equivariant one.  We put great care in highlighting the simplicity and clarity of the theory so that our work could be used
as a reference manual for calculational purposes.

This work has three main sections. We start in the first section with a review of the theory of representations 
of compact magnetic groups as it is presented in Wigner's seminal book of group theory and quantum mechanics \cite{wigner}. We include  a review on how to determine the type of an irreducible magnetic representation
using the appropriate Schur-Frobenius  indicator.

 We continue with the bulk of our work in the second
section where we define the magnetic equivariant K-theory groups and we show that the Bott isomorphism, the Thom isomorphism and the degree shift isomorphism, among others, generalize to the magnetic setup.
Moreover, we use Atiyah and Segal's approach to the real K-theory groups from the point of view of Clifford modules to determine the coefficients of the magnetic equivariant K-theory as an equivariant cohomology theory.
We end up this section with the description Atiyah-Hirzebruch spectral sequence showcasing the 
appearance of the Bredon cohomology with coefficients in the Grothendieck group of magnetic representations as its second page.

We finish our work in the third section where we present an explicit calculation of magnetic equivariant K-theories relevant in the study of 2-dimensional altermagnetic materials \cite{Altermagnetism1,Altermagnetism2,Altermagnetism3}. We give a concise description of the
incorporation of the spin-orbit interaction in the setup, and how the magnetic group is enhanced to the appropriate double cover and the K-theory groups becomes twisted. We focus our attention
 in the magnetic cyclic group $\mathbb{Z}/4$ and its action on the 2-dimensional torus via 4-fold rotations.
 We explicit calculate its  magnetic equivariant K-theory groups and we show how the
 K-theory groups are calculated in the case that the spin-orbit interaction is taken into account.
 The calculation of our last example shows that 2-dimensional magnetic crystals which preserve the combination
 of the 4-fold rotation symmetry composed with time reversal, possess a bulk $\mathbb{Z}/2$ invariant
 that distinguishes trivial insulators from topological ones. This calculation
allowed Gonz\'alez-Hern\'andez and the first two authors  to show that 2-dimensional
insulating altermagnets with $C_4\mathbb{T}$ symmetry possess a non-trivial
phase such as the one of topological insulators \cite{gonzalezhernandez2024spinchernnumberaltermagnets}. 





\section*{Summary of results}

Here we highlight some of the results that we have collected in this work in relation to the magnetic equivarant K-theory.

\subsubsection*{\bf Classification of magnetic representations.} We give a summary of Wigner's account \cite{wigner} on the classification of magnetic representations in Thm. \ref{wigner}. For an irreducible
magnetic representation $V$ of the magnetic group $(G,\phi)$, the vector space
\begin{align}
\End_{\MRep[]{\mg{G}}}(V)\cong \mathbb{F}
\end{align}
determines its type with $\mathbb{F} \in \{\mathbb{R}, \mathbb{C}, \mathbb{H} \}$. The Schur-Frobenius indicator of Def. \ref{Schurindicator} gives a formula to determine its type.

\subsubsection*{{\bf Coefficients in magnetic equivariant K-theory.}} The magnetic equivariant K-theory of a point splits into real, complex and symplectic K-theories according to the number of irreducible representations of the magnetic group of real, complex and quaternionic type. The formula appears in Eqn. \ref{decomposition K_G} and reads:
 \begin{align} 
    \K_G^*(*) \cong KO^*(*)^{\oplus n_\mathbb{R}} \oplus KU^*(*)^{\oplus n_\mathbb{C}} \oplus KSp^*(*)^{\oplus n_\mathbb{H}}        
    \end{align}
    where $n_{\mathbb{F}}$ is the number of irreducible representations of $\mathbb{F}$-type.
The proof is on Thm. \ref{coefofK}.

\subsubsection*{{\bf Degree shift isomorphism.}} 
The isomorphism between real K-theory and symplectic K-theory $K\mathbb{R}^* \cong KSp^{*+4}$ is generalized to the context of magnetic equivariant K-theory inducing an isomorphism of degree 4
 \begin{align}
\K^{*}_{G}(X)  \stackrel{\cong}{\longrightarrow} {}^{\widehat{G}}\K^{*+4}_{G}(X).
\end{align}
where the right hand side is the $\widehat{G}$-twisted magnetic $G$-equivariant K-theory with $\widehat{G}$
being defined as the pullback group $\phi^*(\mathbb{Z}/4)$. The proof appears in Thm. \ref{degree shift}.
Whenever the $\mathbb{Z}/2$-extension $\widehat{G}$
is trivial, then the magnetic equivariant K-theory becomes 4-periodic. This is Prop. \ref{4-periodic}.

\subsubsection*{{\bf Thom isomorphism.}} For a magnetic $(G,\phi)$-equivariant vector bundle $E \stackrel{p}{\longrightarrow} X$
over the $G$ compact space $X$, the homomorphism
 \begin{align} \nonumber
\varphi_*: \K^{-p}_{\mg{G}}(X) & \stackrel{\cong}{\longrightarrow} \K^{-p}_{\mg{G}}(E)\\
F & \longmapsto \Lambda^*_E \otimes p^*F 
\end{align}
is the Thom isomorphism. Here  $\Lambda^*_E$ is the Thom class of $E$ and the proof appears in Thm. \ref{tiso}.

\subsubsection*{{\bf Atiyah-Hirzebruch spectral sequence.}}
Calculations are important in condensed matter physics and therefore a procedure for determining the magnetic equivariant K-theory is relevant. The Atiyah-Hirzebruch spectral sequence is one tool that permits calculate such groups. Filtering $\K_G^*(X)$ using
the $G$-CW decomposition of $X$, we obtain a spectral sequence whose second page is
 \begin{align}
E^{n,t}_2\cong H^n\left(X;\K^t_{G}\right) \Longrightarrow \K^*_{G}(X),
\end{align}
and that abuts to $\K^*_{G}(X)$. The second page is Bredon's equivariant cohomology with coefficients in the group of magnetic representations.
The proof appears in Thm. \ref{ahss}.

\subsubsection*{{\bf $C_4 \mathbb{T}$ symmetry in 2-dimensional torus.}}
The symmetry $C_4$ stands for a 4-fold rotation on the
2-dimensional torus, $\mathbb{T}$ stands for time reversal symmetry and. $C_4 \mathbb{T}$ is the composition. We calculate the magnetic $ \langle C_4 \mathbb{T} \rangle$-equiviariant K-theory of the 2-dimensional torus $T^2$ in both the context of no spin orbit interaction in Prop. \ref{C4T NOSOC}  and spin orbit interaction
in Prop. \ref{C4T SOC}:

\begin{align}
\begin{tabular}{|c|cccc|}
 \hline
  $n$  & $0$ & $-1$ & $-2$ & $-3$   \\
      \hline
 $  \K^n_{\mathbb{Z}/4}(T^2) $ &$ \mathbb{Z}^4$ & $\mathbb{Z}/2$ & $\left( \mathbb{Z} \oplus \mathbb{Z}/2 \right) \oplus \mathbb{Z}$ &$ 0 $
\\ \hline
  $  {}^{\mathbb{Z}/8}\K^n_{\mathbb{Z}/4}(T^2) $ &$ \mathbb{Z}^2  \oplus \mathbb{Z}/2$ & $0$ & $ \mathbb{Z}^3\oplus \mathbb{Z}$ &$ \mathbb{Z}/2 $
   \\ \hline
\end{tabular}
\end{align}
 In both cases the magnetic equivariant K-theory are 4-periodic. Here we highlight that the bulk invariants for $n=0$ are trivial for the first case and $\mathbb{Z}/2$ for second case, while for $n=-2$ the bulk invariants are $\mathbb{Z}$ in both cases. Bulk invariants are the ones that are due to the presence of the 2-cell.





\section*{Notation}

In this work several mathematical structures have been defined. In order to make the text easily accessible, we list the most important symbols that we have used
throughout our work. They will come with a brief explanation and a cross-reference to where they are introduced for the first time. We highlight that
symbols in boldface are the ones associated to magnetic information.

\begin{itemize}
    \item[$(G, \phi)$]  - Magnetic group. Consists of a group $G$ together with a surjective homomorphism $\phi: G \to \mathbb{Z}/2.$ Def. \ref{def magnetic group}.
    \item[$G_0$] - Core of the magnetic group. Kernel of
    $\phi: G \to \mathbb{Z}/2$ consisting of the elements that act unitarily on magnetic representations. Def. \ref{def magnetic group}.
     \item[$\mathbf{Rep}(G,\phi)$] - Category of magnetic representations of $(G,\phi)$. Def. \ref{def representation}.
     \item[$\mathrm{Rep}(G_0)$] - Category of complex
     representations of the group $G_0.$  Def. \ref{def representation}.
     \item[$\mathbb{K}$] - Complex conjugation operator.
     \item[$\mathbf{R}(G, \phi)$] - Grothendieck group of isomorphism classes of magnetic representations of $(G, \phi)$, also denoted $\mathbf{R}(G)$.  Def. \ref{def representation}, Eqn. \eqref{splitring}.
     \item[$\mathbf{R}(G, \phi; \mathbb{F})$] 
     - Grothendieck group of isomorphism classes of magnetic representations of $(G, \phi)$ of type $\mathbb{F}$ for $\mathbb{F} \in \{ \mathbb{R}, \mathbb{C}, \mathbb{H} \}$, also denoted $\mathbf{R}(G, \mathbb{F})$.  Eqn. \eqref{splitring}.
     \item[$R(G_0)$] - Grothendieck ring of complex representations of the group $G_0$.  Def. \ref{def representation}.
     \item[$\mathbb{R}^{p,q}$] - Real vector space  $\mathbb{R}^{q+p} = \mathbb{R}^q \oplus i \mathbb{R}^p$ with the involution $x+iy$ maps to $x-iy$.
     \item[$D^{p,q}$] - Unit disk in $\mathbb{R}^{p,q}$.
     \item[$S^{p,q}$] - Unit sphere in $\mathbb{R}^{p,q}$.
     \item[$X$] - $G$-CW-complex.
     \item[$\mathbf{Vect}_{(G,\phi)}(X)$] - Category of magnetic $(G,\phi)$-equivariant complex vector bundles. Also denoted $\mathbf{Vect}_G(X)$. Def. \ref{def magnetic K-theory}.
     \item[$\K_G(X)$] - Magnetic $G$-equivariant K-theory of $X$. Def. \ref{def magnetic K-theory}.
     \item[$KU_{G_0}(X)$] - $G$-equivariant complex K-theory of $X$ \cite{segal}.
     \item[$K\mathbb{R}^*$] - Atiyah's real K-theory of involution spaces \cite{atiyahreal}.
     \item[$K \mathbb{H}^*$] - Quaternionic K-theory of involution spaces.
     \item[$KO^*$] - K-theory of real vector bundles.
     \item[$KSp^*$] - Dupont's symplectic K-theory \cite{dupontsymplectic}.
     \item[$\widetilde{\K}_G(X)$] - Reduced magnetic equivariant K-theory. Prop. \ref{defireduc}.
     \item[$C_{p,q}$] - Clifford algebra over $\mathbb{R}^{p+q}$ with a negative quadratic form for $q$ variables and positive for $p$ variables. Eqn. \eqref{Cliffordpq}.
     \item[$M^{p,q}_{\mathbb{R}}(\mg{G},\phi)$]  - Grothendieck group of magnetic graded $C_{p,q}[G, \phi]$-modules. Def. \ref{magnetic Clifford}.
     \item[$(\widetilde{G}, \widetilde{\phi})$] -Magnetic central extension of $(G,\phi)$ by $A=(\mathbb{Z}/2)^k$. See \S \ref{section degree shift}.
     \item[${}^{(\widetilde{G},\chi)}\K_{G}^{*}(X)$] 
        - $(\widetilde{G},\chi)$-twisted magnetic $G$-equivariant K-theory groups of $X$ with $\chi$  a fixed character of $A$. Def. \ref{twisted magnetic K-theory}.
     \item[$(\widehat{G}, \widehat{\phi})$] - $\mathbb{Z}/2$-central magnetic extension of $(G,\phi)$ defined as the pullback $\phi^*(\mathbb{Z}/4)$. Eqn. \eqref{widehatG}.     \item[${}^{(\widehat{G},\widehat{\sigma})}\K_{G}^{*}(X)$] - Twisted magnetic equivariant K-theory which is the image of the degree shift isomorphism. Thm. \ref{degree shift}.
     \item[$\Lambda^*_E$] - Thom class of the bundle $E \to X$; it defines an element in $\K_G(E)$. Ex. \ref{Thom class}.
     \item[$H^*(X; \K_G^*)$] - Bredon's equivariant cohomology with coefficients in magnetic equivariant K-theory. Thm. \ref{ahss}.
     \item[$\widetilde{G}^{soc}$] - $\mathbb{Z}/2$ central extension of $G$ defined by the spin orbit interaction. Def. \ref{def soc group}.
     \item[${}^{\widetilde{G}^{soc}}\K^*_G((S^1)^3)$] - Twisted magnetic equivariant K-theory groups for the 3-dimensional torus in the presence of spin orbit interaction. Def. \ref{definition twisted soc}.
\end{itemize}





\section{Magnetic groups and their representations}

In this chapter we develop the theory of representations of magnetic point groups, namely groups that come equipped with a surjective homomorphism onto $\mathbb{Z}/2$. These groups emerge as symmetries in quantum systems, particularly in contexts involving time-reversal symmetry or combined spatial and time-reversal symmetries \cite{mpgsp}. Magnetic point groups act via complex linear or antilinear maps, depending on whether a group element lies in the kernel of the homomorphism. This introduces an added layer of complexity in their representation theory.

A key difference between the representations of magnetic point groups and ordinary groups lies in the structure of the morphisms between irreducible representations, as explained by Schur's lemma. For non magnetic (ordinary) groups, the set of homomorphisms of an irreducible representation consists solely of complex multiples of the identity matrix. However, in the case of magnetic groups, the set of morphisms of irreducible representations can take one of three forms: real, complex, or quaternion multiples of the identity.

This distinction is not merely formal; it has deep physical implications. In quantum systems, the presence of time-reversal symmetry or combined symmetries can lead to phenomena like Kramers' degeneracy, where pairs of states remain degenerate due to time-reversal symmetry, even in the absence of other symmetries \cite{TRK}. The analysis of how representations change under the action of antilinear maps is crucial for understanding such phenomena, particularly in systems with strong spin-orbit interactions or in materials that exhibit topological phases \cite{PhysRevLett.98.106803}. Thus, the study of morphisms in this more general setting of magnetic point groups opens a pathway to understanding physical systems that cannot be described by ordinary group theory alone.

Moreover, we will delve into the role of extensions of magnetic point groups by subgroups of $\U(1)$, which naturally lead to what are known as twisted representations. These twisted representations are particularly relevant in cases where the time-reversal operator no longer squares to the identity but instead to minus the identity. This occurs, for instance, in systems with Spin-Orbit Coupling, where time-reversal symmetry lifts to a group of order 4, \cite{annurev:/content/journals/10.1146/annurev-conmatphys-031115-011319}. The spatial symmetries, which often reside in $\SO(3)$, must then be lifted to its double cover, $\SU(2)$, to accommodate the spin-1/2 particles in quantum systems. This modification affects not only the algebraic structure of the symmetry group but also how representations of the group relate to physical observables, such as energy levels and degeneracies.


\subsection{Magnetic groups}

\begin{definition} \label{def magnetic group}

    A \textbf{magnetic group}  consists of a pair $(G,\phi_G)$ with $G$ a group and $\phi_G: {G}\longrightarrow \mathbb{Z}/2$  a surjective homomorphism. The group $G_0:=\ker \phi_G$ will be called the \textbf{core group} of the magnetic group.
This information is usually depicted in the short exact sequence of groups 
\begin{equation}
\xymatrix{ 1\ar[r]& G_0 \ar[r] &G \ar[r]^{\phi_G} & \mathbb{Z}/2 \ar[r] & 0.}
\end{equation}
A \textbf{morphism} between the magnetic groups $({G},\phi_{G})$ and $({H},\phi_{H})$  is a homomorphism $f:{G}\longrightarrow {H}$ such that $\phi_{{H}} \circ f=\phi_{{G}}$.
\end{definition}

We will denote by $\MGrp$ the category of magnetic groups and their morphisms.

A \textbf{subgroup} of a magnetic group $({G},\phi_{{G}})$ is a magnetic group $({H},\phi_{{H}})$ such that $\mg{H}\leq \mg{G}$ and this inclusion is a morphism in $\MGrp$.  It is a  \textbf{normal subgroup} if ${H}\trianglelefteq {G}$.

Note that not every subgroup of $H_0$ of $G_0$ comes from a magnetic subgroup $(\mg{H},\phi)$. For example, take the magnetic group $1\longrightarrow \mathbb{Z}/4\longrightarrow \mathbb{Z}/8\longrightarrow \mathbb{Z}/2\longrightarrow 1$ and  the subgroup $\mathbb{Z}/2$ of $ \mathbb{Z}/4$. This group
$\mathbb{Z}/2$ is not the core of any magnetic subgroup of $(\mathbb{Z}/8, \phi_{\mathbb{Z}/8})$.

In condensed matter physics, the magnetic groups that classify the symmetries of a crystal which take into account additional symmetries such as time-reversal, are called {\textbf{magnetic space groups}. Whenever the translational
symmetries are mod-out, the quotient group is called the  \textbf{magnetic point group} (see chapter 7 of \cite{bradley1972mathematical}).

\begin{example} Perhaps the most known example of a magnetic group is the \textbf{magnetic general linear group}.
For a complex vector space $V$ denote by $\MGL(V)$ the group of invertible linear or antilinear operators of $V$ where
   $\phi: \MGL(V)  \to \mathbb{Z}/2 $ maps an operator to $0$ if it is complex linear
and $1$ if it is antilinear. Its core group $\MGL(V)_0=\GL(V)$ is the group of invertible linear operators of $V$.

The general linear group with the complex conjugation automorphism is an example of what is known as a {\it Real group} \cite{atiyahsegalequivariant}. That is, 
a group $G_0$ with an automorphism $\tau: G_0\longrightarrow G_0$ such that $\tau^2=\id$. 
The group $G_0\rtimes \mathbb{Z}/2$ together with the projection $\pi_2$ to $\mathbb{Z}/2$ becomes a magnetic group.
\end{example}

\subsection{Representations of magnetic groups}

The representation theory of magnetic groups extends the classical representation theory of ordinary groups by incorporating time-reversal symmetry. Magnetic groups arise in systems where time-reversal symmetry or a combination of spatial and time-reversal symmetries plays a significant role, such as in quantum systems with magnetic or spin-orbit coupling effects.

\begin{definition}
    A \textbf{representation} of a magnetic group $({G},\phi)$ on a finite dimensional complex vector space $V$ is a morphism of magnetic groups $\rho: G \longrightarrow \MGL(V)$.
\end{definition}

\begin{example}
    Consider the magnetic group $(G_0\times \mathbb{Z}/2,\pi_2)$ where $\phi=\pi_2$ is the projection onto the second factor and the core group is $G_0$.
    
    Take any representation $\rho: G_0\times \mathbb{Z}/2 \to \MGL(V)$ and note that $D(g): = \rho(g,0)$ for $g \in G_0$ defines a complex representation of the group $G_0$. 
    Now set $J:=\rho (e,1)$, which is an antilinear operator commuting with $D$ such that $J^2=\id_V$. 
    So $D:G_0\longrightarrow \GL(V)$ is a complex representation and $J:V\longrightarrow V$ an antilinear involution which commutes with $D$. Therefore and this $(D,J)$ is a {\bf real representation} of $G_0$ in the sense of Atiyah \cite{atiyahreal}.
  
    Clearly we have the bijection
    \begin{align}
\left\{\begin{array}{c}
    \text{Real representations}\\ \text{of the group } G_0
    \end{array}\right\} \longleftrightarrow \left\{ \begin{array}{c}
    \text{Representations of the} \\ \text{magnetic point group }  (G_0\times\mathbb{Z}/2,\pi_2)\end{array}\right\}.
\end{align}

\end{example}

\begin{remark}
    Let $(D:G_0\longrightarrow \GL(V), J:V\longrightarrow V)$ be a real representation of $G_0$. Then $W=\{v\in V \,|\, J(v)=v\}$ is a vector space over the real numbers, which is closed under the action of $D$. Thus $G_0$ acts on $W$ by matrices with real coefficients. On the other hand, if $W$ is a vector space, over the real numbers and $D:G_0\longrightarrow \GL(W)$ is a real representation, then $(D\otimes 1:G_0\longrightarrow \GL(W\otimes \mathbb{C}), 1\otimes \K: W\otimes \mathbb{C}\longrightarrow W\otimes \mathbb{C})$ is a real representation of $G_0$. These constructions are inverses one to the other other.
\end{remark}

\begin{example}\label{Z4}
    Take the magnetic point group $(\mathbb{Z}/4,\operatorname{mod} 2)$ and note that there are two representations that come straight to mind.  One can have the one dimensional representation where the generator of $\mathbb{Z}/4$ is mapped to $\mathbb{K}$, the 
complex conjugation operator in complex vector spaces, or the two dimensional representations where the generator is mapped to  $ \begin{pmatrix}
        0&1\\-1&0
    \end{pmatrix}\cdot \mathbb{K}$.
\end{example}

\subsection{Morphisms of representations}

Here we give the notion of morphism of representations of magnetic groups and that of an irreducible one. Next we state and prove an appropriate version of Schur's lemma in this context. The material presented here is based on the pioneer work of  Wigner \cite{wigner} who called representations of magnetic groups {\bf corepresentations}.

\begin{definition} \label{def representation}
    A \textbf{morphism} of representations $\rho_i: {G}\longrightarrow \MGL(V_i)$ $i=1,2$ of the magnetic group $({G},\phi)$ is a (complex) linear transformation $T:V_1\longrightarrow V_2$ such that $T\rho_1(g)=\rho_2(g)T$.
\end{definition}

Two representations $\rho_1,\rho_2$ are \textbf{isomorphic} if there exists an morphism of representations which is invertible. 

\noindent
For $(\mg{G},\phi)$ a fixed magnetic group denote:
\begin{itemize}
    \item $\MRep[]{\mg{G}}$ the category of representations of magnetic groups with their morphisms.  Denote its Grothendieck group of isomorphism classes of representations  by $\MRR[]{\mg{G}}$. 
    \item $\Hom_{\MRep[]{\mg{G}}}(V,W)$ the $\mathbb{R}$-vector space of morphisms of representations and by
    $\End_{\MRep[]{\mg{G}}}(V)$ the endomorphisms of a representation $V$.
    \item $\Rep(G_0)$ the category of representations of the core group $G_0$ and by $\operatorname{R}(G_0)$ its Grothendieck ring of complex representations.
   \end{itemize}

\begin{example}
    Let $\rho:\mg{G}\longrightarrow \MGL(V)$ be a representation of the magnetic group $(\mg{G},\phi)$ and $z\in \mathbb{C}$ a complex number such that $|z|=1$. Define the map
    \begin{align}
    \rho_1(g):=\overline{z}\cdot\rho(g)\cdot z=\overline{z}\cdot \mathbb{K}^{\phi(g)}(z)\cdot\rho(g)=\begin{cases}
        \rho(g)& \text{ if } \phi(g)=0\\ \overline{z}^2\rho(g) & \text{ if } \phi(g)=1
    \end{cases}
    \end{align}
    that is, $\rho_1$ and $\rho_2$ differ only by a factor of $\overline{z}^2$ in the antilinear part.
    
    This map is a representation of the magnetic point group because
    \begin{align}
        \rho_1(gh)&=\overline{z}\cdot\rho(gh)\cdot z\\
        &= \overline{z}\cdot\rho(g)\cdot\rho(h)\cdot z\\
        &= \overline{z}\cdot\rho(g)\cdot z\overline{z} \cdot\rho(h)\cdot z\\ 
        &= \rho_1(g)\cdot \rho_1(h).
    \end{align}
    
    Now, if $T$ denotes multiplication by $z$ then
    \begin{align}
        T\rho_1(g)=z\overline{z}\rho(g)z
        = \rho(g)z
        = \rho(g)T,
    \end{align}
    and therefore the representations $\rho$ and $\rho_1$ are isomorphic. Hence the antilinear part of representations of magnetic groups could be multiplied by any complex number of norm one and the isomorphism class of the representation remains unaffected.
\end{example}

    If we have a representation $V$ of the magnetic point group $(\mg{G},\phi)$, then the \textbf{restriction} to the core group $G_0$ is a representation of $G_0$ and is denoted by $\res(V)$. This restriction map defines a functor at the level of the categories of representations. 
    \begin{equation}\label{res}\xymatrix{
    \MRep[]{\mg{G}} \ar[r]^{\hspace{0.3cm}\res}& \Rep(G_0).
    }  
    \end{equation}

    A \textbf{subrepresentation} of a representation $V$ of the magnetic point group $(\mg{G},\phi)$ is a subspace $W\subset V$ such that the restriction of the $\mg{G}$-action to $W$ is also a representation of $(\mg{G},\phi)$.
 A representation $V$ of $(\mg{G},\phi)$ is \textbf{irreducible} if its only representations are $V$ and $0$.

Now, the category of representations of finite magnetic groups is semisimple.
\begin{proposition}
    Every representation of a finite magnetic group is a sum of irreducible representations.
\end{proposition}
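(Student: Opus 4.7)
The plan is to imitate the classical Maschke averaging argument, with the twist that the antilinearity of some $\rho(g)$ forces a careful placement of the arguments in the averaged Hermitian form. Starting from an arbitrary Hermitian inner product $H$ on $V$ (antilinear in the first slot, linear in the second), the naive choice $\sum_{g \in G} H(\rho(g)v,\rho(g)w)$ is not even sesquilinear: for $g$ with $\phi(g)=1$ the summand $H(\rho(g)v,\rho(g)w)$ is linear in $v$ and antilinear in $w$, opposite to what is required. The fix is to swap the two arguments on the antilinear coset, setting
\[
H'(v,w) \;=\; \sum_{g \in G_0} H\bigl(\rho(g)v,\rho(g)w\bigr) \;+\; \sum_{g \in G \setminus G_0} H\bigl(\rho(g)w,\rho(g)v\bigr).
\]

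First I would verify that $H'$ is a positive-definite Hermitian inner product. Sesquilinearity holds termwise after the swap; Hermitian symmetry $H'(w,v)=\overline{H'(v,w)}$ follows from $\overline{H(a,b)}=H(b,a)$ together with the interchange of the two sums; positive-definiteness is immediate from $H'(v,v)=\sum_{g \in G} H(\rho(g)v,\rho(g)v)\geq 0$. Next I would establish the \emph{magnetic invariance}
\[
H'(\rho(h)v,\rho(h)w) \;=\; \begin{cases} H'(v,w) & \text{if } \phi(h)=0, \\ \overline{H'(v,w)} & \text{if } \phi(h)=1, \end{cases}
\]
by right-translating the summation index by $h$: when $\phi(h)=1$ the two cosets $G_0$ and $G\setminus G_0$ get swapped, and applying $\overline{H(a,b)}=H(b,a)$ inside the resulting sums produces exactly the complex conjugate of $H'(v,w)$.

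With $H'$ in hand, the rest is routine. For any subrepresentation $W \subset V$ I would show that the orthogonal complement $W^{\perp}$ with respect to $H'$ is again a subrepresentation: for linear $\rho(g)$, invariance gives $H'(\rho(g)v,u)=H'(v,\rho(g^{-1})u)=0$ whenever $v \in W^\perp$ and $u \in W$; for antilinear $\rho(g)$, magnetic invariance gives $H'(\rho(g)v,u)=\overline{H'(v,\rho(g^{-1})u)}=0$, using that $\rho(g^{-1})u \in W$ by $G$-stability of $W$. Hence $V=W\oplus W^{\perp}$ as magnetic representations, and an induction on $\dim V$ decomposes $V$ into irreducibles. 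The only substantive obstacle is the construction of $H'$; once the variances are tracked correctly, everything afterwards is formally identical to the non-magnetic Maschke theorem.
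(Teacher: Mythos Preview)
Your proof is correct, but takes a different route from the paper. The paper averages a \emph{projection} rather than an inner product: given a subrepresentation $W\subset V$, it picks any linear complement, takes the projection $\pi_1:V\to W$, and sets $\pi(v)=\sum_{g\in G} g\cdot \pi_1(g^{-1}\cdot v)$. The point is that this averaged map is automatically $\mathbb{C}$-linear with no coset bookkeeping, because each antilinear $g$ acts twice (once as $g^{-1}$ inside, once as $g$ outside) and the antilinearities cancel; one then checks $G$-equivariance, surjectivity onto $W$, and takes $W'=\ker\pi$.

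Your approach instead produces an invariant Hermitian form $H'$, and the argument-swap on the antilinear coset is exactly the adjustment needed to keep $H'$ sesquilinear. This is not new to the paper: the same trick appears later (Lemma~\ref{metrica}) in the vector-bundle setting, written equivalently as $\sum_g \mathbb{K}^{\phi(g)}(\langle gv,gw\rangle)$. So what you gain is that Maschke and the existence of invariant metrics come out in one stroke; what the paper's projection argument gains is that it runs without ever splitting into cosets or tracking conjugate-linearity, since morphisms in $\MRep[]{\mg{G}}$ are $\mathbb{C}$-linear and the averaging preserves that automatically.
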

 Which follows from Maschke's lemma applied to representations of magnetic groups.
\begin{lemma}[Maschke]\label{maschke}
    If $W$ is a subrepresentation of $V$ of a finite magnetic group, then there exists a subrepresentation $W'$ of $V$ such that $V=W\oplus W'$.
\end{lemma}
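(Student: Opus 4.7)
The plan is to adapt the classical averaging argument to the magnetic setting. The key observation is that one can construct a positive-definite Hermitian form on $V$ which is preserved by the magnetic action in the appropriate sense: elements of the core $G_0$ act unitarily, while antilinear elements act anti-unitarily. Taking the orthogonal complement of $W$ with respect to this form will produce the required complementary subrepresentation.

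Concretely, I would begin with an arbitrary Hermitian inner product $\langle \cdot, \cdot \rangle_0$ on $V$ and define
\begin{align}
\langle v, w \rangle := \sum_{g \in G} \mathbb{K}^{\phi(g)}\langle \rho(g) v, \rho(g) w \rangle_0,
\end{align}
where $\mathbb{K}$ applied to a scalar denotes complex conjugation. Since each summand with $v = w$ is a nonnegative real number (conjugation fixes the reals), the form is positive definite. Sesquilinearity follows because for $\phi(g) = 1$ the antilinearity of $\rho(g)$ in the first slot is ``undone'' by the outer conjugation $\mathbb{K}$, and a direct check confirms complex linearity in $v$ and conjugate linearity in $w$.

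Next, I would verify the magnetic invariance by reindexing with $g' = gh$, using $\phi(gh) \equiv \phi(g) + \phi(h) \pmod 2$. This yields $\langle \rho(h) v, \rho(h) w \rangle = \langle v, w \rangle$ when $h \in G_0$ (the exponent of $\mathbb{K}$ is unchanged) and $\langle \rho(h) v, \rho(h) w \rangle = \overline{\langle v, w \rangle}$ when $\phi(h) = 1$ (the exponent shifts by one, producing an overall conjugation). Setting $W' := W^{\perp}$, the decomposition $V = W \oplus W'$ holds as complex vector spaces. To see that $W'$ is a subrepresentation, one uses that for $g \in G_0$ unitarity gives $\langle \rho(g) v, w \rangle = \langle v, \rho(g)^{-1} w \rangle$, and for $\phi(g) = 1$ anti-unitarity gives $\langle \rho(g) v, w \rangle = \overline{\langle v, \rho(g)^{-1} w \rangle}$; since $W$ is $\rho(g)^{-1}$-invariant, both expressions vanish for $v \in W^{\perp}$, so $\rho(g) v \in W^{\perp}$.

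The main subtlety, rather than a true obstacle, is bookkeeping the complex conjugation twist: one must realize that the naive averaging formula $\sum_g \langle \rho(g) v, \rho(g) w \rangle_0$ fails to be sesquilinear when antilinear elements are present, and that inserting $\mathbb{K}^{\phi(g)}$ is precisely the correction that restores sesquilinearity \emph{and} simultaneously promotes $G_0$-invariance into the correct magnetic invariance. Once the form is in hand, the remainder of the argument is a direct transcription of the classical Maschke proof.
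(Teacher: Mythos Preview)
Your proof is correct, but it takes a different route from the paper's. The paper uses the projection-averaging argument: it chooses an arbitrary linear complement $U$ with projection $\pi_1:V\to W$, then averages to $\pi(v)=\sum_{g\in G} g\cdot\pi_1(g^{-1}\cdot v)$, checks this is a morphism in $\MRep[]{\mg{G}}$, and sets $W'=\ker\pi$. Your approach instead averages a Hermitian form with the conjugation twist $\mathbb{K}^{\phi(g)}$ and takes the orthogonal complement. Interestingly, your averaging formula is exactly the one the paper introduces later, in Lem.~\ref{metrica}, to produce an invariant Hermitian metric on a magnetic $(G,\phi)$-equivariant vector bundle; you have in effect specialized that lemma to the bundle over a point. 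What your approach buys is a reusable invariant inner product (and it foreshadows Prop.~\ref{complement}); what the paper's projection argument buys is that it never needs an auxiliary Hermitian structure and works verbatim over any ground field of characteristic prime to $|G|$, staying purely within the category of morphisms of magnetic representations.
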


\begin{proof}
    Write $V$ as a direct sum $V=W\oplus U$ (with $U$ not necessary a subrepresentation) and consider the projection $\pi_1: V\longrightarrow W$ given by $\pi_1(w+u)=w$. Define
    \begin{align}
     \nonumber   \pi: V&\longrightarrow W\\ v&\longmapsto \sum_{g\in \mg{G}}g\cdot \pi_1(g^{-1}\cdot v).
    \end{align}
    The morphism $\pi$ is in $ \Hom_{\MRep[]{\mg{G}}}(V,W)$ because if $h\in \mg{G}$ and $v\in V$, then
    \begin{align}
        \pi(h\cdot v)&= \sum_{g\in \mg{G}} g\cdot \pi_1(g^{-1}\cdot(h\cdot v)) \\ &= \sum_{g\in \mg{G}} g\cdot \pi_1((g^{-1}h)\cdot v) \\ &= \sum_{g\in \mg{G}} (hh^{-1}g)\cdot \pi_1((h^{-1}g)^{-1}\cdot v) \\ &= h\cdot \sum_{g\in \mg{G}} (h^{-1}g)\cdot \pi_1((h^{-1}g)^{-1}\cdot v) \\ &= h\cdot \sum_{l\in \mg{G}} l\cdot \pi_1(l^{-1}\cdot v) \\ &= h\cdot \pi(v)
    \end{align}
    The morphism $\pi$ is surjective, this follows since $\pi_1(w)=w$ for $w\in W$, so $\pi(w)=|\mg{G}|w$.
    If we denote by $W'=\ker\pi$, then we have the decomposition as a direct sum of representations of $(\mg{G},\phi)$ 
    \begin{align}
    V\cong\operatorname{Im}\pi \oplus \ker\pi=W\oplus W'.
    \end{align}
\end{proof}

One key step required to classify the irreducible representations of magnetic groups $(\mg{G},\phi)$ is to construct the induced representations of the core $G_0\leq \mg{G}$.
\begin{definition}\label{induction}
    Let $(\mg{G},\phi)$ be a finite magnetic point group and $V$ a representation of the core group $G_0$. The \textbf{induced representation} of $V$ is the complex vector space
\begin{align} 
\Ind{G_0} V:= \mathbb{C}[\mg{G},\phi]\otimes_{\mathbb{C}[G_0]} V
\end{align}
    where
       $\mathbb{C}[\mg{G},\phi]$ is the \textbf{regular representation} of $(\mg{G},\phi)$, i.e.  the complex vector space generated by $\mg{G}$ together the action of $(\mg{G},\phi)$ given by 
\begin{align}
g\cdot \left(\sum_i z_ig_i\right)=\sum_i \K^{\phi(g)}(z_i)gg_i,
\end{align}
   and  whose action of $(\mg{G},\phi)$ on $\Ind{H}V$ is given by its action on the first factor.
\end{definition}

For a fixed element $a_0\in \mg{G}\backslash G_0$, we have the following presentation for the induced representation
    \begin{align}
        \Ind{G_0}V= V \oplus a_0V
    \end{align}
    
where 
 $a_0V=\{a_0v\, |\, v\in V\}$ is a complex vector space with the scalar product given by
 \begin{align} \label{conjugaterepresentation1}
z\cdot a_0v=a_0\overline{z}v.
\end{align}
The vector space  $a_0V$ is a representation of $G_0$ with action
\begin{align} \label{conjugaterepresentation2}
g\cdot a_0v=a_0(a_0^{-1}ga_0v),
\end{align}
 and this representation is called the \textbf{conjugate representation} of $V$ by $a_0$. 

If $\rho: G_0\longrightarrow \GL(V)$ is the representation of the core, then the conjugate 
representation $a_0V$ can be written by the homomorphism $\rho': G_0\longrightarrow \GL(V)$ where 
\begin{align}
\rho'(g)=\overline{\rho(a^{-1}_0ga_0)}.
\end{align}  
Here we have implicitly assumed the isomorphism $a_0 V \cong V$, $a_0v \mapsto v$.

         The action of $(\mg{G},\phi)$ on $V\oplus a_0V$ is given as follows

        \begin{enumerate}
            \item If $g\in G_0$, then $g\cdot(v,a_0w)=\left(gv,a_0(a_0^{-1}ga_0w)\right)$. We can write this formula as
            \begin{equation}\label{formulaind0}
            g\cdot \begin{pmatrix}
                v \\ w 
            \end{pmatrix}=\begin{pmatrix}
                \rho(g) & 0 \\ 0 & \rho'(g)
            \end{pmatrix}\begin{pmatrix}
                v\\ w
            \end{pmatrix}.\end{equation}
            \item If $g\not\in G_0$, then we can write $g=a_0g'$ for some $g'\in G_0$ and 
            \begin{align}
                g\cdot (v,a_0w)&=a_0g'\cdot (v,a_0w) \notag\\
                &=a_0\cdot \left(g'v,a_0(a_0^{-1}g'a_0w)\right) \notag\\
                &= (a_0g'a_0w, a_0g'v)\notag \\
                &=\left(ga_0 w,a_0(a_0^{-1}g v)\right).
            \end{align}

        We can write the last action in the matrix form
        \begin{equation}\label{formulaforind}
        g\cdot \begin{pmatrix}
                v \\ w 
            \end{pmatrix}=\begin{pmatrix}
                0 & \rho(ga_0) \\ \rho'(ga^{-1}_0) & 0
            \end{pmatrix}\mathbb{K} \begin{pmatrix}
                v\\ w
            \end{pmatrix}.\end{equation}
 \end{enumerate}  

The restriction and induction constructions are related by the so called Frobenius reciprocity lemma:

\begin{lemma}[Frobenius reciprocity]\label{frobrec}
    Let $(\mg{G},\phi)$ be a finite magnetic point group, $V$ a representation of the core group $G_0$ and $W$ a representation of $(\mg{G},\phi)$. Then there exist natural bijection 
    \begin{align}
    \Hom_{\MRep[]{\mg{G}}}\left(\Ind{G_0}V,W\right)\overset{\cong}{\longrightarrow} \Hom_{\Rep(G_0)}\left(V,\res W\right).
    \end{align}
\end{lemma}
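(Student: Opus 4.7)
The plan is to exhibit mutually inverse maps between the two Hom-sets, using the decomposition $\Ind_{G_0} V = V \oplus a_0V$ for a fixed element $a_0\in \mg{G}\setminus G_0$ together with the action formulas \eqref{formulaind0} and \eqref{formulaforind}.

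First, I would define the forward map
\begin{align}
\Phi: \Hom_{\MRep[]{\mg{G}}}(\Ind{G_0}V, W) \longrightarrow \Hom_{\Rep(G_0)}(V, \res W),
\qquad \Phi(T) := T|_{V},
\end{align}
by restricting a magnetic morphism $T$ to the complex subspace $V \subset \Ind_{G_0}V$. This is well defined because $V$ is a complex-linear subspace and, by formula \eqref{formulaind0}, is preserved by $G_0$ with the original $G_0$-action on $V$; hence $T|_V$ is automatically $\mathbb{C}$-linear and $G_0$-equivariant.

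For the inverse, given a $G_0$-equivariant $\mathbb{C}$-linear map $S: V \to \res W$, I would define
\begin{align}
\Psi(S): V \oplus a_0V \longrightarrow W, \qquad \Psi(S)(v + a_0w) := S(v) + \rho_W(a_0)\, S(w).
\end{align}
The key checks are (i) complex linearity on the $a_0V$-summand, which uses that scalar multiplication there is conjugated, $z\cdot a_0 w = a_0(\overline{z}w)$ from \eqref{conjugaterepresentation1}, combined with the antilinearity of $\rho_W(a_0)$: indeed
\begin{align}
\Psi(S)(z \cdot a_0w) = \rho_W(a_0) S(\overline{z}w) = \rho_W(a_0)(\overline{z}S(w)) = z\,\rho_W(a_0)S(w);
\end{align}
and (ii) $\mg{G}$-equivariance. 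For $g \in G_0$ this reduces, via \eqref{formulaind0} and \eqref{conjugaterepresentation2}, to the identity $\rho_W(g)\rho_W(a_0) = \rho_W(a_0)\rho_W(a_0^{-1}ga_0)$, which holds by the homomorphism property of $\rho_W$; the $G_0$-equivariance of $S$ then takes care of the rest. For $g \notin G_0$, write $g = a_0 g'$ and use \eqref{formulaforind} together with the antilinearity of $\rho_W(a_0)$ to verify the required matrix identity directly.

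Finally I would check that $\Phi$ and $\Psi$ are mutually inverse: $\Phi(\Psi(S)) = S$ is immediate from the definition on $V$, while $\Psi(\Phi(T)) = T$ follows because any magnetic morphism $T$ must satisfy $T(a_0 w) = \rho_W(a_0)\, T(w)$ by $\mg{G}$-equivariance applied to the element $a_0 \in \mg{G}$, so $T$ is determined by $T|_V$. Naturality in $V$ and $W$ is clear from the construction. I expect the main bookkeeping obstacle to be precisely the interplay between the conjugated scalar action on $a_0V$ and the antilinearity of $\rho_W(a_0)$ when checking equivariance for $g \notin G_0$; once those signs/conjugations are tracked carefully, the rest is formal.
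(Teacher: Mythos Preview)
Your proof is correct and is essentially the same argument as the paper's, just phrased in terms of the explicit decomposition $\Ind_{G_0}V = V \oplus a_0V$ rather than the tensor-product description $\mathbb{C}[\mg{G},\phi]\otimes_{\mathbb{C}[G_0]}V$. The paper's maps $F(f)(v)=f(e\otimes v)$ and $G(f)(g\otimes v)=g\cdot f(v)$ are exactly your $\Phi$ and $\Psi$ after identifying $e\otimes v$ with $v$ and $a_0\otimes w$ with $a_0w$; your version is in fact more explicit about the linearity and equivariance checks (the paper leaves those implicit), while the tensor-product formulation makes the verification that $\Phi$ and $\Psi$ are inverse slightly more transparent.
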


\begin{proof}
    Define the functions
    \begin{align}
  \nonumber      F: \Hom_{\MRep[]{\mg{G}}}\left(\mathbb{C}[\mg{G},\phi]\otimes_{\mathbb{C}[G_0]}V,W\right) & \longrightarrow \Hom_{\Rep(G_0)}(V,\res W)\\ 
f& \longmapsto F(f)(v)=f(e\otimes v) 
    \end{align}    
    \begin{align}
  \nonumber      G: \Hom_{\Rep(G_0)}(V,\res W)& \longrightarrow \Hom_{\MRep[]{\mg{G}}}\left(\mathbb{C}[\mg{G},\phi]\otimes_{\mathbb{C}[G_0]}V,W\right)\\ f& \longmapsto G(f)(g\otimes v)=gf(v) 
    \end{align}
    $F$ and $G$ are inverses one of each-other:
    \begin{enumerate}
        \item if $f\in \Hom_{\MRep[]{\mg{G}}}\left(\Ind{G_0}V,W\right)$ and $g\otimes v\in \Ind{G_0}$ is an elementary tensor, then
\begin{align} 
(G\circ F)(f)(g\otimes v)=G(F(f))(g\otimes v)=gF(f)(v)=gf(e\otimes v)=f(g\otimes w).
\end{align}
        \item If $h\in \Hom_{\Rep(G_0)}(V,\res W)$ and $v\in V$, then
\begin{align}(F\circ G)(h)(v)=F(G(h))(v)=G(h)(e\otimes v)=eh(v)=h(v).
\end{align}
    \end{enumerate}
\end{proof}

\begin{remark}
    Given any subgroup $H$ of a magnetic point group $(\mg{G},\phi)$ and a representation $V$ of $H$ we can define an induced representation $\Ind{H}V$ of $(\mg{G},\phi)$ and also prove the Frobenius reciprocity theorem. We need to be careful because there are two types of subgroups of $(\mg{G},\phi)$, hence $V$ could be an ordinary representation or a magnetic one, and of course we have two types of restrictions and two types of inductions.
\end{remark}

\subsection{Classification of irreducible representations}

The irreducible representations of finite magnetic groups split into three different types, one for each field $\mathbb{R}$, $\mathbb{C}$ and $\mathbb{H}$. Let us see how this is carried out.

\begin{lemma}\label{signodeT}
    Let $(\mg{G},\phi)$ be a finite magnetic group, $\rho:G_0\longrightarrow \GL(V)$ an irreducible representation of $G_0$, $a_0\in \mg{G}\backslash G_0$ and $\rho':G_0\longrightarrow \GL(V)$ the conjugate representation by $a_0$. Then either:

    \begin{enumerate}
        \item the representations $\rho$ and its conjugate $\rho'$ are non isomorphic $\rho'\not\cong \rho$, or
        \item the representation $\rho$ and its conjugate $\rho'$ are isomorphic $\rho\cong\rho'$ and there exist an isomorphism $T\in \Hom_{\Rep(G_0)}(\rho',\rho)$ such that
 \begin{align}
\rho(a^2_0)=\pm T\overline{T}.
\end{align}
    \end{enumerate}

We say the representation $\rho$ is of \textbf{complex type} if it satisfies the first case, \textbf{real type} if satisfies the second case with $\rho(a^2_0)=+ T\overline{T}$ and \textbf{quaternionic type} the case $\rho(a^2_0)=- T\overline{T}$.
\end{lemma}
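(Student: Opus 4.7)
The plan is to apply Schur's lemma to the complex-irreducible representation $\rho$ of $G_0$, then analyze how the antilinear element $a_0$ couples $\rho$ to its conjugate $\rho'$. The dichotomy of the lemma is immediate from the classical complex Schur lemma applied to $\Hom_{\Rep(G_0)}(\rho',\rho)$: either this space is zero, which is case (1), or (since both $\rho$ and $\rho'$ are irreducible of the same dimension) it is one-dimensional and spanned by an isomorphism $T$, in which case the task is to produce the scalar $\pm 1$.

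In case (2), I would first repackage the intertwining condition. Substituting $\rho'(g)=\overline{\rho(a_0^{-1}ga_0)}=\mathbb{K}\rho(a_0^{-1}ga_0)\mathbb{K}$ into $T\rho'(g)=\rho(g)T$ and introducing the antilinear operator $U:=T\mathbb{K}$, I obtain
\[
U\rho(h)=\rho(a_0 h a_0^{-1})\,U,\qquad h\in G_0.
\]
Two consequences follow. Specializing $h=a_0^{2}\in G_0$ and using $a_0 a_0^{2} a_0^{-1}=a_0^{2}$ gives the commutation $U\rho(a_0^{2})=\rho(a_0^{2})U$. Iterating the relation, $U^{2}\rho(h)U^{-2}=\rho(a_0^{2}h a_0^{-2})=\rho(a_0^{2})\rho(h)\rho(a_0^{2})^{-1}$, so $\rho(a_0^{2})^{-1}U^{2}$ commutes with every $\rho(h)$. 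Since $U^{2}=T\overline{T}$ is complex-linear and $\rho$ is complex-irreducible, Schur's lemma for the core gives $\rho(a_0^{2})^{-1}T\overline{T}=\nu\cdot\id$ for some $\nu\in\mathbb{C}^{\times}$, that is, $\rho(a_0^{2})=\lambda\,T\overline{T}$ with $\lambda=\nu^{-1}$.

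The remaining task is to force $\lambda=\pm 1$. Rescaling the intertwiner $T\mapsto \mu T$ replaces $T\overline{T}$ by $|\mu|^{2}T\overline{T}$, so after choosing $|\mu|^{2}=|\lambda|$ we may assume $|\lambda|=1$. To show $\lambda$ is real, I would unpack the commutation $U\rho(a_0^{2})=\rho(a_0^{2})U$ as $T\overline{\rho(a_0^{2})}=\rho(a_0^{2})T$, substitute $\rho(a_0^{2})=\lambda T\overline{T}$ on the right and $\overline{\rho(a_0^{2})}=\overline{\lambda}\,\overline{T}T$ on the left, and cancel the invertible factor $T\overline{T}T$ to conclude $\lambda=\overline{\lambda}$. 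Combined with $|\lambda|=1$ this gives $\lambda\in\{+1,-1\}$, which is exactly the statement of the lemma, and moreover shows the sign is independent of the choice of $T$ up to rescaling, so it is an invariant attached to $\rho$.

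The main obstacle I foresee is the careful bookkeeping of antilinear compositions: keeping track of the identities $\mathbb{K}^{2}=\id$, $\mathbb{K}A=\overline{A}\mathbb{K}$ and $\overline{AB}=\overline{A}\,\overline{B}$, and distinguishing at each step whether a given expression is complex linear or antilinear so that Schur's lemma applies to the right object. A secondary subtlety is verifying that the rescaling freedom $T\mapsto \mu T$ changes $T\overline{T}$ only by the positive scalar $|\mu|^{2}$, which is what makes the sign $\pm 1$ intrinsic and splits the magnetic irreducibles into the real and quaternionic types declared in the lemma.
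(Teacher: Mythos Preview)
Your proof is correct and follows essentially the same route as the paper: both arguments take an intertwiner $T$ between $\rho'$ and $\rho$, use Schur's lemma for the irreducible core representation to conclude that $T\overline{T}\rho(a_0^{2})^{-1}$ is a scalar, show that this scalar is real, and then rescale $T$ by a positive real number to reduce to $\pm 1$. Your packaging via the antilinear operator $U=T\mathbb{K}$ and its intertwining relation $U\rho(h)=\rho(a_0 h a_0^{-1})U$ is a clean reformulation of the paper's explicit matrix manipulations with $S$, $\overline{S}$ and $\rho(a_0^{\pm 2})$, but the underlying computation is the same.
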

\begin{proof}
    If the representations $\rho$ and $\rho'$ are non isomorphic, then we are done.
    If $\rho$ and $\rho'$ are isomorphic representations, then there exists a isomorphism  $S\in \Hom_{\Rep(G_0)}(a_0V,V)$ such that 
    \begin{equation}\label{isomorphisprincipal}
        \rho'(g)=S^{-1}\rho(g)S 
    \end{equation}
    for all $g\in G_0$. Let us do the following computation with respect to some basis
    \begin{align}
        \rho(a^{-2}_0)\rho(g)\rho(a^{2}_0)=& \rho(a^{-2}_0ga^{2}_0) \hspace{0.5cm}\text{ homomorphism property}\\ =& \rho(a^{-1}_0(a^{-1}_0ga_0)a_0) \\ =& \overline{\rho'(a^{-1}_0ga_0)} \hspace{0.5cm} \text{ definition of }\rho' \\
        =& \overline{S}\,^{-1}\,\overline{\rho(a^{-1}_0ga_0)}\,\overline{S} \hspace{0.5cm} \text{ complex conjugate Eqn.} \eqref{isomorphisprincipal}\\ =& \overline{S}\,^{-1} \rho'(g)\overline{S} \hspace{0.5cm} \text{ definition of }\rho' \\ =&  \overline{S}\,^{-1}S^{-1}\rho(g)S\overline{S} \hspace{0.5cm}\text{ Eqn.}\eqref{isomorphisprincipal}.
    \end{align}
    Write the last equality as:
    \begin{align}
        S\overline{S}\rho(a^{-2}_0)\rho(g)\rho(a^{2}_0)\overline{S}\,^{-1}S^{-1}&=\rho(g) \\
        \left(S\overline{S}\rho(a^{-2}_0)\right)\rho(g)\left(S\overline{S}\rho(a^{-2}_0)\right)^{-1}&=\rho(g)
    \end{align}
    so the map $S\overline{S}\rho(a^{-2}_0)\in \operatorname{End}_{\Rep(G_0)}(V)$ is an isomorphism of the representation $\rho$, and thus by the classical Schur's lemma, see for example lemma 1.7 in \cite{fultonharris}, we have
    \begin{equation}\label{schurconstnt}
        S\overline{S}\rho(a^{-2}_0)=c\operatorname{I}
    \end{equation}  
    with $c\in \mathbb{C}^*$. Now observe that replacing $g=a^{-2}_0$ in $\rho'(g)=S^{-1}\rho(g)S$ we get
\begin{align}
\rho'(a^{-2}_0)=S^{-1}\rho(a^{-2}_0)S
\end{align}
    and using the definition of $\rho'$ it becomes
    \begin{equation}\label{schuraux}
        \overline{\rho(a^{-2}_0)}=S^{-1}\rho(a^{-2}_0)S.
    \end{equation}
    
    Using these last equalities we have 
    \begin{align}
        c\operatorname{I}=&S\overline{S}\rho(a^{-2}_0) \hspace{0.5cm} \text{ Eqn.}\eqref{schurconstnt}\\ = & S\overline{S} S\overline{\rho(a^{-2}_0)} S^{-1} \hspace{0.5cm} \text{ Eqn.} \eqref{schuraux} \\ =& S\overline{c}S^{-1} \hspace{0.5cm}\text{ Eqn.}\eqref{schurconstnt} \\ =& \overline{c}\operatorname{I} 
    \end{align}
    and thus $c\in \mathbb{R}^*$. Hence there are two possibilities, either $c>0$ or $c<0$. Now we can define a new morphism by rescaling $S$ by a real number
\begin{align}  
T:=\begin{cases}
        \frac{1}{\sqrt{c}}S & \text{ if } c>0\\ \frac{1}{\sqrt{-c}}S & \text{ if } c<0.
    \end{cases}
\end{align}
    Clearly we can can rewrite Eqn. \eqref{schurconstnt} as $ T\overline{T}=\pm \rho(a^{2}_0)$.
\end{proof}

Now we state  Schur's lemma in this context.

\begin{theorem}[Schur's lemma]\label{Schurfreeb} 

Let $V,W$ be irreducible representations of the finite magnetic group $(\mg{G},\phi)$. If $V$ and $W$ are non isomorphic, then $\Hom_{\MRep[]{\mg{G}}}(V,W)=0$. If $V\cong W$, then there are three possibilities for the restriction of $V$ to a representation of the core group $G_0$:
\begin{itemize}
    \item $\End_{\MRep[]{\mg{G}}}(V)\cong \mathbb{R}$, and $\res(V)=V$ is an irreducible representation of $G_0$. 
    \item $\End_{\MRep[]{\mg{G}}}(V)\cong \mathbb{C}$, and $\res(V)\cong V_1\oplus V_2$ where $V_1,V_2$ are non isomorphic irreducible representations of $G_0$ (although they are complex conjugate of each other). 
    \item $\End_{\MRep[]{\mg{G}}}(V)\cong \mathbb{H}$, and  $\res(V)\cong V_1\oplus V_1$ where $V_1$ is an irreducible representation of $G_0$.
\end{itemize}

\end{theorem}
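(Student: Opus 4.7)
The plan is to dispatch the non-isomorphic case via a standard Schur-type argument on kernels and images, and then to classify the endomorphism algebra in the case $V\cong W$ by running the trichotomy of Lemma \ref{signodeT} on an irreducible $G_0$-summand of $\res V$. The whole argument will be organized around one fixed element $a_0\in\mg{G}\setminus G_0$, whose antilinear action on $V$ provides the extra constraint that distinguishes magnetic endomorphisms from ordinary $G_0$-endomorphisms.

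First, I would handle the non-isomorphic case: a nonzero morphism $T\in\Hom_{\MRep[]{\mg{G}}}(V,W)$ is complex linear and intertwines the full $\mg{G}$-action, so $\ker T\subseteq V$ and $\operatorname{Im} T\subseteq W$ are stable under both the linear and the antilinear parts of the action, i.e.\ are magnetic subrepresentations; irreducibility of $V$ and $W$ then forces $T$ to be an isomorphism, contradicting $V\not\cong W$. For the endomorphism case I would next pick, via Maschke's lemma (Lemma \ref{maschke}), an irreducible $G_0$-subrepresentation $U\subseteq\res V$, observe that $a_0U\subseteq\res V$ carries the conjugate representation $\rho'$ of \eqref{conjugaterepresentation1}--\eqref{conjugaterepresentation2}, and note that the magnetic-stable subspace $U+a_0U$ must fill $V$ by irreducibility. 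Applying Lemma \ref{signodeT} to $U$ then splits the analysis into: (a) complex type $\rho\not\cong\rho'$, where classical Schur for $G_0$ gives $U\cap a_0U=0$ and hence $\res V\cong U\oplus a_0U$ with non-isomorphic summands; (b) real type $\rho(a_0^2)=+T\bar{T}$, where $a_0|_U$ already endows $U$ with a magnetic structure, forcing $U=a_0U=V$ and $\res V$ irreducible; (c) quaternionic type $\rho(a_0^2)=-T\bar{T}$, where the wrong sign prevents $U$ from being $a_0$-stable, forcing $U\neq a_0U$ and $\res V\cong U\oplus a_0U$ with $U\cong a_0U$ as $G_0$-representations.

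To finish, in each case I would compute $\End_{\MRep[]{\mg{G}}}(V)$ by combining classical Schur on $\res V$ with the antilinear-commutation constraint imposed by $a_0$. In the real case a single complex scalar $\lambda$ must satisfy $\lambda=\bar\lambda$, so $\End\cong\mathbb{R}$; in the complex case a pair $(\lambda,\mu)$ on the two isotypic blocks must satisfy $\mu=\bar\lambda$, so $\End\cong\mathbb{C}$; in the quaternionic case I would fix a $G_0$-isomorphism $S:a_0U\to U$, write the antilinear $a_0$-action in the matrix form of \eqref{formulaforind}, and identify its centralizer inside $\End_{\Rep(G_0)}(V)\cong M_2(\mathbb{C})$ with the Hamilton quaternions, the extra generator squaring to $-1$ being supplied precisely by the sign in $\rho(a_0^2)=-T\bar{T}$. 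The main obstacle will be this last identification in case (c), where the antilinear commutation condition has to be unpacked carefully into the defining quaternionic relations; a secondary subtlety is confirming that the wrong-sign alternative in Lemma \ref{signodeT} really forces $U\neq a_0U$ in the quaternionic branch, which is what rules out degeneracies between the real and quaternionic configurations and pins down the trichotomy.
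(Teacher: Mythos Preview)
Your proposal is correct and follows essentially the same strategy as the paper: pick an irreducible $G_0$-summand, pass to the conjugate representation via a fixed $a_0$, invoke Lemma~\ref{signodeT}, and then compute the centralizer inside $\End_{\Rep(G_0)}(\res V)$ using the antilinear constraint. The only organizational difference is that the paper branches first on whether $\res V$ is irreducible (immediately giving the real case via $\End\subset\mathbb{C}$ and $z=\bar z$), and only in the reducible case invokes Frobenius reciprocity to identify $V$ with $\Ind_{G_0}V_1$ before splitting into the complex and quaternionic subcases; you instead branch directly on the type of $U$.

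One point of caution: your phrasing in case~(b), ``$a_0|_U$ already endows $U$ with a magnetic structure, forcing $U=a_0U=V$'', is not quite right as written. The map $a_0|_U$ lands in $a_0U\subset V$, not in $U$, so it does not by itself make the subspace $U$ magnetic-stable. What you actually need is the contrapositive: if $U\neq V$ then $V\cong\Ind_{G_0}U$, and in real type the isomorphism $T$ with $T\overline T=+\rho(a_0^2)$ lets you build a proper magnetic subrepresentation of $\Ind_{G_0}U$ (essentially the graph of the Wigner-type antilinear structure), contradicting irreducibility of $V$. The paper glosses over exactly the same point when it simply asserts the minus sign in its Case~1, and you have in fact already flagged this subtlety in your final sentence, so you are on the right track.
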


\begin{proof}
    Suppose $V$ and $W$ are non isomorphic and let $T:V\longrightarrow W$ be a morphism of these representations. Then $\ker(T)=V$ since the kernel and the image of $T$ are subrepresentations of $V$ and $W$, respectively. Therefore $T=0$.
    
    If $\res(V)$ is an irreducible representation of $G_0$, then 
    \begin{align}
    \End_{\MRep[]{\mg{G}}}(V)\subset \End_{\Rep(G_0)}(V)\cong \mathbb{C}.
     \end{align}
    The previous isomorphism is just the classical Schur's lemma. An endomorphism $A\in \End_{\MRep[]{\mg{G}}}(V)$ must be of the form $A=z\cdot \id_V$ with $z\in \mathbb{C}$ and satisfies
    \begin{align}
        Aa_0(v)&=a_0A(v) \\
        z\cdot \id_V a_0(v)&= a_0\cdot z\cdot \id_V(v)\\
        z\cdot a_0(v)&=\overline{z}a_0(v)
    \end{align}
    so $z\in \mathbb{R}$ and thus $\End_{\MRep[]{\mg{G}}}(V)\cong \mathbb{R}$.
    
    On the other hand, if $\res(V)$ is reducible, pick any irreducible representation $V_1$ of $G_0$ in the decomposition of $\res(V)$, and consider the induced representation $\Ind{G_0} V_1=V_1\oplus a_0V_1$. By the Frobenius reciprocity Thm. \ref{frobrec} we have a natural non zero morphism, induced by the inclusion $\iota: V_1\longrightarrow \res (V)$
    \begin{align}
   \nonumber     \Ind{G_0}V_1=V_1\oplus a_0V_1& \longrightarrow V\\ (v,a_0w)& \longmapsto v+a_0w
    \end{align}
    The image of this morphism is a subrepresentation of the irreducible representation $V$, so $V_1\oplus a_0V_1$ is isomorphic to $V$. 
    Here we have two possibilities.
    
    Case 1. $V_1\cong a_0V_1$. By Lem. \ref{signodeT}, there exist an isomorphism $T:a_0V_1\longrightarrow V_1$ such that $T\overline{T}=-a^2_0$. If we define the morphism of representations 
    \begin{align}
    \begin{pmatrix}
        \id_{V_1}&0\\0&T
    \end{pmatrix}: V_1\oplus a_0V_1\longrightarrow V_1\oplus V_1,
    \end{align}
    then we can transform the induced representation of $(\mg{G},\phi)$ on 
    $V_1\oplus\,a_0V_1$ into a representation on $V_1\oplus V_1$ given by the formulas
    \begin{itemize}
        \item if $g\in G_0$
        \begin{align}
        \begin{pmatrix}
        \rho(g)&0\\0&\rho(g)
    \end{pmatrix}= \begin{pmatrix}
        \id_{V_1}&0\\0&T
    \end{pmatrix} \begin{pmatrix}
        \rho(g)&0\\0&\rho'(g)
    \end{pmatrix} \begin{pmatrix}
        \id_{V_1}&0\\0&T^{-1}
    \end{pmatrix}
    \end{align}
    \item if $g\in \mg{G}\backslash G_0$, then $g=a_0g'$ for some $g'\in G_0$ so we only need to specify the action of $a_0$
    \begin{align}
    \begin{pmatrix}
        0&-T\\T&0
    \end{pmatrix}\mathbb{K}=\begin{pmatrix}
        \id_{V_1}&0\\0&T
    \end{pmatrix}\begin{pmatrix}
        0&\rho(a^2_0)\\\id_{V_1}&0
    \end{pmatrix}\mathbb{K} \begin{pmatrix}
        \id_{V_1}&0\\0&T^{-1}
    \end{pmatrix}.
    \end{align}
        
    \end{itemize}
    
    Now, we have the following inclusion
    \begin{align}
    \End_{\MRep[]{\mg{G}}}(V)\subset \End_{\Rep(G_0)}(V_1\oplus V_1)\cong \mathbb{C}^4
    \end{align}
    An endomorphism $A\in\End_{\MRep[]{\mg{G}}}(V)$ must be of the form $A=\begin{pmatrix}
        \lambda \id_{V_1} & \alpha \id_{V_1} \\ \gamma \id_{V_1} & \theta \id_{V_1}
    \end{pmatrix}$. The morphism $A$ satisfies
    \begin{align}
        Aa_0\begin{pmatrix}
            v_1\\v_2
        \end{pmatrix}&=a_0A\begin{pmatrix}
            v_1\\v_2
        \end{pmatrix}\\
        A\begin{pmatrix}
            -T\mathbb{K}(v_2)\\ T\mathbb{K}(v_1)
        \end{pmatrix} &= a_0\begin{pmatrix}
            \lambda v_1+\alpha v_2\\ \gamma v_1+\theta v_2
        \end{pmatrix}\\
        \begin{pmatrix}
            -\lambda T\mathbb{K}(v_2)+\alpha T\mathbb{K}(v_1)\\-\gamma T\mathbb{K}(v_2)+\theta T\mathbb{K}(v_1)
        \end{pmatrix}&= \begin{pmatrix}
           -\overline{\gamma}T\mathbb{K}(v_1)-\overline{\theta}T\mathbb{K}(v_2) \\ \overline{\lambda}T\mathbb{K}(v_1)+\overline{\alpha}T\mathbb{K}(v_2)  
        \end{pmatrix}
    \end{align}
    So we have $\lambda=\overline{\theta}$ and $\alpha=-\overline{\gamma}$ and hence
    \begin{align}
    \End_{\MRep[]{\mg{G}}}(V)=\left\{ \begin{pmatrix}
        \lambda& \alpha \\ -\overline{\alpha}& \overline{\lambda}
    \end{pmatrix}\, | \, \lambda,\alpha\in \mathbb{C}\right\}\cong \mathbb{H}
    \end{align}
    
    Case 2. $V_1\not\cong a_0V_1$, so
    \begin{align}
    \End_{\MRep[]{\mg{G}}}(V)\subset \End_{\Rep(G_0)}(V_1\oplus a_0V_1)\cong \mathbb{C}^2
    \end{align}
    An endomorphism $A\in \End_{\MRep[]{\mg{G}}}(V)$ must be of the form $A=\begin{pmatrix}
        \lambda\id&0\\0& \alpha\id
    \end{pmatrix}$ and it satisfies
    \begin{align}
        Aa_0\begin{pmatrix}
            v\\a_0w
        \end{pmatrix} &= a_0A\begin{pmatrix}
            v\\a_0w
        \end{pmatrix} \\ A\begin{pmatrix}
            a^2_0w\\a_0v
        \end{pmatrix}&=a_0\begin{pmatrix}
            \lambda v\\ \alpha a_0w
        \end{pmatrix} \\ \begin{pmatrix}
            \lambda a^2_0w\\ \alpha a_0v
        \end{pmatrix}&= \begin{pmatrix}
            \overline{\alpha}a^2_0w\\ \overline{\lambda}a_0v
        \end{pmatrix}
    \end{align}
    so $\lambda=\overline{\alpha}$ and hence
    \begin{align}
    \End_{\MRep[]{\mg{G}}}(V)=\left\{ \begin{pmatrix}
        \lambda&0\\0&\overline{\lambda}
    \end{pmatrix} \,|\, \lambda\in\mathbb{C}\right\}\cong \mathbb{C}
    \end{align}
\end{proof}

On the other hand, if we start with an irreducible representation $V$ of $G_0$, then Wigner \cite{wigner} showed us a procedure to produce irreducible representations of the magnetic group of $(\mg{G},\phi)$.

\begin{theorem}[Wigner]\label{wigner}

    If $(\mg{G},\phi)$ is a finite magnetic group and $\rho: G_0\longrightarrow \GL(\mathbb{C}^n)$ an irreducible representation of the core group $G_0$, then we can construct an irreducible representation of $(\mg{G},\phi)$, in the following way. Let $\rho': G_0\longrightarrow \GL(a_0\mathbb{C}^n)$ be the conjugate representation of $\rho$, 
    \begin{itemize}
        \item \textbf{Real type}. If $\rho$ and $\rho'$ are isomorphic representations $\rho'(g)=T^{-1} \rho(g) T$ with $T\overline{T}=\rho(a^2_0)$, then for $v\in\mathbb{C}^n$ the formula 
        \begin{align}
        g\cdot v:=\begin{cases}
            \rho(g)v & g\in G_0\\
            \rho(ga^{-1}_0)T\mathbb{K}(v) & g\in \mg{G} \backslash G_0
        \end{cases}
        \end{align}
        defines an irreducible representation of $(\mg{G},\phi)$.

        \item \textbf{Complex type}. If $\rho$ and $\rho'$ are non isomorphic representations, then for $v,w \in \mathbb{C}^n$ the formula 
        \begin{align}
        g\cdot (v,w):=\begin{cases}
            \begin{pmatrix}
                \rho(g) &  0\\ 0& \rho'(g)
            \end{pmatrix} \begin{pmatrix}
                v\\w
            \end{pmatrix}& g\in G_0\\
            \begin{pmatrix}
                0&\rho(ga_0)\\ \rho'(ga^{-1}_0)&0
            \end{pmatrix}\mathbb{K} \begin{pmatrix}
                v\\w
            \end{pmatrix}& g\in \mg{G}  \backslash G_0
        \end{cases}
        \end{align}
        defines an \textbf{irreducible representation of} $(\mg{G},\phi)$.
        
        \item \textbf{Quaternionic type}. If $\rho$ and $\rho'$ are isomorphic representations $\rho'(g)=T^{-1} \rho(g) T$ with $T\overline{T}=-\rho(a^2_0)$, then for $v,w\in \mathbb{C}^n$ the formula 
        \begin{align}
        g\cdot(v,w):=\begin{cases}
            \begin{pmatrix}
                \rho(g) &  0\\ 0& \rho(g)
            \end{pmatrix} \begin{pmatrix}
                v\\w
            \end{pmatrix} & g\in G_0\\
            \begin{pmatrix}
                0&\rho(ga^{-1}_0)T\\-\rho(ga^{-1}_0)T&0
            \end{pmatrix}\mathbb{K} \begin{pmatrix}
                v\\w
            \end{pmatrix} & g\in \mg{G} \backslash G_0
        \end{cases}
        \end{align}
        defines an \textbf{irreducible representation of} $(\mg{G},\phi)$.
    \end{itemize}
    Moreover, every irreducible representation of $(\mg{G},\phi)$ is constructed in this way.
\end{theorem}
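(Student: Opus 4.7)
The strategy is to recognize each of the three formulas as a rewriting of the induced representation $\Ind_{G_0} \rho = V \oplus a_0 V$ described in Def.~\ref{induction} (cf.\ Eqns.~\eqref{formulaind0}--\eqref{formulaforind}), possibly simplified via the isomorphism $T: a_0 V \to V$ supplied by Lem.~\ref{signodeT}. The theorem then splits into three tasks: verifying each formula defines a magnetic representation, proving irreducibility, and establishing that every irreducible magnetic representation arises this way.

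For the first task, the complex type is literally the induced representation $\Ind \rho$, so nothing new is needed. The quaternionic type is obtained from $\Ind \rho$ via the block change-of-basis $\operatorname{diag}(\id_V, T)$ transporting the action from $V \oplus a_0 V$ onto $V \oplus V$; the critical check is that two successive applications of $a_0$ recover $\rho(a_0^2)$, which follows from $T\overline{T} = -\rho(a_0^2)$ combined with the off-diagonal minus sign. The real type requires more care: here $T\overline{T} = +\rho(a_0^2)$ forces $\Ind \rho$ to split (as a magnetic representation) into two isomorphic pieces of dimension $n$, and the stated formula on $\mathbb{C}^n$ realizes each piece; the key identity $a_0\cdot(a_0\cdot v) = T\overline{T}v = \rho(a_0^2)v$ ensures the homomorphism property for powers of $a_0$, while the remaining relations descend from $\rho$ being a homomorphism on $G_0$.

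For irreducibility, I would invoke the endomorphism computations already carried out inside the proof of Thm.~\ref{Schurfreeb}: in each of the three cases one finds $\End_{\MRep[]{\mg{G}}}(V) \cong \mathbb{R}, \mathbb{C}$, or $\mathbb{H}$, respectively, and since each is a division algebra, $V$ is forced to be irreducible.

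For completeness, let $W$ be any irreducible magnetic representation of $(\mg{G},\phi)$ and choose an irreducible $G_0$-subrepresentation $V_1 \subset \res W$ (which exists by Lem.~\ref{maschke}). By Frobenius reciprocity (Lem.~\ref{frobrec}) the inclusion $V_1 \hookrightarrow \res W$ lifts to a nonzero magnetic morphism $\Ind V_1 \to W$, which is surjective because $W$ is irreducible. Classifying the pair $(V_1, a_0)$ via Lem.~\ref{signodeT} as complex, real, or quaternionic matches $W$ to the corresponding formula in the statement. The main obstacle is the real case, where $\dim \Ind V_1 = 2\dim W$; here one must produce the splitting of $\Ind V_1$ into two isomorphic magnetic subrepresentations of dimension $\dim V_1$ and identify $W$ with the explicit $\mathbb{C}^n$ model. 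This splitting is furnished by the idempotent built from $T$ together with $T\overline{T} = +\rho(a_0^2)$, and I expect verifying that one of the two summands matches the displayed real-type formula to be the most delicate bookkeeping step of the argument.
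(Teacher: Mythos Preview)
Your proposal is correct and follows essentially the same route as the paper's proof: both arguments recognise the three displayed formulas as the induced representation $\Ind_{G_0}\rho$ (literally in the complex case, after the change of basis $\operatorname{diag}(\id,T)$ from the proof of Thm.~\ref{Schurfreeb} in the quaternionic case, and as one of the two summands of a splitting in the real case), and both deduce completeness by starting from an irreducible $G_0$-constituent of $\res W$ and passing through $\Ind$. The only cosmetic difference is that the paper invokes Thm.~\ref{Schurfreeb} directly for the completeness statement, whereas you reproduce the Frobenius-reciprocity step that sits inside that proof; your division-algebra argument for irreducibility is a slightly cleaner way of packaging what the paper leaves implicit.
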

The first proof of this theorem was given by Wigner in sections 4 and 5 of chapter 20 in \cite{wigner}. 
\begin{proof}
Consider the induced representation $\Ind{G_0}\rho$. First, suppose it is irreducible. We have $\res \Ind{G_0}\rho=\rho\oplus \rho'$, so by the Schurs's Lem. \ref{Schurfreeb} the representation $\rho$ can be of complex or quaternionic type. If $\rho$ is of complex type, then $\rho$ and $\rho'$ are non isomorphic representations and the action described at the end of Def. \ref{induction}, see Eqns. \eqref{formulaind0} and \eqref{formulaforind}, give us the formula of the theorem. If $\rho$ is of quaternionic type, then $\rho$ and $\rho'$ are isomorphic and the representation $\Ind{G_0}\rho$ take the form of described in this theorem by performing the change of basis given in the proof of Thm. \ref{Schurfreeb}.

If $\Ind{G_0}\rho$ is reducible, then $\rho$ is of real type and $\Ind{G_0}\rho$ contains to copies of the representation described in this theorem.

Finally, if $\rho$ is an irreducible representation of $(\mg{G},\phi)$, then by Thm. \ref{Schurfreeb} the representation  $\res\, \rho$ is an irreducible representation or a sum of two of irreducible representations  of the core group $G_0$. In any case, we see that we can take these pieces of $\res\, \rho$ and reconstruct $\rho$ as this theorem states.
\end{proof}

\begin{remark}
    Let $\rho$ be an irreducible representation of $(\mg{G},\phi)$. By Thm. \ref{wigner} $\rho$ comes from an 
irreducible representation of the core group $G_0$. We also say that $\rho$ is of \textbf{real}, \textbf{complex}, or \textbf{quaternion type} if it comes from an irreducible representation of the core of the respectively same type. 
\end{remark}

\begin{corollary}
    For any representation $V$ of a finite magnetic group $(\mg{G},\phi)$, there is a decomposition
    \begin{align}
    V=\left(A_1^{a_1}\oplus\ldots\oplus A_m^{a_m}\right) \oplus \left(B_1^{b_1}\oplus\ldots \oplus B_n^{b_n}\right)\oplus\left( C_1^{c_1}\oplus \ldots \oplus C_l^{c_l}\right)
    \end{align}
    where $A_i,B_j,C_k$ are non isomorphic irreducible representations of real, complex and quaternionic type, respectively. 
\end{corollary}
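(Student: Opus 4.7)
The plan is to deduce this decomposition directly from semisimplicity together with the trichotomy of irreducible magnetic representations established above. The statement has two ingredients: that $V$ decomposes as a direct sum of irreducibles, and that each irreducible summand can be tagged as being of real, complex, or quaternionic type.

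First I would invoke Maschke's lemma (Lem.~\ref{maschke}): given any subrepresentation $W\subset V$, there is a complementary subrepresentation $W'$ with $V=W\oplus W'$. Since $V$ is finite dimensional, iterating on a proper irreducible subrepresentation (which exists by choosing one of minimal positive dimension) and then on the complement $W'$ produces, after finitely many steps, a decomposition
\begin{align}
V \;\cong\; U_1 \oplus U_2 \oplus \cdots \oplus U_N
\end{align}
with each $U_k$ an irreducible representation of $(\mg{G},\phi)$. This is the analogue of the usual proof that semisimplicity passes from the module to its submodules, and no new ingredient beyond Maschke is needed here.

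Next I would classify each $U_k$ using Thm.~\ref{Schurfreeb} (equivalently Thm.~\ref{wigner}): the endomorphism algebra $\End_{\MRep[]{\mg{G}}}(U_k)$ is isomorphic to exactly one of $\mathbb{R}$, $\mathbb{C}$, or $\mathbb{H}$, which labels $U_k$ as being of real, complex, or quaternionic type respectively. Finally I would group the summands into isomorphism classes: let $A_1,\ldots,A_m$ be the distinct isomorphism classes of real-type irreducibles appearing among the $U_k$, $B_1,\ldots,B_n$ the complex-type ones, and $C_1,\ldots,C_l$ the quaternionic-type ones, and let $a_i$, $b_j$, $c_k$ be the respective multiplicities. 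Rearranging the sum gives the claimed decomposition
\begin{align}
V\cong\left(A_1^{a_1}\oplus\cdots\oplus A_m^{a_m}\right) \oplus \left(B_1^{b_1}\oplus\cdots\oplus B_n^{b_n}\right) \oplus \left(C_1^{c_1}\oplus\cdots\oplus C_l^{c_l}\right).
\end{align}

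There is essentially no obstacle here since both inputs (Maschke and the trichotomy from Schur's lemma) are already in place; the only point worth being careful about is that the isomorphism classes of irreducibles of different types are genuinely distinct (their endomorphism algebras are non-isomorphic $\mathbb{R}$-algebras), so grouping by type is unambiguous and the final sum really is a direct sum of pairwise non-isomorphic isotypic components.
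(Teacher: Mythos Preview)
Your proposal is correct and follows exactly the argument the paper intends: the corollary is stated without proof immediately after Wigner's theorem, as an evident consequence of semisimplicity (the proposition following Maschke's Lem.~\ref{maschke}) together with the trichotomy of irreducibles from Thm.~\ref{Schurfreeb}/Thm.~\ref{wigner}. Your write-up simply makes explicit what the paper leaves implicit.
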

If we denote by $\MRF[]{G}{\mathbb{F}}$ the free abelian group generated by the irreducible representations of real, complex, quaternionic type for $\mathbb{F}=\mathbb{R},\mathbb{C},\mathbb{H}$, respectively, then the representation group of $(\mg{G},\phi)$ splits as

\begin{equation}\label{splitring}
    \MRR[]{\mg{G}}\cong \MRF[]{\mg{G}}{\mathbb{R}}\oplus \MRF[]{\mg{G}}{\mathbb{C}} \oplus \MRF[]{\mg{G}}{\mathbb{H}}
\end{equation}

and by the Thm. \ref{Schurfreeb}
\begin{align}
 \nonumber   \End_{\MRep[]{\mg{G}}}(V)&\cong \bigoplus_i\End_{\MRep[]{\mg{G}}}(A_i)^{a_i}\oplus \bigoplus_j\End_{\MRep[]{\mg{G}}}\textbf{}(B_j)^{b_j} \oplus \bigoplus_k\End_{\MRep[]{\mg{G}}}(C_k)^{c_k}\\ &\cong \bigoplus_i \mathbb{R}^{a_i}\oplus \bigoplus_j \mathbb{C}^{b_j} \oplus \bigoplus_k \mathbb{H}^{c_k}.
\end{align}

Given an irreducible representation of the core $\rho: G_0 \longrightarrow \GL(n)$, then there is a procedure to find out which type of irreducible representation of $(\mg{G},\phi)$ will produce. This is called the Schur-Frobenius indicator. Denote by $\chi_{\rho}$ the character of $\rho$ and fix an element $a_0\in \mg{G}\backslash G_0$.
\begin{definition} \label{Schurindicator}
    The \textbf{Schur-Frobenius indicator} of $\rho$ with respect to $\phi$ is 
    \begin{align}
        \SF_{\phi}(\rho)=\sum_{g\in a_0G_0} \chi_{\rho}(g^2)
    \end{align}
\end{definition}

Notice this number is well defined since for every $g\in a_0G_0$, $g^2\in G_0$.

\begin{theorem}
    Let $(\mg{G},\phi)$ be a finite magnetic group and $\rho:G_0\longrightarrow \GL(n)$ an irreducible representation of $G_0$. 
    \begin{itemize}
        \item If $\SF_{\phi}(\rho)=|G_0|$, then the irreducible representation of $(\mg{G},\phi)$ associated to $\rho$ is of real type.
        \item If $\SF_{\phi}(\rho)=0$, then the irreducible representation of $(\mg{G},\phi)$ associated to $\rho$ is of complex type.
        \item If $\SF_{\phi}(\rho)=-|G_0|$, then the irreducible representation of $(\mg{G},\phi)$ associated to $\rho$ is of quaternionic type.
    \end{itemize}
\end{theorem}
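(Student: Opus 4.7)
The plan is to reduce $\SF_{\phi}(\rho)$ to the trace of a single operator built from matrix-coefficient sums, and then invoke classical Schur orthogonality together with the trichotomy of Lem. \ref{signodeT}. Writing each $g \in a_0 G_0$ as $g = a_0 h$ with $h \in G_0$ and using the identity $g^2 = a_0^2 \,(a_0^{-1} h a_0)\, h$ together with the defining identity $\rho'(h) = \overline{\rho(a_0^{-1} h a_0)}$, I would obtain
\[
\SF_{\phi}(\rho) \;=\; \sum_{h \in G_0} \operatorname{tr}\!\bigl(\rho(a_0^2)\,\overline{\rho'(h)}\,\rho(h)\bigr) \;=\; \operatorname{tr}\!\bigl(\rho(a_0^2)\,S\bigr), \qquad S := \sum_{h \in G_0} \overline{\rho'(h)}\,\rho(h).
\]
Without loss of generality $\rho$ (and hence $\rho'$) is unitary.

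In the complex case, Lem. \ref{signodeT} says $\rho$ and $\rho'$ are non-isomorphic irreducibles of $G_0$, so Schur orthogonality for matrix coefficients of distinct irreducibles makes every entry $S_{ij} = \sum_{h,k} \overline{\rho'(h)_{ik}}\,\rho(h)_{kj}$ vanish, giving $\SF_{\phi}(\rho) = 0$. In the real or quaternionic case, Lem. \ref{signodeT} supplies an intertwiner $T$ with $\rho'(h) = T^{-1}\rho(h)T$ and $T\overline{T} = \pm\rho(a_0^2)$, the sign being $+$ in the real case and $-$ in the quaternionic one. Because $(zT)\overline{(zT)} = |z|^2\, T\overline{T}$, rescaling $T$ by a unit complex number preserves both the intertwining relation and the equation $T\overline{T} = \pm\rho(a_0^2)$, so I may further arrange $T$ to be unitary. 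Substituting $\rho'(h) = T^{-1}\rho(h) T$ into $S$ and applying the Schur relation $\sum_h \rho(h)_{ab}\,\overline{\rho(h)_{cd}} = \tfrac{|G_0|}{n}\delta_{ac}\delta_{bd}$ then yields, after a short index computation, $S = \tfrac{|G_0|}{n}\,\overline{T^{-1} T^T}$.

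Assembling the pieces with $\overline{T^{-1} T^T} = \overline{T}^{-1}\overline{T}^T$ and using the unitarity $T\overline{T}^T = T T^* = I$, I would conclude
\[
\rho(a_0^2)\,S \;=\; \pm\, T\overline{T}\cdot \tfrac{|G_0|}{n}\,\overline{T}^{-1}\overline{T}^T \;=\; \pm \tfrac{|G_0|}{n}\,I,
\]
whose trace is $\pm|G_0|$, with the $+$ sign corresponding to real type and the $-$ sign to quaternionic type. The main obstacle in executing this plan is purely bookkeeping: keeping track of conjugates, transposes, and inverses of $T$ carefully enough to see the cancellation $T\overline{T}\cdot \overline{T}^{-1}\overline{T}^T = T\overline{T}^T = I$, and confirming that the unitary normalization of $T$ is compatible with the Lem. \ref{signodeT} equation. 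Once this is verified, the trichotomy of values $|G_0|,0,-|G_0|$ matches the trichotomy of types exactly.
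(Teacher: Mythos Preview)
The paper does not actually prove this theorem; it simply refers to section 7 of \cite{SFindicator} and notes that the argument is ``essentially based on the orthogonality relations between characters.'' Your proposal carries this out explicitly via matrix-coefficient orthogonality together with the trichotomy of Lem.~\ref{signodeT}, which is precisely the indicated route, and the computation is correct.

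One point deserves sharper justification: your sentence about rescaling $T$ by a unit complex number shows only that such a rescaling \emph{preserves} the two equations, not that it can \emph{produce} a unitary $T$. The right argument is that for unitary $\rho$ and $\rho'$ any intertwiner is a scalar multiple of a unitary one (since $T^*T$ commutes with $\rho'$ and is therefore a positive scalar by Schur). Rerunning the proof of Lem.~\ref{signodeT} starting from a unitary intertwiner $S$, the constant $c$ in $S\overline{S}\rho(a_0^{-2})=cI$ then has $|c|=1$ because both $S\overline{S}$ and $\rho(a_0^{-2})$ are unitary; hence $T=S$ is already unitary with no further rescaling. With this in hand, your final line $\rho(a_0^2)S=\pm\tfrac{|G_0|}{n}T\overline{T}^T=\pm\tfrac{|G_0|}{n}I$ goes through and the trichotomy follows.
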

\begin{proof}
    We refer the reader to section 7 of \cite{SFindicator} for the proof of the result, which is essentially based on the orthogonally relations between characters.
\end{proof}

\begin{lemma}\label{homtensor}
    For $\mathbb{F}\cong\mathbb{R}$, $\mathbb{C}$ or $\mathbb{H}$, let $V,W\in \IrrepF[]{\mg{G}}{\mathbb{F}}$ and $E$ a finite $\mathbb{F}$-vector space. Then we have the following isomorphism of $\mathbb{F}$-vector spaces
    \begin{align}
    \Hom_{\MRep[]{\mg{G}}}(V,W\otimes_{\mathbb{F}} E)\cong \Hom_{\MRep[]{\mg{G}}}(V,W)\otimes_{\mathbb{F}} E
    \end{align}
\end{lemma}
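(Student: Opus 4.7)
The lemma is essentially a formal consequence of Schur's lemma (Thm.\ \ref{Schurfreeb}), which for irreducible magnetic representations of type $\mathbb{F}$ identifies $\End_{\MRep[]{\mg{G}}}(V)$ and $\End_{\MRep[]{\mg{G}}}(W)$ with $\mathbb{F}$. This identification equips $W$ with an $\mathbb{F}$-action by $\mathbb{C}$-linear operators that commute with the $\mg{G}$-action, so the tensor product $W\otimes_{\mathbb{F}} E$ inherits a well-defined magnetic representation structure (with $\mg{G}$ acting on the $W$-factor only), and $\Hom_{\MRep[]{\mg{G}}}(V,W)$ acquires an $\mathbb{F}$-structure via post-composition by $\End_{\MRep[]{\mg{G}}}(W)$.

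The plan is to construct the natural candidate
\begin{align*}
\alpha:\Hom_{\MRep[]{\mg{G}}}(V,W)\otimes_{\mathbb{F}} E &\longrightarrow \Hom_{\MRep[]{\mg{G}}}(V,W\otimes_{\mathbb{F}} E),\\
f\otimes e &\longmapsto \bigl(v\mapsto f(v)\otimes e\bigr),
\end{align*}
and verify that it is an isomorphism. First I would check well-definedness: the $\mathbb{F}$-balancing follows from the identity $(\lambda\cdot_W w)\otimes e = w\otimes(\lambda e)$ in $W\otimes_{\mathbb{F}}E$ for $\lambda\in\mathbb{F}$, and $\alpha(f\otimes e)$ is $\mg{G}$-equivariant precisely because the $\mathbb{F}$-action on $W$ commutes with the $\mg{G}$-action.

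To see $\alpha$ is bijective I would pick an $\mathbb{F}$-basis $e_1,\dots,e_n$ of $E$. This yields a magnetic-representation isomorphism $W\otimes_{\mathbb{F}} E\cong W^{n}$ and an $\mathbb{F}$-linear isomorphism $\Hom_{\MRep[]{\mg{G}}}(V,W)\otimes_{\mathbb{F}} E\cong \Hom_{\MRep[]{\mg{G}}}(V,W)^{n}$. Since $\Hom_{\MRep[]{\mg{G}}}(V,-)$ commutes with finite direct sums, this reduces the statement to the trivial case $n=1$, in which $\alpha$ is the canonical identification $\Hom_{\MRep[]{\mg{G}}}(V,W)\otimes_{\mathbb{F}}\mathbb{F}\cong\Hom_{\MRep[]{\mg{G}}}(V,W)\cong\Hom_{\MRep[]{\mg{G}}}(V,W\otimes_{\mathbb{F}}\mathbb{F})$. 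Note that the case $V\not\cong W$ is absorbed into this argument, since then both sides vanish by Schur.

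The only real delicacy is in the quaternionic case: because $\mathbb{H}$ is non-commutative one has to fix conventions (typically $W$ as a right $\mathbb{H}$-module via $\End_{\MRep[]{\mg{G}}}(W)^{\mathrm{op}}$ and $E$ as a left $\mathbb{H}$-module) so that the tensor products make sense and the residual left $\mathbb{H}$-action (through $\End_{\MRep[]{\mg{G}}}(V)$) endows both sides with compatible $\mathbb{H}$-module structures. Once this bookkeeping is in place, the argument proceeds uniformly for $\mathbb{F}\in\{\mathbb{R},\mathbb{C},\mathbb{H}\}$.
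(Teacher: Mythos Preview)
Your proposal is correct and follows essentially the same approach as the paper: choose an $\mathbb{F}$-basis of $E$ to identify $W\otimes_{\mathbb{F}}E\cong W^{n}$, then use that $\Hom_{\MRep[]{\mg{G}}}(V,-)$ commutes with finite direct sums. The paper's proof is in fact just this five-line chain of isomorphisms without the explicit construction of $\alpha$ or the discussion of $\mathbb{H}$-module conventions, so your version is a more careful spelling-out of the same argument.
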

\noindent
\begin{proof}
\begin{align}
\Hom_{\MRep[]{\mg{G}}}(V,W\otimes_{\mathbb{F}}E)\cong& \Hom_{\MRep[]{\mg{G}}}(V,W\otimes_{\mathbb{F}} \mathbb{F}^n)\\ \cong& \Hom_{\MRep[]{\mg{G}}}(V,W^n) \\ \cong & \Hom_{\MRep[]{\mg{G}}}(V,W)^n \\ \cong & \Hom_{\MRep[]{\mg{G}}}(V,W)\otimes_{\mathbb{F}} \mathbb{F}^n \\ \cong & \Hom_{\MRep[]{\mg{G}}}(V,W)\otimes_{\mathbb{F}} E
    \end{align}
\end{proof}





\section{Magnetic Equivariant K-theory}

In this chapter we will develop the $K$-theory for spaces with actions of magnetic groups. This is a generalization of Atiyah's Real $K$-theory \cite{atiyahreal}, Segal's equivariant $K$-theory \cite{atiyahsegalequivariant} and some twisted $K$-theories \cite{twisted}. This is not the first time where such a notion is introduced. For instance, M. Karoubi was the first to define it in \cite{10.1007/BFb0059024}. More recently, it had been articulated with the study of topological phases of matter \cite{twistedmat}, \cite{Gomi2017FreedMooreK}. In these works D. Freed, G. Moore and K. Gomi defined the so called Freed-Moore $K$-theory, that essentially contains the definition we will state next. This particular K-theory has been further developed to provide
tools for their explicit calculation \cite{PhysRevB.95.235425}, \cite{PhysRevB.106.165103}.

We carry out several computations, mainly for tori equipped with actions of the magnetic point groups studied in the previous chapter; in particular we compute the coefficients of this $K$-theory and, as one can expect, the representations of the magnetic point groups will appear. The way we compute these groups is via Clifford modules. These results can be potentially used to detect protected topological phases of matter, and in they near future, they could be used to produce periodic tables for topological insulators and superconductors generalizing the work of Kitaev \cite{kitaev}.

\subsection{Magnetic equivariant vector bundles }

For a magnetic group $(G, \phi)$ the category of $(G,\phi)$-spaces will simply be the category of $G$-spaces.
Some explicit choices of magnetic groups have been important in the literature, let us see some of them.

A $(\mathbb{Z}/2,\id)$-space $X$ is the same as a Real space in the sense of Atiyah \cite{atiyahreal}, i.e. a space with involution given by the action of the non trivial element of $\mathbb{Z}/2$. Recall, an \textit{involution} on $X$ is a homeomorphism $\tau:X\longrightarrow X$ such that $\tau^2=\id$.

    A $(G_0\rtimes \mathbb{Z}/2,\pi_2)$-space $X$ is the same as a Real $G_0$-equivariant space in the sense of Atiyah and Segal \cite{atiyahsegalequivariant}, that is, a $G_0$-space $X$ with an involution $\tau$ such that $\tau(g\cdot x)= \tau(g)\cdot \tau(x)$ for $g\in G_0$ and $x\in X$ where $\tau: G_0\longrightarrow G_0$ denotes the action of the non trivial element of $\mathbb{Z}/2$ on $G_0$.

    The following spaces are $(\mg{G},\phi)$-spaces via the homomorphism $\mg{G} \overset{\phi}{\longrightarrow} \mathbb{Z}/2$, where the non trivial element of $\mathbb{Z}/2$ acts by complex conjugation.
    \begin{itemize}
    \item $\mathbb{R}^{p,q}=\mathbb{R}^q\oplus i\mathbb{R}^p$ with the involution by complex conjugation $x+iy\longmapsto x-iy$.
    \item $D^{p,q}$ unit ball in $\mathbb{R}^{p,q}$ (with the euclidean norm on $\mathbb{R}^{p+q}$).
    \item $S^{p,q}$ unit sphere in $\mathbb{R}^{p,q}$.
\end{itemize}
The interchange on the super indices position is due to Atiyah's convention in \cite{atiyahreal}. 

    Let $V$ be a complex-vector space, then 
    \begin{align}
    \PS(V)=(V\backslash \{0\})/\mathbb{C}^*
    \end{align}
    is the projective space over $V$. If $V$ is a representation of $(\mg{G},\phi)$, then $\PS(V)$ is a $(\mg{G},\phi)$-space with the action
    $g\cdot [v]:=[gv]$.
    This action is well defined because
       $ g(\lambda v) = \mathbb{K}^{\phi(g)}(\lambda)gv$ and therefore 
     $g[\lambda v]=g[v]$.

The vector bundles relevant to our analysis in this work are described below.

\begin{definition}
    A \textbf{magnetic $(\mg{G},\phi)$-equivariant vector bundle}  is a \textit{complex vector bundle} $E \overset{p}{\longrightarrow} X$ over a compact $G$-space $X$ such that
    \begin{itemize}
        \item the total space $E$ is a $G$-space and the projection $p$ is $G$-equivariant.
        \item For $g\in \mg{G}$ and $x \in X$ the fiberwise map $g: E_x\longrightarrow E_{g\cdot x}$ is complex linear if $\phi(g)=0$ and complex antilinear if $\phi(g)=1$, for all $x\in X$.
    \end{itemize}
    \textbf{Morphisms} of magnetic $(\mg{G},\phi)$-equivariant vector bundles are morphism of complex vector bundles $f:E\longrightarrow F$ commuting  with the action of $G$.
    
\end{definition}

Denote by $\MVec_{(\mg{G},\phi)}(X)$ the \textbf{category of magnetic $(\mg{G},\phi)$-equivariant vector bundles}.
 Denote the vector space of morphisms between $E$ and $F$ by  $\Hom_{(\mg{G},\phi)}(E,F)$.

A \textbf{section} of a $(\mg{G},\phi)$-vector bundle is a $G$-equivariant section $s: X\longrightarrow E$. Denote the vector space of such sections as $\Gamma_{G}(E)$. 

\textbf{Construction of magnetic $(G,\phi)$-equivariant vector bundles.}\\
Let $E,F$ be two $(\mg{G},\phi)$-vector bundles over $X$.
\begin{itemize}
    \item The \textbf{hom bundle} $\Hom(E,F)$ with fiber over $x$ the complex vector space of linear transformations  $\Hom(E_x,F_x)$ between $E_x$ and $F_x$, and action of $(G,\phi)$ given by
    \[(g\cdot f)(v)=gf(g^{-1}v)\]
    for $g\in \mg{G}$, $f\in \Hom(E,F)$ and $v\in E_{gx}$.
    \item The \textbf{tensor bundle} $E\otimes F$ with fiber $E_x\otimes F_x$ at $x\in X$ and action of $(\mg{G},\phi)$ given by
    \[g\cdot (v\otimes w)=gv\otimes gw\]
    for $g\in \mg{G}$, $v\in E_x$ and $w\in F_x$.
    \item The \textbf{pull-back} $f^* E$ for $f: Y \longrightarrow X$  a $G$-equivariant map of $G$-spaces with the induced action of $(\mg{G},\phi)$.
    \item If $X=X_1\cup X_2$ is the union of two compact $(\mg{G},\phi)$-subspaces and $A=X_1\cap X_2$ is a compact $(\mg{G},\phi)$-subspace, the \textbf{clutching} of two $(\mg{G},\phi)$-vector bundles $E_1\longrightarrow X_1$ and $E_2\longrightarrow X_2$ via a $(\mg{G},\phi)$-isomorphism $\alpha: E_1|_A\longrightarrow E_2|_A$ is given by
    \[E_1\cup_{\alpha} E_2:=E_1\sqcup E_2\big/ \left(e\sim \alpha(e) \text{ for all }e\in (E_1)_x \text{ with }x\in A\right) \]
    
\end{itemize}
With all the previous notation we can state the following classical fact:
\begin{lemma}\label{tensorandsectios}
    Let $E,F$ be two $(\mg{G},\phi)$-vector bundle over $X$. Then we have a natural isomorphism 
    \begin{align*}
        \Hom_{(\mg{G},\phi)}(E,F) & \overset{\cong}{\longrightarrow} \Gamma_{G}(\Hom(E,F)) \\ f & \longmapsto s_f
    \end{align*}
    between morphisms of $(\mg{G},\phi)$-vector bundles and $G$-equivariant sections of the bundle $\Hom(E,F)$ over $X$. Here $s_f(x)(v)=f(v)$ for all $v\in E_x$.
\end{lemma}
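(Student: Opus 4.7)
The approach I would take is to define the correspondence explicitly on both sides and verify it is a well-defined bijection, with the only delicate point being the compatibility of the antilinear part of the $G$-action with the requirement that $f$ is complex linear on each fibre. Recall that a morphism $f\in \Hom_{(\mg{G},\phi)}(E,F)$ is, by definition, fibrewise complex linear, even though the individual operators $g\colon E_x\to E_{gx}$ may be antilinear when $\phi(g)=1$. Hence the restriction $f|_{E_x}\colon E_x\to F_x$ always lies in the complex vector space $\Hom(E_x,F_x)$, which is the fibre of $\Hom(E,F)$ over $x$.

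The plan is as follows. First, given $f\in \Hom_{(\mg{G},\phi)}(E,F)$, I define $s_f\colon X\to \Hom(E,F)$ by $s_f(x)=f|_{E_x}$. The continuity of $s_f$ as a section of the Hom bundle is the classical (non-equivariant) statement, so the only new content is $G$-equivariance. For $g\in \mg{G}$ and $v\in E_{gx}$, compute
\begin{align}
(g\cdot s_f(x))(v)=g\,s_f(x)(g^{-1}v)=g\,f(g^{-1}v)=f(v)=s_f(gx)(v),
\end{align}
where in the third equality I used the $\mg{G}$-equivariance of $f$. Here the key observation is that composing the potentially antilinear operator $g^{-1}\colon E_{gx}\to E_x$ with the linear map $f|_{E_x}$ and then with the potentially antilinear $g\colon F_x\to F_{gx}$ yields a complex linear map $E_{gx}\to F_{gx}$, regardless of $\phi(g)$, which is what is required to land in the fibre of $\Hom(E,F)$ over $gx$.

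Conversely, given a $G$-equivariant section $s\in \Gamma_G(\Hom(E,F))$, I define $\tilde f\colon E\to F$ by $\tilde f(v)=s(x)(v)$ for $v\in E_x$. This is fibrewise complex linear by construction and continuous by the classical correspondence. To check it commutes with the $\mg{G}$-action, take $v\in E_x$ and $g\in \mg{G}$; then
\begin{align}
\tilde f(gv)=s(gx)(gv)=(g\cdot s(x))(gv)=g\,s(x)(g^{-1}gv)=g\,s(x)(v)=g\,\tilde f(v).
\end{align}
Finally, the assignments $f\mapsto s_f$ and $s\mapsto \tilde f$ are mutually inverse directly from the definitions, giving the claimed natural bijection. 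I expect no serious obstacle: the substantive point is just to verify that the interplay between the antilinear fibre maps and the complex linear fibrewise morphisms is consistent with the definition of the $\mg{G}$-action on $\Hom(E,F)$, and the computation above shows that the antilinearities cancel in pairs.
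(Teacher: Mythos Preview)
Your proof is correct; the paper itself states this lemma as a ``classical fact'' and gives no proof, so your direct verification of the two mutual inverses and their $G$-equivariance fills in exactly what the paper omits.
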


\begin{example}\label{trivialbundles}
    Let $n$ be a non negative integer and $X$ a $(\mg{G},\phi)$-space. The \textbf{trivial $(\mg{G},\phi)$-vector bundle over $X$ of dimension $n$} is 
    \[\xymatrix{
    \textbf{n}:=X\times \mathbb{C}^n \ar[d]^{\pi_1} \\ X
    }\]
    with the trivial action 
    \[g\cdot(x,z)=\left(gx,\mathbb{K}^{\phi(g)}(z)\right)\]
    where $g\in \mg{G}$, $x\in X$ and $z\in \mathbb{C}^n$. More generally, let $V$ be a representation of $(\mg{G},\phi)$, then $\textbf{V}:=X\times V \overset{\pi_1}{\longrightarrow} X$ is the \textbf{trivial bundle} over $X$ with fiber $V$. 
\end{example}

\begin{definition}[Stable equivalence]
    Two $(\mg{G},\phi)$-vector bundles $E,F$ over a $G$-space $X$ are \textbf{stably equivalent} $E\sim_s E'$ if there exist trivial $(\mg{G},\phi)$-vector bundles $\textbf{V}, \textbf{V}'$ on $X$ such that $E\oplus \textbf{V}\cong E'\oplus \textbf{V}'$.  
\end{definition}

\begin{remark}
    Notice the set of stable equivalence classes of $(\mg{G},\phi)$-vector bundles is an abelian semigroup under $\oplus$, whose identity is given by the class of the trivial bundle. However, it is not clear this is an abelian group since we do not yet know if for every $(\mg{G},\phi)$-vector bundle $E$ there exist other $F$ such that $E\oplus F\cong \textbf{V}$. In fact, we will show this at the end of this section in Prop. \ref{complement}.
\end{remark}

\subsection{Definition of Magnetic Equivariant K-theory}

\begin{definition} \label{def magnetic K-theory}
    The \textbf{magnetic equivariant $K$-theory}  $\K_{G}(X)$ of the $G$-space $X$ is the Grothendieck group of the category $\MVec_{(\mg{G},\phi)}(X)$ of magnetic $(\mg{G},\phi)$-equivariant vector bundles over $X$. Its elements are formal differences $E_0-E_1$ of magnetic $(\mg{G},\phi)$-equiviariant vector bundles, modulo the equivalence relation $E_0-E_1\sim E'_0-E'_1$ if and only if $E_0\oplus E'_1\oplus F\cong E'_0\oplus E_1\oplus F$ for some magnetic $(\mg{G},\phi)$-equiviariant vector bundle $F$ on $X$.
\end{definition}

\begin{remark}
From now on, we will remove the homomorphism $\phi$ from the notation. Bold face functors
receive magnetic groups and $\phi$ will be assumed to be part of the structure of the magnetic groups.  
Hence we will denote by
 \begin{align}
 \mathbf{Rep}(G), \ \mathbf{R}(G),\ \mathbf{R}(G,\mathbb{F}), \ \mathbf{Irrep}(G,\mathbb{F}),\ \MVec_{\mg{G}}(X) \ \mathrm{and} \ \K_{\mg{G}}(X)
 \end{align}
 the category of magnetic $(G,\phi)$-representations, its Grothendieck group, the subgroup
 generated by irreducible representations of type $\mathbb{F}$ (for $\mathbb{F} \in \{\mathbb{R}, \mathbb{C}, \mathbb{H} \}$), the set of isomorphism classes of irreducible representations of type $\mathbb{F}$, the category of
 magnetic $(G,\phi)$-equivariant vector bundles and its K-theory respectively.
 \end{remark}

\begin{example}
    If the magnetic group is $(\mathbb{Z}/2,\id)$, then a $(\mathbb{Z}/2,\id)$-vector bundle is a Real vector bundle in the sense of Atiyah \cite{atiyahreal}. We have then that magnetic $(\mathbb{Z}/2,\id)$-equivariant K-theory is equivalent to Atiyah's Real K-theory,
    \begin{align}
    \K_{\mathbb{Z}/2}(X) = K \mathbb{R}(X),
    \end{align}
    and therefore if $X$ is a trivial $\mathbb{Z}/2$ space, then $\K_{\mathbb{Z}/2}(X) =KO(X)$.
\end{example}

\begin{example}\label{real and quaternionic bundles}
    Consider magnetic group 
    \begin{align}
    1\longrightarrow G_0=\mathbb{Z}/2\longrightarrow \mg{G}=\varmathbb{Z}/4 \overset{\operatorname{mod}2}{\longrightarrow} \mathbb{Z}/2\longrightarrow 1
    \end{align} together with a space $X$ with an involution. Take the induced
    action of $\mathbb{Z}/4$ on $X$ given by the homomorphism $\phi:= \operatorname{mod}2$.
    
    Since the core $G_0=\mathbb{Z}/2$ acts trivially on $X$, any magnetic $(\mathbb{Z}/4,\phi)$-equivariant
    vector bundle $E \to X$ can be split as $E\cong E_{1} \oplus E_{-1}$ where $G_0$
    acts trivially on $E_{1}$ and with the sign representation on $E_{-1}$.
    If $\tau $ denotes the generator of $\mathbb{Z}/4$, then $\tau$ acts complex antilinearly
    on $E$, with $\tau^2=1$ on $E_{1}$ and with $\tau^2=-1$ on $E_{-1}$. Therefore
    $E_{1}$ is a Real vector bundle in the sense of Atiyah \cite{atiyahreal} and $E_{-1}$ is a quaternionic bundle \cite{hartshornestable}.
    Hence for $X$ a $\mathbb{Z}/2$-space we have the canonical isomorphism
     \begin{align} \label{K-theory Z/4}
    \K_{\mathbb{Z}/4}(X) \cong K \mathbb{R}(X) \oplus K \mathbb{H}(X),
    \end{align}
    that in the case that $X$ is a trivial $\mathbb{Z}/2$ space becomes
    \begin{align}
    \K_{\mathbb{Z}/4}(X) \cong KO(X) \oplus KSp(X)
    \end{align}
    where $Ksp$ is symplect K-theory as defined by Dupont \cite{dupontsymplectic}.
\end{example}

Let us denote by $KU$ the usual complex $K$-theory.

\begin{example}\label{kteoriadelpunto}
    Suppose we take the space $X$ to be a point $*$ and $G$ finite. Then any magnetic $(G,\phi)$-equivariant vector bundle over a point is nothing else as a representation of the magnetic group $(G,\phi)$. Then we get the canonical isomorphism of groups
    \begin{align}
\K_G(*) \cong     \mathbf{R}(G).
    \end{align}
    \end{example}

    This generalizes what happens in the equivariant as shown by Segal \cite{segal}.

Now we will carry out some computations which are based on the constructions of some induced bundles.
\begin{definition}
    Let $(\mg{G},\phi)$ be a magnetic group, $H\leq \mg{G}$ a subgroup and $V$ a complex representation of the group $H$. The \textbf{induced bundle} is the $(\mg{G},\phi)$-vector bundle over the discrete space $\mg{G}/H$
    \begin{align}
    p:\mg{G}\times_H V \longrightarrow \mg{G}/H
    \end{align}
    where 
    \begin{itemize}
        \item $\mg{G}\times_H V=\mg{G}\times V/ \left( (gh^{-1},hv)\sim (g,v) \text{ for }h\in H\right)$ with the quotient topology,
        \item the projection $p$ is given by $p([g,v])=gH$,
        \item the vector space structure on each fiber is
        \begin{align}
            [g,v]+[g,w]&=[g,v+w], \ \ \ \ 
            z[g,v]=[g,\K^{\phi(g)}(z)v]
        \end{align}
        for $g\in \mg{G}$, $v,w\in V$ and $z\in \mathbb{C}$.
        \item and the action of $(\mg{G},\phi)$ on $G \times_H V$ is
        $l\cdot[g,v]=[lg,v]$.
    \end{itemize}
    Note that the action of $(\mg{G},\phi)$ on $G\times_H V$ is linear or antilinear depending on $\phi$:
     \begin{align*}
         l\cdot (z[g,v])=&  [lg,\mathbb{K}^{\phi(g)}(z)v]\\ =& \mathbb{K}^{\phi(l)}(z)[lg,v] \\ =& \mathbb{K}^{\phi(l)}(z)l[g,v].
     \end{align*}
\end{definition}

    In the particular case where $H=G_0$ and $a_0\in \mg{G}\backslash G_0$ is fixed, then we have an isomorphism 
    \begin{align*}
        G\times_{G_0} V & \longrightarrow V\sqcup a_0V \\
        [g,v]& \longmapsto \begin{cases}
            a_0 (a^{-1}_0gv) & \text{ if } g\not\in G_0\\ gv & \text{ if } g\in G_0
        \end{cases}
    \end{align*}
    where $a_0V=\{a_0 v\, | \, v\in V\}$ is the conjugated representation of $V$ introduced in Eqns. \eqref{conjugaterepresentation1} and \eqref{conjugaterepresentation2}.
\begin{lemma}
There is a natural isomorphism
\[\K_{\mg{G}}(S^{1,0})\cong KU_{G_0}(*)\cong \operatorname{R}(G_0).\]
\end{lemma}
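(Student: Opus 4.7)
The plan is to establish an equivalence of categories $\MVec_{\mg{G}}(S^{1,0}) \simeq \Rep(G_0)$ and then pass to Grothendieck groups. The second isomorphism $KU_{G_0}(*) \cong \operatorname{R}(G_0)$ is the classical identification going back to Segal.

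First I would describe $S^{1,0}$ concretely. Since $\mathbb{R}^{1,0} = i\mathbb{R}$ with the involution $iy \mapsto -iy$, the unit sphere is the two-point set $\{+i, -i\}$, swapped by the involution. As a $G$-set it is therefore $\mg{G}/G_0$: the core $G_0$ fixes both points and every element $g$ with $\phi(g)=1$ interchanges them. Fix $a_0 \in \mg{G}\setminus G_0$ once and for all.

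Next I would unpack what a magnetic $(G,\phi)$-equivariant vector bundle $E \to S^{1,0}$ amounts to. Since the base is discrete, the data reduces to two complex vector spaces $E_{+i}, E_{-i}$ equipped with complex-linear $G_0$-actions and complex-antilinear isomorphisms $E_{+i}\to E_{-i}$ and $E_{-i}\to E_{+i}$ realizing the elements of $\mg{G}\setminus G_0$. The cocycle identity $g\cdot v = (g a_0^{-1})\cdot(a_0 v)$ shows that $E_{-i}$ carries the conjugate representation of $E_{+i}$ in the sense of \eqref{conjugaterepresentation1}--\eqref{conjugaterepresentation2}, so the whole structure is recoverable from the single $G_0$-representation $E_{+i}$ together with the antilinear map $a_0\colon E_{+i}\to E_{-i}$.

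Then I would define the restriction functor
\begin{align}
\res_{+i}\colon \MVec_{\mg{G}}(S^{1,0}) \longrightarrow \Rep(G_0), \qquad E \longmapsto E_{+i},
\end{align}
and, going the other way, the induction functor $V \mapsto \mg{G}\times_{G_0} V$ recalled just before the lemma, using $\mg{G}/G_0 \cong S^{1,0}$. A direct check gives a natural isomorphism $\res_{+i}(\mg{G}\times_{G_0} V)\cong V$, $[e,v]\mapsto v$. In the other direction, for any magnetic bundle $E$ the natural map
\begin{align}
\mg{G}\times_{G_0} E_{+i} \longrightarrow E, \qquad [g,v] \longmapsto g\cdot v,
\end{align}
is a $(G,\phi)$-equivariant isomorphism: over $+i$ it is the identity and over $-i$ it is the antilinear bijection $v\mapsto a_0 v$, which is invertible because $a_0$ acts invertibly on $E$.

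The step that requires most care is checking that the map $[g,v]\mapsto g\cdot v$ is well defined on the quotient and respects the scalar structure, i.e.\ that the rule $z[g,v] = [g, \K^{\phi(g)}(z)v]$ built into the induced bundle matches the antilinear transport on $E$. This reduces to the identity $g(zw) = \K^{\phi(g)}(z)\,(gw)$ for $w \in E_{+i}$, which is the defining property of a magnetic equivariant bundle. Once this equivalence of categories is in place, passing to Grothendieck groups produces the desired chain of isomorphisms $\K_{\mg{G}}(S^{1,0}) \cong \operatorname{R}(G_0) \cong KU_{G_0}(*)$.
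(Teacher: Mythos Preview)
Your proof is correct and follows essentially the same approach as the paper: restrict a magnetic bundle on $S^{1,0}\cong \mg{G}/G_0$ to the fiber over one point to obtain a $G_0$-representation, and use the induced bundle $\mg{G}\times_{G_0}V$ as the inverse construction. The paper's proof is a two-line sketch of exactly this; you have simply supplied the details (the explicit identification of $S^{1,0}$, the well-definedness on the quotient, and the compatibility with the antilinear scalar structure).
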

\begin{proof}
    Let $p:E \longrightarrow \{\pm 1\}$ be a $(\mg{G},\phi)$-vector bundle with $S^{1,0}\cong\{\pm 1\}$ with the free $\mathbb{Z}/2$ and the $G$ action induced by $\phi$. Then the restriction $p^{-1}(\{1\})\longrightarrow \{1\}=*$ is a $G_0$-equivariant vector bundle.\\
    On the other hand, if $V\longrightarrow *$ is a $G_0$-equivariant vector bundle, then consider the induced bundle \[p:E=(\mg{G},\phi)\times_{G_0} V \longrightarrow \{\pm1\}\]
    These constructions are inverses one to the other. 
\end{proof}

We can generalize the previous lemma by taking any subgroup $H$ of $\mg{G}$.
\begin{lemma}\label{cocientes}
    Let $(\mg{G},\phi)$ be a finite magnetic group and $H$ a subgroup of $\mg{G}$. Then there is a canonical isomorphism
    \[\K_G(\mg{G}/H)\cong \begin{cases}
        KU_H(*) & \text{if } H\leq G_0\\
        \K_H(*) & \text{if } H\not\leq G_0
    \end{cases} \]
\end{lemma}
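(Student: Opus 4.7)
The plan is to mimic the argument of the preceding lemma, building an equivalence of categories between $\MVec_{\mg{G}}(\mg{G}/H)$ and the appropriate category of representations of $H$, and then pass to Grothendieck groups. In one direction I would send a $(\mg{G},\phi)$-equivariant vector bundle $p:E\to \mg{G}/H$ to its fiber $E_{eH}$ over the identity coset, with its induced $H$-action. In the other direction I would send a representation $V$ of $H$ to the induced bundle $\mg{G}\times_H V \to \mg{G}/H$ defined before the preceding lemma. The key observation is that $\mg{G}/H$ is a discrete $G$-set whose stabilizer at $eH$ is $H$, so a $G$-equivariant bundle is entirely determined by the fiber $E_{eH}$ together with the induced $H$-action on it.

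The split into two cases comes from the type of action of $H$ on the fiber $E_{eH}$. For $h\in H$, the map $h: E_{eH}\to E_{eH}$ is complex linear if $\phi(h)=0$ and antilinear if $\phi(h)=1$. Hence:
\begin{itemize}
\item If $H\le G_0$, then $\phi|_H \equiv 0$, so every $h\in H$ acts linearly and $E_{eH}$ is simply a complex representation of the (ordinary) group $H$. The category of such fibers is $\operatorname{Rep}(H)$ and its Grothendieck group is $KU_H(*)=R(H)$ by Ex.~\ref{kteoriadelpunto} applied in Segal's setting.
\item If $H\not\le G_0$, then $\phi|_H: H\to \mathbb{Z}/2$ is surjective and $(H,\phi|_H)$ is itself a magnetic subgroup of $(\mg{G},\phi)$. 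The fiber $E_{eH}$ is a magnetic $(H,\phi|_H)$-representation, so the category of fibers is $\mathbf{Rep}(H,\phi|_H)$ and its Grothendieck group is $\K_H(*)=\mathbf{R}(H)$, again by Ex.~\ref{kteoriadelpunto}.
\end{itemize}

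Concretely I would check that the two constructions are mutually inverse. Starting from a bundle $E\to \mg{G}/H$, any element $[g,v]\in \mg{G}\times_H E_{eH}$ maps to $g\cdot v \in E_{gH}$; this is well-defined on the quotient by $H$ because of the relation $(gh^{-1},hv)\sim (g,v)$, and it is a $(\mg{G},\phi)$-isomorphism onto $E$ because it restricts to the identity on $E_{eH}$ and intertwines the $\mg{G}$-action by construction. Conversely, starting from a representation $V$ of $H$, the fiber of $\mg{G}\times_H V$ over $eH$ is canonically $V$ with its original $H$-action and with the correct (anti)linear structure, since scalar multiplication on $\mg{G}\times_H V$ was defined by $z[e,v]=[e,v]$ twisted by $\K^{\phi(e)}=\id$.

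The step I expect to require a bit of bookkeeping is verifying that the $H$-action on the fiber $E_{eH}$, and the scalar structure on the induced bundle $\mg{G}\times_H V$, match up correctly in terms of linearity versus antilinearity, especially in the case $H\not\le G_0$, where one has to check that the antilinear fiberwise action of $h\in H$ with $\phi(h)=1$ is the same on both sides of the equivalence. Once this compatibility is in place, the equivalence of categories is immediate, and taking Grothendieck groups gives the claimed isomorphism
\begin{align}
\K_G(\mg{G}/H) \cong \begin{cases} KU_H(*) & \text{if } H\le G_0, \\ \K_H(*) & \text{if } H\not\le G_0. \end{cases}
\end{align}
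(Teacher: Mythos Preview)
Your proposal is correct and follows essentially the same approach as the paper: restrict a $(\mg{G},\phi)$-bundle over $\mg{G}/H$ to its fiber at the identity coset to obtain an $H$- or $(H,\phi|_H)$-representation depending on whether $H\le G_0$, and invert this with the induced bundle $\mg{G}\times_H V \to \mg{G}/H$. Your write-up is in fact more detailed than the paper's, which simply asserts that these constructions are inverses of one another.
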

\begin{proof}
    The proof uses the same constructions as in the previous lemma. Let $p:E\longrightarrow \mg{G}/H$ be a $(\mg{G},\phi)$-vector bundle. Then the restriction $E|_{\{H\}}\longrightarrow \{H\}=*$ is an $H$ or $(H,\phi|_H)$-vector bundle over the point, depending on whether $H\leq G_0$ or $H\not\leq G_0$.
    
    Conversely, let $V\longrightarrow *$ be an $H$ or $(H,\phi|_H)$-vector bundle and consider the induced bundle
    \[p:E=G\times_H V \longrightarrow \mg{G}/H.\]
    These constructions are inverses one to the other.
\end{proof}

\begin{proposition}\label{decomK}
    Let  $(G,\phi)$ be a finite magnetic group acting trivially on $X$. Then we have a natural isomorphism of groups:
    \begin{align}
    \K_{G}(X)  \overset{\cong}{\longrightarrow} \left( \mathbf{R}(G, \mathbb{R}) \otimes KO(X) \right) \oplus  \left( \mathbf{R}(G, \mathbb{C}) \otimes KU(X) \right)  \oplus \left(  \mathbf{R}(G, \mathbb{H}) \otimes KSp(X) \right).
    \end{align}
\end{proposition}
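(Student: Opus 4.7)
The plan is to establish an equivalence of categories between $\MVec_G(X)$ and the product category $\prod_{V \in \mathbf{Irrep}(G)} \mathbf{Vect}_{\mathbb{F}_V}(X)$, where $\mathbb{F}_V \in \{\mathbb{R},\mathbb{C},\mathbb{H}\}$ is the type of the irreducible magnetic representation $V$. Because $G$ acts trivially on $X$, such an equivalence automatically induces the claimed splitting upon passage to Grothendieck groups. I would model the argument on the non-equivariant isotypic decomposition of a representation, but promoted to families over $X$.

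First, for each $V \in \mathbf{Irrep}(G,\mathbb{F})$ let $\mathbf{V} = X \times V \to X$ be the trivial magnetic bundle of Ex.\ \ref{trivialbundles}. Define a functor
\begin{align}
\Phi_V : \MVec_G(X) \longrightarrow \mathbf{Vect}_{\mathbb{F}}(X), \qquad E \longmapsto \mathrm{Hom}_{(G,\phi)}(\mathbf{V}, E),
\end{align}
where the right-hand side is the subbundle of $\mathrm{Hom}(\mathbf{V}, E)$ of fiberwise $G$-equivariant maps. Fiberwise, Thm.\ \ref{Schurfreeb} together with semisimplicity shows that $\mathrm{Hom}_{(G,\phi)}(V, E_x)$ is naturally a (right) module over $\End_{(G,\phi)}(V) \cong \mathbb{F}$, and this $\mathbb{F}$-action is given by precomposition, hence varies continuously in $x$. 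So $\Phi_V(E)$ is genuinely an $\mathbb{F}$-vector bundle. In the reverse direction, define
\begin{align}
\Psi : \prod_{V \in \mathbf{Irrep}(G)} \mathbf{Vect}_{\mathbb{F}_V}(X) \longrightarrow \MVec_G(X), \qquad \{W_V\}_V \longmapsto \bigoplus_{V} W_V \otimes_{\mathbb{F}_V} \mathbf{V},
\end{align}
with the $(G,\phi)$-action acting only on the second tensor factor.

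The key step is then to check that $\Phi := (\Phi_V)_V$ and $\Psi$ are mutually inverse. In one direction, the natural map $\Psi \circ \Phi(E) \to E$ given fiberwise by evaluation $(f \otimes v) \mapsto f(v)$ is an isomorphism because on each fiber, the isotypic decomposition provided by Schur's lemma in the magnetic setting (Thm.\ \ref{Schurfreeb}) says exactly that $E_x \cong \bigoplus_V \mathrm{Hom}_{(G,\phi)}(V, E_x) \otimes_{\mathbb{F}_V} V$. In the other direction, Lem.\ \ref{homtensor} applied fiberwise shows $\Phi_V \circ \Psi(\{W_{V'}\}) = \mathrm{Hom}_{(G,\phi)}(\mathbf{V}, W_V \otimes_{\mathbb{F}_V} \mathbf{V}) \cong W_V$ (other summands vanish since $\mathrm{Hom}_{(G,\phi)}(V, V') = 0$ for $V \not\cong V'$). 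Finally, passing to Grothendieck groups gives
\begin{align}
\K_G(X) \cong \bigoplus_{V \in \mathbf{Irrep}(G,\mathbb{R})} KO(X) \oplus \bigoplus_{V \in \mathbf{Irrep}(G,\mathbb{C})} KU(X) \oplus \bigoplus_{V \in \mathbf{Irrep}(G,\mathbb{H})} KSp(X),
\end{align}
which is exactly the stated formula after identifying $\mathbf{R}(G,\mathbb{F})$ with the free abelian group on $\mathbf{Irrep}(G,\mathbb{F})$ as in Eqn.\ \eqref{splitring}.

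The main obstacle is the continuity of the $\mathbb{F}$-structure on $\Phi_V(E)$ and, in the quaternionic case, making sure that the decomposition $\mathrm{Hom}_{(G,\phi)}(\mathbf{V}, E) \otimes_{\mathbb{H}} \mathbf{V}$ is well-defined globally even though the isomorphism $V \oplus V \cong \res V$ of Thm.\ \ref{Schurfreeb} involves an auxiliary choice of the intertwiner $T$. Both issues dissolve once one notes that the isotypic projector $E_x \to \mathrm{Hom}_{(G,\phi)}(V, E_x) \otimes_{\mathbb{F}} V$ is intrinsic (it equals the $G$-average of appropriate matrix coefficients of $V$) and hence varies continuously with $x$.
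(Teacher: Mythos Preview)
Your proposal is correct and follows essentially the same approach as the paper: both use the fiberwise isotypic decomposition $E \mapsto \bigoplus_V V \otimes_{\mathbb{F}_V} \mathrm{Hom}_{(G,\phi)}(V,E)$ with inverse $V \otimes_{\mathbb{F}} W \mapsto V \otimes_{\mathbb{F}} W$, relying on Schur's lemma (Thm.~\ref{Schurfreeb}) to identify the coefficient field. You supply more detail than the paper (which is quite terse), in particular the explicit verification via Lem.~\ref{homtensor} that the two functors are inverse and the remark on continuity of the $\mathbb{F}$-structure, but the strategy is the same.
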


\begin{proof}
Take a magnetic $(G,\phi)$-equivariant vector bundle $p:E\longrightarrow X$ and $V \in \mathbf{Irrep}(G,\mathbb{F})$. The bundle  
\begin{align}
  \operatorname{Hom}_{\MRep[]{\mg{G}}}(V,E) \to X
\end{align}
is a $\operatorname{End}_{\MRep[]{\mg{G}}}(V) \cong \mathbb{F}$ vector bundle and therefore the map
\begin{align}
 \nonumber \K_{G}(X)  &\overset{\cong}{\longrightarrow} \bigoplus_{\mathbb{F} \in \{\mathbb{R}, \mathbb{C},\mathbb{H}  \}} \mathbf{R}(G, \mathbb{F}) \otimes K\mathbb{F}(X)\\
 E & \longmapsto \bigoplus_{\mathbb{F} \in \{\mathbb{R}, \mathbb{C},\mathbb{H}  \}} \bigoplus_{V \in \mathbf{Irrep}(G, \mathbb{F})}  V \otimes \operatorname{Hom}_{\MRep[]{\mg{G}}}(V,E) 
\end{align}
provides the desired isomorphism. The inverse map takes a representation $V$ of type $\mathbb{F}$ and 
 a $\mathbb{F}$-vector bundle $E$ in $K\mathbb{F}(X)$ to $V \otimes_{\mathbb{F}}E$ in $\K_G(X)$.

\end{proof}

\subsection{Topological properties}

Now we prove some lemmas that are required to extend $\K_{\mg{G}}$ to a cohomology theory, these are extensions of the lemmas appearing in \cite{atiyahktheory}, \cite{atiyahreal} and \cite{segal}. For the rest of the section we assume
\begin{itemize}
    \item $X$ is a Hausdorff compact space.
    \item Magnetic point groups $(\mg{G},\phi)$ are finite. 
\end{itemize}

\begin{lemma}\label{veciequiv}
    Let $X$ be a $G$-space, $A\subset X$ a closed invariant subspace and $W$ an open neighborhood of $A$. Then there exist an open invariant set $U$ such that $A\subset U \subset W$.
\end{lemma}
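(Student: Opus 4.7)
The plan is to build the invariant neighborhood by intersecting the translates of $W$. Since the magnetic point group $(\mg{G},\phi)$ is assumed finite for this section, this intersection will be a finite intersection of open sets and hence open.

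Concretely, I would set
\begin{align}
U := \bigcap_{g \in \mg{G}} g \cdot W.
\end{align}
First I would verify that $U$ is open: each $g \cdot W$ is open because the action of $g$ on $X$ is a homeomorphism, and a finite intersection of open sets is open. Next I would check the inclusion $A \subset U$. Since $A$ is $G$-invariant, $g^{-1} \cdot A = A$ for every $g \in \mg{G}$, so $A \subset W$ yields $A = g \cdot (g^{-1} \cdot A) \subset g \cdot W$ for all $g$; intersecting over $g$ gives $A \subset U$. The inclusion $U \subset W$ is immediate by taking $g = e$. Finally, $U$ is $\mg{G}$-invariant: for any $h \in \mg{G}$,
\begin{align}
h \cdot U = h \cdot \bigcap_{g \in \mg{G}} g \cdot W = \bigcap_{g \in \mg{G}} (hg) \cdot W = \bigcap_{g' \in \mg{G}} g' \cdot W = U,
\end{align}
using that left multiplication by $h$ is a bijection of $\mg{G}$.

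There is no real obstacle here; the only place where care is needed is to remember that the hypothesis $|\mg{G}| < \infty$ (stated at the beginning of the subsection) is essential to ensure that $U$ is open, since an arbitrary intersection of open sets need not be open. The magnetic structure $\phi$ plays no role in this statement because we are working at the level of the underlying $G$-space, where only the group action matters.
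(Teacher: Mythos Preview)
Your proof is correct. In fact the set you construct coincides with the paper's: the paper defines $U := q^{-1}\bigl(X/G \setminus q(X\setminus W)\bigr)$ where $q:X\to X/G$ is the orbit map, and unwinding this gives exactly $\bigcap_{g\in G} g\cdot W$. The difference is in how openness is argued. You use that a finite intersection of open sets is open, which is the cleanest route when $G$ is finite. The paper instead writes $U$ as the $q$-preimage of an open set in $X/G$, following Palais; this phrasing requires knowing that $q$ is a closed map (so that $q(X\setminus W)$ is closed), but has the advantage of working verbatim for arbitrary compact groups, not just finite ones. Since the paper only needs the finite case, your argument is the more economical one here.
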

\begin{proof}
    Define the open set \[U:= q^{-1}\left(X/G\,\backslash\,  q(X\backslash W)\right)\] where $q: X \longrightarrow X/G$ is the canonical projection. 
    First note that $q(A)\cap q(X\backslash W)=\emptyset$: Suppose $[x]\in q(A)\cap q(X\backslash W)=\emptyset$, then there exist $g,h\in \mg{G}$ such that $g\cdot x\in A$ and $h\cdot x\in X\backslash W$. So $h\cdot (hg^{-1})\cdot g\cdot x=\in A$ since $A$ is invariant. Then $h\cdot x\in A\cap X\backslash W$ which is a contradiction.
    So we have $q(A)\subset X/G \, \backslash \, q(X\backslash W)$ and thus $A=q^{-1}(q(A))\subset q^{-1}(X/G \backslash q(X\backslash W)=U$ since $A$ is invariant. $U\subset W$: suppose there exist $x\in 
    U$ such that $x\not\in W$, then $[x]\in p(U)=X/G\,\backslash \, q(X\backslash W)$ and $[x]\in q(X\backslash W)$ which is a contradiction. 
\end{proof}
This proof is based in the one given by R. Palais in 1.1.14 of \cite{classification_g_spaces}.
\begin{lemma}[Extension of sections]\label{extofsec}
If $E$ is a $(\mg{G},\phi)$-vector bundle on a $G$-space $X$, and $A$ is a closed $G$-subspace of $X$, then any section of the $(\mg{G},\phi)$-vector bundle $E|_A$ can be extended to a section of the $(\mg{G},\phi)$-vector bundle $E$. \end{lemma}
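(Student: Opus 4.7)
The plan is to combine two classical techniques: first produce a (non-equivariant) extension of the section to all of $X$, then symmetrize it using an averaging over the finite group $G$ to recover equivariance. The preceding Lemma \ref{veciequiv} may not even be needed here, since after averaging the result is automatically $G$-equivariant, and restriction to $A$ will recover $s$ provided $s$ was the original equivariant section we started with.

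For the first step, I would use that $X$ is compact Hausdorff, hence normal, together with a finite open cover $\{U_i\}$ that trivializes $E$ and a subordinate partition of unity $\{\rho_i\}$. Over each trivializing open $U_i$, the restriction $s|_{A \cap U_i}$ corresponds to a tuple of continuous complex-valued functions on the closed subset $A \cap U_i$ of $U_i$; by the Tietze extension theorem these extend to continuous functions on $U_i$, giving a local extension $\tilde{s}_i : U_i \to E|_{U_i}$. Setting $\tilde{s} := \sum_i \rho_i \tilde{s}_i$ (with each term understood to be zero outside the support of $\rho_i$) defines a continuous section of $E$ on all of $X$, and on $A$ it reduces to $\sum_i \rho_i s = s$.

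For the second step, define
\begin{align*}
s'(x) := \frac{1}{|\mg{G}|} \sum_{g \in \mg{G}} g \cdot \tilde{s}(g^{-1} \cdot x).
\end{align*}
Each summand lies in the fiber $E_x$ because the bundle map $g : E_{g^{-1}x} \to E_x$ is a well-defined (complex linear or antilinear depending on $\phi(g)$) isomorphism, so the sum makes sense as a finite sum of vectors in the complex vector space $E_x$, and the scaling by the real number $1/|\mg{G}|$ commutes with both linear and antilinear maps. The associativity $(hk)\cdot v = h \cdot (k \cdot v)$ of the bundle action yields $G$-equivariance via the usual change of variables $k = h^{-1}g$, so that $s'(h \cdot x) = h \cdot s'(x)$ for every $h \in \mg{G}$. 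Finally, for $x \in A$ one has $g^{-1}\cdot x \in A$ by invariance of $A$, so $\tilde{s}(g^{-1}x) = s(g^{-1}x)$ and the equivariance of the original $s$ gives $g \cdot s(g^{-1}x) = s(x)$; hence $s'|_A = s$.

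The only place where one has to check carefully that the magnetic nature of the action does not cause trouble is the averaging step, where summing vectors and applying antilinear isomorphisms are mixed. This is harmless because the action is a genuine (set-theoretic) group action and the normalization factor is real, so neither the equivariance computation nor the restriction computation is affected by the distinction between linear and antilinear summands. Hence no additional ingredient beyond finiteness of $\mg{G}$ and Tietze's theorem on $X$ is required.
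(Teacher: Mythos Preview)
Your proof is correct and follows essentially the same two-step approach as the paper: first extend non-equivariantly using Tietze and a partition of unity, then average over $G$. You are in fact more careful than the paper in checking that the averaging step is unaffected by the presence of antilinear group elements (since $1/|\mg{G}|$ is real) and that the restriction to $A$ recovers $s$.
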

\begin{proof}
    Let $s:A \longrightarrow E|_A$ be such a section and extend it arbitrarily to $E$, as in 1.4.1 of \cite{atiyahktheory} or by using the Tietze extension theorem to extend it locally and then add this local extension to a global section via a partition of the unity. Finally, take the averages of $s$ over $\mg{G}$
    \[s(x)=\frac{1}{|\mg{G}|}\sum_{\mg{G}}(g\cdot s)(x) \]
    as in 1.1 of \cite{segal} or 1 of \cite{atiyahreal}.
\end{proof}

\begin{lemma}\label{extensionofiso}
    Let $E,F$ be two $(\mg{G},\phi)$-vector bundles on $X$, $A$ a closed $G$-subspace of $X$ and $f:E|_A \longrightarrow F|_A$ an isomorphism. Then there is a $G$-neighborhood of $A$ in $X$ and an isomorphism $f: E|_U\longrightarrow F|_U$ of $(\mg{G},\phi)$-vector bundles extending $f$.
\end{lemma}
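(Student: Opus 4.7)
The plan is to reduce the problem to the extension of a section, combined with the fact that being an isomorphism is an open condition on the base, and then average/cut down to get a $G$-invariant neighborhood.

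First I would use Lemma \ref{tensorandsectios} to reinterpret the given isomorphism $f: E|_A \to F|_A$ as a $G$-equivariant section $s_f$ of the magnetic $(\mg{G},\phi)$-equivariant vector bundle $\operatorname{Hom}(E,F)|_A \to A$. By Lemma \ref{extofsec} this section extends to a $G$-equivariant section $\widetilde{s}$ of $\operatorname{Hom}(E,F)$ over all of $X$, and applying Lemma \ref{tensorandsectios} in the other direction produces a morphism $\widetilde{f}: E \to F$ of magnetic $(\mg{G},\phi)$-equivariant vector bundles with $\widetilde{f}|_A = f$.

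Next I would consider the set
\begin{align*}
W := \{ x \in X \mid \widetilde{f}_x : E_x \to F_x \text{ is an isomorphism}\}.
\end{align*}
Since invertibility of a linear map is an open condition (locally, in trivializations, it is the non-vanishing of a determinant, a continuous function of $x$), $W$ is open in $X$, and $W \supseteq A$ because $\widetilde{f}|_A = f$ is an isomorphism. The set $W$ need not be $G$-invariant, so the final step is to apply Lemma \ref{veciequiv} to the closed invariant subspace $A$ and the open neighborhood $W$: this yields a $G$-invariant open set $U$ with $A \subset U \subset W$. The restriction $\widetilde{f}|_U : E|_U \to F|_U$ is then a morphism of magnetic $(\mg{G},\phi)$-equivariant vector bundles that is fiberwise invertible on $U$, hence a bundle isomorphism extending $f$.

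The only subtle step is recognizing that the naive pointwise invertibility locus $W$ is only open, not $G$-invariant, so one cannot declare it to be the desired neighborhood directly; this is precisely what Lemma \ref{veciequiv} is designed to fix, and invoking it is the main (and really only) obstacle in the argument. Everything else is a formal chase through the equivalences already established in Lemmas \ref{tensorandsectios} and \ref{extofsec}.
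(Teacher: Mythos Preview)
Your proof is correct and follows exactly the same route as the paper's: reinterpret $f$ as a section via Lemma~\ref{tensorandsectios}, extend by Lemma~\ref{extofsec}, take the open locus $W$ of invertibility, and cut down to a $G$-invariant $U$ via Lemma~\ref{veciequiv}. One small remark: since the extended morphism $\widetilde{f}$ is already $G$-equivariant, the set $W$ is in fact automatically $G$-invariant (if $\widetilde{f}_x$ is invertible then so is $\widetilde{f}_{gx} = g\,\widetilde{f}_x\,g^{-1}$), so the appeal to Lemma~\ref{veciequiv} is not strictly necessary---though the paper invokes it as well.
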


\begin{proof}
    By Lem. \ref{tensorandsectios}, $f|_A$ can be understood as a section of the $(\mg{G},\phi)$-vector bundle $\Hom_{(\mg{G},\phi)}(E|_A,F|_A)=\Hom_{(\mg{G},\phi)}(E,F)|_A$ over $A$. Using Lem. \ref{extofsec}, extend this section to a section $f$ of $\Hom_{(\mg{G},\phi)}(E,F)$. So we have a morphism from $E$ to $F$. The subset $W$ of $X$ where $f$ is an isomorphism, is an open set which contains $A$ by the continuity of the determinant. By Lem. \ref{veciequiv} there exists a $G$-equivariant open neighborhood $U$ of $A$ contained in $W$, and of course, $f$ is still an isomorphism on $E|_U$.
\end{proof}

\begin{lemma}\label{invhom}
    Let $p:E\longrightarrow X$ be a $(\mg{G},\phi)$-vector bundle, $f_0,f_1: Y\longrightarrow X$ be two $G$-homotopic $G$-maps and $Y$ be a compact space. Then we have an isomorphism $f_0^* E\cong f_1^* E$ of $(\mg{G},\phi)$-vector bundles.
\end{lemma}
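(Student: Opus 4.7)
The plan is to adapt the classical homotopy-invariance argument for vector bundles to the magnetic equivariant setting, using Lemma \ref{extensionofiso} as the main tool. First, I would combine $f_0$ and $f_1$ via the $G$-equivariant homotopy $H:Y\times I\longrightarrow X$ (with trivial $G$-action on $I=[0,1]$) and form the pullback $F:=H^*E$, a $(\mg{G},\phi)$-equivariant vector bundle on $Y\times I$ with $F|_{Y\times\{i\}}\cong f_i^*E$ for $i=0,1$. This reduces the claim to showing that the restrictions $F_t:=F|_{Y\times\{t\}}$ are all pairwise isomorphic as $(\mg{G},\phi)$-equivariant bundles on $Y$.

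For the local step, fix $t_0\in I$ and let $\pi_Y:Y\times I\longrightarrow Y$ denote the projection onto the first factor. There is a tautological identification $F|_{Y\times\{t_0\}}\cong (\pi_Y^*F_{t_0})|_{Y\times\{t_0\}}$. Since $Y\times\{t_0\}$ is a closed $G$-invariant subspace, Lemma \ref{extensionofiso} extends this to a $(\mg{G},\phi)$-equivariant isomorphism $F\cong \pi_Y^*F_{t_0}$ on some $G$-invariant open neighborhood $W_{t_0}\supset Y\times\{t_0\}$. Compactness of $Y$ combined with the trivial $G$-action on $I$ gives, by the tube lemma, an open interval $J_{t_0}\ni t_0$ with $Y\times J_{t_0}\subset W_{t_0}$. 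For the global step I would then invoke compactness of $I$ to extract, out of the cover $\{J_{t}\}_{t\in I}$, a subdivision $0=s_0<s_1<\cdots<s_n=1$ such that each consecutive pair $\{s_i,s_{i+1}\}$ lies in a single slab $Y\times J_{t_j}$. Restricting the local isomorphism over $Y\times J_{t_j}$ to the two endpoints yields $(\mg{G},\phi)$-equivariant isomorphisms $F_{s_i}\cong F_{t_j}\cong F_{s_{i+1}}$, and composing these in sequence produces $F_0\cong F_1$, i.e., $f_0^*E\cong f_1^*E$.

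The main obstacle is ensuring that the neighborhoods and isomorphisms produced at each step respect the magnetic $G$-action. This is already built into Lemma \ref{extensionofiso}, whose proof relies in turn on the averaging-of-sections argument of Lemma \ref{extofsec} and the equivariant tube argument of Lemma \ref{veciequiv}. The only additional point specific to the magnetic setting is that the slab $Y\times J_{t_0}$ is automatically $G$-invariant---because $G$ acts trivially on the $I$-factor---so no further symmetrisation in the interval direction is needed, and the classical compactness/covering conclusion goes through verbatim.
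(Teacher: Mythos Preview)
Your proposal is correct and follows essentially the same approach as the paper: pull back along the homotopy $H$, apply Lemma~\ref{extensionofiso} over each slice $Y\times\{t\}$ to get an isomorphism with $\pi_Y^*F_t$ on a slab $Y\times J_t$, and then use connectedness/compactness of $I$ to propagate. The only cosmetic difference is that the paper phrases the conclusion as ``the isomorphism class of $H_t^*E$ is a locally constant function of $t$ on the connected interval $I$'' (citing Atiyah's Lemma~1.4.3), whereas you spell out the finite chain of slabs explicitly; these are the same argument.
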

\begin{proof}
    Suppose $H: Y\times I\longrightarrow X$ is such an equivariant homotopy between $f_0$ and $f_1$. For any $t\in I$, we have an isomorphism of $(\mg{G},\phi)$-vector bundles 
    \begin{align}
    H^* E|_{Y\times \{t\}} \longrightarrow \pi_t^* H_t^* E|_{Y\times \{t\}}=H^*_tE 
    \end{align}
    over $Y\times \{t\}$, where $\pi_t: Y\times I \longrightarrow Y\times \{t\}$ is given by $\pi_t(y,s)=(y,t)$ and $H_t=H|_{y\times \{t\}}$. By Lem. \ref{extensionofiso} there is an open neighborhood of $Y\times \{t\}$, namely $Y\times \delta(t)$, where $H^* E$ and $\pi_1^* H_t^* E$ are isomorphic. If $r\in \delta(t)$, then we have a commutative diagram
    \begin{align}
    \xymatrix{ (H^*E)|_{Y\times \delta(t)} \ar[r]^{\cong}  & (\pi^*_{t}(H_t^*E))|_{Y\times \delta(t)} \\ H_r^*E=(H^*E|_{Y\times \delta(t)})|_{Y\times \{r\}} \ar@{^(->}[u] \ar@{-->}[r]^{\cong} & (\pi^*_{t}(H_t^*E)|_{Y\times \delta(t)})|_{Y\times \{r\}} \ar@{^(->}[u]}
    \end{align}
    The vector bundle $\pi^*_{t}H_t^*E|_{Y\times \delta(t)}|_{Y\times \{r\}}$ is the same vector bundle as $\theta_{t,r}^*H^*_t E$ where $\theta_{t,r}:Y\times \{r\}\longrightarrow Y\times \{t\}$ is given by $\theta_{t,r}(y,r)=(y,t)$ and also we have a natural isomorphism $\theta_{t,r}^*H^*_t E\cong H^*_t E$. Hence the isomorphism class of $H_t^*E$ is locally constant function on $t$. Since the interval $I$ is connected, this implies it is constant. This is the same argument given in Lem. 1.4.3 in \cite{atiyahktheory}.
\end{proof}

\begin{corollary}\label{trivial}
    
         If $f: X \longrightarrow Y$ is a $G$-homotopy equivalence, then the pull-back
         \begin{align}
         f^*: \MVec_{(\mg{G},\phi)}(Y)/\cong\, \longrightarrow \MVec_{(\mg{G},\phi)}(X)/\cong
         \end{align}
          is s bijection between the isomorphism classes of $(\mg{G},\phi)$-vector bundles over $Y$ and $X$.
        In particular,  if $X$ is $G$-contractible, then every $(G,\phi)$-vector bundle over $X$ is isomorphic to a trivial $(\mg{G},\phi)$-vector bundle.
\end{corollary}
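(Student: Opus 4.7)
The plan is to derive Corollary \ref{trivial} as a direct formal consequence of the magnetic homotopy invariance established in Lemma \ref{invhom}. Since $f:X\to Y$ is a $G$-homotopy equivalence, pick a $G$-homotopy inverse $g:Y\to X$ so that $g\circ f \simeq_G \id_X$ and $f\circ g \simeq_G \id_Y$. Both $X$ and $Y$ are compact by the standing hypothesis at the start of the subsection, so Lemma \ref{invhom} applies: for any $(\mg{G},\phi)$-vector bundle $E\to Y$ we get $(f\circ g)^*E\cong E$, and for any $(\mg{G},\phi)$-vector bundle $F\to X$ we get $(g\circ f)^*F\cong F$. Since pullback is functorial, $(f\circ g)^*=g^*\circ f^*$ and $(g\circ f)^*=f^*\circ g^*$, so $g^*$ is a two-sided inverse of $f^*$ on isomorphism classes, establishing that
\begin{align}
f^*:\MVec_{(\mg{G},\phi)}(Y)/\!\!\cong\;\longrightarrow\;\MVec_{(\mg{G},\phi)}(X)/\!\!\cong
\end{align}
is a bijection.

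For the second assertion, assume $X$ is $G$-contractible, meaning there is a $G$-homotopy equivalence $c:X\to *$ (with the trivial $G$-action on the point). By the first part, $c^*$ induces a bijection on isomorphism classes of bundles. By Example \ref{kteoriadelpunto} every $(\mg{G},\phi)$-vector bundle over a point is a magnetic representation $V$ of $(\mg{G},\phi)$, and its pullback $c^*V$ is exactly the trivial bundle $\mathbf{V}=X\times V\to X$ from Example \ref{trivialbundles}. Hence every $(\mg{G},\phi)$-vector bundle over $X$ is isomorphic to some trivial bundle $\mathbf{V}$.

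I expect no real obstacle here: the whole argument is a categorical consequence of Lemma \ref{invhom} together with the identification of bundles over a point with magnetic representations. The only point worth being careful about is to confirm that pullback is well-defined on isomorphism classes (which is immediate from the construction) and that the compactness assumption needed in Lemma \ref{invhom} is in force for both composites — both $X$ and $Y$ are compact by hypothesis, so applying the lemma to each of the two homotopies $g\circ f\simeq_G\id_X$ and $f\circ g\simeq_G\id_Y$ is legitimate.
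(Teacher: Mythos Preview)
Your proof is correct and is exactly the argument the paper intends: the corollary is stated immediately after Lemma \ref{invhom} with no proof given, precisely because it follows by the standard functoriality-plus-homotopy-inverse argument you wrote out. Your care in checking the compactness hypothesis and in invoking Examples \ref{trivialbundles} and \ref{kteoriadelpunto} for the second assertion is appropriate and completes the details the paper leaves implicit.
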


Let $X=X_1\cup X_2$, $A=X_1\cap X_2$, all spaces being compact and $G$-invariant. Assume that $E_i$ is a $(\mg{G},\phi)$-vector bundles over $X_i$ and that $\alpha: E_1|_A\longrightarrow E_2|_A$ is an $(\mg{G},\phi)$-isomorphism. The \textbf{clutching} of $E_1$ and $E_2$ is the following $(\mg{G},\phi)$-vector bundle over $X$
\begin{align}
E_1\cup_{\alpha} E_2:=E_1\sqcup E_2/(v\sim \alpha(v) \text{ for all }v\in E_1).
\end{align}
\begin{corollary}\label{isoofclut}
    The isomorphism class of a clutching $(\mg{G},\phi)$-vector bundle $E_1\cup_{\alpha}E_2$ depends only on the homotopy class of the isomorphism $\alpha: E_1|_A \longrightarrow E_2|_A$.
\end{corollary}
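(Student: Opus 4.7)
The plan is to promote the homotopy of clutching isomorphisms to a single clutching on $X\times I$ and then apply the homotopy invariance of pullback $(\mg{G},\phi)$-vector bundles (Lem.~\ref{invhom}).

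First I would set up the data carefully. Let $\alpha_t \colon E_1|_A \to E_2|_A$ for $t\in I$ be a $G$-equivariant homotopy of $(\mg{G},\phi)$-isomorphisms between $\alpha_0$ and $\alpha_1$, viewed equivalently as a single $(\mg{G},\phi)$-isomorphism
\begin{align}
\widetilde{\alpha}\colon \pi_A^{*}\,(E_1|_A) \;\stackrel{\cong}{\longrightarrow}\; \pi_A^{*}\,(E_2|_A)
\end{align}
over $A\times I$, where $\pi_A\colon A\times I\to A$ is the projection and $G$ acts trivially on the interval. Let $\pi_i\colon X_i\times I\to X_i$ denote the projections. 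Then $\pi_1^{*}E_1$ and $\pi_2^{*}E_2$ are $(\mg{G},\phi)$-vector bundles over $X_1\times I$ and $X_2\times I$ respectively, and their restrictions to $A\times I=(X_1\times I)\cap(X_2\times I)$ agree with $\pi_A^{*}(E_i|_A)$.

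Next I would perform the clutching of these two bundles using $\widetilde{\alpha}$ to obtain
\begin{align}
F \;:=\; \pi_1^{*}E_1 \,\cup_{\widetilde{\alpha}}\, \pi_2^{*}E_2
\end{align}
as a $(\mg{G},\phi)$-vector bundle over $X\times I=(X_1\times I)\cup(X_2\times I)$. By construction, for each $t\in I$ the inclusion $i_t\colon X\hookrightarrow X\times I$, $x\mapsto (x,t)$, is a $G$-map, and the pullback $i_t^{*}F$ is canonically isomorphic to $E_1\cup_{\alpha_t}E_2$, because pullback commutes with the clutching construction and $\widetilde{\alpha}$ restricts to $\alpha_t$ along $A\times\{t\}$.

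Finally I would invoke homotopy invariance. The inclusions $i_0$ and $i_1$ are $G$-homotopic through the obvious homotopy $(x,s,t)\mapsto(x,(1-t)s+ts)$ (which is $G$-equivariant since $I$ carries the trivial $G$-action), so by Lem.~\ref{invhom} we obtain an isomorphism of $(\mg{G},\phi)$-vector bundles
\begin{align}
E_1\cup_{\alpha_0}E_2 \;\cong\; i_0^{*}F \;\cong\; i_1^{*}F \;\cong\; E_1\cup_{\alpha_1}E_2,
\end{align}
which is the desired conclusion. The only subtle point is checking that the clutching construction is compatible with pullback in the $(\mg{G},\phi)$-equivariant setting, but this follows directly from the definition of the clutching on total spaces and the fact that $\widetilde{\alpha}$ is $G$-equivariant and complex (anti)linear fiber by fiber as dictated by $\phi$; no further work is required.
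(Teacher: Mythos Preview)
Your proposal is correct and follows essentially the same approach as the paper: promote the homotopy of clutching maps to a single clutching over $X\times I$, identify each slice with $E_1\cup_{\alpha_t}E_2$, and invoke Lem.~\ref{invhom}. The only blemish is the homotopy formula you wrote, $(x,s,t)\mapsto(x,(1-t)s+ts)$, which simplifies to $(x,s,t)\mapsto(x,s)$; the intended homotopy between $i_0$ and $i_1$ is simply $H(x,t)=(x,t)$, but this is a harmless slip since the claim is obvious.
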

\begin{proof}
    Suppose $\alpha_{t}: E_1|_{A}\longrightarrow E_2|_{A}$ is a homotopy of $(\mg{G},\phi)$-isomorphisms. Then we can construct an $(\mg{G},\phi)$-isomorphism of $(\mg{G},\phi)$-vector bundles over $A\times I$ as follows
    \begin{align}
     \nonumber   \alpha:(\pi_1^* E_1)|_{A\times I} & \longrightarrow (\pi_1^*E_2)|_{A\times I} \\ (a,t,v)& \longmapsto (a,t,\alpha_t(v)).
    \end{align}
    where $\pi_1: X\times I \longrightarrow X$ is the projection onto the first factor. So we can construct the clutching of $\pi_1^* E_1$ and $\pi_1^* E_2$
    \begin{align}
    \xymatrix{ \pi_1^* E_1\cup_{\alpha} \pi_1^* E_2 \ar[d] \\ X\times I.}
    \end{align}
    For $t\in I$, define the function 
    \begin{align}
     \nonumber   f_t: X&\longrightarrow X\times I \\ x&\longmapsto (x,t).
    \end{align}
    Note for $t\in I$ we have an $(\mg{G},\phi)$-isomorphism of $(\mg{G},\phi)$-vector bundles
    over $X$
    \begin{align}
    E_1\cup_{\alpha_t} E_2 \overset{\cong}{\longrightarrow} f^*_t(\pi_1^* E_1\cup_{\alpha} \pi_1^* E_2).
    \end{align}
    So apply Lem. \ref{invhom} to the $(\mg{G},\phi)$-vector bundle $\pi_1^* E_1\cup_{\alpha} \pi_1^* E_2$ and the $(\mg{G},\phi)$-homotopic functions $f_0,f_1$. 
\end{proof}
Let $(X,A)$ be a closed $G$-pair with $A$ $G$-contractible. If $E$ is a $(\mg{G},\phi)$-vector bundle over $X$, then by Cor. \ref{trivial} there exist a $(\mg{G},\phi)$-isomorphism $\alpha: E|_A \longrightarrow A\times V$ with $V$ a representation of $(\mg{G},\phi)$. We refer to $\alpha$ as a \textbf{trivialization} of $E$ over $A$. Let $\pi_2: A\times V \longrightarrow V$ be the projection onto the second factor and define an equivalence relation on $E|_A$ by
\begin{align}
e\sim e' \text{ if and only if } \pi_2(\alpha(e))=\pi_2(\alpha(e'))
\end{align}
and extend it by the identity on $E|_{X\backslash A}$. Denote by $E/\alpha$ the quotient of $E$ by this equivalence relation. Then $E/\alpha \longrightarrow X/A$ is a $(\mg{G},\phi)$-vector bundle, one just need to verify the local triviality around the point $A/A$ but this is an application of Lem. \ref{extensionofiso}. Moreover:
\begin{corollary}\label{homotopyinvoftriv}
    Let $(X,A)$ be a closed $G$-pair with $A$ $G$-contractible, 
    $E\longrightarrow X$ a $(\mg{G},\phi)$-vector bundle over $X$ and $\alpha: E|_A\longrightarrow A\times V$ a trivialization of $E$ over a closed $G$-subspace $A$. Then the isomorphism class of the collapsing construction $E/\alpha$ depends only on the homotopy class of $\alpha$.
\end{corollary}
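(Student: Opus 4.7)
The plan is to follow the same template as the proof of Cor.~\ref{isoofclut} for the clutching case, but with the collapsing construction substituted for clutching. Given a $G$-homotopy $\alpha_t: E|_A \to A\times V$ of $(\mg{G},\phi)$-trivializations, I would assemble them into a single trivialization over $X \times I$, perform the collapsing once on that total bundle, and then show that the two end slices at $t=0,1$ recover $E/\alpha_0$ and $E/\alpha_1$ respectively.

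Concretely, let $\pi_1: X \times I \to X$ be the projection with $G$ acting trivially on $I$. The homotopy assembles into the $(\mg{G},\phi)$-isomorphism
$$\tilde\alpha: (\pi_1^* E)|_{A\times I} \longrightarrow (A\times I) \times V, \qquad ((a,t), e) \longmapsto (a, t, \pi_2(\alpha_t(e))).$$
Since $A \times I$ is a closed $G$-subspace that is $G$-contractible (composing the $G$-contraction of $A$ with the projection to $I$), the collapsing construction produces a $(\mg{G},\phi)$-vector bundle
$$\tilde E := (\pi_1^*E)/\tilde\alpha \;\longrightarrow\; B := (X\times I)/(A\times I),$$
whose local triviality around the single collapsed point is guaranteed by Lem.~\ref{extensionofiso} applied to $\tilde\alpha$, exactly as in the definition preceding this corollary.

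For $i \in \{0,1\}$ the $G$-inclusion $f_i: X \to X \times I$, $x \mapsto (x,i)$, descends through the two quotients to a continuous $G$-map $\bar f_i: X/A \to B$, and one checks that $\bar f_i^*\,\tilde E \cong E/\alpha_i$ because $f_i^*(\pi_1^* E) = E$ and the restriction $f_i^*\,\tilde\alpha$ equals $\alpha_i$. The assignment $([x], s) \mapsto [(x,s)]$ defines a well-posed continuous $G$-homotopy $H: (X/A) \times I \to B$ between $\bar f_0$ and $\bar f_1$; since $X/A$ is compact, Lem.~\ref{invhom} yields $\bar f_0^*\,\tilde E \cong \bar f_1^*\,\tilde E$, and hence $E/\alpha_0 \cong E/\alpha_1$.

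The main obstacle, and the only content beyond the clutching case, is the identification $\bar f_i^*\,\tilde E \cong E/\alpha_i$. The subtlety is that $\tilde\alpha$ identifies fibers across different values of $t$ through the common $V$-coordinate (so $A \times I$ collapses to a single point in $B$), and one must verify that restricting $\tilde E$ to the image of $X \times \{i\}$ recovers precisely the equivalence relation defining $E/\alpha_i$. This reduces to observing that an equivalence class of $\tilde E$ meets the slice $X \times \{i\}$ either in a singleton (away from $A$) or in a $(\pi_2 \circ \alpha_i)^{-1}(v)$-fiber over $A$, which is exactly the equivalence built into $E/\alpha_i$; the matching of trivializations near $[A]$ follows from extending $\tilde\alpha$ to a neighborhood of $A\times I$ and restricting that extension to $X \times \{i\}$.
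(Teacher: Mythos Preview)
Your proof is correct and follows essentially the same approach as the paper: assemble the homotopy of trivializations into a single trivialization $\tilde\alpha$ over $A\times I$, form the collapsed bundle $(\pi_1^*E)/\tilde\alpha$ over $(X\times I)/(A\times I)$, identify its slices at $t=0,1$ with $E/\alpha_0$ and $E/\alpha_1$, and apply Lem.~\ref{invhom} to the natural $G$-homotopy $([x],s)\mapsto[(x,s)]$. The paper's proof is slightly terser (it simply asserts the identification you single out as the ``main obstacle''), but the argument is the same.
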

\begin{proof}
    Similar to the proof of Cor. \ref{isoofclut}, a $G$-homotopy of between trivializations $\alpha_0$ and $\alpha_1$ induces an isomorphism $\beta: (E\times I)|_{A\times I} \longrightarrow A\times I\times V$ of $(\mg{G},\phi)$-vector bundles over $A\times I$. Consider the natural $G$-map
    \begin{align}
   \nonumber     f: (X/A)\times I &\longrightarrow (X\times I)/(A\times I)\\ ([x],t) & \longrightarrow [x,t]
    \end{align}
    This a $G$-homotopy between $f_0=f|_{(X/A)\times \{0\}}$ and $f_1=f|_{(X/A)\times \{1\}}$. For $i=0,1$, we have  $(\mg{G},\phi)$-isomorphisms of $(\mg{G},\phi)$-vector bundles over $(X/Y)\times I$
    \begin{align}
    E/\alpha_i \longrightarrow f^*\left((E\times I)/\beta\right)|_{(X/Y)\times \{i\}}.
    \end{align}
    So apply Lem. \ref{invhom} to the $(\mg{G},\phi)$-vector bundle $(E\times I)/\beta$ and the maps $f_0$ and $f_1$.
\end{proof}
For $X,Y$ two $G$-spaces, denote by $[X,Y]^{\mg{G}}$ the set of $G$-homotopy classes of $G$-maps.
\begin{lemma}\label{calculodepi0}
    Let $A$ be a $G$-contractible $G$-space, $V$ a representation of $(\mg{G},\phi)$ and suppose it  decomposes as $V\cong \left(A_1^{a_1}\oplus\ldots\oplus A_m^{a_m}\right) \oplus \left(B_1^{b_1}\oplus\ldots \oplus B_n^{b_n}\right)\oplus\left( C_1^{c_1}\oplus \ldots \oplus C_l^{c_l}\right)$
    where $A_i,B_j,C_k$ are non isomorphic irreducible representations of real, complex and quaternionic type, respectively. Then we have a bijection
    \begin{align}
    [A,\GL(V)]^{\mg{G}} \overset{\cong}{\longrightarrow} \Pi_i\{\pm 1\}^{a_i}.
    \end{align}
    where the action of $G$ on $\GL(V)$ is given by $(g\cdot T)(v)=g\cdot T(g^{-1}\cdot v)$ with $g\in \mg{G}$, $T\in \GL(V)$ and $v\in V$. The set $\Pi_i\{\pm 1\}^{a_i}$ is the discrete space of $(\sum_i a_i)$-tuples in $\{\pm 1\}$.
\end{lemma}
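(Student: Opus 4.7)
The plan is to reduce the computation of $[A,\GL(V)]^{\mg{G}}$ to $\pi_0$ of the fixed-point subgroup $\GL(V)^{\mg{G}}$, and then to compute the latter explicitly using Schur's lemma.

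First, I would observe that because $A$ is $\mg{G}$-contractible, there is a fixed point $a_0\in A^{\mg{G}}$ (the image of the constant map $G$-homotopic to the identity), and evaluation at $a_0$ induces a bijection
\begin{align*}
[A,\GL(V)]^{\mg{G}} \stackrel{\cong}{\longrightarrow} \pi_0\left(\GL(V)^{\mg{G}}\right),
\end{align*}
where $\GL(V)$ carries the conjugation action $(g\cdot T)(v)=gT(g^{-1}v)$. Indeed, any $\mg{G}$-map $f:A\to\GL(V)$ is $\mg{G}$-homotopic to the constant $\mg{G}$-map with value $f(a_0)\in \GL(V)^{\mg{G}}$, via the retraction coming from the $G$-contraction, and two $\mg{G}$-equivariant constant maps are $\mg{G}$-homotopic iff their values lie in the same path component of $\GL(V)^{\mg{G}}$.

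Second, I would identify $\GL(V)^{\mg{G}}$. The fixed-point subgroup $\GL(V)^{\mg{G}}$ is exactly the group of units of the real algebra $\End_{\MRep[]{\mg{G}}}(V)$. Using Schur's Lem.~\ref{Schurfreeb} together with Lem.~\ref{homtensor} applied to the isotypic blocks $A_i^{a_i}$, $B_j^{b_j}$, $C_k^{c_k}$, the homomorphism spaces between distinct irreducibles vanish and each block's endomorphism algebra is a matrix algebra over its Schur field, giving
\begin{align*}
\End_{\MRep[]{\mg{G}}}(V)\;\cong\; \prod_i M_{a_i}(\mathbb{R})\;\times\;\prod_j M_{b_j}(\mathbb{C})\;\times\;\prod_k M_{c_k}(\mathbb{H}),
\end{align*}
so that
\begin{align*}
\GL(V)^{\mg{G}}\;\cong\;\prod_i \GL_{a_i}(\mathbb{R})\times\prod_j \GL_{b_j}(\mathbb{C})\times\prod_k \GL_{c_k}(\mathbb{H}).
\end{align*}

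Third, I would apply $\pi_0$: the groups $\GL_n(\mathbb{C})$ and $\GL_n(\mathbb{H})$ are path connected, while $\pi_0\GL_n(\mathbb{R})=\{\pm 1\}$, detected by the sign of the determinant. Thus
\begin{align*}
\pi_0\left(\GL(V)^{\mg{G}}\right)\;\cong\;\prod_i \{\pm 1\},
\end{align*}
one factor $\{\pm 1\}$ for each irreducible of real type appearing in $V$, with the isomorphism realised by taking the sign of the determinant of the restriction to the isotypic component $A_i^{a_i}$ (viewed as a real matrix via $\End_{\MRep[]{\mg{G}}}(A_i^{a_i})\cong M_{a_i}(\mathbb{R})$).

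The most delicate step is the first one: making precise that $\mg{G}$-contractibility of $A$ implies the bijection with $\pi_0$ of the fixed-point space. This requires producing a $\mg{G}$-fixed point and using the $\mg{G}$-contraction as an equivariant deformation retraction; the rest is routine algebra via Schur's lemma and the standard connectedness facts for matrix groups over $\mathbb{C}$ and $\mathbb{H}$.
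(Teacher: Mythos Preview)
Your approach is essentially identical to the paper's: reduce $[A,\GL(V)]^{\mg{G}}$ to $[*,\GL(V)]^{\mg{G}}=\pi_0\bigl(\GL(V)^{\mg{G}}\bigr)=\pi_0\bigl(\Iso_{\MRep[]{\mg{G}}}(V)\bigr)$ via the $G$-contraction, and then compute this using Schur's lemma and the standard connectedness of $\GL_n(\mathbb{C})$ and $\GL_n(\mathbb{H})$.

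One point worth flagging: your identification $\End_{\MRep[]{\mg{G}}}(A_i^{a_i})\cong M_{a_i}(\mathbb{R})$ is the correct one, giving units $\GL_{a_i}(\mathbb{R})$ and hence a single factor $\{\pm1\}$ per real-type irreducible, so that $\pi_0\bigl(\GL(V)^{\mg{G}}\bigr)\cong\prod_{i=1}^m\{\pm1\}$. The paper instead writes the isomorphism group of the real isotypic block as $(\mathbb{R}^*)^{a_i}$ and thus arrives at $\prod_i\{\pm1\}^{a_i}$, which is the form appearing in the statement. Your count is the accurate one; for every later use in the paper (namely selecting the canonical ``positive'' class $(1,\dots,1)$ in Rem.~\ref{convention} and Cor.~\ref{trivializationiso}) the discrepancy is immaterial.
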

\begin{proof}\label{homotopytype}
    Suppose $*\in A$ is a fixed point of the group $G$. A $G$-homotopy $H:A\times I \longrightarrow A$, between the identity map $\id_A$ and a the constant map $A\longrightarrow \{*\}$, gives rise to a bijection 
    \begin{align}
    [A,\GL(V)]^{\mg{G}} \longrightarrow [*,\GL(V)]^{\mg{G}}.
    \end{align}
    Now we have the equality 
    \begin{align}
    [*,\GL(V)]^{\mg{G}}=\pi^{\mg{G}}_0(\operatorname{Iso}_{\MRep[]{\mg{G}}}(V))
    \end{align}
    where $\pi^{\mg{G}}_0(X)$ is the set of $G$-path components of $X$. Finally, by Schur's lemma in  Lem. \ref{Schurfreeb} we have the bijection
    \begin{align}
        \pi^{\mg{G}}_0 \left( \operatorname{Iso}_{\MRep[]{\mg{G}}}(V)\right) & \cong \pi^{\mg{G}}_0 \left(\bigoplus_i (\mathbb{R}^*)^{a_i}\oplus \bigoplus_j (\mathbb{C}^*)^{b_j} \oplus \bigoplus_k (\mathbb{H}^*)^{c_k} \right) \\
        & \cong \pi_0^{\mg{G}}\left(\bigoplus_i (\mathbb{R}^*)^{a_i}\right)\times \pi^{\mg{G}}_0\left(\bigoplus_j (\mathbb{C}^*)^{b_j}\right) \times \pi_0^{\mg{G}}\left(\bigoplus_k (\mathbb{H}^*)^{c_k}\right)\\ & \cong \Pi_i \{\pm 1\}^{a_i}.
    \end{align}
\end{proof}
\begin{remark}\label{convention}
    As we have seen, Lem. \ref{homotopytype} shows that the $G$-homotopy classes $[A,\GL(V)]^{\mg{G}}$ depend only on the irreducible representations of real type in the decomposition of $V$. However, we always can choose classes in $\mathbb{R}^+=\{x\in \mathbb{R}\, |\, x>0\}$ or in $\mathbb{R}^-=\{x\in \mathbb{R}\, |\, x<0\}$ because they always exist, and \textbf{by convention we choose the positive part $\mathbb{R}^+\subset \mathbb{R}^*$}.

    An example of this choice, and possibly the most important, is the following: Given a $(\mg{G},\phi)$-vector bundle $E$ over $X$, then there exist trivializations $\alpha: E|_A \longrightarrow A\times V$. Two of these trivializations $\alpha_0,\alpha_1:E_A\longrightarrow A\times V$ define a map $\alpha^{-1}_1\alpha_0: A\times V \longrightarrow A\times V$ which is of the form $(x,v)\longmapsto (x,\Psi_x(v))$. The map $\Psi:A\longrightarrow \GL(V)$ is $\mg{G}$-equivariant, so its class is in $[\Psi]\in [A,\GL(V)^{\mg{G}}]\cong \pi^{\mg{G}}_0(\operatorname{Iso}_{\MRep[]{\mg{G}}}(V))=\{\pm 1\}^{\oplus_i a_i}$. We choose $\alpha_0,\alpha_1$ such that $[\Psi]$ is represented by $(1,1,\ldots,1)\in\{\pm 1\}^{\oplus_i a_i}$. 
\end{remark}
\begin{corollary}\label{trivializationiso}
    Let $(X,A)$ be a $(\mg{G},\phi)$-pair with $A$ $(\mg{G},\phi)$-contractible. Then the canonical projection $f: X \longrightarrow X/A$ induces a bijection 
    \begin{align}
    \xymatrix{f^*: \MVec_{(\mg{G},\phi)}(X/A)/\cong \, \ar[r] & \MVec_{(\mg{G},\phi)}(X)/\cong.}
    \end{align}
\end{corollary}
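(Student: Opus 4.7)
The plan is to construct a two-sided inverse $\Phi$ to $f^*$ using the collapsing construction introduced just above Corollary \ref{homotopyinvoftriv}. Given a magnetic $(G,\phi)$-equivariant vector bundle $E \to X$, since $A$ is $G$-contractible, Corollary \ref{trivial} guarantees a trivialization $\alpha: E|_A \stackrel{\cong}{\to} A \times V$ for some magnetic representation $V$, and Lemma \ref{calculodepi0} classifies the $G$-homotopy classes of such trivializations as $\prod_i \{\pm 1\}^{a_i}$. Following Remark \ref{convention}, I would always pick $\alpha$ in the distinguished (positive) homotopy class; Corollary \ref{homotopyinvoftriv} then ensures that the isomorphism class of the collapsing $E/\alpha \to X/A$ depends only on this choice of component, so the assignment $\Phi([E]) := [E/\alpha]$ is well-defined.

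Next I would verify that $\Phi$ is a two-sided inverse of $f^*$. For $f^* \circ \Phi = \id$, the canonical quotient map $E \to E/\alpha$ covers $f: X \to X/A$ and is a fiberwise $(G,\phi)$-isomorphism outside $A$, while the trivialization $\alpha$ provides compatible charts near $A$; these assemble into a natural magnetic isomorphism $E \cong f^*(E/\alpha)$. For $\Phi \circ f^* = \id$, given $E' \to X/A$ the pullback $f^*E'$ carries a canonical trivialization $\alpha_{\mathrm{can}}$ over $A$: since $f|_A$ is constant with value $[A] \in X/A$, each fiber $(f^*E')_a$ is naturally identified with the single fiber $E'_{[A]}$, and the collapsing $(f^*E')/\alpha_{\mathrm{can}}$ tautologically recovers $E'$.

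The main obstacle, and the reason Remark \ref{convention} must be invoked, is the well-definedness of $\Phi$: absent the restriction to a fixed homotopy component, different trivializations of $E|_A$ could in principle produce non-isomorphic collapsings, since Lemma \ref{calculodepi0} exhibits a non-trivial set $\prod_i \{\pm 1\}^{a_i}$ of homotopy classes whenever $V$ has irreducible summands of real type. One must also check that the canonical trivialization $\alpha_{\mathrm{can}}$ arising from a pullback indeed lies in the distinguished positive component, so that the computation of $\Phi \circ f^*$ uses the same convention as the definition of $\Phi$; this is immediate since $\alpha_{\mathrm{can}}$ is literally the constant identity trivialization along the contraction of $A$ to the point $[A]$.
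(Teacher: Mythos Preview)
Your proof is correct and follows essentially the same approach as the paper: construct an inverse to $f^*$ via the collapsing $E \mapsto E/\alpha$, using Corollary \ref{homotopyinvoftriv} and the convention of Remark \ref{convention} to control the choice of trivialization. Your treatment is in fact more detailed than the paper's, which only spells out the well-definedness of the inverse map and leaves the verification that it is a two-sided inverse to $f^*$ implicit.
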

\begin{proof}
    Given a $(\mg{G},\phi)$-vector bundle $E$ over $X$, choose a trivialization $\alpha:E|_A \longrightarrow A\times V$ and then consider the $(\mg{G},\phi)$-vector bundle $E/\alpha \longrightarrow X/A$. So we have map 
    \begin{align}
    \MVec_{(\mg{G},\phi)}(X)/ \longrightarrow \MVec_{(\mg{G},\phi)}(X/A)/\cong.
    \end{align}
    Let us see that this map only depends on the isomorphism class of $E$. Let $F$ be $(\mg{G},\phi)$-vector bundle over $X$ isomorphic to $E$, choose a trivialization $\beta:F|A\longrightarrow A\times V$ such that $\beta^{-1}\alpha\pi$ produces a $(\mg{G},\phi)$-map $A\longrightarrow GL(V)$ represented by a class of the form $(1,1\ldots, 1)\in [A,\GL(V)]^{\mg{G}}$, following our convention in Rem. \ref{convention}. Then by Cor. \ref{homotopyinvoftriv} $E/\alpha$ is isomorphic to $F/\beta$.
\end{proof}

    \begin{remark}
    For equivariant complex vector bundles the quoting map $f: X \longrightarrow X/A$ induces a bijection between the corresponding isomorphism classes of equivariant complex vector bundles over $X$ and $X/A$, see for example 2.10 in \cite{segal} or 1.4.8 in \cite{atiyahktheory}.
\end{remark}

The usual \textbf{suspension} of a $G$-space $X$ is $S X:=C_{+}(X)\cup C_{-}(X)$ where $C_{-}(X)=X\times [0,\frac{1}{2}]/(x,0)\sim (y,0)$ and $C_{+}(X)=X\times [\frac{1}{2},1]/(x,1)\sim (y,1)$. We put the trivial $\mg{G}$ action on $[0,1]$, so $C_{-}(X)$, $C_{+}(X)$ and $SX$ are $G$-spaces.  The two cones $C_{-}(X)$ and $C_{+}(X)$
are $G$-contractible.

Denote by $\MVec^n_{(\mg{G},\phi)}(S X)$ the set of $(\mg{G},\phi)$-vector bundles over $X$ of dimension $n$.
\begin{corollary}\label{kofsuspension}
    There is a natural bijection 
    \begin{align}
    \MVec^n_{(\mg{G},\phi)}(S X)/ \cong\, \overset{\cong}{\longrightarrow} \bigcup_{\scalebox{0.75}{\begin{tabular}{c}
        Isomorphism classes of \\
         $n$-dimensional \\ representations $V$ of $(\mg{G},\phi)$           
    \end{tabular}}}[X, \GL(V)]^{\mg{G}}
    \end{align}
\end{corollary}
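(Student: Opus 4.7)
The plan is to adapt the classical clutching construction to the magnetic equivariant setting, using the triviality of bundles over $G$-contractible spaces (Cor.~\ref{trivial}) and the homotopy invariance of clutching (Cor.~\ref{isoofclut}), together with the classification of homotopy classes of maps into $\GL(V)$ given by Lem.~\ref{calculodepi0} and the convention fixed in Rem.~\ref{convention}.

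First I would construct the map from left to right. Given an $n$-dimensional $(\mg{G},\phi)$-vector bundle $E \to SX$, restrict it to each cone $C_{\pm}(X)$. Both cones are $G$-contractible (the cone point is a $G$-fixed point), so by Cor.~\ref{trivial} we may choose trivializations $\alpha_{\pm}: E|_{C_{\pm}(X)} \stackrel{\cong}{\longrightarrow} C_{\pm}(X) \times V_{\pm}$ for some $n$-dimensional magnetic representations $V_{\pm}$. Restricting both trivializations to $X = C_{-}(X) \cap C_{+}(X)$ and composing, we obtain a $(\mg{G},\phi)$-isomorphism $\alpha_{+} \circ \alpha_{-}^{-1}: X \times V_{-} \to X \times V_{+}$. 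Evaluating at any point of $X$ produces an isomorphism $V_{-} \cong V_{+}$ of magnetic representations; fix one and call the common representation $V$. The resulting clutching datum is a $G$-equivariant map $\psi: X \to \GL(V)$, where $\GL(V)$ carries the conjugation action of Lem.~\ref{calculodepi0}.

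To make the assignment independent of choices, I would invoke the convention in Rem.~\ref{convention}: restrict to pairs of trivializations such that the change-of-trivialization map $X \to \GL(V_\pm)$ has class represented by $(1,1,\dots,1)$ in the $\{\pm 1\}^{\oplus_i a_i}$ of Lem.~\ref{calculodepi0}. This pins down the homotopy class $[\psi] \in [X, \GL(V)]^{\mg{G}}$ uniquely given the isomorphism class of $E$, via Cor.~\ref{isoofclut}: two choices of trivializations within this convention differ by a map homotopic to the identity on the real-type components, which does not change the clutching bundle up to isomorphism.

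The inverse map is straightforward clutching: given an $n$-dimensional magnetic representation $V$ and a class $[\psi] \in [X, \GL(V)]^{\mg{G}}$, form the $(\mg{G},\phi)$-vector bundle $(C_{-}(X) \times V) \cup_{\psi} (C_{+}(X) \times V) \to SX$. By Cor.~\ref{isoofclut}, its isomorphism class depends only on $[\psi]$, and clearly only on the isomorphism class of $V$ (changing $V$ by an isomorphism conjugates $\psi$ accordingly, which is a homotopy on each path-component). That the two constructions are mutually inverse is then formal: starting from $E$, the clutched bundle built from the extracted $(V, \psi)$ is canonically isomorphic to $E$ via $\alpha_-$ and $\alpha_+$; starting from $(V, \psi)$, reading off trivializations on the two cones returns the class $[\psi]$. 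The main obstacle is the bookkeeping in the middle step, namely verifying that the $\{\pm 1\}$-ambiguities coming from real-type irreducible summands of $V$ are absorbed consistently by the convention of Rem.~\ref{convention} on both cones simultaneously, so that $[\psi]$ is unambiguously defined modulo $G$-homotopy.
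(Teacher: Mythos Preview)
Your proposal is correct and follows essentially the same clutching argument as the paper: trivialize over each cone via Cor.~\ref{trivial}, extract the $G$-equivariant clutching map $\psi: X \to \GL(V)$, invoke the convention of Rem.~\ref{convention} (equivalently, the argument of Cor.~\ref{trivializationiso}) for well-definedness, and use the clutching construction as the inverse. The only differences are cosmetic: the paper writes out explicitly the verification that $\psi(g\cdot x)=g\cdot\psi(x)$, which you assert, and conversely it takes for granted from the outset that both trivializations share a common fiber representation $V$ rather than introducing separate $V_{\pm}$ as you do.
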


\begin{proof}
    If $E$ is a $n$-dimensional $(\mg{G},\phi)$-vector bundle over $SX$, then the restrictions of $E$ to $C_{+}(X)$ and $C_{-}(X)$ are trivial,  and by Cor. \ref{trivial} we have trivializations $\alpha_{\pm}:E|_{C_{\pm}(X)} \longrightarrow C_{\pm}(X)\times V$. We can restrict the trivializations to $X=X\times \{\frac{1}{2}\}$ and get an isomorphism of $(\mg{G},\phi)$-vector bundles 
    \begin{align}
    \alpha_{+}\circ\alpha^{-1}_-: X\times V \longrightarrow X\times V
    \end{align}
    This map is of the form $(x,e)\longmapsto (x,\psi_x(e))$ where $\psi_x\in  \GL(V)$ so we have a map
    \begin{align}
    \psi: X\longrightarrow \GL(V).
    \end{align}
    This map commutes with the action of $(\mg{G},\phi)$:
    \begin{align}
        \alpha_{+}\circ\alpha^{-1}_-(g\cdot (x,v))&=g \cdot \alpha_{+}\circ\alpha^{-1}_-(x,v) \\ \alpha_{+}\circ\alpha^{-1}_-(g\cdot x,g\cdot v)&= g\cdot (x,\psi_x(v)) \\ (g\cdot x,\psi_{g\cdot x}(g\cdot v))&= (g\cdot x,g\cdot \psi_{x}(v))
    \end{align}
    so $\psi_{g\cdot x}(g\cdot v)= g\cdot \psi_{x}(v)$. If we put $w=gv$ we get
    \begin{align}
        \psi_{g\cdot x}(w)&= g\cdot \psi_{x}(g^{-1}w)\\
        \psi_{g\cdot x}(w)&= (g\cdot \psi_x)(w),
    \end{align}
    hence $\psi(g\cdot x)=g\cdot\psi(x)$ i.e. $\psi\in [X,\GL(V)]^{\mg{G}}$. So we have a map

    \begin{align}
    \MVec^n_{(\mg{G},\phi)}(S X) \longrightarrow \bigcup_{\scalebox{0.75}{\begin{tabular}{c}
        Isomorphism classes of \\
         $n$-dimensional \\ representation $V$ of $(\mg{G},\phi)$           
    \end{tabular}}}[X, \GL(V)]^{\mg{G}}.
    \end{align}
    We can show this function descends to isomorphism classes $\MVec^n_{(\mg{G},\phi)}(S X)/\cong$ as in the proof of Cor. \ref{trivializationiso}.
    
    The clutching construction defines a map
    \begin{align}
    \bigcup_{\scalebox{0.75}{\begin{tabular}{c}
        Isomorphism classes of \\
         $n$-dimensional \\ representation $V$ of $(\mg{G},\phi)$           
    \end{tabular}}}[X, \GL(V)]^{\mg{G}} \longrightarrow \MVec^n_{(\mg{G},\phi)}(S X)/\cong
    \end{align}
    and both maps are inverses of each other. 
\end{proof}

We need the existence of invariant Hermitian metrics.

\begin{lemma}\label{metrica}
    Let $E$ be a $(\mg{G},\phi)$-vector bundle on a $G$-space $X$. Then there exists an Hermitian metric on $E$ invariant under the action of $(\mg{G},\phi)$.
\end{lemma}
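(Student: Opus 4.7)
The plan is to imitate the standard averaging argument used for unitary equivariant bundles, but correcting for the fact that elements $g$ with $\phi(g)=1$ act antilinearly on the fibers, which forces a complex conjugation in the averaging formula.

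First I would produce some Hermitian metric $h$ on $E$, ignoring the group action. Since $X$ is compact Hausdorff, a partition of unity subordinate to local trivializations of $E$ glues the standard Hermitian forms on $\mathbb{C}^n$ into a global Hermitian metric $h$, exactly as in the non-equivariant case. At this point $h$ has no reason to be $G$-invariant.

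Next I would average. Because $G$ is finite and $\phi(g)=1$ means $g\colon E_x\to E_{gx}$ is antilinear, the naive formula $h_{gx}(gv,gw)$ is antilinear in $v$ and linear in $w$, which is the wrong variance for a sesquilinear form on $E_x$. The fix is to conjugate exactly when $g$ is antilinear. Setting
\begin{align}
\widetilde{h}_x(v,w) := \frac{1}{|G|}\sum_{g\in G} \mathbb{K}^{\phi(g)}\bigl(h_{gx}(gv,\,gw)\bigr),
\end{align}
each summand is sesquilinear in $(v,w)$ (complex conjugation swaps the linear and antilinear slots of $h_{gx}(gv,gw)$ back to the correct ones), so $\widetilde{h}_x$ is sesquilinear. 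Conjugate symmetry is inherited from $h$, and positive definiteness is automatic because $h_{gx}(gv,gv)$ is a positive real number for $v\neq 0$, and $\mathbb{K}$ fixes positive reals, so every term in the sum is positive.

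Finally, I would check invariance. For $\ell\in G$,
\begin{align}
\widetilde{h}_{\ell x}(\ell v,\ell w)
= \frac{1}{|G|}\sum_{g\in G}\mathbb{K}^{\phi(g)}\bigl(h_{g\ell x}((g\ell)v,(g\ell)w)\bigr),
\end{align}
and the substitution $g'=g\ell$, which gives $\phi(g)=\phi(g')+\phi(\ell)$ in $\mathbb{Z}/2$, turns the right hand side into $\mathbb{K}^{\phi(\ell)}(\widetilde{h}_x(v,w))$; this is precisely the compatibility required of a $(G,\phi)$-invariant Hermitian metric, since linear $\ell$ preserves $\widetilde{h}$ and antilinear $\ell$ converts it to its conjugate. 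The main bookkeeping obstacle is keeping the two complex structures (on $E_x$ versus $E_{gx}$) straight, and making sure the conjugation $\mathbb{K}^{\phi(g)}$ is applied to the scalar output and not inside the fiber, so that sesquilinearity is preserved; once this is set up correctly the rest of the argument reduces to the classical Weyl trick.
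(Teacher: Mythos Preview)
Your proposal is correct and follows essentially the same approach as the paper: construct an arbitrary Hermitian metric via a partition of unity and then average over $G$ with the correction $\mathbb{K}^{\phi(g)}$ inserted to handle the antilinear elements, arriving at the identical formula $\widetilde{h}_x(v,w)=\frac{1}{|G|}\sum_{g}\mathbb{K}^{\phi(g)}(h_{gx}(gv,gw))$. Your write-up is in fact more thorough than the paper's, which omits the explicit verification of sesquilinearity, positivity, and invariance that you supply.
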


\begin{proof}
    Given a finite open cover $\{U\}_i$ of $X$ where $E$ is locally trivial, choose Hermitian metrics on each $E|_U\cong U\times \mathbb{C}^n$ and then add them together with a partition of unity, as in Lemma 1.4.10 of \cite{atiyahktheory}.  So we have a metric $\langle \cdot,\cdot \rangle$ on $E$. Now we just average the metric to obtain an equivariant one, namely for $x\in X$ and $v,w\in E_x$ 
    \begin{align}
    \langle v,w\rangle_{\mg{G}}:=\frac{1}{|\mg{G}|}\sum_{\mg{G}} \mathbb{K}^{\phi(g)}\left(\langle gv,gw\rangle\right) .
    \end{align}
    The complex conjugation is needed to make the first entry linear and the second entry antilinear.
\end{proof}

\begin{lemma}\label{embetriv}
    If $p:E\longrightarrow X$ is a $(\mg{G},\phi)$-vector bundle, then there exist a representation $V$ of $(\mg{G},\phi)$ and a $(\mg{G},\phi)$-embedding of $(\mg{G},\phi)$-vector bundles over $X$
    \begin{align}
    \xymatrix{E \ar@{^(->}[rr] \ar[dr]_{p} && X\times V\ar[dl]^{\pi_1} \\ &X.&}
    \end{align}
\end{lemma}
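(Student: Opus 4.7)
The goal is to produce, for $E\to X$, an equivariant fiberwise injection into some trivial magnetic bundle $X\times V$. The strategy is the equivariant analog of the classical compactness argument: construct a $G$-invariant cover of $X$ by open sets over which one has equivariant trivializations into magnetic representations, and patch them together with a $G$-invariant partition of unity. The only extra subtlety in the magnetic setting is that scaling by complex functions is not compatible with antilinear actions, so the partition of unity must be real-valued; this is harmless since real-valued functions commute with $\mathbb{K}$.

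The first step is to produce equivariant local trivializations along orbits. Fix $x\in X$ and let $G_x\le G$ be its stabilizer, which inherits a (possibly magnetic) structure from $(G,\phi)$. The fiber $E_x$ is then a (magnetic) representation of $G_x$, and by local triviality there is a linear isomorphism $E_x\to E_x$. Using Lem.\ \ref{extensionofiso} applied to the $G_x$-equivariant pair $(\{x\},E_x)\hookrightarrow (X,E)$, this isomorphism extends to a $G_x$-equivariant isomorphism $E|_{W}\cong W\times E_x$ on some $G_x$-stable open neighborhood $W$ of $x$, where by Lem.\ \ref{veciequiv} we may shrink $W$ to be $G_x$-invariant. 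Translating by coset representatives gives a $G$-equivariant map $\varphi_x:E|_{U_x}\to U_x\times V_x$ on the $G$-invariant neighborhood $U_x:=G\cdot W$ of the orbit $G\cdot x$, where $V_x$ is the magnetic $(G,\phi)$-representation $\mathrm{Ind}_{G_x}^{(G,\phi)}E_x$ constructed as in Def.\ \ref{induction}; by construction $\varphi_x$ is injective on each fiber.

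Now by compactness of $X$, select finitely many $x_1,\dots,x_n$ so that $U_{x_1},\dots,U_{x_n}$ cover $X$. Choose any real-valued partition of unity $\{\tilde f_i\}$ subordinate to this cover and then average over $G$ to obtain a $G$-invariant real-valued partition of unity $\{f_i\}$, still subordinate since each $U_{x_i}$ is $G$-invariant. Set $V:=V_{x_1}\oplus\cdots\oplus V_{x_n}$ as a magnetic representation of $(G,\phi)$ and define
\begin{align}
\varphi:E&\longrightarrow X\times V,\\
e&\longmapsto \Bigl(p(e),\,\bigl(f_1(p(e))\varphi_{x_1}(e),\ldots,f_n(p(e))\varphi_{x_n}(e)\bigr)\Bigr),
\end{align}
with the convention that $f_i\varphi_{x_i}$ is extended by zero off $U_{x_i}$, which is smooth because $f_i$ vanishes on $X\setminus U_{x_i}$. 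Since each $f_i$ is real and $G$-invariant, it commutes with both linear and antilinear actions, so $\varphi$ is $(G,\phi)$-equivariant.

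Finally, for every $y\in X$ at least one $f_i(y)>0$; on that summand the component $f_i(y)\varphi_{x_i}|_{E_y}$ is injective, so $\varphi$ is fiberwise injective, hence an equivariant embedding of bundles. The main conceptual obstacle is step two—producing $G$-equivariant trivializations around entire orbits rather than mere points—which is handled by the extension-of-isomorphisms lemma together with the equivariant neighborhood lemma; once that is in place, the partition-of-unity assembly is routine provided one keeps the partition real-valued and $G$-invariant.
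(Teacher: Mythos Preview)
Your proof is correct and follows essentially the same strategy as the paper: build a local $(G,\phi)$-equivariant embedding around each orbit into a trivial bundle with fiber the induced representation, extend it to an invariant neighborhood via Lem.~\ref{extensionofiso}, then patch finitely many such with a partition of unity. The paper phrases the local step slightly differently---it writes the target representation directly as $\bigoplus_{y\in\operatorname{orb}_G(x)}E_y$ and defines the embedding on the orbit itself before extending, rather than first trivializing $G_x$-equivariantly at $x$ and then translating by cosets; since $\operatorname{Ind}_{G_x}^{(G,\phi)}E_x\cong\bigoplus_{y\in\operatorname{orb}_G(x)}E_y$, the two constructions coincide, and the paper's framing sidesteps the need to worry about overlaps among the translates $g_iW$ (which in your version requires shrinking $W$ so that $gW\cap W=\emptyset$ for $g\notin G_x$). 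Your explicit remark that the partition of unity must be real-valued and $G$-invariant is a point the paper leaves implicit but which is indeed essential in the magnetic setting.
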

\begin{proof}
    Let $x\in X$. Notice that the sum 
    \begin{align}
    \bigoplus_{y\in \operatorname{orb}_{\mg{G}}(x)}F_y, 
    \end{align}
    of the fibers of $E$ over the points in the orbit $\operatorname{orb}_{\mg{G}}(x)$, is a representation of $(\mg{G},\phi)$. We have a natural $(\mg{G},\phi)$-embedding of $(\mg{G},\phi)$-vector bundles over $\operatorname{orb}_{\mg{G}}(x)$
    \begin{align}
      \Psi_x:E|_{\operatorname{orb}_{\mg{G}}(x)} \longrightarrow \operatorname{orb}_{\mg{G}}(x)\times \bigoplus_{y\in \operatorname{orb}_{\mg{G}}(x)}F_y.
      \end{align}
    By Lem. \ref{extensionofiso} there exist an invariant open neighborhood $U_x$ of $\operatorname{orb}_{\mg{G}}(x)$ and an extension of $\Psi_{U_x}$ which is still an embedding 
    \begin{align}
     \Psi_{U_x}:E|_{U_x} \longrightarrow U_x\times \bigoplus_{y\in \operatorname{orb}_{\mg{G}}(x)}F_y.
     \end{align}
    We can cover $X$ with a finite collection of trivializing open sets $\{U_\alpha\}$ since $X$ is a compact space. Let $\{f_\alpha\}$ be a partition of unity with $\operatorname{supp}(f_\alpha)\subset U_\alpha$. Denote by $V$ the sum $\oplus_\alpha \left(\bigoplus_{y\in \operatorname{orb}(\alpha)}F_{\alpha}\right)$ of representations of $(\mg{G},\phi)$. Define the maps
    \begin{align}
        \widetilde{\Psi}_\alpha: E &\longrightarrow X\times V \\
        v & \longmapsto \begin{cases}
            f_\alpha(p(x)) \Psi_{U_{\alpha}}(v) & \text{ if } p(v)\in U_\alpha \\
            0 & \text{ if } p(v)\not\in U_\alpha.
        \end{cases}
    \end{align}
    The $(\mg{G},\phi)$-maps $\widetilde{\Psi}_{\alpha}$ define an embedding $E\longrightarrow X\times V$.
\end{proof}

\begin{proposition}\label{complement}
    If $E$ is a $(\mg{G},\phi)$-vector bundle on a $G$-space $X$, then there is a trivial $(\mg{G},\phi)$-vector bundle $\textbf{V}$ and a $(\mg{G},\phi)$-vector bundle $E^\bot$ such that $E\oplus E^{\bot}\cong \textbf{V}$.
\end{proposition}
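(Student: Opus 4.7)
The plan is to combine Lemma \ref{embetriv} with Lemma \ref{metrica} and then take an orthogonal complement inside the ambient trivial bundle.

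First, I would apply Lemma \ref{embetriv} to obtain a representation $V$ of $(G,\phi)$ and a $(G,\phi)$-equivariant embedding $E \hookrightarrow X \times V = \mathbf{V}$ of vector bundles over $X$. Next, I would invoke Lemma \ref{metrica} to equip $\mathbf{V}$ with a $(G,\phi)$-invariant Hermitian metric $\langle \cdot, \cdot\rangle_G$, which by construction satisfies $\langle hv, hw\rangle_G = \mathbb{K}^{\phi(h)}(\langle v,w\rangle_G)$ for every $h \in G$. I would then define $E^\bot$ fiberwise as the orthogonal complement
\begin{equation}
E^\bot_x := \{ v \in \mathbf{V}_x \mid \langle v, u\rangle_G = 0 \text{ for all } u \in E_x \},
\end{equation}
so that $\mathbf{V} = E \oplus E^\bot$ as complex vector bundles.

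It remains to verify that $E^\bot$ is actually a $(G,\phi)$-equivariant subbundle of $\mathbf{V}$. Local triviality follows from the standard fact that the orthogonal complement of a (locally trivial) subbundle with respect to a Hermitian metric is again locally trivial; this is the routine Gram-Schmidt argument carried out on a trivializing neighborhood of $E$, exactly as in the non-equivariant setting. For $G$-equivariance of the fibers, given $v \in E^\bot_x$ and any $u \in E_{gx}$, I would write $u = g w$ with $w = g^{-1}u \in E_x$ and compute
\begin{equation}
\langle g v, u\rangle_G = \langle g v, g w\rangle_G = \mathbb{K}^{\phi(g)}(\langle v, w\rangle_G) = 0,
\end{equation}
so that $gv \in E^\bot_{gx}$, confirming that $E^\bot$ is preserved by the $(G,\phi)$-action.

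The splitting $E \oplus E^\bot \cong \mathbf{V}$ then holds as an isomorphism of $(G,\phi)$-equivariant complex vector bundles, which is precisely the claim. There is no genuine obstacle here: both ingredients (equivariant embedding into a trivial bundle and the existence of an invariant Hermitian metric) have been established, and the main subtlety to check is that invariance of the metric with respect to the possibly antilinear action still forces orthogonality to be a $G$-stable condition, which is exactly what the computation above verifies.
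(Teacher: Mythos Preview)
Your proof is correct and follows exactly the same approach as the paper: invoke Lemma~\ref{embetriv} for the equivariant embedding into a trivial bundle, invoke Lemma~\ref{metrica} for an invariant Hermitian metric, and take the orthogonal complement. The paper's version is more terse and omits the explicit verification that $E^\bot$ is $G$-stable, which you supply; your added computation is correct and worth including.
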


\begin{proof}
    By Lem. \ref{embetriv} there exist a $(\mg{G},\phi)$-embedding $E\longrightarrow X\times V$. By Lem. \ref{metrica} there exist an Hermitian metric on $X\times V$ invariant under the action of $(\mg{G},\phi)$. Define $E^{\bot}$ as the orthogonal complement of $E$ in $X\times V$.
\end{proof}

\begin{proposition}\label{defireduc}
    The set of isomorphism classes of stable $(\mg{G},\phi)$-vector bundles is an abelian group $\widetilde{\K}_{\mg{G}}(X)$. It can be identified naturally with a quotient group of $\K_{\mg{G}}(X)$.
\end{proposition}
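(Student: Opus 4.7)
The plan is to build $\rK_{\mg{G}}(X)$ as an abelian group in two moves and then realize it as a quotient of $\K_{\mg{G}}(X)$. First I would verify that $\oplus$ descends to a well-defined binary operation on stable equivalence classes: if $E \oplus \mathbf{V}_1 \cong E' \oplus \mathbf{V}_1'$ and $F \oplus \mathbf{V}_2 \cong F' \oplus \mathbf{V}_2'$ for trivial $(\mg{G},\phi)$-bundles (Ex.~\ref{trivialbundles}), then reassociating gives $(E \oplus F) \oplus (\mathbf{V}_1 \oplus \mathbf{V}_2) \cong (E' \oplus F') \oplus (\mathbf{V}_1' \oplus \mathbf{V}_2')$, and direct sums of trivial magnetic bundles are trivial. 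Associativity, commutativity, and the identity (the zero bundle) are inherited from $\oplus$ on bundles, so stable classes form an abelian monoid.

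Inverses are immediate from Prop.~\ref{complement}: for any magnetic bundle $E$ there exist a bundle $E^{\bot}$ and a trivial $\mathbf{V}$ with $E \oplus E^{\bot} \cong \mathbf{V}$, hence $[E]_s + [E^{\bot}]_s = [\mathbf{V}]_s = 0$ in $\rK_{\mg{G}}(X)$. This upgrades the monoid to an abelian group.

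To realize $\rK_{\mg{G}}(X)$ as a natural quotient of $\K_{\mg{G}}(X)$, the assignment $E \mapsto [E]_s$ is a monoid morphism from the isomorphism classes of magnetic bundles into the group $\rK_{\mg{G}}(X)$. By the universal property of the Grothendieck completion it extends uniquely to a group homomorphism $\pi \colon \K_{\mg{G}}(X) \to \rK_{\mg{G}}(X)$ sending $[E_0] - [E_1]$ to $[E_0]_s - [E_1]_s = [E_0 \oplus E_1^{\bot}]_s$. This is surjective by construction, so $\rK_{\mg{G}}(X) \cong \K_{\mg{G}}(X)/\ker\pi$.

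The step that requires the most care, and the only one where Prop.~\ref{complement} is used nontrivially, is the comparison of the two equivalence relations in play: the relation defining $\K_{\mg{G}}(X)$ permits stabilization by an arbitrary auxiliary bundle $F$, while $\sim_s$ only permits trivial summands. To identify $\ker\pi$ precisely with the subgroup of $\K_{\mg{G}}(X)$ generated by classes of trivial bundles $[\mathbf{V}]$, I would argue as follows: if $[E_0] - [E_1] = [\mathbf{V}] - [\mathbf{V}']$ in $\K_{\mg{G}}(X)$, there is some $F$ with $E_0 \oplus \mathbf{V}' \oplus F \cong E_1 \oplus \mathbf{V} \oplus F$, and adding $F^{\bot}$ to both sides (using $F \oplus F^{\bot}$ trivial) yields a stable equivalence $E_0 \sim_s E_1$. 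The converse direction is a direct unpacking of definitions, so $\ker\pi$ is exactly the subgroup of trivial-bundle classes and the identification is complete.
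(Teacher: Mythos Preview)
Your proof is correct and follows essentially the same approach as the paper's: both use Prop.~\ref{complement} to supply inverses, construct a surjective homomorphism $\K_{\mg{G}}(X)\to\rK_{\mg{G}}(X)$, and identify its kernel with the subgroup generated by classes of trivial bundles. Your version is arguably cleaner in two places: you define $\pi$ via the universal property of the Grothendieck group (the paper instead first normalizes every class to the form $H-\mathbf{V}$ using a complement of $F$, then sends $H-\mathbf{V}\mapsto H$, which tacitly requires a well-definedness check), and you explicitly verify both inclusions for $\ker\pi$, including the use of $F\oplus F^{\bot}$ to absorb an arbitrary stabilizer into a trivial one.
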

\begin{proof}
    Prop. \ref{complement} guaranties the existence of inverses in $\widetilde{\K}_{\mg{G}}(X)$.

    Now we must prove there exists an epimorphism $\K_{\mg{G}}(X)\longrightarrow \widetilde{\K}_{\mg{G}}(X)$. Take an element of $\K_{\mg{G}}(X)$ represented by $E-F$. Choose a $(\mg{G},\phi)$-vector bundle $F'$ such that $F\oplus F'\cong \textbf{V}$ is trivial. We have
    \begin{align}
        E-F=& E\oplus F'-F\oplus F'\\ =& H-\textbf{V}
    \end{align}
    so any element $E-F\in \K_{\mg{G}}(X)$ can be represented by $H-\textbf{V}$. Define the homomorphism
    \begin{align}
     \nonumber   \K_{\mg{G}}(X) &\longrightarrow \widetilde{\K}_{\mg{G}}(X) \\ H-\textbf{V} &\longmapsto H.
    \end{align}
    Clearly this is an epimorphism and the kernel is generated by elements of the form $\textbf{V}'-\textbf{V}$ which is isomorphic to $\mathbf{R}(G)$.
\end{proof}

\begin{theorem}\label{freeaction}
    If $N\leq G_0$ is a normal subgroup which acts freely on $X$, then the projection $\pi: X \longrightarrow X/N$ induces an isomorphism
    \begin{align}
    \pi^*:         \K_{\mg{G}/N}(X/N)\longrightarrow \K_{\mg{G}}(X)
    \end{align}
\end{theorem}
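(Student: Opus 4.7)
The plan is to exhibit an explicit inverse to $\pi^*$ at the level of vector bundle categories, namely the quotient functor $q_*: \MVec_{\mg{G}}(X) \to \MVec_{\mg{G}/N}(X/N)$ sending $E \mapsto E/N$, and then check that the two compositions are naturally isomorphic to the identity, so they descend to inverse isomorphisms on Grothendieck groups.

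First I would verify that this functor is well-defined. Since $N \leq G_0$, every element of $N$ acts complex-linearly on each fiber of $E$. The $N$-action on $X$ is free, hence the $N$-action on the total space $E$ is free (a non-identity element cannot fix a fiber when it moves the base point). Moreover, since $\phi$ is trivial on $N$, $\phi$ descends to a homomorphism $\overline{\phi}: G/N \to \mathbb{Z}/2$, giving $(\mg{G}/N,\overline{\phi})$ the structure of a magnetic group with core $G_0/N$. The residual $G/N$-action on $E/N$ is then linear or antilinear according to $\overline{\phi}$, so $E/N \to X/N$ is a magnetic $(G/N,\overline{\phi})$-equivariant complex bundle provided it is locally trivial.

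Local triviality I would establish as follows: pick $[x] \in X/N$ and choose a slice through $x$. Concretely, using that $\pi: X \to X/N$ is a principal $N$-bundle (compact Hausdorff base, free action of the finite group $N$), there is an open $N$-invariant neighborhood $U$ of the orbit $Nx$ that is $N$-equivariantly homeomorphic to $N \times V$ for some open $V \subset X/N$. Trivializing $E$ on a smaller neighborhood of $x$ and applying Lem. \ref{extensionofiso} together with averaging over $N$ (as in Lem. \ref{metrica} and Lem. \ref{extofsec}), one obtains an $N$-equivariant trivialization of $E$ over $U$ of the form $U \times W$ with $N$ acting only on the $U$-factor. Taking the quotient yields $V \times W$, the desired local trivialization of $E/N$. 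Functoriality in morphisms is automatic since $(G,\phi)$-equivariant bundle maps descend to quotients.

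Next I would check the two natural isomorphisms. For $\pi^* \circ q_* \cong \id$, given $E \to X$ the pullback $\pi^*(E/N)$ consists of pairs $(x,[e])$ with $\pi(e)$ in the $N$-orbit of $\pi(x)$ via $E \to X \to X/N$; sending such a pair to the unique $e' \in E_x$ representing the class $[e]$ gives a natural $(G,\phi)$-equivariant isomorphism. For $q_* \circ \pi^* \cong \id$, given $F \to X/N$ the pullback $\pi^*F$ carries the diagonal $N$-action on $\pi^*F = X \times_{X/N} F$ (trivial on the $F$-factor since the $N$-action on $F$ is trivial before pullback); the quotient by $N$ is naturally $F$ itself. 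Both isomorphisms are clearly $G$-equivariant, hence descend to inverse group homomorphisms at the level of $\K$.

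The main obstacle I anticipate is the local triviality verification for $E/N$: one must use the freeness of $N$ on $X$ to construct $N$-equivariant local trivializations of $E$ in order for $E/N$ to be recognized as a genuine vector bundle (rather than merely a set-theoretic quotient). Once this is set up, the averaging argument over the finite group $N$ and Lem. \ref{extensionofiso} reduce the problem to the classical local structure of principal $N$-bundles, after which the isomorphism of categories (and hence of Grothendieck groups) is a formal consequence.
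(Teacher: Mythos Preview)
Your proposal is correct and takes essentially the same approach as the paper: the paper's proof simply states that $E \mapsto E/N$ is the inverse to $\pi^*$ and refers to Segal~\cite{segal}, Proposition~2.1, for the details. Your sketch fills in exactly those details (well-definedness of the quotient bundle via $N$-equivariant local trivializations and the two natural isomorphisms), so you have written out what the paper outsources.
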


\begin{proof}
    An inverse to $\pi^*$ is given by the homomorphism $E\longmapsto E/N$. It is the same as 2.1 of \cite{segal}.
\end{proof}

\subsection{Magnetic K-theory as a cohomology theory}
In this section we are going to enhance the  magnetic equivariant $K$-theory to a  cohomology theory for a fixed magnetic group $(\mg{G},\phi)$.

\begin{definition}
    Let $X$ be a compact $G$-space with a base point $x_0$. The inclusion $x_0\longrightarrow X$ induces a restriction homomorphism $\psi: \K_{\mg{G}}(X) \longrightarrow \K_{\mg{G}}(x_0)$ and we define the \textbf{reduced $K$-theory} of $X$ as 
    \begin{align}
    \widetilde{\K}_{\mg{G}}(X):=\ker\psi .
    \end{align}
    The $K$-theory of a compact $(\mg{G},\phi)$-pair $(X,Y)$ is defined as 
    \begin{align}
    \K_{\mg{G}}(X,Y):=\widetilde{\K}_{\mg{G}}(X/Y).
    \end{align}
    
\end{definition}
In case $Y=\emptyset$ we set $X/Y=X_+:=X\sqcup \{*\}$ to get $\K_{\mg{G}}(X)=\K_{\mg{G}}(X,\emptyset)$.

\begin{remark}
    This definition of reduced $\K$-theory is naturally isomorphic with the one given in Prop. \ref{defireduc}.
\end{remark}
\begin{definition}
    We define the $(p,q)$-\textbf{suspension groups} of the $G$-pair $(X,Y)$ as
    \begin{align}
    \K^{p,q}_{\mg{G}}(X,Y)=\K_{\mg{G}}(X\times B^{p,q}, X\times S^{p,q}\cup Y\times B^{p,q})
    \end{align}
\end{definition}
The usual suspension groups $\K^{-q}_{\mg{G}}$ are given by 
\begin{align} \label{suspension K-theory}
\K^{-q}_{\mg{G}}(X,Y):=\K^{0,q}_{\mg{G}}(X,Y).
\end{align}
In particular for a base point $x_0\in X$, we have $\rK^{-q}_{\mg{G}}(X)=\rK_{\mg{G}}(\Sigma^{q} X)$ where $\Sigma X=X\times I/ (X\times \{0,1\}\cup \{x_0\}\times I)$ is the \textbf{reduced suspension} of $(X,x_0)$.

\begin{example}\label{Hopfbundle}
    Consider the inclusion $\iota: \mathbb{R}^2=\mathbb{R}^{0,2}\longrightarrow \mathbb{R}^{2,2}=\mathbb{C}^2$ which produces a map
    \begin{align}
    \PS(\mathbb{R}^2) \longrightarrow \PS(\mathbb{C}^2)
    \end{align}
    and induces a morphism
    \begin{align}
    \xymatrix{
    \widetilde{\K}_{\mg{G}}(\operatorname{P}(\mathbb{C}^2)) \ar[r]^{\iota^*} \ar@{=}[d] & \widetilde{\K}_{\mg{G}}(\operatorname{P}(\mathbb{R}^2)) \ar@{=}[d] \\ \widetilde{\K}_{\mg{G}}(B^{1,1}/S^{1,1}) \ar@{=}[d]
     \ar[r]^{\iota^*} & \widetilde{\K}_{\mg{G}}(B^{0,1}/S^{0,1}) \ar@{=}[d]
     \\  \K^{1,1}_{\mg{G}}(*)
     \ar[r]^{\iota^*} & \K^{0,1}_{\mg{G}}(*)
    }
    \end{align}
    Consider the Hopf line bundle (dual to the tautological $(\mg{G},\phi)$-vector bundle)
    \begin{align}
     H^*=\left\{(L,p)\in \operatorname{P}(\mathbb{C}^2)\times \mathbb{C}^2\, |\, p\in L\right\}
     \end{align}
     over $\operatorname{P}(\mathbb{C}^2)$). This is a $(\mg{G},\phi)$-vector bundle with the action induced by the conjugation action and the homomorphism $\mg{G}\overset{\phi}{\longrightarrow} \mathbb{Z}/2$.
      If we subtract a trivial line bundle, then the \textbf{reduced Hopf bundle}, also called the \textbf{Bott element} $[H]-\textbf{1}$, is in the reduced $K$-group $\widetilde{\K}_{\mg{G}}(\operatorname{P}(\mathbb{C}^2))$ and so $\eta:=\iota^*(H-\textbf{1})$ is the \textbf{reduced Hopf bundle} over $\operatorname{P}(\mathbb{R}^2)$. Of course we have a $\mathbb{Z}/2$-homotopy equivalence $B^{1,1}/S^{1,1}\longrightarrow \PS(\mathbb{R}^{2,2})$ given by stereographic projection and the Hopf fibration.
\end{example}

\begin{lemma}\label{exactade3}
    For $(X,Y)$ a $G$-pair we have an exact sequence
    \begin{align}
    \xymatrix{ \K_{\mg{G}}(X,Y) \ar[r]^{j^*} & \K_{\mg{G}}(X) \ar[r]^{i^*} & \K_{\mg{G}}(Y)}
    \end{align}
    where $i:Y\longrightarrow X$ and $j: (X,\emptyset) \longrightarrow (X,Y)$ are the inclusions.
\end{lemma}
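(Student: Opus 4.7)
My plan is to verify the two inclusions $\operatorname{im}(j^*) \subseteq \ker(i^*)$ and $\ker(i^*) \subseteq \operatorname{im}(j^*)$ separately, and the heart of the argument lies in the second.

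For the first inclusion, I would observe that the composite $q \circ i : Y \to X/Y$, where $q: X \to X/Y$ is the quotient map, is the constant map onto the basepoint $[Y]$. Consequently, for any magnetic equivariant bundle $W$ on $X/Y$, the pullback $(q\circ i)^*W$ is isomorphic to the trivial bundle $Y \times W_{[Y]}$. Since a class in $\widetilde{\K}_{\mg{G}}(X/Y)$ can always be represented in the form $[W] - [\mathbf{V}]$ with $W_{[Y]} \cong V$, the composition $i^* \circ j^*$ vanishes.

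For the reverse inclusion, take $[E] - [F] \in \K_{\mg{G}}(X)$ with $[E|_Y] = [F|_Y]$ in $\K_{\mg{G}}(Y)$. I perform two successive stabilization steps. First, stable equivalence on $Y$ provides a bundle $H$ on $Y$ with $E|_Y \oplus H \cong F|_Y \oplus H$; applying Prop. \ref{complement} on $Y$, there is $H^{\perp}$ with $H \oplus H^{\perp}$ trivial, and the enlarged trivial bundle extends to a trivial bundle $\mathbf{W}$ on $X$, so absorbing $\mathbf{W}$ into both $E$ and $F$ leaves $[E] - [F]$ unchanged while promoting the stable isomorphism on $Y$ to an honest isomorphism $E|_Y \cong F|_Y$. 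Second, applying Prop. \ref{complement} on $X$ to $F$ itself, there is $F^{\perp}$ with $F \oplus F^{\perp} \cong \mathbf{V}$ trivial, and replacing $(E,F)$ by $(E \oplus F^{\perp}, \mathbf{V})$ leaves the class unchanged but now puts us in the situation where $F$ is the trivial bundle $\mathbf{V}$ and $E|_Y$ carries a chosen trivialization $\alpha : E|_Y \xrightarrow{\cong} Y \times V$.

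With this setup I invoke the collapsing construction introduced before Cor. \ref{homotopyinvoftriv}: the quotient $E/\alpha$ is a magnetic $(\mg{G},\phi)$-equivariant vector bundle over $X/Y$ (local triviality at the basepoint $[Y]$ being guaranteed by Lem. \ref{extensionofiso}), with fiber $V$ over $[Y]$. The class $[E/\alpha] - [\mathbf{V}] \in \widetilde{\K}_{\mg{G}}(X/Y) = \K_{\mg{G}}(X,Y)$ is the candidate preimage, and the last step is the canonical identification $q^*(E/\alpha) \cong E$, implemented by $\alpha$ on $Y$ and the identity away from $Y$, which yields $j^*([E/\alpha] - [\mathbf{V}]) = [E] - [\mathbf{V}] = [E] - [F]$. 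The main technical delicacy I anticipate is precisely this last bookkeeping: making sure the collapsing and pullback interact as claimed and that the chosen representative $(E \oplus F^{\perp}, \mathbf{V}, \alpha)$ really realizes the given K-theory class under $j^*$. None of the topological inputs are difficult on their own, since Prop. \ref{complement} and Lem. \ref{extensionofiso} do all the heavy lifting.
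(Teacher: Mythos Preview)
Your proof is correct and follows essentially the same approach as the paper: both directions are handled identically in spirit, with the reverse inclusion established by stabilizing until the restriction to $Y$ admits a trivialization $\alpha$ and then forming the collapsed bundle $E/\alpha$ over $X/Y$. The only difference is organizational: the paper begins by writing the class as $E - \mathbf{V}$ with $\mathbf{V}$ trivial (absorbing your second stabilization step into the initial choice of representative, cf.\ Prop.~\ref{defireduc}), so only one further trivial summand $\mathbf{W}$ is needed to promote the stable equivalence on $Y$ to an honest trivialization.
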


\begin{proof}
    The composition $i^*j^*$ is induced by the composition $ji:(Y,\emptyset)\longrightarrow (X,Y)$, which factors through $(Y,Y)$, and so the $i^*j^*$ factors through the zero group $\K_{\mg{G}}(Y,Y)$ and thus $\operatorname{im} j^* \subset \ker i^*$.

    Now, suppose that $\xi\in \ker i^*$. Represent $\xi$ as $E-\textbf{V}$ where $E$ is a $(\mg{G},\phi)$-vector bundle over $X$ and $\textbf{V}$ is a trivial $(\mg{G},\phi)$-vector bundle. Since $i^*(\xi)=0$ it follows that $[E|_Y]=[\textbf{V}]$ in $\K_{\mg{G}}(Y)$. This implies that for some trivial $(\mg{G},\phi)$-vector bundle $W$ we have
    \begin{align}
    (E\oplus \textbf{W})|_Y=\textbf{V}\oplus \textbf{W}.
    \end{align}
    Thus we have a trivialization $\alpha$ of $(E\oplus \textbf{W})|_Y$ and hence we have the $(\mg{G},\phi)$-vector bundle $(E\oplus \textbf{W})/\alpha \longrightarrow X/Y$. If we define the element $\eta= (E\oplus \textbf{W})/\alpha- \textbf{V}\oplus \textbf{W}\in \widetilde{\K}_{\mg{G}}(X/Y)=\K_{\mg{G}}(X,Y)$, then
     \begin{align}
     j^*(\eta)=E\oplus \textbf{W}- \textbf{V}\oplus \textbf{W}=E-\textbf{V}=\xi .
     \end{align}
\end{proof}
\begin{corollary}\label{reduses}
    Let $(X,Y)$ be a $G$-pair and $y\in Y$ a base point. Then we have an exact sequence
    \begin{align}
    \xymatrix{ \K_{\mg{G}}(X,Y) \ar[r]^{j^*} & \widetilde{\K}_{\mg{G}}(X) \ar[r]^{i^*} & \widetilde{\K}_{\mg{G}}(Y)}
    \end{align}
    where $i:Y\longrightarrow X$ and $j: (X,\emptyset) \longrightarrow (X,Y)$ are the inclusions.
\end{corollary}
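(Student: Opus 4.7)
My plan is to reduce Corollary \ref{reduses} to Lemma \ref{exactade3} by exploiting the base point $y$, which provides compatible retractions onto $\{y\}$ for both $X$ and $Y$.

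First I would set up the splittings. The inclusion $\{y\} \hookrightarrow X$ factors through $Y$, so the retractions $r_X: X \to \{y\}$ and $r_Y: Y \to \{y\}$ (thought of as constant maps after composing with the inclusion of $\{y\}$ into $X$, resp. $Y$) yield split short exact sequences
\begin{align}
0 \longrightarrow \widetilde{\K}_{\mg{G}}(X) \longrightarrow \K_{\mg{G}}(X) \longrightarrow \K_{\mg{G}}(\{y\}) \longrightarrow 0, \\
0 \longrightarrow \widetilde{\K}_{\mg{G}}(Y) \longrightarrow \K_{\mg{G}}(Y) \longrightarrow \K_{\mg{G}}(\{y\}) \longrightarrow 0,
\end{align}
by the definition of reduced K-theory. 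Since $i \circ r_Y = r_X$ when restricted to the summand $\K_{\mg{G}}(\{y\})$, the map $i^*: \K_{\mg{G}}(X) \to \K_{\mg{G}}(Y)$ respects these decompositions and restricts to a map $i^*: \widetilde{\K}_{\mg{G}}(X) \to \widetilde{\K}_{\mg{G}}(Y)$.

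Next I would show that the image of $j^*: \K_{\mg{G}}(X,Y) = \widetilde{\K}_{\mg{G}}(X/Y) \to \K_{\mg{G}}(X)$ lies in $\widetilde{\K}_{\mg{G}}(X)$. The quotient map $X \to X/Y$ sends the base point $y$ to the base point $[Y] \in X/Y$, so the composition $\widetilde{\K}_{\mg{G}}(X/Y) \xrightarrow{j^*} \K_{\mg{G}}(X) \to \K_{\mg{G}}(\{y\})$ factors through $\widetilde{\K}_{\mg{G}}(X/Y) \to \K_{\mg{G}}(\{[Y]\})$, which vanishes by definition of reduced K-theory. Hence $j^*$ factors through $\widetilde{\K}_{\mg{G}}(X) \subset \K_{\mg{G}}(X)$.

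Finally I would deduce exactness at $\widetilde{\K}_{\mg{G}}(X)$ from exactness at $\K_{\mg{G}}(X)$ in Lemma \ref{exactade3}. The composition $i^* \circ j^*$ vanishes already at the unreduced level, hence also at the reduced level. Conversely, suppose $\xi \in \widetilde{\K}_{\mg{G}}(X)$ satisfies $i^*(\xi) = 0$ in $\widetilde{\K}_{\mg{G}}(Y)$. Because $\widetilde{\K}_{\mg{G}}(Y)$ injects into $\K_{\mg{G}}(Y)$ via the splitting, we have $i^*(\xi) = 0$ in $\K_{\mg{G}}(Y)$ as well, so by Lemma \ref{exactade3} there exists $\eta \in \K_{\mg{G}}(X,Y)$ with $j^*(\eta) = \xi$. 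Since $j^*$ lands in $\widetilde{\K}_{\mg{G}}(X)$, this $\eta$ is the required preimage. No step here presents a serious obstacle; the whole argument is a standard diagram chase exploiting the splittings, and the only content beyond Lemma \ref{exactade3} is the verification that $j^*$ and $i^*$ are compatible with reduction, which follows immediately from the base point considerations.
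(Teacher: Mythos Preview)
Your proof is correct and follows essentially the same approach as the paper. The paper compresses the argument into a single commutative diagram with rows $\K_{\mg{G}}(\{y\})$, $\K_{\mg{G}}$, $\widetilde{\K}_{\mg{G}}$ and columns indexed by $(X,Y)$, $X$, $Y$, then declares the result follows; what you have written is exactly the diagram chase implicit in that picture.
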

\begin{proof}
    It follows from the next commutative diagram 
    \begin{align}
    \xymatrix{ 0 \ar[r] & \K_{\mg{G}}(\{y\}) \ar@{=}[r] & \K_{\mg{G}}(\{y\}) \\ \K_{\mg{G}}(X,Y) \ar[r]^{j^*} \ar[u] & \K_{\mg{G}}(X) \ar[r]^{i^*} \ar[u] & \K_{\mg{G}}(Y) \ar[u] \\ \K_{\mg{G}}(X,Y) \ar[r]^{j^*} \ar@{=}[u]& \widetilde{\K}_{\mg{G}}(X) \ar[r]^{i^*} \ar@{^(->}[u]& \widetilde{\K}_{\mg{G}}(Y). \ar@{^(->}[u] }
    \end{align}
\end{proof}
\begin{lemma}\label{sucesionlarga}
    For a $G$-pair $(X,Y)$ we have a long exact sequence
    \begin{align}
    \dots \longrightarrow \K^{-1}_{\mg{G}}(X)\longrightarrow \K^{-1}_{\mg{G}}(Y) \longrightarrow \K_{\mg{G}}(X,Y) \longrightarrow \K_{\mg{G}}(X)\longrightarrow \K_{\mg{G}}(Y)
    \end{align}
\end{lemma}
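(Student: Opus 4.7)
The plan is to derive the long exact sequence by extending the inclusion $i : Y \hookrightarrow X$ to a Puppe cofibre sequence of pointed $G$-spaces and applying the three-term exact sequence of Cor.~\ref{reduses} at each stage.

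First I would form the mapping cone $C_i := X \cup_Y CY$, where $CY := Y \times [0,1]/(Y \times \{1\})$ is the cone on $Y$ with $G$ acting on the $Y$-factor and trivially on the interval. Because $CY$ is $G$-contractible, Cor.~\ref{trivializationiso} applied to the pair $(C_i, CY)$ yields a natural bijection of isomorphism classes of magnetic bundles, inducing a canonical isomorphism
\begin{align}
\widetilde{\K}_{\mg{G}}(C_i) \;\cong\; \widetilde{\K}_{\mg{G}}(C_i/CY) \;=\; \widetilde{\K}_{\mg{G}}(X/Y) \;=\; \K_{\mg{G}}(X,Y).
\end{align}
Iterating the cone construction on the inclusion $X \hookrightarrow C_i$ produces $C_i \cup_X CX$, which is $G$-homotopy equivalent to the unreduced suspension $SY = CY/Y$ by collapsing the $G$-contractible subspace $CX$ via the same corollary. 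Continuing indefinitely one obtains the Puppe sequence
\begin{align}
Y \longrightarrow X \longrightarrow C_i \longrightarrow SY \longrightarrow SX \longrightarrow SC_i \longrightarrow S^2Y \longrightarrow \cdots,
\end{align}
in which every consecutive pair of arrows is a $G$-cofibration $W \hookrightarrow Z$ together with its collapse map $Z \to Z/W$.

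Next I would apply Cor.~\ref{reduses} to each consecutive triple in the Puppe sequence, producing three-term exact sequences
\begin{align}
\widetilde{\K}_{\mg{G}}(Z/W) \longrightarrow \widetilde{\K}_{\mg{G}}(Z) \longrightarrow \widetilde{\K}_{\mg{G}}(W),
\end{align}
and splice them into a single long sequence. Together with the identification $\widetilde{\K}_{\mg{G}}(C_i) = \K_{\mg{G}}(X,Y)$ established above, and the identification $\widetilde{\K}_{\mg{G}}(SZ) \cong \K^{-1}_{\mg{G}}(Z)$ (which follows from the definition \eqref{suspension K-theory} and the $G$-homotopy equivalence between the unreduced suspension $SZ$ and the quotient $(Z \times B^{0,1})/(Z \times S^{0,1})$ used in defining $\K^{-1}_{\mg{G}}(Z)$), one obtains the five-term sequence claimed in the lemma. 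Iterating the Puppe construction one step further yields the full long exact sequence involving higher suspensions $\K^{-q}_{\mg{G}}$.

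The main technical point is checking that the classical Puppe cofibre-sequence formalism carries through $G$-equivariantly in the magnetic setting. Explicitly, I need that the cones, cylinders and suspensions used above inherit $(\mg{G},\phi)$-space structures with trivial $G$-action on the interval factor, that the contractibility of $CY$ and $CX$ is $G$-equivariant so that Cor.~\ref{trivializationiso} applies at each step, and that the induced maps on magnetic $K$-theory behave naturally under the identifications of reduced, relative and suspension groups. These are all routine consequences of the $G$-homotopy invariance of Lem.~\ref{invhom} combined with Cor.~\ref{trivializationiso}; no genuinely new magnetic phenomenon arises beyond what has already been established in the preceding lemmas.
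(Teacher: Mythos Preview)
Your proposal is correct and follows essentially the same approach as the paper: both build the Puppe cofibre sequence from the inclusion $Y\hookrightarrow X$ using iterated mapping cones, apply the three-term reduced sequence of Cor.~\ref{reduses} at each stage, and use the collapse of $G$-contractible cones (via Cor.~\ref{trivializationiso}) to identify the successive cofibres with $X/Y$, $\Sigma Y$, $\Sigma X$, etc. The only cosmetic differences are that the paper writes out the iterated-cone diagram explicitly (following Segal~\cite{segal}, 2.6) and works with reduced cones and suspensions throughout, passing to the unreduced statement at the end by replacing $(X,Y)$ with $(X_+,Y_+)$.
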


\begin{proof} 
    It is enough to show that we have a five term exact sequence
    \begin{align}
    \widetilde{\K}^{-1}_{\mg{G}}(X)\longrightarrow \widetilde{\K}^{-1}_{\mg{G}}(Y) \longrightarrow \K_{\mg{G}}(X,Y) \longrightarrow \widetilde{\K}_{\mg{G}}(X)\longrightarrow \widetilde{\K}_{\mg{G}}(Y).
    \end{align}
    In fact, if this has been established, then replacing $(X,Y)$ by $(S^nX,S^nY)$ for $n=1,2,3,\ldots $ we obtain a family of exact sequences that can be concatenated to produce a long exact sequence. Then replacing $(X,Y)$ by $(X_+,Y_+)$ we get the long exact sequence of the statement.
    For a base point $x_0\in X$, denote by $\widetilde{C}(X)=X\times  I/(X\times \{0\}\cup \{x_0\}\times I)$ the reduced cone. So consider the following commutative diagram of $G$-spaces
    \begin{align}
    \xymatrix{
  Y \ar@{^(->}[r] & X \ar[r] & X/Y & \Sigma Y & \Sigma X \\ & X \ar@{^(->}[r] \ar@{=}[u]
  & X\cup \widetilde{C}Y \ar[u]_{\simeq} \ar[r] & X\cup \widetilde{C}Y/X \ar[u]_{\cong} & \Sigma X \ar@{=}[u] \\
  & & X\cup \widetilde{C}Y \ar@{^(->}[r] \ar@{=}[u] & (X\cup \widetilde{C}Y)\cup \widetilde{C}X=\widetilde{C}X\cup \widetilde{C}Y \ar[r] \ar[u]_{\simeq} & \widetilde{C}X \cup \widetilde{C}Y / X\cup \widetilde{C}Y \ar[u]_{\cong}  \\ &&&\widetilde{C}X\cup \widetilde{C}Y \ar@{=}[u] \ar@{^(->}[r]& (\widetilde{C}X\cup \widetilde{C}Y)\cup \widetilde{C}(X\cup \widetilde{C}Y) \ar[u]_{\simeq}}
  \end{align}
  By Cor. \ref{reduses}, when we apply $\widetilde{\K}_{\mg{G}}(\mathunderscore )$, the first three rows become exact sequences. All spaces, maps and homotopies involved here are in the category of $G$-spaces, and the cones are $G$-contractible. So the same arguments of 2.6 in \cite{segal} applies to this context, but of course with the appropriate results about the homotopy properties of $(\mg{G},\phi)$-vector bundles, $G$-maps and $G$-homotopies.    
\end{proof}

If $X$ is a locally compact $G$-space, then the one point compactification $X\cup\{\infty\}$ is still a $G$-space with the new point $\infty $ being a fixed point. This allows us to define the $\K_{\mg{G}}$-theory of locally compact $G$-spaces $X$ as 
\begin{align}
\K_{\mg{G}}(X):=\ker\left(\K_{\mg{G}}(X\cup \{\infty\}) \longrightarrow \K_{\mg{G}}(\infty)\right).
\end{align}

If $(X,Y)$ is a compact $G$-pair, then we have natural a $G$-homeomorphism
\begin{align}
X \slash Y \longrightarrow (X\backslash Y)\cup \{\infty\}
\end{align}
 so we have an isomorphism
 \begin{align} \label{K(X/Y)=K(X,Y)}
 \K_{\mg{G}}(X,Y)\cong \K_{\mg{G}}(X\slash Y).
 \end{align}

\begin{corollary}\label{exactforlocally}
    For a locally compact $G$-pair $(X,Y)$ we have a long exact sequence
    \begin{align}
    \dots \longrightarrow \K^{-1}_{\mg{G}}(X)\longrightarrow \K^{-1}_{\mg{G}}(Y) \longrightarrow \K_{\mg{G}}(X,Y) \longrightarrow \K_{\mg{G}}(X)\longrightarrow \K_{\mg{G}}(Y)
    \end{align}
\end{corollary}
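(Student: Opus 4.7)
The plan is to reduce the claim to the compact case by passing to one-point compactifications. Assume $Y$ is a closed $G$-invariant subspace of the locally compact $G$-space $X$. The one-point compactifications $X^+$ and $Y^+$ are compact $G$-spaces sharing the fixed basepoint $\infty$, and the natural inclusion exhibits $(X^+,Y^+)$ as a compact $G$-pair in the sense already treated in Lem.~\ref{sucesionlarga}. From the homeomorphism $X^+/Y^+ \cong (X\setminus Y)^+$ combined with \eqref{K(X/Y)=K(X,Y)} we obtain the identification
\begin{equation}
\K_{\mg{G}}(X^+,Y^+) \;\cong\; \widetilde{\K}_{\mg{G}}(X^+/Y^+) \;\cong\; \K_{\mg{G}}(X\setminus Y) \;=:\; \K_{\mg{G}}(X,Y),
\end{equation}
and similarly for every suspended variant $\K^{-q}_{\mg{G}}(X^+,Y^+)$.

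Next I would introduce the splittings induced by the collapse $c:X^+\to\{\infty\}$ and its section (the inclusion of the basepoint). Since these maps are $G$-equivariant and $c$ is a retraction of the inclusion $\{\infty\}\hookrightarrow X^+$, they induce natural direct sum decompositions
\begin{align}
\K_{\mg{G}}(X^+) &\;\cong\; \widetilde{\K}_{\mg{G}}(X^+)\oplus \K_{\mg{G}}(\infty) \;\cong\; \K_{\mg{G}}(X)\oplus \mathbf{R}(G), \\
\K_{\mg{G}}(Y^+) &\;\cong\; \widetilde{\K}_{\mg{G}}(Y^+)\oplus \K_{\mg{G}}(\infty) \;\cong\; \K_{\mg{G}}(Y)\oplus \mathbf{R}(G),
\end{align}
and the analogous splittings hold for the $\K^{-q}_{\mg{G}}$ groups via $(X^+,Y^+)\wedge S^{0,q}$. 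The first summand is by definition $\K_{\mg{G}}(X)$, $\K_{\mg{G}}(Y)$ for locally compact inputs.

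Now I would apply Lem.~\ref{sucesionlarga} to the compact pair $(X^+,Y^+)$ to obtain the long exact sequence
\begin{equation}
\cdots \longrightarrow \K^{-1}_{\mg{G}}(X^+) \longrightarrow \K^{-1}_{\mg{G}}(Y^+) \longrightarrow \K_{\mg{G}}(X^+,Y^+) \longrightarrow \K_{\mg{G}}(X^+) \longrightarrow \K_{\mg{G}}(Y^+),
\end{equation}
and then subtract off the point-summand. Because the retractions to $\{\infty\}$ are natural in the pair and compatible with the connecting homomorphism, the whole long exact sequence decomposes as the direct sum of the (trivial) sequence associated to the pair $(\{\infty\},\{\infty\})$ and a sequence built from the reduced groups. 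Extracting the second summand yields the claimed long exact sequence
\begin{equation}
\cdots \longrightarrow \K^{-1}_{\mg{G}}(X) \longrightarrow \K^{-1}_{\mg{G}}(Y) \longrightarrow \K_{\mg{G}}(X,Y) \longrightarrow \K_{\mg{G}}(X) \longrightarrow \K_{\mg{G}}(Y).
\end{equation}

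The main obstacle I expect is essentially bookkeeping: one must confirm that the retraction-based splittings are compatible with every map in the Puppe-type long exact sequence of Lem.~\ref{sucesionlarga} (including the connecting homomorphisms coming from the iterated reduced suspensions of $X^+$ and $Y^+$), and that the homeomorphism $X^+/Y^+\cong (X\setminus Y)^+$ is genuinely $G$-equivariant. Both points are routine, paralleling the classical non-equivariant argument in Atiyah--Segal, once one observes that $\infty$ is a $G$-fixed point so that all basepoint-collapse maps land inside $\MVec_{\mg{G}}$.
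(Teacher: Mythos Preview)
Your proposal is correct and follows essentially the same approach as the paper's proof: both pass to the one-point compactifications, invoke Lem.~\ref{sucesionlarga} for the compact pair $(X\cup\{\infty\},Y\cup\{\infty\})$, and then split off the contribution of the basepoint $\{\infty\}$ using the retraction. The paper packages this as a commutative diagram with exact columns (citing Karoubi II.4.7), whereas you phrase it via explicit direct-sum decompositions, but the content is identical.
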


\begin{proof}
    The same as II.4.7 in \cite{karoubi}, we have a commutative diagram
\begin{align}
\scalebox{0.8}{\xymatrix{
0 & 0 & 0 & 0 & 0 \\
\K^{-1}_{\mg{G}}(\infty) \ar[r] \ar[u] & \K^{-1}_{\mg{G}}(\infty) \ar[r] \ar[u] & 0 \ar[r] \ar@{=}[u] & \K_{\mg{G}}(\infty) \ar[r] \ar[u] & \K_{\mg{G}}(\infty) \ar[u] \\
\K^{-1}_{\mg{G}}(X\cup \{\infty\}) \ar[r] \ar[u] & \K^{-1}_{\mg{G}}(Y\cup \{\infty\}) \ar[r] \ar[u] & \K_{\mg{G}}(X\cup \{\infty\}\backslash Y\cup\{\infty\}) \ar[r] \ar[u] & \K_{\mg{G}}(X\cup \{\infty\}) \ar[r] \ar[u] & \K_{\mg{G}}(Y\cup \{\infty\})  \ar[u] \\
\K^{-1}_{\mg{G}}(X) \ar[r] \ar[u] & \K^{-1}_{\mg{G}}(Y) \ar[r] \ar[u] & \K_{\mg{G}}(X\backslash Y) \ar[r] \ar[u]^{\cong }  & \K_{\mg{G}}(X) \ar[r] \ar[u] & \K_{\mg{G}}(Y)  \ar[u] \\ 0 \ar[u]& 0 \ar[u]& 0 \ar[u]& 0 \ar[u]& 0 \ar[u]
}}
\end{align}
    where the middle row is exact Lem. \ref{sucesionlarga} and the columns are just the definition of $\K$-theory for locally compact spaces.
\end{proof}

\begin{corollary}\label{sucesiondetres}
    Given a locally compact $G$-triple $(X,Y,Z)$ there is a long exact sequence
\begin{align}    
    \dots \longrightarrow \K^{-1}_{\mg{G}}(X,Z) \longrightarrow \K^{-1}_{\mg{G}}(Y,Z) \longrightarrow \K_{\mg{G}}(X,Y) \longrightarrow \K_{\mg{G}}(X,Z) \longrightarrow \K_{\mg{G}}(Y,Z) 
    \end{align}
\end{corollary}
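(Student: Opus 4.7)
The plan is to reduce the triple case to the pair case already established in Corollary \ref{exactforlocally}. The key geometric observation is the natural $G$-homeomorphism
\[
(X/Z)\big/(Y/Z) \;\cong\; X/Y,
\]
which holds because collapsing $Z$ first and then the image of $Y$ has the same effect as collapsing $Y \supset Z$ directly. This reduces the question about triples to a question about pairs.

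Concretely, I would form the pointed $G$-spaces $X/Z$ and $Y/Z$ with basepoint $[Z]$ and apply Corollary \ref{exactforlocally} to the pair $(X/Z, Y/Z)$, obtaining
\[
\cdots \to \K^{-1}_{\mg{G}}(X/Z) \to \K^{-1}_{\mg{G}}(Y/Z) \to \K_{\mg{G}}(X/Z,\, Y/Z) \to \K_{\mg{G}}(X/Z) \to \K_{\mg{G}}(Y/Z).
\]
Then I would translate each term via Eqn.~\eqref{K(X/Y)=K(X,Y)}. The relative middle term becomes
\[
\K_{\mg{G}}(X/Z,\, Y/Z) \;\cong\; \widetilde{\K}_{\mg{G}}\bigl((X/Z)/(Y/Z)\bigr) \;\cong\; \widetilde{\K}_{\mg{G}}(X/Y) \;\cong\; \K_{\mg{G}}(X,Y),
\]
while the remaining terms become $\K^{*}_{\mg{G}}(X,Z)$ and $\K^{*}_{\mg{G}}(Y,Z)$. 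Substituting these identifications into the pair sequence gives exactly the long exact sequence of the triple, and naturality of the connecting homomorphism is inherited from the one in Corollary \ref{exactforlocally}.

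The main obstacle is handling the locally compact setting cleanly: the quotient $X/Z$ is most naturally formed when $Z$ is compact, so in general I would instead work with one-point compactifications, passing to the pair $(X^+/Z^+,\, Y^+/Z^+)$ with $\infty$ as basepoint, or equivalently to the open locally compact pair $(X\setminus Z,\, Y\setminus Z)$ together with $(X\setminus Z)\setminus (Y\setminus Z) = X\setminus Y$. Using the isomorphism $\K_{\mg{G}}(A,B)\cong \K_{\mg{G}}(A\setminus B)$ implicit in the definition of locally compact $K$-theory, the pair sequence of $(X\setminus Z,\, Y\setminus Z)$ reads exactly as the triple sequence after relabeling. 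Beyond this bookkeeping, no new ingredient is needed.
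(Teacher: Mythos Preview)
Your proposal is correct and, in its final paragraph, arrives at exactly the paper's one-line proof: apply Cor.~\ref{exactforlocally} to the locally compact $G$-pair $(X\setminus Z,\,Y\setminus Z)$, then use $\K_{\mg{G}}(A,B)\cong\K_{\mg{G}}(A\setminus B)$ to relabel the terms as $\K_{\mg{G}}(X,Z)$, $\K_{\mg{G}}(Y,Z)$, and $\K_{\mg{G}}(X,Y)$. Your initial quotient-space discussion is a valid heuristic but, as you yourself note, the complement formulation is the clean way to handle the locally compact setting---and that is precisely what the paper does, without the detour.
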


\begin{proof}
    As in II.4.14 of \cite{karoubi}, apply Cor. \ref{exactforlocally} to the $G$-pair $(X\backslash Z,Y\backslash Z)$. 
\end{proof}

\begin{corollary}
    Given a locally compact $G$-triple $(X,Y,Z)$ there is a long exact sequence
    \begin{align}
    \dots \longrightarrow \K^{p,1}_{\mg{G}}(X,Z) \longrightarrow \K^{p,1}_{\mg{G}}(Y,Z) \longrightarrow \K^{p,0}_{\mg{G}}(X,Y) \longrightarrow \K^{p,0}_{\mg{G}}(X,Z) \longrightarrow \K^{p,0}_{\mg{G}}(Y,Z) 
    \end{align}
    for each integer $p\geq 0$.
\end{corollary}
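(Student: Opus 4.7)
The plan is to deduce this from the previous corollary (Cor. \ref{sucesiondetres}) by replacing the triple $(X,Y,Z)$ with an auxiliary triple obtained by multiplying with $B^{p,0}$ and adjoining $X\times S^{p,0}$ as a common subspace.

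Concretely, I would first apply Cor. \ref{sucesiondetres} to the locally compact $G$-triple
\begin{align}
\bigl(X\times B^{p,0},\ Y\times B^{p,0}\cup X\times S^{p,0},\ Z\times B^{p,0}\cup X\times S^{p,0}\bigr),
\end{align}
obtaining a five-term exact sequence with terms $\K^{-1}$ and $\K^{0}$ of the appropriate pairs. Next I would identify each of these terms. By definition of $\K^{p,0}_{\mg{G}}$, the pair $(X\times B^{p,0},\, X\times S^{p,0}\cup Y\times B^{p,0})$ computes $\K^{p,0}_{\mg{G}}(X,Y)$, and similarly for $(X,Z)$. For $(Y,Z)$ one excises $X\times S^{p,0}$ away from $Y\times B^{p,0}\cup X\times S^{p,0}$: the quotient
\begin{align}
\bigl(Y\times B^{p,0}\cup X\times S^{p,0}\bigr)\big/\bigl(Z\times B^{p,0}\cup X\times S^{p,0}\bigr)
\cong \bigl(Y\times B^{p,0}\bigr)\big/\bigl(Z\times B^{p,0}\cup Y\times S^{p,0}\bigr),
\end{align}
and combining this with the identification \eqref{K(X/Y)=K(X,Y)} yields $\K^{p,0}_{\mg{G}}(Y,Z)$.

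To identify the two $\K^{-1}$ terms with $\K^{p,1}_{\mg{G}}(X,Z)$ and $\K^{p,1}_{\mg{G}}(Y,Z)$, I would unravel the definition $\K^{-1}_{\mg{G}}(A,B)=\K^{0,1}_{\mg{G}}(A,B)=\K_{\mg{G}}(A\times B^{0,1},\, A\times S^{0,1}\cup B\times B^{0,1})$ and use the product decompositions
\begin{align}
B^{p,0}\times B^{0,1}=B^{p,1},\qquad B^{p,0}\times S^{0,1}\,\cup\, S^{p,0}\times B^{0,1}=S^{p,1},
\end{align}
which hold because $\mathbb{R}^{p,0}\oplus\mathbb{R}^{0,1}=\mathbb{R}^{p,1}$ as $\mathbb{Z}/2$-representations. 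A direct manipulation then shows
\begin{align}
\K^{-1}_{\mg{G}}\bigl(X\times B^{p,0},\, Z\times B^{p,0}\cup X\times S^{p,0}\bigr)\cong \K^{p,1}_{\mg{G}}(X,Z),
\end{align}
and the analogous identification, together with the same excision step as above, gives $\K^{p,1}_{\mg{G}}(Y,Z)$ for the other term. Finally, to obtain the full long exact sequence (and not just a five-term portion) I would iterate by replacing $(X,Y,Z)$ with their reduced suspensions, exactly as in the proof of Lem. \ref{sucesionlarga}.

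The main obstacle is purely bookkeeping: checking that the excision identification is valid in the locally compact equivariant setting and that the products $B^{p,0}\times B^{0,1}$, $S^{p,0}\times B^{0,1}\cup B^{p,0}\times S^{0,1}$ match the definition of $B^{p,1}$ and $S^{p,1}$ as $G$-spaces through $\phi$. Once these identifications are in place, the exactness of the sequence is simply the exactness of the sequence provided by Cor. \ref{sucesiondetres}, so no further homotopical input is required.
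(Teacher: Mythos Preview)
Your proposal is correct and follows essentially the same route as the paper: apply Cor.~\ref{sucesiondetres} to the auxiliary triple obtained by crossing with $B^{p,0}$ and adjoining $X\times S^{p,0}$, then identify the resulting terms with the $\K^{p,0}$ and $\K^{p,1}$ groups via the definitions and the product decomposition $B^{p,0}\times B^{0,1}=B^{p,1}$, $S^{p,0}\times B^{0,1}\cup B^{p,0}\times S^{0,1}=S^{p,1}$. In fact, the triple you write down, $\bigl(X\times B^{p,0},\,Y\times B^{p,0}\cup X\times S^{p,0},\,Z\times B^{p,0}\cup X\times S^{p,0}\bigr)$, is the correct one; the paper's stated triple omits the $Z\times B^{p,0}$ summand in the third entry, which appears to be a typo (as written it only yields the case $Z=\emptyset$). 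Your final remark about iterating with suspensions is unnecessary, since Cor.~\ref{sucesiondetres} already delivers the full long exact sequence.
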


\begin{proof}
    As in page 373 of \cite{atiyahreal}, apply Cor. \ref{sucesiondetres} to the $G$-triple $(X\times B^{p,0},X\times S^{p,0}\cup Y\times B^{p,0},X\times S^{p,0})$. 
\end{proof}

\subsection{Products and Bott periodicity}

Let $E$ be a $(\mg{G},\phi)$-vector bundle over a $G$-space $X$. Recall that $\PS(E)$ the \textbf{projective bundle} of $E$, which fiber at $x\in X$ is given by the projective space $\PS(E_x)$, is also a $G$-space. The tautological line-bundle $H^*$ is given by
\begin{align}
\xymatrix{H^* \ar[d]_{\pi_1}\\ \operatorname{P}(E),}\hspace{-0.4cm} =\{(l,v)\in \PS(E)\times E \mid l\in\operatorname{P}(E) \text{ and }v\in l\} 
\end{align}
it is naturally a $(\mg{G},\phi)$-vector bundle. Denote by $H$ its dual line bundle, known as the standard line $(\mg{G},\phi)$-vector bundle over $\operatorname{P}(E)$. 

The projection map $\operatorname{P}(E)\longrightarrow X$ induces a ring homomorphism $\K_{\mg{G}}(X) \longrightarrow \K_{\mg{G}}(\operatorname{P}(E))$ so that $\K_{\mg{G}}(\operatorname{P}(E))$ becomes a $\K_{\mg{G}}(X)$-algebra. For a particular $(\mg{G},\phi)$-vector bundle $E$ we can determine the structure of the algebra:
\begin{theorem}[Bott periodicity]\label{pediocicitytheorem}
    Let $L$ be a line $(\mg{G},\phi)$-vector bundle over a $G$-space $X$, $H$ the standard line $(\mg{G},\phi)$-vector bundle over the $G$-space $\operatorname{P}(L\oplus \textbf{1})$. Then, as a $\K_{\mg{G}}(X)$-algebra, $\K_{\mg{G}}(\operatorname{P}(L\oplus \textbf{1}))$ is generated by $[H]$, subject to the single relation
    \begin{align}
    \left([H]-[\textbf{1}]\right) \left([L][H]-[\textbf{1}]\right)=0.
    \end{align}
\end{theorem}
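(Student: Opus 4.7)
\medskip
\noindent\textbf{Proof plan.} My plan is to adapt Atiyah's classical proof of the projective bundle formula \cite{atiyahktheory} to the magnetic equivariant setting, using the long exact sequence of Lem. \ref{sucesionlarga} and the Bott element of Ex. \ref{Hopfbundle}. The argument breaks into three stages: verify the stated relation, reduce to the case of trivial $L$ by patching, and compute directly when $X$ is a point.

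For the relation, on $\operatorname{P}(L\oplus \textbf{1})$ one has the tautological short exact sequence of $(\mg{G},\phi)$-equivariant bundles
\begin{align*}
0\longrightarrow H^*\longrightarrow \pi^*(L\oplus \textbf{1})\longrightarrow H\otimes \pi^*L\longrightarrow 0,
\end{align*}
where $\pi:\operatorname{P}(L\oplus \textbf{1})\to X$ is the projection and the quotient is identified by computing determinants: both $H^*$ and the quotient are line bundles, and $\det\pi^*(L\oplus \textbf{1})=\pi^*L$. In $\K_{\mg{G}}(\operatorname{P}(L\oplus \textbf{1}))$ this yields $[H^*]+[H][L]=[L]+[\textbf{1}]$, and multiplying by $[H]$ gives exactly $([H]-[\textbf{1}])([L][H]-[\textbf{1}])=0$. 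Hence there is a natural $\K_{\mg{G}}(X)$-algebra homomorphism
\begin{align*}
\Phi:\K_{\mg{G}}(X)[t]\big/\bigl((t-[\textbf{1}])([L]t-[\textbf{1}])\bigr)\longrightarrow \K_{\mg{G}}(\operatorname{P}(L\oplus \textbf{1})),\quad t\longmapsto [H],
\end{align*}
and the task is to show $\Phi$ is an isomorphism.

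Next I would cover $X$ by finitely many $G$-invariant closed subsets over which $L$ trivializes as a $(\mg{G},\phi)$-line bundle; such a cover exists by compactness of $X$ combined with Lem. \ref{extensionofiso} and Lem. \ref{veciequiv}. Applying the exact sequence of Cor. \ref{sucesiondetres} to the pairs cut out by this cover and to their preimages in $\operatorname{P}(L\oplus \textbf{1})$, using naturality of $\Phi$ and the five lemma, reduces the problem to the case $L\cong \textbf{1}$. Then $\operatorname{P}(L\oplus \textbf{1})\cong X\times \operatorname{P}(\mathbb{C}^2)$ as $G$-spaces, and an analogous naturality argument run over a $G$-CW decomposition of $X$ reduces further to $X=*$. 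For this base case the cofiber sequence $*\to \operatorname{P}(\mathbb{C}^2)\to \operatorname{P}(\mathbb{C}^2)/*\simeq B^{1,1}/S^{1,1}$ plugged into Lem. \ref{sucesionlarga} identifies $\widetilde{\K}_{\mg{G}}(\operatorname{P}(\mathbb{C}^2))$ with the class of the Bott element $\eta=[H]-[\textbf{1}]$ of Ex. \ref{Hopfbundle}, exhibiting $\K_{\mg{G}}(\operatorname{P}(\mathbb{C}^2))$ as the free $\mathbf{R}(G)$-module on $[\textbf{1}]$ and $[H]$ subject to the single relation $([H]-[\textbf{1}])^2=0$, which matches the claimed presentation when $L=\textbf{1}$.

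The main obstacle I anticipate is the trivializing cover step: producing $G$-invariant closed subsets over which a magnetic line bundle actually trivializes requires care, since the antilinear half of the action can obstruct a naive averaging of local trivializations, and the $\K_{\mg{G}}(X)$-algebra structures must match across restrictions so that the five lemma can be applied. The resolution is to exploit the invariant Hermitian metric of Lem. \ref{metrica} together with the Schur decomposition of the fiber representations under the appropriate isotropy subgroups from Thm. \ref{Schurfreeb}, in parallel with the technicality handled by Atiyah and Segal \cite{atiyahsegalequivariant} in the ordinary equivariant case.
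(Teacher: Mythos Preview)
Your approach differs substantially from the paper's. The paper follows Atiyah--Bott \cite{periodicitytheorem}: it constructs an explicit inverse to
\[
\mu:\K_{\mg{G}}(X)[t]\big/(t-1)([L]t-1)\longrightarrow \K_{\mg{G}}(\PS(L\oplus\mathbf{1}))
\]
by writing an arbitrary $(\mg{G},\phi)$-bundle on $\PS(L\oplus\mathbf{1})$ as a clutching of bundles pulled back from $X$, approximating the clutching function by Laurent polynomials via Ces\`aro means, linearizing, and reading off the inverse from the linear clutching data. This works uniformly over $X$ and never needs a base case or a cover.

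Your reduction strategy, by contrast, has a genuine gap at the base case $X=*$. The cofiber sequence $*\to\PS(\mathbb{C}^2)\to\PS(\mathbb{C}^2)/*$ that you invoke does nothing more than split $\K_{\mg{G}}(\PS(\mathbb{C}^2))\cong\K_{\mg{G}}(*)\oplus\widetilde{\K}_{\mg{G}}(\PS(\mathbb{C}^2))$; it does not compute $\widetilde{\K}_{\mg{G}}(\PS(\mathbb{C}^2))$. Indeed, by the very definition $\widetilde{\K}_{\mg{G}}(\PS(\mathbb{C}^2))=\K_{\mg{G}}^{1,1}(*)$, so asserting that this is free of rank one on $[H]-[\mathbf{1}]$ is precisely the content of the theorem you are trying to prove. (Note also that $\PS(\mathbb{C}^2)$ is already $\mathbb{Z}/2$-homotopy equivalent to $B^{1,1}/S^{1,1}$, as recorded in Ex.~\ref{Hopfbundle}; collapsing a basepoint does not give you a new space whose K-theory you know independently.) Without an independent computation of $\widetilde{\K}_{\mg{G}}(\PS(\mathbb{C}^2))$ the entire induction has no starting point, and the five-lemma argument in steps~2--3, which requires the result in all degrees simultaneously, cannot begin.

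Your verification of the relation via the tautological sequence is correct, and the covering reduction in step~2 is plausible in outline, but neither helps until the base case is established by some direct method---which is exactly what the clutching-function proof supplies.
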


\begin{proof}
As Atiyah points out in theorem 2.1 of \cite{atiyahreal}, we can follow the proof given in \cite{periodicitytheorem} and modify the necessary parts. That proof is divided into four parts, we will state the results and sketch the proof in the appropriated equivariant way. To simplify notation set $\PS:=\PS(L\oplus \textbf{1})$.

\vspace{0.3cm}

\textbf{Part 1. $(\mg{G},\phi)$-vector bundles over $\PS$ as clutching bundles by Laurent polynomials.}

By Lem. \ref{metrica}, choose an $G$-invariant metric in $L$ and let $S\subset L$ be the unit circle bundle in this metric. We identify $L$ with a subspace of $\PS$ and we can write
\begin{align}
\PS= \PS^0 \cup \PS^{\infty}, \hspace{1cm} S=\PS^0\cap \PS^{\infty},
\end{align}
where $\PS^0$ is the closed disc $(\mg{G},\phi)$-bundle interior to $S$ and $\PS^{\infty}$ is the closed disc $(\mg{G},\phi)$-bundle exterior to $S$. So we have the $(\mg{G},\phi)$-bundles
\begin{align}
\xymatrix{\PS^0 \ar[rd]_{\pi_0}& S \ar[d]^{\pi} \ar@{^(->}[r] \ar@{_(->}[l]& \PS^{\infty} \ar[ld]^{\pi_{\infty}} \\ &X.&}
\end{align}
Suppose now that $E^0$, $E^{\infty}$ are two $(\mg{G},\phi)$-vector bundles over $X$ and $f:\pi^*E^0\longrightarrow \pi^*E^{\infty}$ is an isomorphism of $(\mg{G},\phi)$-vector bundles over $S$. Then  we can form the $(\mg{G},\phi)$-vector bundle
\begin{align}
\xymatrix{\pi^*_0E^0\cup_f \pi^*_{\infty}E^{\infty} \ar[d] \\ \PS.}
\end{align}
We shall denote this $(\mg{G},\phi)$-vector bundle for brevity by $(E^0,f,E^{\infty})$ and we shall say that $f$ is a clutching function for $(E^0,E^{\infty})$.
\begin{lemma}\label{vecasclut}
    Let $E$ be any $(\mg{G},\phi)$-vector bundle over $\PS$ and let $E^0$, $E^{\infty}$ be the $(\mg{G},\phi)$-vector bundles over $X$ induced by the 0-section and the $\infty$-section respectively. Then there exist an $(\mg{G},\phi)$-isomorphism $f:\pi^*E^0\longrightarrow \pi^* E^{\infty}$ such that 
    \begin{align}
    E\cong (E^0,f,E^{\infty}),
    \end{align}
    the isomorphism being the natural one on the 0-section and the $\infty$-section.
\end{lemma}
\begin{proof}
    Let $s_0: X\longrightarrow \PS^0$ be the 0-section. Then $s_0\circ\pi_0([v:1])=[0,1]$ is $G$-homotopic to the identity, for example consider the $G$-homotopy $H_s([v:1])=[sv:1]$. So there exist an $(\mg{G},\phi)$-isomorphism
    \[f_0: E|_{\PS^0}\longrightarrow \pi^*_0E^0=\pi^*_0s^*_0 (E|_{\PS^0}).\]
    We can use a similar procedure to obtain an $(\mg{G},\phi)$-isomorphism 
    \[f_{\infty}: E|_{\PS^{\infty}}\longrightarrow \pi^*_{\infty}E^{\infty}=\pi^*_{\infty}s^*_{\infty} (E|_{\PS^{\infty}}).\]
    If we define $f=f_{\infty}f^{-1}_0$, then we are done.
    \end{proof}
    Now we will focus on the clutching functions. The natural inclusion of $(\mg{G},\phi)$-bundles $\iota:S\longrightarrow L$ defines a $G$-section $z\in \Gamma_{(\mg{G},\phi)}(\pi^* L)$ of $\pi^* L$ given by $z(v):=(v,\iota(v))$. This is depicted as:
    \begin{align}
    \xymatrix{\pi^*L \ar[r] \ar[d]& L \ar[d]^{\pi} \\ S \ar[r]_{\pi} \ar@(lu,dl)[u]^{z}& X.}
    \end{align}
    Using the canonical isomorphisms 
    \begin{align}
    \pi^*(L)\cong \pi^*\Hom(\textbf{1},L)
    \end{align}
    we may also regard $z$ as a $G$-section of the $(\mg{G},\phi)$-vector bundle $\pi^*\Hom(\textbf{1},L)$ and, as such, it has an inverse $z^{-1}$ which is a $G$-section of $\pi^*\Hom(L,\textbf{1})\cong \pi^*(L^{-1})$. More generally, for any integer $k$, we may regard $z^k$ as a $G$-section of $\pi^* L^k$. If $a_k\in \Gamma_{(\mg{G},\phi)}(L^{-k})$ then
    \begin{align}
    a_kz^k:=\pi^*(a_k)\otimes z^k\in \Gamma_{(\mg{G},\phi)}(\pi^* \textbf{1}),
    \end{align}
    i.e. it is a function on $S$. Finally suppose that $E^0$, $E^{\infty}$ are two $(\mg{G},\phi)$-vector bundles on $X$ and that 
    \begin{align}
    a_k\in \Gamma_{(\mg{G},\phi)}\Hom(L^k\otimes E^0,E^{\infty}),
    \end{align}
    then 
    \begin{align}
    a_kz^k:=\pi^*(a_k)\otimes z^k\in \Gamma_{(\mg{G},\phi)}\Hom(\pi^* E^0,\pi^*E^{\infty}).
    \end{align}
    If $f=\sum^n_{-n}a_kz^k: \pi^* E^0\longrightarrow \pi^*E^{\infty}$ is an $(\mg{G},\phi)$-isomorphism, then it defines a clutching function and we call this a \textbf{Laurent clutching function} for $(E^0,E^\infty)$.
    One can prove that there is a $(\mg{G},\phi)$-isomorphism
    \begin{align}
    H^{-k}\cong (\textbf{1},z^{-k},L^{-k}).
    \end{align}
    Suppose now that $f\in \Gamma_{(\mg{G},\phi)}\Hom(\pi^*E^0,\pi^*E^{\infty})$ is any $G$-section, then we can define its \textbf{Fourier coefficients}
    \begin{align}
    a_k(x)=\frac{1}{2\pi i}\int_{S_x}f_xz_x^{-k-1}dz_x
    \end{align}
    where $f_x$ and $z_x$ denote the restrictions of $f$, $z$ to $S_x$ and $dz_x$ is therefore a differential on $S_x$. The section $a_k$ is $G$-equivariant:
    \begin{align}
    (g\cdot a_k)(x)=& ga_k(g^{-1}x)\\
    =& g\left(\frac{1}{2\pi i}\int_{S_{g^{-1}x}}f_{g^{-1}x}z^{-k-1}_{g^{-1}x}dz_{g^{-1}x}\right) \\
    =& \frac{(-1)^{\phi(g)}}{2\pi i} \int_{S_{g^{-1}x}}(gf_{g^{-1}x})(gz_{g^{-1}x})^{-k-1}(gdz_{g^{-1}x}) \\ =& \frac{1}{2\pi i}\int_{S_x} f_xz^{-k-1}_x dz_x \\
    =& a_k(x).
\end{align}
Let $s_n$ be the partial sum 
\begin{align}
s_n=\sum^n_{-n}a_kz^k
\end{align}
and define the \textbf{Cesaro means}
\begin{align}
f_n=\frac{1}{n}\sum^n_{k=n}s_k.
\end{align}
\begin{lemma}
        Let $f$ be any clutching function for $(E^0,E^\infty)$, $f_n$ the sequence of Cesaro  means of the Fourier series of $f$. Then $f_n$ converges uniformly to $f$ and hence is a Laurent clutching function for all sufficiently large $n$.
    \end{lemma}
    \begin{proof}
        This is a extension of the Fej\'er's theorem, see chapter 2 of \cite{hoffman2014banach} for a proof.
    \end{proof}

\textbf{Part 2. Linearization of Laurent clutching functions}

All the isomorphisms presented here are $(\mg{G},\phi)$-isomorphisms of $(\mg{G},\phi)$-vector bundles, since they are in terms of $z^k$ and $a_k$ which are $G$-sections.

\vspace{0.3cm}

\textbf{Part 3. Linear clutching functions.}

Let $p$ be a linear clutching function for $(E^0,E^\infty)$ and let the endomorphisms $Q^0:E^0\longrightarrow E^0$ and $Q^\infty:E^\infty\longrightarrow E^\infty$  be given by
\begin{align}
Q_x^0=\frac{1}{2\pi i}\int_{S_x}p_x^{-1}dp_x, \hspace{1cm} Q^{\infty}_x=\frac{1}{2\pi i}\int_{S_x}dp_x p^{-1}_x.
\end{align}
These are $(\mg{G},\phi)$-endomorphisms, in fact for $g\in \mg{G}$
\begin{align}
    (g\cdot Q^0)_x&= gQ^0_{gx}\\ &= g\left(\frac{1}{2\pi i}\int_{S_{gx}} p^{-1}_{gx}dp_{gx}\right) \\ &= \frac{(-1)^{\phi(g)}}{2\pi i}\int_{S_{gx}} gp^{-1}_{gx}gdp_{gx} \\ &= \frac{1}{2\pi i}\int_{S_{x}} p^{-1}_{x}dp_{x},
\end{align}
and similar for $Q^{\infty}$. Again, all the results in this part follow quite formally.

\vspace{0.3cm}

\textbf{Part 4. Construction of an inverse.}

Consider the homomorphism
\begin{align}
 \nonumber   \mu: \K_{\mg{G}}(X)[t]/(t-1)([L]t-1) & \longrightarrow \K_{\mg{G}}(\PS) \\ t& \longmapsto [H].
\end{align}
Then using the previous parts one can construct an inverse for $\mu$.
As we mention before, the same lines and the rest of the proof in \cite{periodicitytheorem} applies quite formally.
\end{proof}

\begin{corollary}
    We have an isomorphism:
    \begin{align}
    \K_{\mg{G}}(\operatorname{P}(\mathbb{C}^2))\cong \frac{\mathbf{R}(G) [H]}{([H]-[\textbf{1}])^2}.
    \end{align}
\end{corollary}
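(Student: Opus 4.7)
The plan is to deduce this as a direct specialization of the Bott periodicity theorem (Thm. \ref{pediocicitytheorem}) rather than to reprove anything. I would take the base $G$-space to be the point, $X = *$, and take the line $(\mg{G},\phi)$-vector bundle to be the trivial one, $L = \mathbf{1}$, corresponding to the trivial one-dimensional representation of $(\mg{G},\phi)$. Then $L \oplus \mathbf{1}$ is the trivial two-dimensional representation, and after identifying $\mathbf{1} \oplus \mathbf{1}$ with the standard $\mathbb{C}^2$ (on which $G$ acts via $\phi$, linearly on $G_0$ and antilinearly on its complement), we have $\operatorname{P}(L \oplus \mathbf{1}) = \operatorname{P}(\mathbb{C}^2)$ as $G$-spaces.

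Next I would invoke Ex. \ref{kteoriadelpunto} to identify the coefficient ring $\K_{\mg{G}}(*) \cong \mathbf{R}(G)$. By Thm. \ref{pediocicitytheorem}, $\K_{\mg{G}}(\operatorname{P}(\mathbb{C}^2))$ is then generated as an $\mathbf{R}(G)$-algebra by $[H]$ subject to the single relation
\begin{align}
([H] - [\mathbf{1}])\,([L][H] - [\mathbf{1}]) = 0.
\end{align}
Since $L = \mathbf{1}$ we have $[L] = [\mathbf{1}]$, which is the multiplicative identity of $\mathbf{R}(G)$, so this relation collapses to $([H] - [\mathbf{1}])^2 = 0$, yielding the desired presentation
\begin{align}
\K_{\mg{G}}(\operatorname{P}(\mathbb{C}^2)) \cong \frac{\mathbf{R}(G)[H]}{([H] - [\mathbf{1}])^2}.
\end{align}

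There is essentially no obstacle: all the work is contained in Thm. \ref{pediocicitytheorem} and Ex. \ref{kteoriadelpunto}. The only small point worth being careful about is that the trivial line bundle $\mathbf{1}$ over a point, viewed as a $(\mg{G},\phi)$-equivariant line bundle, carries the trivial representation on its fiber (as in Ex. \ref{trivialbundles}), so that $\mathbf{1} \oplus \mathbf{1}$ really is the $G$-space $\mathbb{C}^2$ with the canonical magnetic action; this is what lets us pass between $\operatorname{P}(L \oplus \mathbf{1})$ and $\operatorname{P}(\mathbb{C}^2)$.
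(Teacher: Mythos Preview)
Your proposal is correct and follows exactly the same approach as the paper: apply Thm.~\ref{pediocicitytheorem} with $X=*$ and $L=\mathbf{1}$, then use Ex.~\ref{kteoriadelpunto} to identify $\K_{\mg{G}}(*)\cong\mathbf{R}(G)$. Your write-up is in fact slightly more detailed than the paper's, which omits the explicit simplification of the relation when $[L]=[\mathbf{1}]$.
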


\begin{proof}
    Apply the Thm. \ref{pediocicitytheorem} to the space $X=*$ and $L=\textbf{1}$. In this case $\operatorname{P}(L\oplus \textbf{1})\cong \operatorname{P}(\mathbb{C}^2)$ and by Ex. \ref{kteoriadelpunto} we know $\K_{\mg{G}}(*)=\mathbf{R}(G)$ is the representation ring of $(\mg{G},\phi)$.
\end{proof}

\begin{corollary}\label{onegenerator}
    The $\mathbf{R}(G)$-module $\widetilde{\K}_{\mg{G}}(\operatorname{P}(\mathbb{C}^2))$ is free with generator $[H]-[\textbf{1}]$.
\end{corollary}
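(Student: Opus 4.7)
The plan is to read off the reduced K-theory directly from the algebra presentation in the previous corollary, together with the splitting induced by a $G$-fixed basepoint.

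First, from the isomorphism $\K_{\mg{G}}(\operatorname{P}(\mathbb{C}^2))\cong \mathbf{R}(G)[H]\big/([H]-[\textbf{1}])^2$, set $\eta:=[H]-[\textbf{1}]$. Since $\eta^2=0$ and $[H]=[\textbf{1}]+\eta$, the pair $\{[\textbf{1}],\eta\}$ is an $\mathbf{R}(G)$-basis, so $\K_{\mg{G}}(\operatorname{P}(\mathbb{C}^2))$ is a free $\mathbf{R}(G)$-module of rank $2$.

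Next, pick the basepoint $x_0=[1:0]\in \operatorname{P}(\mathbb{C}^2)$. The magnetic $\mg{G}$-action on $\operatorname{P}(\mathbb{C}^2)$ is induced by complex conjugation on $\mathbb{C}^2$ through $\phi$, and $x_0$ is fixed by this action. The constant map $c: \operatorname{P}(\mathbb{C}^2)\longrightarrow \{x_0\}$ splits the inclusion $i:\{x_0\}\hookrightarrow \operatorname{P}(\mathbb{C}^2)$, so the restriction homomorphism
\begin{align}
i^*:\K_{\mg{G}}(\operatorname{P}(\mathbb{C}^2))\longrightarrow \K_{\mg{G}}(x_0)\cong \mathbf{R}(G)
\end{align}
is a split surjection of $\mathbf{R}(G)$-modules, and by definition $\widetilde{\K}_{\mg{G}}(\operatorname{P}(\mathbb{C}^2))=\ker i^*$. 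The inclusion $i$ clearly sends $[\textbf{1}]\mapsto [\textbf{1}]$, and the fiber of the tautological bundle $H^*$ over $x_0$ is the magnetic line $\mathbb{C}\cdot(1,0)\subset \mathbb{C}^{2,2}$ on which $\phi$-antilinear elements act by complex conjugation; this is the trivial magnetic representation $\textbf{1}$, and dually $i^*[H]=[\textbf{1}]$. Therefore $i^*(\eta)=0$, i.e.\ $\eta\in \widetilde{\K}_{\mg{G}}(\operatorname{P}(\mathbb{C}^2))$.

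Finally, combining the two steps: in the basis $\{[\textbf{1}],\eta\}$ the map $i^*$ is the projection onto the first coordinate, hence $\ker i^*=\mathbf{R}(G)\cdot \eta$ is a free $\mathbf{R}(G)$-module of rank one with generator $\eta=[H]-[\textbf{1}]$. The only delicate point in the argument is identifying the restriction of the magnetic line bundle $H$ over the fixed point with the trivial magnetic representation; this is where the convention that $\mathbb{C}^2=\mathbb{R}^{2,2}$ carries the conjugation involution via $\phi$ (as fixed in Example \ref{Hopfbundle}) is used.
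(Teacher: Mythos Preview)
Your proof is correct and is precisely the argument the paper leaves implicit: the corollary is stated without proof in the paper, as an immediate consequence of the algebra presentation $\K_{\mg{G}}(\operatorname{P}(\mathbb{C}^2))\cong \mathbf{R}(G)[H]/([H]-[\textbf{1}])^2$. Your explicit identification of the basis $\{[\textbf{1}],\eta\}$, together with the verification that $i^*[H]=[\textbf{1}]$ at the fixed basepoint, supplies exactly the details needed.
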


In order to present the most classical version of the periodicity theorem we need to talk about external products.

\begin{definition} \label{boxtimes}
    Let $E\longrightarrow X$ and $F\longrightarrow Y$ be $(\mg{G},\phi)$-vector bundles over $G$-spaces $X$ and $Y$. The \textbf{external tensor product} of $E$ and $F$ over $X\times Y$ is defined as:
    \begin{align}
    E\boxtimes F:= \pi_1^*(E)\otimes \pi_2^*(F)
    \end{align}
    The map $(E,F)\longmapsto E\boxtimes F$ induces a pairing 
    \begin{align}
      \nonumber   \K_{\mg{G}}(X)\otimes \K_{\mg{G}}(Y) &\longrightarrow \K_{\mg{G}}(X\times Y) \\
         \alpha\otimes \beta & \longmapsto \alpha * \beta
    \end{align}
    which is called \textbf{external product}.
\end{definition}

If $\Delta: X\longrightarrow X\times X$ denotes the diagonal map, then the composition 
\begin{align}
         \K_{\mg{G}}(X)\otimes \K_{\mg{G}}(X)\longrightarrow \K_{\mg{G}}(X\times X) \overset{\Delta^*}{\longrightarrow} \K_{\mg{G}}(X) 
    \end{align}
    is called the \textbf{internal product} and we will denoted it by $\alpha\beta=\Delta^*(\alpha *\beta)$.
We need also the reduced external product and to this end we need the following classical results.

\begin{corollary}\label{cartesianproduct}
     If $(X,Y)$ is a $G$-pair with $Y$ a $G$-retract of $X$, then for all $n\geq 0$, the sequence 
    \begin{align}
    0\longrightarrow \K^{-n}_{\mg{G}}(X,Y)\longrightarrow \K^{-n}_{\mg{G}}(X) \longrightarrow \K^{-n}_{\mg{G}}(Y) \longrightarrow 0
    \end{align}
    is a split short exact sequence, so $\K^{-n}_{\mg{G}}(X)\cong \K^{-n}_{\mg{G}}(X,Y)\oplus \K^{-n}_{\mg{G}}(Y)$.
    
     If $(X,Y)$ is a $G$-pair of based spaces, the projection maps $\pi_1: X\times Y \longrightarrow X$, $\pi_2:X\times Y \longrightarrow Y$ induces an isomorphism for all $n\geq 0$
    \begin{align}
    \widetilde{\K}^{-n}_{\mg{G}}(X\times Y)\cong \widetilde{\K}^{-n}_{\mg{G}}(X\wedge Y) \oplus \widetilde{\K}^{-n}_{\mg{G}}(X) \oplus \widetilde{\K}^{-n}_{\mg{G}}(Y)
    \end{align}
     
\end{corollary}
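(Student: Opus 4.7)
\textbf{Proof proposal for Corollary \ref{cartesianproduct}.}

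For part (1), the plan is to exploit the long exact sequence of the $G$-pair from Lemma \ref{sucesionlarga}. A $G$-retraction $r\colon X\to Y$ satisfying $r\circ i = \id_Y$ induces a homomorphism $r^*\colon \K^{-n}_{\mg{G}}(Y)\to \K^{-n}_{\mg{G}}(X)$ such that $i^*\circ r^* = \id$, so the restriction $i^*\colon \K^{-n}_{\mg{G}}(X)\to \K^{-n}_{\mg{G}}(Y)$ is a split surjection for every $n \geq 0$. This surjectivity, applied to both $\K^{-n}_{\mg{G}}$ and $\K^{-n-1}_{\mg{G}}$ (since the retraction produces sections in every degree), forces the connecting maps in the long exact sequence to vanish. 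The sequence therefore breaks up into short exact sequences
\begin{equation}
0\longrightarrow \K^{-n}_{\mg{G}}(X,Y) \longrightarrow \K^{-n}_{\mg{G}}(X) \longrightarrow \K^{-n}_{\mg{G}}(Y) \longrightarrow 0,
\end{equation}
and the section $r^*$ provides the splitting.

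For part (2), the idea is to apply the same mechanism to the $G$-pair $(X\times Y,\, X\vee Y)$. The quotient identification $(X\times Y)/(X\vee Y)\cong X\wedge Y$ gives $\K_{\mg{G}}(X\times Y,\, X\vee Y)\cong \widetilde{\K}_{\mg{G}}(X\wedge Y)$ via Eqn. \eqref{K(X/Y)=K(X,Y)}. The crucial point is that although a global $G$-retraction $X\times Y \to X\vee Y$ need not exist, we only need a splitting at the level of K-theory. The pointed $G$-projections $\pi_1\colon X\times Y\to X$ and $\pi_2\colon X\times Y\to Y$ combined with the inclusions $i_X\colon X\hookrightarrow X\times Y$, $x\mapsto (x,y_0)$ and $i_Y\colon Y\hookrightarrow X\times Y$, $y\mapsto (x_0,y)$ satisfy $\pi_j\circ i_j = \id$ and $\pi_j\circ i_k = \text{const}$ for $j\neq k$; these identities pass through reduced K-theory to yield that
\begin{equation}
\pi_1^*+\pi_2^*\colon \widetilde{\K}^{-n}_{\mg{G}}(X)\oplus \widetilde{\K}^{-n}_{\mg{G}}(Y) \longrightarrow \widetilde{\K}^{-n}_{\mg{G}}(X\times Y)
\end{equation}
is a section of the restriction map to $\widetilde{\K}^{-n}_{\mg{G}}(X\vee Y)\cong \widetilde{\K}^{-n}_{\mg{G}}(X)\oplus \widetilde{\K}^{-n}_{\mg{G}}(Y)$, after using the standard wedge decomposition (which itself follows by a one-factor version of the argument above, applied to the retraction $X\vee Y \to X$ via the basepoint of $Y$).

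Reproducing the connecting-map-vanishing argument of part (1), the long exact sequence of the pair $(X\times Y,\, X\vee Y)$ then breaks into split short exact sequences
\begin{equation}
0\longrightarrow \widetilde{\K}^{-n}_{\mg{G}}(X\wedge Y) \longrightarrow \widetilde{\K}^{-n}_{\mg{G}}(X\times Y) \longrightarrow \widetilde{\K}^{-n}_{\mg{G}}(X)\oplus \widetilde{\K}^{-n}_{\mg{G}}(Y) \longrightarrow 0,
\end{equation}
and the splitting $\pi_1^*+\pi_2^*$ produces the desired decomposition. The only genuine subtlety, and thus the main point to verify carefully, is the explicit identification $\widetilde{\K}_{\mg{G}}(X\vee Y)\cong \widetilde{\K}_{\mg{G}}(X)\oplus \widetilde{\K}_{\mg{G}}(Y)$; this is the wedge-sum case of part (1) itself (with $Y$ replaced by one of the summands inside $X\vee Y$ and retraction given by collapsing the other summand to the basepoint), so the argument is internally consistent and no circularity arises.
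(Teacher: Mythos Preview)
Your proof is correct. Part (1) matches the paper exactly. For part (2) the paper takes a slightly different route: rather than working with the pair $(X\times Y,\,X\vee Y)$ and building a K-theory-level section, it applies part (1) twice with genuine $G$-retractions. First, $X$ is a $G$-retract of $X\times Y$ via $\pi_1$, so part (1) gives $\widetilde{\K}^{-n}_{\mg{G}}(X\times Y)\cong \widetilde{\K}^{-n}_{\mg{G}}((X\times Y)/X)\oplus \widetilde{\K}^{-n}_{\mg{G}}(X)$. Second, $Y$ is a $G$-retract of $(X\times Y)/X$ (the map $\pi_2$ descends to the quotient), and $((X\times Y)/X)/Y\cong X\wedge Y$, so a second application of part (1) finishes the argument. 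The paper's version avoids having to justify a section that is not induced by a topological retraction, while your version has the virtue of isolating the smash product in one step and making explicit that only a K-theoretic splitting is needed. Both are standard and equally valid.
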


\begin{proof}
    For the first we see that the long exact sequence of Lem. \ref{sucesionlarga} splits using the retract.

    For the second observe that $X$ is a $G$-retract of $X\times Y$, and $Y$ is a $G$-retract of $X\times Y/X$. The result follows by applying twice the first result.
\end{proof}
By the definition of reduced $\K$-theory we have the following isomorphism
\begin{align}
    \K_{\mg{G}}(X)\otimes \K_{\mg{G}}(Y)\cong &\left( \widetilde{\K}_{\mg{G}}(X)\oplus \K_{\mg{G}}(*)\right)\otimes \left(\widetilde{\K}_{\mg{G}}(X)\oplus \K_{\mg{G}}(*)\right) \\
    \cong & \left( \widetilde{\K}_{\mg{G}}(X)\otimes \widetilde{\K}_{\mg{G}}(Y)\right)\oplus \left(\widetilde{\K}_{\mg{G}}(X)\otimes \K_{\mg{G}}(*)\right) \oplus \left(\K_{\mg{G}}(*)\otimes \widetilde{\K}_{\mg{G}}(Y)\right)\\ & \oplus \left(\K_{\mg{G}}(*)\otimes \K_{\mg{G}}(*)\right)
\end{align}

Using the splitting and the Lem. \ref{cartesianproduct} we can see the external tensor product $\K_{\mg{G}}(X)\otimes \K_{\mg{G}}(Y) \longrightarrow \K_{\mg{G}}(X\times Y)$ restrict to four maps:

\begin{align}
    \widetilde{\K}_{\mg{G}}(X)\otimes \widetilde{\K}_{\mg{G}}(Y) & \longrightarrow \widetilde{\K}_{\mg{G}}(X\wedge Y), \\
    \widetilde{\K}_{\mg{G}}(X)\otimes \K_{\mg{G}}(*) & \longrightarrow \widetilde{\K}_{\mg{G}}(X),\\
    \K_{\mg{G}}(*)\otimes \widetilde{\K}_{\mg{G}}(Y) & \longrightarrow \widetilde{\K}_{\mg{G}}(Y), \\
    \K_{\mg{G}}(*)\otimes \K_{\mg{G}}(*) & \longrightarrow \K_{\mg{G}}(*).
\end{align}

The first map $\widetilde{\K}_{\mg{G}}(X)\otimes \widetilde{\K}_{\mg{G}}(Y) \longrightarrow \widetilde{\K}_{\mg{G}}(X\wedge Y)$ is call \textbf{reduced external tensor product}, the following two maps induce a structure of $\K_{\mg{G}}(*)=\mathbf{R}(G)$-module on reduced $\K_{\mg{G}}$-theory. Using this reduced tensor product and the natural homeomorphism 
\begin{align}
B^{p,q}\times  B^{s,t}\cong B^{p+s,q+t}
\end{align}
we have a natural product
\begin{align}
\K_{\mg{G}}^{p,q}(X,Y)\otimes \K^{s,t}_{\mg{G}}(X',Y')\longrightarrow \K^{p+s,q+t}_{\mg{G}}(X\times X',X\times Y'\cup Y\times X').
\end{align}

Recall the Bott element given in Ex. \ref{Hopfbundle}
\begin{align}
b=[H]-\textbf{1}\in \K^{1,1}_{\mg{G}}(*)=\K_{\mg{G}}(B^{1,1},S^{1,1})=\widetilde{\K}_{\mg{G}}(\operatorname{P}(\mathbb{C}^2)).
\end{align}
The \textbf{Bott homomorphism} is given by 
\begin{align}
\nonumber    \beta: \K^{p,q}_{\mg{G}}(X,Y) &\longrightarrow \K^{p+1,q+1}_{\mg{G}}(X,Y)\\ a & \longmapsto a*b.
\end{align}

\begin{theorem}[$(1,1)$-periodicity]\label{1,1periodicity}
    The Bott homomorphism $\beta: \K^{p,q}_{\mg{G}}(X,Y) \longrightarrow \K^{p+1,q+1}_{\mg{G}}(X,Y)$ is an isomorphism.
\end{theorem}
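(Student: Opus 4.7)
The plan is to reduce the $(1,1)$-periodicity statement to the freeness statement in Theorem \ref{pediocicitytheorem} applied to the trivial line bundle $L=\mathbf{1}$. Informally, multiplication by the Bott element $b=[H]-[\mathbf{1}]$ is the same as the module-multiplication map one reads off the free-module structure of $\K_{\mg{G}}(\mathbb{P}(\mathbf{1}\oplus\mathbf{1}))$ over $\mathbf{R}(G)$, so once that free basis is in hand, $(1,1)$-periodicity is essentially a bookkeeping exercise.

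First, I would unwind the definition of the relative suspension groups. By definition $\K^{p,q}_{\mg{G}}(X,Y)=\widetilde{\K}_{\mg{G}}(W)$ for the pointed $G$-space $W=(X\times B^{p,q})/(X\times S^{p,q}\cup Y\times B^{p,q})$, and using the $G$-homeomorphism $B^{p+1,q+1}\cong B^{p,q}\times B^{1,1}$ together with the identification $B^{1,1}/S^{1,1}\simeq \PS(\mathbb{C}^2)$ from Example \ref{Hopfbundle}, the group $\K^{p+1,q+1}_{\mg{G}}(X,Y)$ is naturally isomorphic to $\widetilde{\K}_{\mg{G}}(W\wedge \PS(\mathbb{C}^2))$. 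Under these identifications, the external product $a\mapsto a*b$ becomes the map $\widetilde{\K}_{\mg{G}}(W)\to \widetilde{\K}_{\mg{G}}(W\wedge \PS(\mathbb{C}^2))$ induced by the reduced external tensor product with $b\in\widetilde{\K}_{\mg{G}}(\PS(\mathbb{C}^2))$. So the whole statement reduces to showing that for every pointed compact $G$-space $Z$ the map
\begin{equation*}
\mu_Z\colon \widetilde{\K}_{\mg{G}}(Z) \longrightarrow \widetilde{\K}_{\mg{G}}(Z\wedge \PS(\mathbb{C}^2)),\qquad a\longmapsto a*b,
\end{equation*}
is an isomorphism.

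Second, I would apply Theorem \ref{pediocicitytheorem} to the trivial line $(\mg{G},\phi)$-vector bundle $L=\mathbf{1}$ over $Z$, so that $\PS(L\oplus\mathbf{1})=Z\times\PS(\mathbb{C}^2)$. The theorem then says that $\K_{\mg{G}}(Z\times\PS(\mathbb{C}^2))$ is the free $\K_{\mg{G}}(Z)$-algebra on $[H]$ modulo $([H]-[\mathbf{1}])^2=0$, i.e.\ it is a free $\K_{\mg{G}}(Z)$-module on the basis $\{[\mathbf{1}],b\}$. Phrased differently, the projection $\pi_1\colon Z\times\PS(\mathbb{C}^2)\to Z$ and the class $b$ together induce an isomorphism of $\K_{\mg{G}}(Z)$-modules
\begin{equation*}
\K_{\mg{G}}(Z)\oplus \K_{\mg{G}}(Z)\stackrel{\cong}{\longrightarrow} \K_{\mg{G}}(Z\times \PS(\mathbb{C}^2)),\qquad (a_0,a_1)\longmapsto \pi_1^*a_0 + (\pi_1^*a_1)\cdot \pi_2^*b.
\end{equation*}

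Third, I would pass from the absolute to the reduced setting using Corollary \ref{cartesianproduct}. Since the basepoint $*\in\PS(\mathbb{C}^2)$ gives a $G$-retraction, the product splitting above is compatible with the splitting
\begin{equation*}
\K_{\mg{G}}(Z\times \PS(\mathbb{C}^2))\cong \widetilde{\K}_{\mg{G}}(Z\wedge \PS(\mathbb{C}^2))\oplus\widetilde{\K}_{\mg{G}}(Z)\oplus \widetilde{\K}_{\mg{G}}(\PS(\mathbb{C}^2))\oplus \mathbf{R}(G).
\end{equation*}
Matching summands (using Corollary \ref{onegenerator}, which identifies $\widetilde{\K}_{\mg{G}}(\PS(\mathbb{C}^2))$ as the free $\mathbf{R}(G)$-module on $b$), the free-basis isomorphism of Step 2 restricts to $\mu_Z$ on the reduced piece and is therefore an isomorphism. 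Chasing through the definitions gives the desired $(1,1)$-periodicity for $(X,Y)$.

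The step I expect to be the main obstacle is purely technical: verifying that the external product $a*b$ of Definition \ref{boxtimes} agrees, under the identification $\PS(\mathbf{1}\oplus\mathbf{1})=Z\times \PS(\mathbb{C}^2)$, with the module multiplication by $\pi_2^*b$ coming from Theorem \ref{pediocicitytheorem}, and that the several splittings (absolute vs.\ reduced, and the basepoint splitting of $\PS(\mathbb{C}^2)$) are compatible. Once this compatibility is checked, the result is a direct consequence of the free-module description and the computation $\widetilde{\K}_{\mg{G}}(\PS(\mathbb{C}^2))\cong \mathbf{R}(G)\cdot b$.
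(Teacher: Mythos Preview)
Your proposal is correct and follows essentially the same approach as the paper: reduce to a pointed space, apply Theorem \ref{pediocicitytheorem} with $L=\mathbf{1}$ over that space to identify $\K_{\mg{G}}(Z\times\PS(\mathbb{C}^2))$ as a free $\K_{\mg{G}}(Z)$-module on $\{[\mathbf{1}],b\}$, invoke Corollary \ref{onegenerator}, and pass between reduced and unreduced via the standard splittings. The only cosmetic difference is that the paper factors $\beta$ through the tensor product $\widetilde{\K}_{\mg{G}}(X)\otimes_{\mathbf{R}(G)}\widetilde{\K}_{\mg{G}}(\PS(\mathbb{C}^2))$ before comparing with $\K_{\mg{G}}(X\times\PS(\mathbb{C}^2))$, whereas you phrase the comparison directly as matching summands; the content is the same, and the compatibility check you flag as the main technical point is exactly what the paper leaves implicit.
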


\begin{proof}
    We can assume $p=q=0$ replacing $(X,Y)$ by $(X\times B^{p,q},X\times S^{p,q}\cup Y\times B^{p,q})$. Also, we can assume $Y$ to be a point, since $\K_{\mg{G}}(X,Y)=\widetilde{\K}_{\mg{G}}(X/Y)=\widetilde{\K}_{\mg{G}}(X/Y,\{Y\})$.

    In this situation $\K_{\mg{G}}(X,*)=\widetilde{\K}_{\mg{G}}(X)$, $\K^{1,1}_{\mg{G}}(X,*)=\K_{\mg{G}}(X\wedge \operatorname{P}(\mathbb{C}^2))$ and $\beta$ can be written as the composition
    \begin{align}
    \xymatrix{\widetilde{\K}_{\mg{G}}(X) \ar[r]^{\otimes b\hspace{1.5cm}} & \widetilde{\K}_{\mg{G}}(X)\otimes_{\mathbf{R}(G)} \widetilde{\K}_{\mg{G}}(\operatorname{P}(\mathbb{C}^2)) \ar[r]^{\hspace{1cm}*} & \widetilde{\K}_{\mg{G}}(X\wedge \operatorname{P}(\mathbb{C}^2))
    }\end{align}
    By Cor. \ref{onegenerator} the first map is an isomorphism. So we need to show the second map is an isomorphism. By the previous morphism relating the external product and the reduced external product we just have to prove the map 
    \begin{align}
   \K_{\mg{G}}(X)\otimes_{\mathbf{R}(G)} \K_{\mg{G}}(\operatorname{P}(\mathbb{C}^2)) \longrightarrow \K_{\mg{G}}(X\times \operatorname{P}(\mathbb{C}^2))
    \end{align}
    is an isomorphism. But by Thm. \ref{pediocicitytheorem} we have
    \begin{align}
    \K_{\mg{G}}( \operatorname{P}(\mathbb{C}^2))\cong \frac{\MRR[]{\mg{G}}[H]}{([H]-[\textbf{1}])^2}
    \end{align}
    and of course there is a natural isomorphism
    \begin{align}
    \K_{\mg{G}}(X)\otimes_{\mathbf{R}(G)} \frac{\mathbf{R}(G)[H]}{([H]-[\textbf{1}])^2} \cong \frac{\K_{\mg{G}}(X)[H]}{([H]-[\textbf{1}])^2}
    \end{align}
    which commutes with the internal tensor product.
\end{proof}

\begin{definition} Let the positive degree magnetic equivariant K-theory groups be:
\begin{align}
\K^{p}_{\mg{G}}(X,Y):=\K^{p,0}_{\mg{G}}(X,Y) \hspace{1cm} \text{ for }p > 0.
\end{align}
\end{definition}
\begin{corollary}
    There is a natural isomorphism
    \begin{align}
    \K^{p,q}_{\mg{G}}\cong \K^{p-q}_{\mg{G}}
    \end{align}
    for all $p,q\geq 0$.
\end{corollary}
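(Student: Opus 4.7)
The plan is to reduce the statement directly to the $(1,1)$-periodicity isomorphism of Thm.~\ref{1,1periodicity}. Since $\beta:\K^{a,b}_{\mg{G}}(X,Y)\to\K^{a+1,b+1}_{\mg{G}}(X,Y)$ is a natural isomorphism for every $a,b\geq 0$, iterating $\beta$ (or its inverse) shifts both super-indices by the same integer, so the difference $p-q$ is preserved.

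More precisely, I would split into two cases depending on the sign of $p-q$. If $p\geq q$, I apply $\beta^{-q}$ (that is, the inverse of the $q$-fold composition of $\beta$) to obtain a natural isomorphism
\begin{align}
\K^{p,q}_{\mg{G}}(X,Y)\;\cong\;\K^{p-q,0}_{\mg{G}}(X,Y)\;=\;\K^{p-q}_{\mg{G}}(X,Y),
\end{align}
where the last equality is by the definition of $\K^{p}_{\mg{G}}$ for $p>0$ (and by tautology when $p=q$, since then both sides equal $\K^{0}_{\mg{G}}=\K_{\mg{G}}$). If $p<q$, I instead apply $\beta^{-p}$ to obtain
\begin{align}
\K^{p,q}_{\mg{G}}(X,Y)\;\cong\;\K^{0,q-p}_{\mg{G}}(X,Y)\;=\;\K^{-(q-p)}_{\mg{G}}(X,Y)\;=\;\K^{p-q}_{\mg{G}}(X,Y),
\end{align}
using Eqn.~\eqref{suspension K-theory}. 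In either case the composite is natural in the pair $(X,Y)$ since $\beta$ is multiplication by the Bott element and hence natural.

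There is essentially no obstacle here: the entire content of the corollary is packaged into the $(1,1)$-periodicity theorem, and the only things to check are bookkeeping items, namely that the definitions $\K^{-q}_{\mg{G}}:=\K^{0,q}_{\mg{G}}$ and $\K^{p}_{\mg{G}}:=\K^{p,0}_{\mg{G}}$ are compatible on their overlap (the case $p=q=0$, where both give $\K_{\mg{G}}$) and that $\beta$ is indeed invertible with inverse natural in $(X,Y)$, which follows directly from Thm.~\ref{1,1periodicity}. The mildly delicate point to note, but not really an obstacle, is that applying $\beta^{-1}$ requires $\min(p,q)\geq 1$ at each step; one simply iterates $\min(p,q)$ times and stops when one of the indices hits $0$, landing in the correct normalized range.
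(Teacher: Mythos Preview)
Your proof is correct and follows essentially the same approach as the paper's: in both cases one iterates the $(1,1)$-periodicity isomorphism $\min(p,q)$ times to reduce to $\K^{p-q,0}_{\mg{G}}$ or $\K^{0,q-p}_{\mg{G}}$ and then invokes the relevant definition. Your write-up is in fact more explicit about naturality and the overlap at $p=q=0$ than the paper's terse version.
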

\begin{proof}
    If $p<q$, apply  $p$ times Thm. \ref{1,1periodicity}  and then the definition of the suspension of Eqn. \eqref{suspension K-theory}. If $p\geq q$, apply $q$ times Thm. \ref{1,1periodicity}.
\end{proof}

\subsection{Clifford Modules} 

In this section we introduce bigraded Clifford modules in the context of magnetic groups. These are used in section \ref{sectionofcoeff} to compute the $K$-theory of spheres with or without involution.

Let $\Cliff{p}{q}$ denote the Clifford algebra over $\mathbb{R}^{p,q}$ of the quadratic form
\begin{align}
-\left(\sum_j^qx^2_j+\sum_i^p y_i^2\right)
\end{align}
A more explicit description is the following
\begin{align}
\Cliff{p}{q}=\left\langle 1,e_1,e_2,\ldots,e_q,e_{q+1},e_{q+2},\ldots, e_{p+q} \,\,\Big|
{\begin{tabular}{c}
   $e^2_j=-1$  \\
   $e_je_k=-e_ke_j$  \\
\end{tabular}} \right\rangle
\end{align}

This algebra has a natural action of $(\mg{G},\phi)$ given by
\begin{align}
g\cdot e_j=\begin{cases}
    e_j & \text{ if } 1\leq j \leq q \\
    (-1)^{\phi(g)}e_j & \text{ if } q+1\leq j \leq q+p.
\end{cases} 
\end{align}

\begin{remark}
    There is another natural Clifford algebra over $\mathbb{R}^{q+p}=\mathbb{R}^q\oplus \mathbb{R}^p$ ($=\mathbb{R}^{p,q}$ as vector spaces over the real numbers) induced by the quadratic form 
    \begin{align}
    -\sum_i^{q}x^2+\sum_j^{p}y^2.
    \end{align}
    Denote this Clifford algebra by $C_{p,q}$. More explicitly we have
    \begin{align} \label{Cliffordpq}
    C_{p,q}=\left\langle 1,e_1,e_2,\ldots,e_q,e_{q+1},e_{q+2},\ldots, e_{p+q}\,\,\Big|
{\begin{tabular}{cc}
   $e^2_j=-1$  & $1\leq j\leq q$ \\
   $e^2_j=1$  & $q+1\leq j\leq q+p$ \\
   $e_je_k=-e_ke_j$ & \\
\end{tabular}} \right\rangle.
\end{align}
    Endow $C_{p,q}$  with the \emph{trivial involution} and hence with the trivial action of $\mg{G}$. Later we will compare the categories of Clifford modules over $\operatorname{Cliff}(\mathbb{R}^{p,q})$ and $C_{p,q}$. Denote by $C_q$ the Clifford algebra $C_{0,q}$.
\end{remark}

\begin{definition} \label{magnetic Clifford}
    A \textbf{magnetic $\Cliff{p}{q}[\mg{G},\phi]$-module} is a $\mathbb{Z}_2$-graded complex vector space $M=M_0\oplus M_1$ together with:
    \begin{itemize}
        \item a graded $\mathbb{C}$-linear action of $\Cliff{p}{q}$,
        \item a representation of the magnetic point group $(\mg{G},\phi)$ of degree zero such that
        \begin{align}
        g\cdot(zm)=(g\cdot z)(g\cdot m)
        \end{align}
        for $g\in \mg{G}$, $z\in \Cliff{p}{q}$ and $m\in M$.
    \end{itemize}
Denote by $M^{p,q}(\mg{G},\phi)$ the Grothendieck group of isomorphism classes of $\Cliff{p}{q}[\mg{G},\phi]$-modules.
\end{definition}

In a similar way, for the Clifford  algebra $C_{p,q}$, we can define the magnetic $C_{p,q}[\mg{G},\phi]$-modules as $\mathbb{Z}_2$-graded complex vector spaces $M=M_0\oplus M_1$ together with:
\begin{itemize}
    \item a graded $\mathbb{C}$-linear action of $C_{p,q}$,
    \item a representation of the magnetic point group $(\mg{G},\phi)$ of degree zero such that 
    \begin{align}
    g\cdot(zm)=z(g\cdot m)
    \end{align}
    for $g\in \mg{G}$, $z\in C_{p,q}$ and $m\in M$ .
\end{itemize}
The difference with respect to the $\Cliff{p}{q}[\mg{G},\phi]$-modules is that the action of $\mg{G}$ is trivial on $C_{p,q}$ and not trivial on $\Cliff{p,q}{}$. Denote by $\widehat{M}^{p,q}(\mg{G},\phi)$ the Grothendieck group of the isomorphism classes of $C_{p,q}[\mg{G},\phi]$-modules.

\begin{remark}
    We will use both groups $M^{p,q}(\mg{G},\phi)$ and $\widehat{M}^{p,q}(\mg{G},\phi)$, the first one to produce a well defined morphism to the $K$-theory groups, and the second one to do calculations of the coefficients.
\end{remark}
\begin{example}
    Let $(\mg{G},\phi)=(\mathbb{Z}/2,\id)$ and $M$ be a $\Cliff{p}{q}[\mg{G},\phi]$-module. Then
    \begin{itemize}
        \item the non-trivial element of $\mg{G}=\mathbb{Z}/2$ acts on $M$ in an antilinear way, let us denote such action by $m\longmapsto \overline{m}$,
        \item the Clifford algebra $\Cliff{p}{q}$ acts on $M$ and these two actions interact as follows
        \begin{align}
            \overline{zm}=& 1\cdot (zm)\\ =& 1\cdot(z) 1\cdot (m) \\ =& \overline{z}\cdot \overline{m}
        \end{align}
        where $\overline{z}:= 1\cdot z$ denotes the action of $\mathbb{Z}/2$ on $\Cliff{p}{q}$.
    \end{itemize}
    
    So $M$ is a real $\mathbb{Z}/2$-graded $\Cliff{p}{q}$-module in the sense of \cite{atiyahreal}.
\end{example}

\begin{example}
    Let $(\mg{G},\phi)=(G_0\rtimes \mathbb{Z}/2,\pi_2)$ and $M$ a magnetic $\Cliff{0}{q}[\mg{G},\phi]$-module. Then $M$ is a $\mathbb{Z}/2$-graded complex vector space together with:
    \begin{itemize}
        \item a $\mathbb{C}$-linear graded action of $\Cliff{0}{q}=C_{0,q}=C_q$,
        \item an antilinear action given by the element $(e,1)\in G_0\rtimes \mathbb{Z}/2$ of degree zero, denoted by $m\longmapsto \overline{m}$. This map commutes with the action of the Clifford algebra because
        \begin{align}
            \overline{zm}=& (e,1)\cdot (zm)\\ =& ((e,1)\cdot z)((e,1)\cdot m) \\ =& z\cdot \overline{m} \hspace{1cm}(\Cliff{0}{q} \text{ has the trivial involution})
        \end{align}
        for $z\in\Cliff{0}{q}$ and $m\in M$.
        \item The elements of the form $(g,0)\in G_0\rtimes \mathbb{Z}/2$ act linearly on $M$, $m\mapsto g\cdot m$. This action is of degree zero, commutes with the action of $C_q$, and is such that 
        \begin{align}
            \overline{g\cdot m}=& (e,1)(g,0)\cdot m\\ =& (e,1)(g,0)(e,1)^{-1}(e,1)\cdot m \\ =& (1\cdot g,1)(e,1)\cdot m \\ =& (\overline{g},0)\cdot m \\ =& \overline{g}\cdot \overline{m}
        \end{align}
         where $\overline{g}:=1\cdot g$ denotes the action of $\mathbb{Z}/2$ on $G_0$.
    \end{itemize}
     So $M$ is a real graded $C_q[G_0]$-module in the sense of \cite{atiyahsegalequivariant}.
\end{example}

As in the classical (complex) case \cite{abs}, Real case \cite{atiyahreal}, or Real equivariant case \cite{atiyahsegalequivariant}, there is an \textbf{ Atiyah-Bott-Shapiro homomorphism}
\begin{align}
 \nonumber   \alpha: M^{p,q}(\mg{G},\phi) & \longrightarrow \K_{\mg{G}}(D^{p,q},S^{p,q})\\
    M_0\oplus M_1& \longmapsto (D^{p,q}\times M_0, D^{p,q}\times M_1,\sigma)
\end{align}
where $\sigma: S^{p,q}\times M_0 \longrightarrow S^{p,q}\times M_1$
is given by $\sigma(z,m)=(z,zm)$. Of course, the space $D^{p,q}\times M_i$ is a $\mg{G}$-space and this action commutes with the projection onto the first coordinate 
\begin{align}
\pi_1:D^{p,q}\times M_i\longrightarrow D^{p,q}.
\end{align}
So the vector bundles $D^{p,q}\times M_i \overset{\pi_1}{\longrightarrow} D^{p,q}$ are $(\mg{G},\phi)$-vector bundles and the map $\sigma$ is a morphism of $(\mg{G},\phi)$-vector bundles.

There is a restriction morphism 
\begin{align}
\res: M^{p,q+1}(\mg{G},\phi)\longrightarrow M^{p,q}(\mg{G},\phi)
\end{align}
induced by the inclusion $\mathbb{R}^{p,q}\subset \mathbb{R}^{p,q+1}$. So we have a sequence
\begin{align}
\xymatrix{
M^{p,q+1}(\mg{G},\phi)\ar[r]^{\res} & M^{p,q}(\mg{G},\phi) \ar[r]^{\alpha} & \K_{\mg{G}}(D^{p,q},S^{p,q}) \ar[r] & 0
}
\end{align}
which will be proven to be exact in the case $p=0$.
\begin{lemma}
    The composition $\alpha\circ \res $ is zero.
\end{lemma}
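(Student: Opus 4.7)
The plan is to exhibit a canonical null-homotopy of the clutching function $\sigma$ coming from $\alpha(\res M)$ for any $\Cliff{p}{q+1}[G,\phi]$-module $M = M_0 \oplus M_1$. The key observation is that $M$ carries one additional Clifford generator beyond those of $\Cliff{p}{q}$, namely the one, call it $\epsilon$, corresponding to the extra real direction added by the inclusion $\mathbb{R}^{p,q} \subset \mathbb{R}^{p,q+1}$. Multiplication by $\epsilon$ is an odd, complex-linear map $M_0 \to M_1$; it satisfies $\epsilon^2 = -\mathrm{id}$ (so it is a fiberwise isomorphism); it anticommutes with Clifford multiplication by every $z \in \mathbb{R}^{p,q}$; and because $\epsilon$ corresponds to a real direction, the magnetic action leaves it fixed, so that $g \cdot (\epsilon m) = \epsilon(g \cdot m)$ for all $g \in G$ and $m \in M$.

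Using $\epsilon$, I would construct the one-parameter family of clutching maps
\[
\sigma_t : S^{p,q} \times M_0 \longrightarrow S^{p,q} \times M_1, \qquad \sigma_t(z,m) = \bigl(z,\ (\cos t \cdot z + \sin t \cdot \epsilon)\, m\bigr), \quad t \in [0,\pi/2].
\]
For $z \in S^{p,q}$ one has $z^2 = -|z|^2 = -1$ in $\Cliff{p}{q}$, together with $\epsilon^2 = -1$ and $z\epsilon + \epsilon z = 0$, hence
\[
(\cos t \cdot z + \sin t \cdot \epsilon)^2 = -\cos^2 t - \sin^2 t = -\mathrm{id},
\]
so $\sigma_t$ is an isomorphism for every $t$. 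Equivariance under $(G,\phi)$ follows from the magnetic $\Cliff{p}{q}[G,\phi]$-module identity $g\cdot(zm) = (g\cdot z)(g\cdot m)$, the invariance $g\cdot \epsilon = \epsilon$, and the fact that $\cos t$, $\sin t$ are real scalars, so the computation goes through independently of whether $g$ acts linearly or antilinearly.

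At $t=0$, $\sigma_0 = \sigma$ is precisely the clutching function representing $\alpha(\res M)$. At $t = \pi/2$, $\sigma_{\pi/2}(z,m) = (z,\epsilon m)$ is independent of $z$, so it extends over the entire disk $D^{p,q}$ to the $(G,\phi)$-equivariant bundle isomorphism $\mathrm{id}\times\epsilon : D^{p,q}\times M_0 \to D^{p,q}\times M_1$. Consequently the triple at $t = \pi/2$ represents the zero class in $\K_G(D^{p,q},S^{p,q})$, and the homotopy invariance of the clutching construction (the relative version of Cor. \ref{isoofclut}, which holds in $\MVec_{(G,\phi)}$ by the lemmas established earlier) yields $\alpha(\res M) = 0$.

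The only delicate point will be the bookkeeping for the magnetic action on the extra generator $\epsilon$: one must be careful that the direction appended by the inclusion $\mathbb{R}^{p,q} \hookrightarrow \mathbb{R}^{p,q+1}$ is a real one (in Atiyah's convention $q$ counts the real directions), so that $\epsilon$ is $G$-invariant and not multiplied by $(-1)^{\phi(g)}$; once this is correctly set up, the identity $(\cos t\cdot z + \sin t\cdot \epsilon)^2 = -1$ does all the remaining work.
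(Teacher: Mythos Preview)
Your proof is correct and rests on the same algebraic input as the paper's: the extra generator $\epsilon=e_{q+1}$ coming from the added real direction, which is $G$-fixed, squares to $-1$, and anticommutes with $\mathbb{R}^{p,q}$. The packaging differs slightly. The paper observes that the clutching $\sigma$ on $S^{p,q}$ is the equatorial restriction of the clutching on all of $S^{p,q+1}$, and then transports the upper hemisphere $S^{p,q+1}_+$ to $D^{p,q}$ via the projection $\Psi$ that forgets the $(q{+}1)$-st coordinate; this directly exhibits an extension of the clutching over the disk, namely $(w,m)\mapsto\bigl(w,(w+\sqrt{1-|w|^2}\,\epsilon)\,m\bigr)$. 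Your homotopy $\sigma_t$ is precisely what one obtains by reading that extension along radial rays (put $w=\cos t\cdot z$, so $\sqrt{1-|w|^2}=\sin t$). So the two arguments are the same formula viewed two ways: the paper frames it geometrically as an extension over the disk via the hemisphere, while you frame it algebraically as a homotopy of clutching maps to a constant. Your version is a touch more self-contained since it avoids the hemisphere decomposition; the paper's is a touch more conceptual in that it makes visible why the extra generator corresponds to a hemisphere.
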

\begin{proof}
If $M=M_0\oplus M_1$ is a $\Cliff{p}{q+1}[\mg{G},\phi]$-module, then as said before we have a morphism 
\begin{align}
\sigma: S^{p,q+1}\times M_0 \longrightarrow S^{p,q+1}\times M_1.
\end{align}
We have the decomposition $S^{p,q+1}=S^{p,q_1}_+\cup S^{p,q+1}_-$ where $S^{p,q+1}_{\pm}=\{(x_1,\ldots,x_{q+1},y_1,\ldots,y_p)\in S^{p,q+1}\, | \, \pm x_{q+1}\geq0\}$ and this decomposition respects the involution. Notice we have a homeomorphism
\begin{align}
  \nonumber  \Psi: S^{p,q+1}_+& \overset{\cong}{\longrightarrow} D^{p,q} \\ (x_1,\ldots,x_{q+1},y_1,\ldots,y_p)& \longmapsto (x_1,\ldots,x_{q},y_1,\ldots,y_p).
\end{align}
Clearly $\Psi$ is a map which commutes with the involution in both spaces. So we get a isomorphism of $\mg{G}$-vector bundles 
\begin{align}
\xymatrix{
D^{p,q}\times M_0 \ar[rr]^{(\Psi\times \id)\circ\sigma \circ(\Psi^{-1}\times \id)} \ar[rd]_{\pi_1} && D^{p,q}\times M_1 \ar[ld]^{\pi_1} \\ &D^{p,q}& 
}
\end{align}
which restricts to the isomorphism $\sigma: S^{p,q}\times M_0 \longrightarrow S^{p,q}\times M_1$.
\end{proof}
So we have an Atiyah-Bott-Shapiro homomorphism
\begin{align}
\xymatrix{M^{p,q}(\mg{G},\phi)/\res (M^{p,q+1}(\mg{G},\phi)) \ar[r]^{\hspace{1cm}\alpha} & \K_{\mg{G}}(D^{p,q},S^{p,q})}.
\end{align}
Now it is time to compare the two types of Clifford modules. 

\begin{lemma}\label{moritaequiv}
    There exist an equivalence of categories 
    \begin{align}
    \left\{ {\begin{tabular}{c}
         Category of  \\
          $\Cliff{p}{q}[\mg{G},\phi]$-modules
    \end{tabular}}\right\} \longleftrightarrow \left\{{\begin{tabular}{c}
          Category of \\ $C_{p,q}[\mg{G},\phi]$-modules
    \end{tabular}}\right\}.
    \end{align}
    In particular we have a natural isomorphism of groups
    \begin{align}
    M^{p,q}(\mg{G},\phi)\cong \widehat{M}^{p,q}(\mg{G},\phi.)
    \end{align}
\end{lemma}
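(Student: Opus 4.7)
The plan is to exhibit an explicit equivalence of categories that keeps the underlying $\mathbb{Z}_2$-graded complex vector space and the $(\mg{G},\phi)$-action unchanged, but rescales the Clifford generators by a factor of $i$ in the ``antilinear'' directions $j > q$. The key observation is that the two Clifford algebras $\Cliff{p}{q}$ and $C_{p,q}$ differ only in that the last $p$ generators square to $-1$ in the former (with nontrivial $(\mg{G},\phi)$-action $g\cdot e_j = (-1)^{\phi(g)} e_j$) versus $+1$ in the latter (with trivial $(\mg{G},\phi)$-action). Multiplying those generators by $i$ flips the sign of the square, and simultaneously the antilinearity of any $g$ with $\phi(g)=1$ (which sends $i$ to $\bar{i}=-i$) cancels the sign in $g\cdot e_j$. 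So the rescaling by $i$ is the right bridge between the two structures.

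Concretely, given a $\Cliff{p}{q}[\mg{G},\phi]$-module $M$ with generators $e_1,\ldots,e_{p+q}$, I define new operators $\tilde{e}_j$ on $M$ by $\tilde{e}_j := e_j$ for $1 \leq j \leq q$ and $\tilde{e}_j := i\cdot e_j$ for $q < j \leq p+q$. The first verification is that the $\tilde{e}_j$ satisfy the defining relations of $C_{p,q}$: for $j > q$, $\tilde{e}_j^2 = (ie_j)^2 = i^2 e_j^2 = 1$, and pairwise anticommutation is immediate because scalar multiplication by $i$ is central in $\End_{\mathbb{C}}(M)$. The second, more delicate, verification is that the new Clifford action commutes with the $(\mg{G},\phi)$-action, as required for a $C_{p,q}[\mg{G},\phi]$-module whose Clifford algebra carries the \emph{trivial} $(\mg{G},\phi)$-action. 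For $g$ with $\phi(g)=0$ this is obvious; for $g$ with $\phi(g)=1$ and $j>q$ one computes
\begin{align*}
g\cdot (\tilde{e}_j m) = g\cdot (i\, e_j m) = \bar{i}\cdot g(e_j m) = -i\,(g\cdot e_j)(g\cdot m) = -i\,(-e_j)(g\cdot m) = i\,e_j(g\cdot m) = \tilde{e}_j(g\cdot m),
\end{align*}
so the two sign flips cancel. This cancellation is the heart of the argument.

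The inverse functor sends a $C_{p,q}[\mg{G},\phi]$-module $N$ with generators $\tilde{e}_j$ to the module with the same underlying $\mathbb{Z}_2$-graded vector space and $(\mg{G},\phi)$-action, but with Clifford action $e_j := \tilde{e}_j$ for $j \leq q$ and $e_j := -i\,\tilde{e}_j$ for $j > q$; a symmetric calculation produces a $\Cliff{p}{q}[\mg{G},\phi]$-module and shows the two constructions are mutually inverse. Because the underlying graded vector space, $(\mg{G},\phi)$-action, and morphism sets are unchanged---any graded $\mathbb{C}$-linear $(\mg{G},\phi)$-equivariant map that intertwines the original Clifford actions automatically intertwines the rescaled ones---this is an equivalence of categories on the nose, not merely up to natural isomorphism. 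Such an equivalence preserves direct sums and isomorphism classes and therefore descends to the claimed isomorphism $M^{p,q}(\mg{G},\phi)\cong \widehat{M}^{p,q}(\mg{G},\phi)$ on Grothendieck groups. The main (minor) obstacle is bookkeeping the signs in the $\phi(g)=1$ case, but once one notices that $\bar{i}=-i$ exactly compensates for $g\cdot e_j = (-1)^{\phi(g)} e_j$, the entire proof reduces to the one-line computation displayed above.
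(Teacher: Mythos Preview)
Your proof is correct and essentially identical to the paper's: both rescale the last $p$ Clifford generators by $\pm i$ and observe that the antilinearity of $g$ with $\phi(g)=1$ exactly cancels the sign in $g\cdot e_j = (-1)^{\phi(g)} e_j$. The only cosmetic difference is that the paper first builds the functor from $C_{p,q}[\mg{G},\phi]$-modules to $\Cliff{p}{q}[\mg{G},\phi]$-modules and you start from the other side, but the key computation and the inverse are the same.
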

The proof is just a generalization of the Atiyah's proof in \cite{atiyahreal} at the end of section 4.
\begin{proof}
    Let $M$ be a $C_{p,q}[\mg{G},\phi]$-module. We are going to define an action of $\Cliff_{p,q}$ on $M$

    \begin{align}
    \begin{cases}
        [e_j]m:=e_jm & 1\leq j \leq q\\
        [e_j]m:=i(e_jm) & q+1\leq j \leq q+p
    \end{cases}
    \end{align}
    where $[e_j]$ denotes a basis element in $\operatorname{Cliff}(\mathbb{R}^{p,q})$ and $i(e_jm)$ refers to the complex vector space structure of $M$. Then
    \begin{align}
    [e_j] ^2m&= e_je_jm=e^{2}_jm=-m \text{ for } 1\leq j \leq q\\
    [e_j]^2m&= ie_jie_jm=-e^{2}_jm=-m \text{ for } q+1\leq j \leq q+p.
    \end{align}
    So we have a well defined action of $\Cliff{p}{q}$ on $M$. Now let us analyze the behavior of the action of $(\mg{G},\phi)$ with respect to this new action. For $g\in \mg{G}$, $q+1\leq j\leq q+p$ and $m\in M$, we have 
    \begin{align}
        g([e_j]m)=&g(ie_jm) \\ 
        =&(-1)^{\phi(g)}ig(e_jm) \\
        =& (-1)^{\phi(g)}ie_jgm \quad(\text{the action of }(\mg{G},\phi) \text{ commutes with } C_{p,q})\\
        =& (-1)^{\phi(g)}[e_j]gm\\
        =& g[e_j]gm,
    \end{align}
    and for $g\in \mg{G}$, $1\leq j\leq q$ and $m\in M$, we have
    \begin{align}
        g([e_j]m)=&g(e_jm) \\ 
        =& e_jgm\\
        =& g[e_j]gm.
    \end{align}
     So we have a functor
    \begin{align}
    \left\{ {\begin{tabular}{c}
         Category of  \\
          $\Cliff{p}{q}[\mg{G},\phi]$-modules
    \end{tabular}}\right\} \longleftarrow \left\{{\begin{tabular}{c}
          Category of \\ $C_{p,q}[\mg{G},\phi]$-modules
    \end{tabular}}\right\}
    \end{align}
    We can reverse the process, let $M$ be a $\Cliff{p}{q}[\mg{G},\phi]$-module and define the action of $C_{p,q}$ by 

    \begin{align}
    \{e_j\}m=\begin{cases}
        e_jm & 1\leq j\leq q\\
        -ie_jm & q+1 \leq j \leq q+p.
    \end{cases}
    \end{align}

    The negative sign make these functors inverses of one another, in fact:
    \begin{align}
        [\{e_j\}]m=\{e_j\}m&=e_jm \text{ for }1\leq j \leq q\\
        [\{e_j\}]m=i\{e_j\}m=-i^2e_jm&=e_jm \text{ for } q+1\leq j\leq q+p.
    \end{align}
\end{proof}

The importance of this result is that we can pass from algebras with non-trivial involution to algebras with trivial involution. An application is the following theorem which let us compute the $K$-theory of the spheres $S^{p,q}$.

\begin{theorem}
    The following diagram commutes

    \begin{align}
    \xymatrix{M^{p,q+1}(\mg{G},\phi) \ar[rr]^{\res} \ar[d]_{\cong} && M^{p,q}(\mg{G},\phi)  \ar[d]_{\cong} \ar[rr]^{\alpha}&& \K_{\mg{G}}(D^{p,q},S^{p,q}) \ar[rr]&& 0 \\ \widehat{M}^{p,q+1}(\mg{G},\phi) \ar[rr]^{\res} && \widehat{M}^{p,q}(\mg{G},\phi) \ar@{-->}[rru] && &&  
    }
    \end{align}
    Moreover, if $p=0$ then the upper row is exact. 
\end{theorem}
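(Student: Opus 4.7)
The plan is to split the theorem into two independent assertions (commutativity of the left square, exactness of the top row when $p=0$) and handle each separately.

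For the commutativity, I would unpack the explicit equivalence constructed in Lem. \ref{moritaequiv}. That equivalence is defined generator-by-generator: on the same underlying $\mathbb{Z}/2$-graded complex vector space $M$, one rewrites the action of $e_j$ in $\Cliff{p}{q+1}$ as either $e_j$ (for $1\leq j\leq q+1$) or $-i\,e_j$ (for $q+2\leq j\leq q+1+p$) in $C_{p,q+1}$, and conversely. Restriction along the inclusion $\mathbb{R}^{p,q}\subset \mathbb{R}^{p,q+1}$ forgets precisely the action of $e_{q+1}$ while leaving the other generators untouched. Since the equivalence treats each generator independently, forgetting one generator and then applying the equivalence yields the same module as applying the equivalence and then forgetting. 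The $(\mg{G},\phi)$-action is unchanged throughout, so the square commutes on the nose.

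For the exactness when $p=0$, I would follow the Atiyah--Bott--Shapiro strategy in the present magnetic setting. The key simplification is that $\mathbb{R}^{0,q}=\mathbb{R}^q$ has trivial involution, so $S^{0,q}$ carries a trivial $\mg{G}$-action, $\Cliff{0}{q}=C_{0,q}$ as algebras with $\mg{G}$-action, and the vertical equivalence is the identity. I would first show surjectivity of $\alpha$. Represent a class in $\K_{\mg{G}}(D^{0,q},S^{0,q})$ by a triple $(E_0,E_1,\sigma)$; since $D^{0,q}$ is $\mg{G}$-contractible, Cor. \ref{trivial} forces $E_i\cong D^{0,q}\times V_i$ for representations $V_i$, and $\sigma$ becomes an equivariant map $S^{0,q}\to \operatorname{Iso}(V_0,V_1)$. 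Using Lem. \ref{metrica} to make everything unitary and the equivariant approximation/stabilization techniques behind Cor. \ref{kofsuspension} and Lem. \ref{calculodepi0}, $\sigma$ is homotopic (after stabilizing) to a linear map $\sigma(x)=\sum_j x_j\sigma_j$ with each $\sigma_j:V_0\to V_1$ equivariant. Assembling $V_0\oplus V_1$ into a $\mathbb{Z}/2$-graded space with $e_j$ acting off-diagonally by $\sigma_j$ and $\sigma_j^*$ provides a $C_{0,q}[\mg{G},\phi]$-module whose $\alpha$-image is the original class, since the clutching for the module agrees with $\sigma$ up to homotopy.

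For the inclusion $\ker\alpha\subset \operatorname{im}\res$, suppose $M=M_0\oplus M_1$ satisfies $\alpha(M)=0$. Then after adding a Clifford module in $\operatorname{im}\res$ (which we already know lies in the kernel by the lemma preceding the theorem), the associated clutching isomorphism $\sigma:S^{0,q}\times M_0\to S^{0,q}\times M_1$ extends, through an equivariant homotopy, to a $(\mg{G},\phi)$-equivariant isomorphism $\widetilde\sigma$ defined over all of $D^{0,q}$. Evaluating $\widetilde\sigma$ at $0\in D^{0,q}$ yields an equivariant odd self-adjoint operator $J$ on $M_0\oplus M_1$ anticommuting with the action of every $e_j$, and a suitable rescaling normalizes $J^2=-1$. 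Declaring $e_{q+1}:=J$ extends the Clifford action to a $C_{0,q+1}[\mg{G},\phi]$-module $\widetilde M$ with $\res\widetilde M=M$, completing exactness.

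The main obstacle is the linearization step inside the surjectivity argument: one must genuinely produce an equivariant Clifford clutching map from an arbitrary equivariant clutching datum. The inputs are already in place (equivariant trivialization, invariant Hermitian metrics, and the homotopy classification of equivariant maps into $\GL(V)$), but assembling them with care to preserve the magnetic action of $(\mg{G},\phi)$ on both the Clifford generators and the module, and controlling the required stabilization, is where the work lies.
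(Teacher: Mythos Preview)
Your commutativity argument is essentially the paper's: both observe that the equivalence of Lem.~\ref{moritaequiv} is defined generator-by-generator on the same underlying module, while restriction along $\mathbb{R}^{p,q}\subset\mathbb{R}^{p,q+1}$ simply forgets one generator, so the two operations commute. The paper phrases this as a commutative square of linear inclusions of the generating vector spaces; you phrase it module-theoretically. No issue there.

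For the exactness at $p=0$ your approach diverges from the paper's and contains real gaps.

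The paper does \emph{not} argue directly. It defers to the next subsection, where Prop.~\ref{decomc} gives
\[
M^q_{\mathbb{R}}(\mg{G},\phi)\;\cong\;\bigoplus_{\mathbb{F}\in\{\mathbb{R},\mathbb{C},\mathbb{H}\}}\mathbf{R}(G,\mathbb{F})\otimes M^q_{\mathbb{F}},
\]
and this, combined with the matching decomposition of $\K_{\mg{G}}(D^{0,q},S^{0,q})$ from Prop.~\ref{decomK}, splits the whole sequence into the three classical ABS sequences for $KO$, $KU$, $KSp$, whose exactness is then imported from Atiyah--Segal (Prop.~\ref{split}). The magnetic structure is entirely absorbed into the coefficient groups $\mathbf{R}(G,\mathbb{F})$ and no new homotopy-theoretic argument is performed.

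Your direct attempt has two problems.

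\textbf{Surjectivity.} The step ``$\sigma$ is homotopic (after stabilizing) to a linear map $\sigma(x)=\sum_j x_j\sigma_j$'' is not justified by the tools you invoke. Lem.~\ref{calculodepi0} computes only $\pi_0^{\mg{G}}$ of equivariant isomorphisms over a contractible base; it says nothing about equivariant maps $S^{0,q}\to\GL(V)$ for $q\geq 2$. Cor.~\ref{kofsuspension} concerns bundles over a single suspension, not $S^{0,q}$. Linearizing an arbitrary equivariant clutching function to a Clifford symbol is exactly the content of the ABS theorem; asserting it here is close to circular.

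\textbf{Kernel.} The claim that $\widetilde\sigma(0)$ anticommutes with every $e_j$ is unfounded. The extension $\widetilde\sigma:D^{0,q}\to\operatorname{Iso}(M_0,M_1)$ is only required to agree with Clifford multiplication $z\mapsto z\cdot$ on the boundary sphere; at the origin it is an arbitrary equivariant isomorphism with no a priori relation to the odd operators $e_j$. You cannot extract a new Clifford generator $e_{q+1}$ from it without further input. The standard proofs (ABS, Karoubi, Atiyah--Segal) establish $\ker\alpha\subset\operatorname{im}(\res)$ either via the multiplicative structure and Bott periodicity, or by explicit computation of $M^q_{\mathbb{F}}/\res(M^{q+1}_{\mathbb{F}})$ degree by degree---never by an ``evaluate at the centre'' trick.

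The paper's reduction to the classical case via Schur-type decomposition sidesteps both issues completely; that is the argument you should supply.
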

\begin{remark}
    The exactness of the upper row can be proven for every $p$. For this to happen one has to show, for instance, that the groups $\widehat{M}^{p,q}/\res(\widehat{M}^{p,q+1})$ have period 8 on $p$ and $q$.
\end{remark}
\begin{proof}
    The isomorphisms given in Lem. \ref{moritaequiv} are compatible with the restriction homomorphism because we have a commutative diagram of vector spaces
    
    \begin{align}
    \xymatrix{
    \mathbb{R}^{q+1}\oplus i\mathbb{R}^{p} && \mathbb{R}^{q}\oplus i\mathbb{R}^{p}  \ar@{_(->}[ll]\\ \mathbb{R}^{q+1}\oplus \mathbb{R}^{p}  \ar[u]&& \mathbb{R}^q\oplus \mathbb{R}^{p} \ar[u] \ar@{_(->}[ll]
    }
    \end{align}

    where the morphisms $\mathbb{R}^{q}\oplus \mathbb{R}^p\longrightarrow \mathbb{R}^q\oplus i\mathbb{R}^p$ are given by
    \begin{align}
    e_j\longmapsto{\begin{cases}
        ie_j& 1\leq j\leq p\\ e_j & p+1 \leq j\leq p+q.
    \end{cases}}
    \end{align}
    The exactness for $p=0$ will be shown in the following section since we need a particular decomposition of Clifford modules.
\end{proof}
\subsection{Coefficients of Magnetic Equivariant K-theory }\label{sectionofcoeff}
Now we restrict ourselves to the Clifford modules over $C_q$ to make calculations of the $\K_{\mg{G}}$-theory of the spheres $S^q$ without involution. 
We adopt the following notation from \cite{abs}:
    \begin{align}
    M^q_{\mathbb{R}}(\mg{G},\phi):=M^{0,q}(\mg{G},\phi)=\widehat{M}^{0,q}(\mg{G},\phi)
    \end{align}

One defines $M^q_{\mathbb{C}}(\mg{G},\phi)$ and $M^q_{\mathbb{H}}(\mg{G},\phi)$ similarly, replacing $C_q$ by $C_q\otimes_{\mathbb{R}}\mathbb{C}$ and  $C_q\otimes_{\mathbb{R}}\mathbb{H}$ respectively.

Denote by $M^q_{\mathbb{F}}$ the classical Clifford modules defined in \cite{abs} for $\mathbb{F}=\mathbb{R},\mathbb{C},\mathbb{H}$.
\begin{proposition}\label{decomc}
    There exists an isomorphism
    \begin{align}
    M^q_{\mathbb{R}}(\mg{G},\phi)\cong \left(\mathbf{R}(G,\mathbb{R})\otimes M^q_{\mathbb{R}}\right) \oplus \left(\mathbf{R}(G,\mathbb{C})\otimes M^q_{\mathbb{C}}\right) \oplus \left(\mathbf{R}(G,\mathbb{H})\otimes M^q_{\mathbb{H}}\right).
    \end{align}
\end{proposition}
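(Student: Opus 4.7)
The plan is to imitate the proof of Prop.~\ref{decomK}, producing the isomorphism via an isotypic decomposition of $M$ with respect to its underlying magnetic representation structure and observing that the $C_q$-action commutes with the $(\mg{G},\phi)$-action---this is built into the definition of $\widehat{M}^{0,q}(\mg{G},\phi)$, where $C_q$ carries the trivial $\mg{G}$-action. Consequently $C_q$ preserves the isotypic components and acts naturally on the hom spaces attached to each magnetic irreducible.

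Given a magnetic $C_q[\mg{G},\phi]$-module $M=M_0\oplus M_1$ and an (ungraded) irreducible magnetic representation $V\in\mathbf{Irrep}(G,\mathbb{F})$, the $\mathbb{Z}/2$-graded vector space
\begin{align}
\Hom_{\MRep[]{\mg{G}}}(V,M):=\Hom_{\MRep[]{\mg{G}}}(V,M_0)\oplus \Hom_{\MRep[]{\mg{G}}}(V,M_1)
\end{align}
carries a left $\mathbb{F}$-action from $\End_{\MRep[]{\mg{G}}}(V)\cong \mathbb{F}$ by precomposition, and a graded $C_q$-action from $M$ by postcomposition; these commute, so $\Hom_{\MRep[]{\mg{G}}}(V,M)$ becomes a graded $C_q\otimes_{\mathbb{R}}\mathbb{F}$-module, i.e.\ a class in $M^q_{\mathbb{F}}$. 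The forward map is then
\begin{align}
\Psi([M])=\sum_{\mathbb{F}\in\{\mathbb{R},\mathbb{C},\mathbb{H}\}}\sum_{V\in\mathbf{Irrep}(G,\mathbb{F})}[V]\otimes[\Hom_{\MRep[]{\mg{G}}}(V,M)],
\end{align}
and the inverse candidate is $\Phi([V]\otimes[W]):=[V\otimes_{\mathbb{F}}W]$, where $V\otimes_{\mathbb{F}}W$ inherits the grading of $W$, with $\mg{G}$ acting on the first factor and $C_q$ on the second.

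To finish, one applies Thm.~\ref{Schurfreeb} degreewise to $M_0$ and $M_1$ to see that the canonical evaluation map
\begin{align}
\bigoplus_{\mathbb{F}}\bigoplus_{V\in\mathbf{Irrep}(G,\mathbb{F})}V\otimes_{\mathbb{F}}\Hom_{\MRep[]{\mg{G}}}(V,M)\overset{\cong}{\longrightarrow} M
\end{align}
is an isomorphism of graded $C_q[\mg{G},\phi]$-modules, giving $\Phi\circ\Psi=\id$; and Lem.~\ref{homtensor} supplies $\Hom_{\MRep[]{\mg{G}}}(V,V\otimes_{\mathbb{F}}W)\cong W$ as graded $C_q\otimes_{\mathbb{R}}\mathbb{F}$-modules, together with vanishing of the cross-type contributions by Schur's lemma, giving $\Psi\circ\Phi=\id$. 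The main technical subtlety is tracking the $\mathbb{Z}/2$-grading alongside the $(C_q,\mathbb{F})$-bimodule structure, especially in the quaternionic case where one must fix conventions (e.g.\ $V$ a right $\mathbb{H}$-module and $W$ a left $\mathbb{H}$-module) for $V\otimes_{\mathbb{H}}W$ to be a well-defined complex vector space; once chosen, commutativity of the $C_q$- and $\mathbb{F}$-actions on the hom space is automatic because $C_q$ acts through $(\mg{G},\phi)$-morphisms while $\mathbb{F}=\End_{\MRep[]{\mg{G}}}(V)$ commutes with all such morphisms by definition.
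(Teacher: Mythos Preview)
Your proposal is correct and follows essentially the same approach as the paper: both use the isotypic decomposition $M\cong\bigoplus_V V\otimes_{\End_{\MRep[]{\mg{G}}}(V)}\Hom_{\MRep[]{\mg{G}}}(V,M)$ from Prop.~\ref{decomK}, together with Schur's lemma (Thm.~\ref{Schurfreeb}) to identify the Hom spaces as graded $C_q\otimes_{\mathbb{R}}\mathbb{F}$-modules. Your version is in fact more detailed than the paper's, which gives only the forward direction and leaves the inverse and the grading-compatibility checks implicit; your explicit construction of $\Phi$ and appeal to Lem.~\ref{homtensor} for $\Psi\circ\Phi=\id$ supply what the paper omits.
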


\begin{proof}
    Let $M=M_0\oplus M_1\in M^{q}_{\mathbb{R}}(\mg{G},\phi)$. Consider the following decomposition, similar to the one given in Prop. \ref{decomK}
    \begin{align}
        M_0\oplus M_1 \cong \bigoplus_{V\in \mathbf{Irrep}(G)} \left(V\otimes_{\End_{\mathbf{Rep}(G)}(V)}\Hom_{\mathbf{Rep}(G)}(V,M_0\oplus M_1)\right)
    \end{align}
    By the Schur lemma of Lem. \ref{Schurfreeb}, the set $\Hom_{\mathbf{R}(G)}(V,M_0\oplus M_1)$ is a vector space over the field of real, complex or quaternionic numbers depending on the type of the representation $V$ with trivial action of $(\mg{G},\phi)$, so $\Hom_{\mathbf{R}(G)}(V,M_0\oplus M_1)\in M^q_{\End_{\mathbf{R}(G)}}$.
\end{proof}

\begin{proposition}\label{split}
    There exist an exact sequence
    \begin{align}
    \xymatrix{
    M^{q+1}_{\mathbb{R}}(\mg{G},\phi) \ar[r] & M^q_{\mathbb{R}}(\mg{G},\phi) \ar[r]^{\alpha\hspace{0.5cm}} & \K_{\mg{G}}(D^q,S^{q-1}) \ar[r]& 0
    }
    \end{align}
\end{proposition}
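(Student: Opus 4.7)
The plan is to reduce the statement to the classical Atiyah--Bott--Shapiro exact sequences for each of the three types $\mathbb{F} \in \{\mathbb{R}, \mathbb{C}, \mathbb{H}\}$,
\[
M^{q+1}_{\mathbb{F}} \longrightarrow M^q_{\mathbb{F}} \stackrel{\alpha_{\mathbb{F}}}{\longrightarrow} \widetilde{K\mathbb{F}}(S^q) \longrightarrow 0,
\]
by combining the type decomposition of Prop.~\ref{decomc} on the source with the analogous decomposition of Prop.~\ref{decomK} on the target, and checking that the Atiyah--Bott--Shapiro map $\alpha$ is compatible with both splittings.

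First I would observe that since $p=0$, the involution on $\mathbb{R}^{0,q}=\mathbb{R}^q$ is trivial, so $D^q$ and $S^{q-1}$ carry the trivial $G$-action. Using Eqn.~\eqref{K(X/Y)=K(X,Y)} we have $\K_G(D^q, S^{q-1}) \cong \widetilde{\K}_G(S^q)$, and applying Prop.~\ref{decomK} to the pair $(S^q,*)$ (using that each $\mathbf{R}(G,\mathbb{F})$ is free abelian, hence preserves kernels) gives
\[
\K_G(D^q, S^{q-1}) \;\cong\; \bigoplus_{\mathbb{F}} \mathbf{R}(G,\mathbb{F}) \otimes \widetilde{K\mathbb{F}}(S^q).
\]
On the other hand, Prop.~\ref{decomc} yields compatible splittings
\[
M^q_{\mathbb{R}}(G,\phi) \cong \bigoplus_{\mathbb{F}} \mathbf{R}(G,\mathbb{F}) \otimes M^q_{\mathbb{F}}, \qquad M^{q+1}_{\mathbb{R}}(G,\phi) \cong \bigoplus_{\mathbb{F}} \mathbf{R}(G,\mathbb{F}) \otimes M^{q+1}_{\mathbb{F}},
\]
with the restriction homomorphism acting as $\bigoplus_{\mathbb{F}} \id \otimes \res_{\mathbb{F}}$, because the $\mathbb{F}$-type of an irreducible $(G,\phi)$-representation is detected by $\End_{\mathbf{Rep}(G)}(V)$, which is unaffected by restricting the Clifford action from $q+1$ to $q$ variables.

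Next I would verify the key compatibility: under the two decompositions above, $\alpha$ becomes $\bigoplus_{\mathbb{F}} \id \otimes \alpha_{\mathbb{F}}$. Given an irreducible $(G,\phi)$-representation $V$ of type $\mathbb{F}$ and an $\mathbb{F}$-Clifford module $N=N_0\oplus N_1$, the magnetic Clifford module $V\otimes_{\mathbb{F}} N$ is sent by $\alpha$ to the triple $(D^q\times (V\otimes_{\mathbb{F}} N_0), D^q\times (V\otimes_{\mathbb{F}} N_1), \id_V \otimes \sigma_N)$, where $\sigma_N(z,n)=(z,zn)$. Since the Clifford action lives in the $N$-factor and is $\mathbb{F}$-linear, this is precisely the external product of the trivial magnetic bundle with fiber $V$ and the classical ABS bundle pair $\alpha_{\mathbb{F}}(N)$ on $S^q$, which under the identification of Prop.~\ref{decomK} corresponds to the element $V\otimes \alpha_{\mathbb{F}}(N)$.

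Finally, since each $\mathbf{R}(G,\mathbb{F})$ is a free abelian group by Eqn.~\eqref{splitring}, tensoring with it preserves exactness; taking the direct sum over $\mathbb{F}\in\{\mathbb{R},\mathbb{C},\mathbb{H}\}$ of the three classical Atiyah--Bott--Shapiro exact sequences (see \cite{abs} for $\mathbb{R}$ and $\mathbb{C}$, and the corresponding symplectic version for $\mathbb{H}$) produces exactly the desired exact sequence. The main obstacle is the compatibility step: unpacking that the clutching isomorphism $\sigma(z,m)=(z,zm)$ respects the isotypic decomposition $M \cong \bigoplus_V V\otimes_{\mathbb{F}} \Hom_{\mathbf{Rep}(G)}(V,M)$ and matches the external tensor product of Def.~\ref{boxtimes} with the classical ABS construction. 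Once this bookkeeping is in place, the conclusion is a formal consequence of exactness of tensor products with free abelian groups and the classical exact sequences.
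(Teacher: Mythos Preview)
Your proposal is correct and follows essentially the same approach as the paper's proof: both reduce the exact sequence via the type decompositions of Prop.~\ref{decomc} and Prop.~\ref{decomK} to the three classical Atiyah--Bott--Shapiro exact sequences for $\mathbb{F}\in\{\mathbb{R},\mathbb{C},\mathbb{H}\}$, then cite the known exactness of those. You supply more detail on the compatibility of $\alpha$ with the splittings and on why tensoring with the free groups $\mathbf{R}(G,\mathbb{F})$ preserves exactness, points the paper simply takes for granted.
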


\begin{proof}
    Using Prop. \ref{decomK} and Prop. $\ref{decomc}$ we see the sequence splits in tree sequences
    \begin{align}
    \xymatrix{
    M^{q+1}_{\mathbb{F}} \ar[r] & M^q_{\mathbb{F}} \ar[r]^{\alpha\hspace{0.5cm}} & KF(D^q,S^{q-1}) \ar[r]& 0
    }
    \end{align}
    for $(\mathbb{F},KF)=(\mathbb{R},KO),(\mathbb{C},KU),(\mathbb{H},KSp)$. Atiyah and Segal proved the exactness of the last tree sequences in \cite{atiyahsegalequivariant}. 
\end{proof}
So we have a way to compute the $K$-theory of the point in terms of the Clifford modules

\[\K_{\mg{G}}(D^q, S^{q-1})\cong M^q_{\mathbb
{R}}(\mg{G},\phi)/M^{q+1}_{\mathbb
{R}}(\mg{G},\phi).\]

Now we are going to compute the coefficients of the $\K^*_{\mg{G}}$-theory, where the representations of the magnetic groups will appear.
\begin{theorem}[Coefficients of $\K^*_{\mg{G}}$]\label{coefofK}
    Let $(\mg{G},\phi)$ be a finite magnetic group and denote by $n_{\mathbb{F}}$ the number of isomorphism classes of irreducible representations of $(\mg{G},\phi)$ of real, complex or quaternionic type for $\mathbb{F}=\mathbb{R},\mathbb{C},\mathbb{H}$ . Then the coefficients of the $\K^*_{G}$-theory are
    \begin{align}
    \begin{array}{|c|c|}
         \hline q & \K^{-q}_{G}(*)   \\ \hline
          0& \mathbb{Z}^{n_{\mathbb{R}}}\oplus \mathbb{Z}^{n_{\mathbb{C}}} \oplus \mathbb{Z}^{n_{\mathbb{H}}} \\ 
         1 & (\mathbb{Z}/2)^{n_{\mathbb{R}}} \\
          2 & (\mathbb{Z}/2)^{n_{\mathbb{R}}}\oplus \mathbb{Z}^{n_{\mathbb{C}}} \\
          3 & 0 \\ 
          4& \mathbb{Z}^{n_{\mathbb{R}}}\oplus \mathbb{Z}^{n_{\mathbb{C}}} \oplus \mathbb{Z}^{n_{\mathbb{H}}} \\
           5 &  (\mathbb{Z}/2)^{n_{\mathbb{H}}} \\
           6 & \mathbb{Z}^{n_{\mathbb{C}}}\oplus (\mathbb{Z}/2)^{n_{\mathbb{H}}} \\
           7& 0\\ \hline
    \end{array}
    \end{align}
    and these groups are periodic of period 8 with respect to $q$. Alternatively, we have the canonical isomorphism:
    \begin{align} \label{decomposition K_G}
    \K_G^*(*) \cong KO^*(*)^{\oplus n_\mathbb{R}} \oplus KU^*(*)^{\oplus n_\mathbb{C}} \oplus KSp^*(*)^{\oplus n_\mathbb{H}}        
    \end{align}
\end{theorem}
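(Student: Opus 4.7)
The plan is to combine Proposition \ref{split} with the decomposition of Proposition \ref{decomc}, and then invoke the classical Atiyah--Bott--Shapiro computations for $KO$, $KU$, and $KSp$.

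First I would identify $\K^{-q}_{G}(*)=\K^{0,q}_{G}(*)=\K_{G}(D^{q},S^{q-1})$ via the definition of the suspension groups in Eqn.~\eqref{suspension K-theory}, noting that $\mathbb{R}^{0,q}=\mathbb{R}^{q}$ carries trivial involution so the $G$-action on $(D^{q},S^{q-1})$ is trivial. Proposition \ref{split} then provides the surjection $\alpha$ with kernel the image of $\res$, so
\begin{align}
\K^{-q}_{G}(*)\;\cong\; M^{q}_{\mathbb{R}}(\mg{G},\phi)\big/\res\bigl(M^{q+1}_{\mathbb{R}}(\mg{G},\phi)\bigr).
\end{align}

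Next I would apply Proposition \ref{decomc} to split
\begin{align}
M^{q}_{\mathbb{R}}(\mg{G},\phi)\cong \bigoplus_{\mathbb{F}\in\{\mathbb{R},\mathbb{C},\mathbb{H}\}} \mathbf{R}(G,\mathbb{F})\otimes M^{q}_{\mathbb{F}}.
\end{align}
The crucial observation is that this decomposition is natural in $q$, since the isomorphism is obtained by sending $M_{0}\oplus M_{1}$ to $\bigoplus_{V}V\otimes_{\End(V)}\Hom_{\mathbf{Rep}(G)}(V,M_{0}\oplus M_{1})$, and the Clifford restriction $\mathbb{R}^{q}\hookrightarrow\mathbb{R}^{q+1}$ commutes with the $G$-action. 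Hence the decomposition descends to the quotient, and combined with the classical Atiyah--Bott--Shapiro isomorphisms $M^{q}_{\mathbb{F}}/\res M^{q+1}_{\mathbb{F}}\cong K\mathbb{F}^{-q}(*)$ for $(\mathbb{F},K\mathbb{F})=(\mathbb{R},KO),(\mathbb{C},KU),(\mathbb{H},KSp)$ (proved by Atiyah--Segal and invoked in the proof of Prop.~\ref{split}), we obtain the splitting
\begin{align}
\K^{-q}_{G}(*)\cong \bigoplus_{\mathbb{F}}\mathbf{R}(G,\mathbb{F})\otimes K\mathbb{F}^{-q}(*).
\end{align}
Since $\mathbf{R}(G,\mathbb{F})$ is free abelian of rank $n_{\mathbb{F}}$ by definition, this is precisely the decomposition (\ref{decomposition K_G}).

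Finally, the explicit table and 8-periodicity follow by plugging in the known values: $KO^{-q}(*)$ has 8-periodic pattern $\mathbb{Z},\mathbb{Z}/2,\mathbb{Z}/2,0,\mathbb{Z},0,0,0$, $KU^{-q}(*)$ has 2-periodic pattern $\mathbb{Z},0$, and $KSp^{-q}(*)\cong KO^{-q-4}(*)$ shifts $KO$ by four. Summing the three contributions column by column reproduces the eight entries in the table, and the 2- and 8-periodicity of the summands together yield the asserted 8-periodicity. The main subtlety is really only the naturality of the splitting in Prop.~\ref{decomc} with respect to $\res$, which is what allows the three-way decomposition to commute with the Atiyah--Bott--Shapiro quotient; everything else is assembly of known computations.
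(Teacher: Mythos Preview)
Your proposal is correct and follows essentially the same approach as the paper: identify $\K^{-q}_G(*)$ with the Clifford-module quotient via Proposition~\ref{split}, split it using Proposition~\ref{decomc}, observe that the splitting commutes with the restriction maps (which is exactly what the paper records in its commutative diagram), and then read off the answer from the classical Atiyah--Bott--Shapiro computations for $KO$, $KU$, and $KSp$. Your write-up is in fact slightly more explicit than the paper's, particularly in articulating why the decomposition is natural in $q$ and in tabulating the known coefficient groups.
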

\begin{proof}
    Using the decomposition of Prop. \ref{decomc} of $M^q_{\mathbb{R}}(\mg{G},\phi)$ we have the following commutative diagram:

    \begin{align}
    \xymatrix{
    M^{q+1}(\mg{G},\phi) \ar[d]^{\res} \ar[r]^{\cong\hspace{1.75cm}} & \bigoplus_{\mathbb{F}\in \{\mathbb{R},\mathbb{C},\mathbb{H}\}}\left(\mathbf{R}(G,\mathbb{F})\otimes M^{q+1}_{\mathbb{F}}\right) \ar[r]^{\hspace{1cm}\cong} \ar[d]^{\oplus \res} & \bigoplus_{\mathbb{F}\in \{\mathbb{R},\mathbb{C},\mathbb{H}\}}\left(\mathbb{Z}^{n_{\mathbb{F}}}\otimes M^{q+1}_{\mathbb{F}}\right)  \ar[d]_{\oplus\res} \\ M^q(\mg{G},\phi) \ar[r]_{\cong\hspace{1.5cm}} & \bigoplus_{\mathbb{F}\in \{\mathbb{R},\mathbb{C},\mathbb{H}\}} \left(\mathbf{R}(G,\mathbb{F})\otimes M^{q}_{\mathbb{F}}\right) \ar[r]_{\hspace{1cm}\cong} & \bigoplus_{\mathbb{F}\in \{\mathbb{R},\mathbb{C},\mathbb{H}\}}\left(\mathbb{Z}^{n_{\mathbb{F}}}\otimes M^{q}_{\mathbb{F}}\right) 
    }
    \end{align}
    so  we can conclude that
    \begin{align}
    M^q(\mg{G},\phi)/\res (M^{q+1}(\mg{G},\phi))\cong \bigoplus_{\mathbb{F}\in \{\mathbb{R},\mathbb{C},\mathbb{H}\}}\left(\mathbb{Z}^{n_{\mathbb{F}}}\otimes M^q_{\mathbb{F}}/\res(M^{q+1}_{\mathbb{F}})\right)=\bigoplus_{\mathbb{F}\in \{\mathbb{R},\mathbb{C},\mathbb{H}\}}\left(M^q_{\mathbb{F}}/\res(M^{q+1}_{\mathbb{F}})\right)^{n_{\mathbb{F}}}.
    \end{align}
    The groups $M^q_{\mathbb{F}}/\res (M^{q+1}_{\mathbb{F}})$ give the complex, real and quaternionic $K$-theory and   have been computed in \cite{abs}, \cite{atiyahreal}. The decomposition follows.
    \end{proof}

\subsection{Twistings and degree shift} \label{section degree shift}

In quantum mechanics the symmetry groups act projectively, therefore it is imperative
that we understand  how this type of actions relate to the magnetic equivariant K-theory groups.
We will focus only on projective actions that come from sign representations of copies of the group $\mathbb{Z}/2$. 

The setup is the following. Start with a magnetic group $(G,\phi)$ and consider a magnetic central extension
$(\widetilde{G}, \widetilde{\phi})$ of $(G,\phi)$ by an abelian group $A \cong (\mathbb{Z}/2)^k$.

 If $\widetilde{G}_0$ denotes the core
of $\widetilde{G}$ we have the following diagram of group extensions:
\begin{align}
\xymatrix{
 A \ar@{=}[r] \ar[d]& A \ar[d] &  \\ 
 \widetilde{G}_0 \ar[r] \ar[d]^{p_0} & \widetilde{G} \ar[r]^{\widetilde{\phi}} \ar[d]^p & \mathbb{Z}/2 \ar@{=}[d]  \\ 
  G_0 \ar[r]  & G \ar[r] ^\phi & \mathbb{Z}/2.  \\
}
\end{align}
Such extensions are classified
by the cohomology group $H^2(G,A)$ but instead of working with the cocycle representing
the extension, we will work with the group extension itself.

\begin{definition} \label{twisted magnetic K-theory}
Let $X$ be a $G$-space, $(\widetilde{G}, \widetilde{\phi})$ a magnetic group
extension of $(G,\phi)$ by the abelian group $A \cong  (\mathbb{Z}/2)^k$ as above, 
and $\chi \in \widehat{A}:=\rm{Hom}(A,U(1))$ an irreducible representation of $A$.

Let ${}^{(\widetilde{G},\chi)}\K_{G}^{*}(X)$, the
\textbf{$(\widetilde{G},\chi)$-twisted magnetic $G$-equivariant K-theory groups of $X$ } be the direct summand
of the magnetic $\widetilde{G}$-equivariant K-theory of $X$ 
\begin{align}
{}^{(\widetilde{G},\chi)}\K_{G}^{*}(X)  \subset \K_{\widetilde{G}}^{*}(X)
\end{align}
generated by magnetic
$(\widetilde{G}, \widetilde{\phi})$-equivariant vector bundles where the action of $A$ on the fibers matches the one of the character $\chi$. 
\end{definition}

\begin{remark}
Since $X$ is a $G$-space, the induced action of $\widetilde{G}$ on $X$ has the property that $A$ acts trivially.
The group $A$ being abelian allows us to decompose any magnetic $(\widetilde{G}, \widetilde{\phi})$-equivariant vector bundle by the action of $A$ on the fibers 
\begin{align}
 \K_{\widetilde{G}}(X) \cong \bigoplus_{\chi \in \widehat{A}} {}^{(\widetilde{G},\chi)}\K_{G}^{*}(X).
\end{align}

\end{remark}

Accordingly to the Def. \ref{twisted magnetic K-theory} we define
 \begin{align}
 {}^{(\widetilde{G},\chi)}\mathbf{Rep}(G), \ {}^{(\widetilde{G},\chi)}\mathbf{R}(G),\ {}^{(\widetilde{G},\chi)}\mathbf{R}(G,\mathbb{F}), \ {}^{(\widetilde{G},\chi)}\mathbf{Irrep}(G,\mathbb{F}),\ {}^{(\widetilde{G},\chi)}\MVec_{\mg{G}}(X) 
 \end{align}
as the category of $(\widetilde{G},\widetilde{\phi})$ representations where $A$ acts by the character $\chi \in \widehat{A}$,
its Grothendieck group, the ones of type $\mathbb{F}$, the irreducible ones, the irreducible ones of type $\mathbb{F}$ and the magnetic $(\widetilde{G},\widetilde{\phi})$-equivariant vector bundles over $X$ where $A$ acts by the character $\chi$, respectively.

\begin{example}
Of particular relevance is the case on which $(G,\phi)=(\mathbb{Z}/2,id)$ and $(\widetilde{G},\widetilde{\phi})=(\mathbb{Z}/4, mod_2)$. Here $A\cong \mathbb{Z}/2$ and we have two characters.  If $X$ is a $\mathbb{Z}/2$-space, any magnetic $(\mathbb{Z}/4, mod_2)$-equivariant
vector bundle over $X$ decomposes as a direct sum of two bundles, one on which $A$ acts trivially, and therefore could be thought as a magnetic $(\mathbb{Z}/2,id)$-equivariant vector bundle, and 
one on which $A\cong \mathbb{Z}/2$ acts by multiplication by $-1$. The first type are known as Atiyah's Real vector bundles and  the second type
are equivalent to what is known in the literature as quaternionic vector bundles (Dupont \cite{dupontsymplectic} calls them symplectic bundles but we will leave the name symplectic
for the case on which the space $X$ has no action of $\mathbb{Z}/2$), 
, hence:
\begin{align}
\K_{\mathbb{Z}/4}^*(X) \cong K\mathbb{R}^*(X) \oplus K\mathbb{H}^*(X),
\end{align}
see Ex. \ref{real and quaternionic bundles}. 
If we denote by $\sigma$ the sign representation of $A=\mathbb{Z}/2$, 
then the $(\mathbb{Z}/4,\sigma)$-twisted magnetic $\mathbb{Z}/2$-equivariant K-theory over $\mathbb{Z}/2$-spaces
is clearly equivalent to quaternionic K-theory:
\begin{align}
{}^{(\mathbb{Z}/4,\sigma)}\K_{\mathbb{Z}/2}^*(X) \cong K\mathbb{H}^*(X).
\end{align}
\end{example}

The degree shift in magnetic equivariant K-theory comes from the 
$\mathbb{Z}/2$-central extension of any magnetic group $(G,\phi)$ defined by the pullback of $\mathbb{Z}/4$ under $\phi$. This particular
choice of magnetic group is 
\begin{align}
(\widehat{G}, \widehat{\phi}) :=(G \times_{\mathbb{Z}/2}\mathbb{Z}/4,mod_2 \circ \pi_2)
\end{align}
 where we have that:
\begin{align} \label{widehatG}
\widehat{G}=\phi^*(\mathbb{Z}/4)=G \times_{\mathbb{Z}/2}\mathbb{Z}/4 = \{(g,m) \in G \times \mathbb{Z}/4  \colon \phi(g)& =mod_2(m) \},\\
\mathrm{and} \ \ \ \widehat{\phi}(g,m)=mod_2 \circ \pi_2(g,m)= mod_2(m),&
\end{align}
and whose core is $\widehat{G}_0 \cong G_0 \times \mathbb{Z}/2$.
Diagrammatically we have:

\begin{align}
\xymatrix{
 G_0 \ar[r] & \mg{G} \ar[r]^{\phi} & \mathbb{Z}/2  \\
 G_0 \ar@{=}[r] \ar[d] \ar[u] & \widehat{G}=G \times_{\mathbb{Z}/2}\mathbb{Z}/4 \ar[r]^{\hspace{1cm}\pi_2} \ar[u]^{\pi_1} \ar[ur]^{\widehat{\phi}} & \mathbb{Z}/4  \ar[u]_{mod_2} \\ 
 \widehat{G}_0=G_0 \times \mathbb{Z}/2  \ar[ur] & \ar[l] \mathbb{Z}/2 \ar@{=}[r] \ar[u] & \mathbb{Z}/2. \ar[u]  \\
}
\end{align}

Denote by $\widehat{\sigma}: =\phi^*\sigma$ the sign representation on $\mathbb{Z}/2 = \mathrm{Ker}(\widehat{G} \to G)$ coming from $\sigma$ on $\mathbb{Z}/2 = \mathrm{Ker}(\mathbb{Z}/4 \to \mathbb{Z}/2)$. We claim the following degree shift isomorphism.

\begin{theorem}[Degree shift] \label{degree shift}
    For $X$ a $G$-space and $(G,\phi)$ a magnetic group, then there is a natural isomorphism
   \begin{align}
\K^{*}_{G}(X)  \stackrel{\cong}{\longrightarrow} {}^{(\widehat{G}, \widehat{\sigma})}\K^{*+4}_{G}(X).
\end{align}
\end{theorem}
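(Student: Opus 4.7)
The plan is to mirror Atiyah's proof of the classical shift $K\mathbb{R}^*(X) \cong K\mathbb{H}^{*+4}(X)$ from \cite{atiyahreal}, upgraded to keep track of the full magnetic group action. Concretely, I would construct a canonical Bott-like class $\omega \in {}^{(\widehat{G},\widehat{\sigma})}\K^{-4}_G(*)$ and show that exterior product with $\omega$, combined with the $8$-fold periodicity on the coefficient side, yields the desired natural isomorphism
\[
\K^{*}_G(X) \stackrel{\cong}{\longrightarrow} {}^{(\widehat{G},\widehat{\sigma})}\K^{*+4}_G(X).
\]

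For the construction of $\omega$, I would apply the Atiyah-Bott-Shapiro map defined earlier in this section to a canonical graded Clifford module built from the quaternions. Regard $\mathbb{H}$ as a complex vector space under left multiplication by $i$, and let $J\colon \mathbb{H}\to\mathbb{H}$ denote the antilinear map given by right multiplication by $j$, so that $J^{2}=-1$. Equip $\mathbb{H}$ with its natural graded $C_{0,4}$-module structure, and put a $(\widehat{G},\widehat{\phi})$-action on $\mathbb{H}$ by letting $G_0$ act trivially, the central element $(e,2) \in \widehat{G}$ act as $-1$ (matching the character $\widehat{\sigma}$), and any lift of an antilinear element $a\notin G_0$ act as $J$. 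This yields a magnetic graded $C_{0,4}[\widehat{G},\widehat{\phi}]$-module supported on the $\widehat{\sigma}$-isotypic component of $A=\mathbb{Z}/2$, whose image under the ABS map is the desired class $\omega$.

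Next, I would verify the isomorphism on coefficients, i.e.\ for $X = *$. By the decomposition of Prop.\ \ref{decomK} and Thm.\ \ref{coefofK}, the statement splits into the pieces indexed by the Wigner types and reduces to the classical shifts $KO^*\cong KSp^{*+4}$, $KU^*\cong KU^{*+4}$, $KSp^*\cong KO^{*+4}$, together with a bijection between irreducible magnetic representations of $(G,\phi)$ and $\widehat{\sigma}$-twisted irreducible magnetic representations of $(\widehat{G},\widehat{\phi})$ swapping real and quaternionic types while preserving the complex type. This bijection is a direct consequence of Lem.\ \ref{signodeT}: any lift $\tilde{a}\in \widehat{G}$ of an antilinear element $a\notin G_0$ satisfies $\tilde{a}^{2}=(a^{2},2)$, so on a $\widehat{\sigma}$-twisted representation $\tilde{a}^{2}$ acts as $-\rho|_{G_0}(a^{2})$, flipping the sign in the defining condition $T\overline{T}=\pm\rho|_{G_0}(a_0^{2})$ that distinguishes real from quaternionic Wigner type.

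Finally, I would extend the isomorphism from the point to an arbitrary compact $G$-CW complex $X$. Both $\K^*_G(-)$ and ${}^{(\widehat{G},\widehat{\sigma})}\K^{*+4}_G(-)$ are $G$-equivariant cohomology theories, by the long exact sequence Lem.\ \ref{sucesionlarga} and the Bott periodicity Thm.\ \ref{1,1periodicity}, and exterior product with $\omega$ is a natural transformation between them. Since this natural transformation is an isomorphism on every orbit-type coefficient $\K_G(G/H)$, via Lem.\ \ref{cocientes} combined with the point case verified above, a standard comparison argument for equivariant cohomology theories upgrades the isomorphism from the point to all of $X$. I expect the main technical obstacle to be the delicate choice of lifts and signs in the Clifford-module definition of $\omega$ so that the twist $\widehat{\sigma}$ is implemented correctly; once this is verified, the swap of Wigner types and the reduction to the classical periodicities make the remainder of the argument essentially formal.
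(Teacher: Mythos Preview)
Your proposal is sound and would lead to a correct proof, but it takes a genuinely different route from the paper. The paper follows Dupont \cite{dupontsymplectic}: it constructs the shift class geometrically by taking the canonical line bundle $\mathrm{H}_{sp}$ on $\mathbb{P}(\mathbb{H})\cong S^{3,0}$ (with its magnetic $\mathbb{Z}/4$-action), feeds it through the connecting map of the pair $(B^{3,0},S^{3,0})$ (which yields a short exact sequence by \cite[Cor.~3.8]{atiyahreal}), and obtains a universal class $d_0\in {}^{\mathbb{Z}/4}\K^{4}_{\mathbb{Z}/2}(*)$, pulled back to $\widehat{G}$ via $\phi$. The isomorphism is then proved in one stroke for every $X$: the map $E\mapsto E\cdot d_0$ applied twice is precisely multiplication by the generator $d_0^2$ of $\K^{8}_{\mathbb{Z}/2}(*)$, i.e.\ the $8$-periodicity already established, so the single multiplication must be an isomorphism. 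No coefficient decomposition, no type-swap analysis, no induction over orbit types.

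Your approach --- ABS construction of $\omega$, the Wigner-type swap on ${}^{(\widehat G,\widehat\sigma)}\mathbf{R}(G)$, then a comparison of equivariant cohomology theories --- is correct and has the virtue of making the representation-theoretic content explicit: the observation that $\tilde a^{2}$ acts as $-\rho(a^{2})$ on $\widehat\sigma$-twisted modules, flipping the sign in Lem.~\ref{signodeT} and hence exchanging real and quaternionic types, is exactly the mechanism the paper's argument hides. Two places to tighten if you carry it out: first, your ``natural graded $C_{0,4}$-module structure on $\mathbb{H}$'' should really be stated as the graded module with $M_0\cong\mathbb{H}$, and you must verify that your antilinear $J$ genuinely commutes with the chosen Clifford generators; second, the comparison step requires the isomorphism on \emph{every} orbit $\K_G(G/H)$, so via Lem.~\ref{cocientes} you must rerun the point argument for each magnetic subgroup $(H,\phi|_H)$ and separately treat non-magnetic subgroups $H\leq G_0$ (where $\widehat H\cong H\times\mathbb{Z}/2$ and the claim is just complex $4$-periodicity). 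The paper's ``square equals $8$-periodicity'' trick sidesteps both issues.
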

\begin{proof}
Let us start denoting the twisted equivariant magnetic K-theory by ${}^{\widehat{G}}\K^{*}_{G}(X)$
since there is only one non-trivial representation of the group $\mathbb{Z}/2 = \mathrm{Ker}(\widehat{G} \to G)$.

We will generalize the proof of the degree shift isomorphism that was shown by Dupont in \cite{dupontsymplectic}
to the equivariant setup. We will follow closely the steps of his original proof.

Take $(B^{3,0},S^{3,0})$  the pair of the unit ball and the unit sphere in $\mathbb{R}^{3,0}$. Take $\mathbb{H}$ the quaternions and denote by $\mathbb{P}(\mathbb{H})$ its complex projectivization together
with the canonical complex bundle $\mathrm{H}_{sp} \to \mathbb{P}(\mathbb{H})$ over it. Note that
the magnetic group $\mathbb{Z}/4$ acts on $\mathbb{H} \cong \mathbb{C} \oplus \mathbb{C}$ via the matrix $\big(\begin{smallmatrix}
  0 & 1\\
  -1 & 0
\end{smallmatrix}\big) \mathbb{K}$
and therefore the complex bundle $\mathrm{H}_{sp} \to \mathbb{P}(\mathbb{H})$ becomes
magnetic $\mathbb{Z}/4$-equivariant over $\mathbb{P}(\mathbb{H})$. The induced action 
of $\mathbb{Z}/4$ on $\mathbb{P}(\mathbb{H})$ is trivial on $\mathrm{Ker}( \mathbb{Z}/4 \to \mathbb{Z}/2)$
and moreover this  induced $\mathbb{Z}/2$ action on $\mathbb{P}(\mathbb{H})$ is free. 

Therefore $\mathbb{P}(\mathbb{H})$ as
a $\mathbb{Z}/2$ space is equivalent to $S^{3,0}$ and $\mathrm{H}_{sp}$ is a $\mathbb{Z}/4$-twisted
magnetic $\mathbb{Z}/2$-equivariant line bundle over $S^{3,0}$:
\begin{align}
[\mathrm{H}_{sp}] \in {}^{\mathbb{Z}/4}\K^0_{\mathbb{Z}/2}(S^{3,0}) = K\mathbb{H}^0(S^{3,0}).
\end{align}

The long exact sequence associated to the pair $(X \times B^{3,0}, X\times S^{3,0})$ delivers by \cite[Cor. 3.8]{atiyahreal} the short exact sequence
\begin{align}
0 \longrightarrow \K_{G}^*(X) \stackrel{\pi^*}{\longrightarrow} \K_{G}^*(X \times S^{3,0}) \stackrel{\delta}{\longrightarrow} \K_{G}^{*+4}(X) \to 0
\end{align}
where $G$ is acting on $B^{3,0}$ via $\phi$, on the product $X \times B^{3,0}$ diagonally,  and
$\pi:  X \times S^{3,0} \to X$ is the projection.

This short exact sequence applied on ${}^{\mathbb{Z}/4}\K_{\mathbb{Z}/2}$ delivers the  sequence \cite[eqn. 5]{dupontsymplectic}
\begin{align}
0 \longrightarrow{}^{\mathbb{Z}/4}\K_{\mathbb{Z}/2}^0(*)\stackrel{\pi^*}{\longrightarrow}{}^{\mathbb{Z}/4}\K_{\mathbb{Z}/2}^0(  S^{3,0}) \stackrel{\delta}{\longrightarrow} {}^{\mathbb{Z}/4}\K_{\mathbb{Z}/2}^{4}(*) \to 0
\end{align}
where $d_0 := \delta \mathrm{H}_{sp}$ is the generator of ${}^{\mathbb{Z}/4}\K_{\mathbb{Z}/2}^{4}(*) = KSp^{4}(*)=\mathbb{Z}$, and $d_0^2 \in \K_{\mathbb{Z}/2}^8(*)=K\mathbb{R}^8(*)$ is also the generator.

Consider now the map
 \begin{align}
 \K_{G}(X) \times {}^{\mathbb{Z}/4}\K_{\mathbb{Z}/2}(B^{4,0},S^{4,0})& \to {}^{\widehat{G}}\K_{G}(X\times B^{4,0},X \times S^{4,0})\nonumber \\
 (E,\delta \mathrm{H}_{sp}) & \mapsto \pi^*_1 E \otimes \pi_2^* \delta \mathrm{H}_{sp} \label{tensor with Hsp}
 \end{align}
where $\pi_1$ and $\pi_2$ are the projections and $\delta \mathrm{H}_{sp}$ denotes the generator
of ${}^{\mathbb{Z}/4}\K_{\mathbb{Z}/2}^{4}(*)$ defined above. Note that the bundle
$\pi^*_1 E \otimes \pi_2^* \delta \mathrm{H}_{sp}$ is $\widehat{G}$-equivariant since
both $E$ and $\delta \mathrm{H}_{sp}$ are also $\widehat{G}$-equivariant, the former 
through the induced action of $G$ and the latter through the induced action of $\mathbb{Z}/4$. Note
moreover that $\mathbb{Z}/2 = \mathrm{ker} (\widehat{G} \to G)$ acts by multiplication by $-1$ on $\pi^*_1 E \otimes \pi_2^* \delta \mathrm{H}_{sp}$ and therefore it belongs to 
the $\widehat{G}$-twisted magentic $G$-equivariant K-theory group.

Summarize the construction of Eqn. \eqref{tensor with Hsp} as the  desired degree shifting map
\begin{align}
\K_{G}^0(X) & \to  {}^{\widehat{G}}\K_{G}^4(X) \nonumber \\
E &\mapsto E \cdot d_0.
\end{align}
This homomorphism is an isomorphism because when carried twice it induces the 8-periodicity isomorphism:
\begin{align}
\K_{G}^0(X)  & \stackrel{\cong}{\to} \K_{G}^8(X) \nonumber \\
 E &  \mapsto E \cdot d_0^2,
\end{align}
where $d_0^2$ is the generator of $\K_{\mathbb{Z}/2}^8(*)$.

\end{proof}
Following the notation of the degree shift isomorphism of Thm. \ref{degree shift},  carrying it twice 
delivers the isomorphism
\begin{align}
{}^{(\widetilde{G}, \widehat{\sigma})}\K^{*+4}_{G}(X) \stackrel{\cong}{\to} 
{}^{(\widehat{\widehat{G}}, \widehat{\widehat{\sigma}}+\widehat{\sigma})}\K^{*+8}_{G}(X),
\end{align}
where $(\widehat{\widehat{G}},\widehat{\widehat{\phi}})$ is the magnetic group
\begin{align}
\widehat{\widehat{G}}=\{(g,m,n) \in G \times \mathbb{Z}/4 \times \mathbb{Z}/4  \colon \phi(g) =mod_2(m) = mod_2(n) \}
\end{align}
with $\widehat{\widehat{\phi}}(g,m,n)=mod_2(n)$, and $\widehat{\widehat{\sigma}}+\widehat{\sigma}$  the irreducible representation of 
\begin{align}
( \mathbb{Z}/2)^2 \cong \langle (1_G,2,0),(1_G,0,2) \rangle = \mathrm{Ker}(\widehat{\widehat{G}} \to G )
\end{align}
that maps both $(1_G,2,0)$ and $(1_G,0,2)$ to $-1$. Then we have the following simple result.

\begin{lemma}
There is a natural isomorphism
\begin{align}
\K^{*}_{G}(X) \stackrel{\cong}{\to} 
{}^{(\widehat{\widehat{G}}, \widehat{\widehat{\sigma}}+\widehat{\sigma})}\K^{*}_{G}(X).
\end{align}
\end{lemma}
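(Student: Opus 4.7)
My plan is to iterate the degree shift isomorphism of Thm. \ref{degree shift} twice, and then to invoke the 8-fold Bott periodicity of magnetic equivariant K-theory. First, I would apply Thm. \ref{degree shift} directly to produce the natural isomorphism
\begin{align}
\K^{*}_G(X) \stackrel{\cong}{\longrightarrow} {}^{(\widehat{G}, \widehat{\sigma})}\K^{*+4}_G(X),
\end{align}
given by tensoring with the Bott element $d_0 = \delta \mathrm{H}_{sp}$ and using the connecting homomorphism of the pair $(X\times B^{3,0}, X\times S^{3,0})$.

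Next, I would show that the construction of Thm. \ref{degree shift} generalizes verbatim to the twisted setting. Starting from a $(\widehat{G}, \widehat{\sigma})$-twisted magnetic $G$-equivariant vector bundle $E$ over $X$, the tensor bundle $\pi_1^* E \otimes \pi_2^* \delta \mathrm{H}_{sp}$ over $X\times S^{3,0}$ simultaneously carries the original $\mathbb{Z}/2$ acting by $-1$ on $E$ and a second independent $\mathbb{Z}/2$ acting by $-1$ on $\mathrm{H}_{sp}$ coming from the new copy of $\mathbb{Z}/4$. These two $\mathbb{Z}/2$'s are precisely the two generators of $(\mathbb{Z}/2)^2 = \ker(\widehat{\widehat{G}} \to G)$, and each acts by $-1$, so the resulting bundle lies in the $(\widehat{\widehat{G}}, \widehat{\widehat{\sigma}}+\widehat{\sigma})$-twisted theory. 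This yields a second isomorphism
\begin{align}
{}^{(\widehat{G}, \widehat{\sigma})}\K^{*+4}_G(X) \stackrel{\cong}{\longrightarrow} {}^{(\widehat{\widehat{G}}, \widehat{\widehat{\sigma}}+\widehat{\sigma})}\K^{*+8}_G(X),
\end{align}
which composed with the first gives $\K^{*}_G(X) \cong {}^{(\widehat{\widehat{G}}, \widehat{\widehat{\sigma}}+\widehat{\sigma})}\K^{*+8}_G(X)$.

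Finally, I would close the argument by appealing to $8$-periodicity. The $(\widehat{\widehat{G}}, \widehat{\widehat{\sigma}}+\widehat{\sigma})$-twisted theory is a direct summand of $\K_{\widehat{\widehat{G}}}^{*}(X)$, which is $8$-periodic; on coefficients this is visible in Thm. \ref{coefofK} through the decomposition into $KO$, $KU$ and $KSp$ summands (each of period $8$), and the periodicity extends to arbitrary compact $G$-spaces either through Bott's theorem (multiplication by the generator of $K\mathbb{R}^8(\ast)\cong \mathbb{Z}$) or via the Atiyah-Hirzebruch spectral sequence of Thm. \ref{ahss}. The hardest step I anticipate is the twisted version of the degree shift: one has to track carefully the two $\mathbb{Z}/2$'s, verify that the connecting homomorphism of the pair $(X\times B^{3,0}, X\times S^{3,0})$ respects the preexisting twisting, and confirm that the two characters combine additively to $\widehat{\widehat{\sigma}}+\widehat{\sigma}$ rather than cancelling. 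Once this bookkeeping is settled, the lemma follows immediately from the chain of isomorphisms above.
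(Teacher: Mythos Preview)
Your argument is correct but takes a very different, and considerably more roundabout, route than the paper's. You iterate the degree shift to obtain $\K^{*}_G(X) \cong {}^{(\widehat{\widehat{G}}, \widehat{\widehat{\sigma}}+\widehat{\sigma})}\K^{*+8}_G(X)$ and then invoke $8$-periodicity of $\K_{\widehat{\widehat{G}}}^{*}(X)$ (hence of each of its direct summands) to strip off the $+8$. The paper, by contrast, never touches the degree shift or periodicity: it writes down the elementary formula
\[
(g,m,n)\cdot e \;:=\; (-1)^{\frac{m+n}{2}}\, g\cdot e
\]
on a magnetic $G$-bundle $E$, checks that $m+n$ is always even so the sign is well defined, and observes that both generators $(1_G,2,0)$ and $(1_G,0,2)$ of $(\mathbb{Z}/2)^2=\ker(\widehat{\widehat{G}}\to G)$ act by $-1$. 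This gives an explicit isomorphism of \emph{categories} $\MVec_G(X)\cong {}^{(\widehat{\widehat{G}},\widehat{\widehat{\sigma}}+\widehat{\sigma})}\MVec_G(X)$, from which the K-theory isomorphism is immediate. In the paper's logical flow the lemma is precisely what allows one to identify the double degree shift with ordinary $8$-periodicity; your approach reverses this, taking $8$-periodicity as input. Both are valid, but the paper's proof is a one-line explicit construction, while yours relies on heavier machinery (the twisted version of Thm.~\ref{degree shift} and full $8$-periodicity) that the lemma is in fact meant to help clarify.
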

\begin{proof}
Take a magnetic $G$-equivariant vector bundle $E$ over $X$ and endow this same vector bundle $E$ but with the following action of $\widehat{\widehat{G}}$. For $e \in E$ and $(g,m,n) \in 
\widehat{\widehat{G}}$ let
\begin{align}
(g,m,n) \cdot e := (-1)^{\frac{m+n}{2}} g \cdot  e.
\end{align}
The action is well defined because $m+n$ is even, and note that both $(1_G,2,0)$ and $(1_G,0,2)$
act by multiplication by $-1$. With this action of $\widehat{\widehat{G}}$ the vector bundle
$E$ is a $(\widehat{\widehat{G}}, \widehat{\widehat{\sigma}}+\widehat{\sigma})$-twisted
magnetic $G$-equivariant vector bundle and therefore we get the following isomorphism of categories
\begin{align}
\mathbf{Vec}_G(X) \stackrel{\cong }{\to } {}^{(\widehat{\widehat{G}}, \widehat{\widehat{\sigma}}+\widehat{\sigma})}\mathbf{Vec}_G(X).
\end{align}
The isomorphism in K-theories follows.
\end{proof}

Therefore the  degree shift homomorphism gives us isomorphisms 
\begin{align}
\K_G^*(X) \stackrel{\cong}{\to}  {}^{(\widehat{G},\widehat{\sigma})}\K^{*+4}_{G}(X) \stackrel{\cong}{\to} \K_G^{*+8}(X)
\end{align}
whose composition $\K_G^*(X) \stackrel{\cong}{\to} \K_G^{*+8}(X)$ is multiplying by $d_0^2$, or alternatively, the canonical isomorphism given by the Morita equivalence of the Clifford algebras $C_{p,q}$ and $C_{p+8,q}$.

An interesting consequence of the degree shift isomorphism of Thm. \ref{degree shift} is the following.

\begin{proposition} \label{4-periodic}
Whenever the short exact sequence $\mathbb{Z}/2 \to \widehat{G} \to G$ splits, namely $\widehat{G} \cong G \times \mathbb{Z}/2$ is a trivial extension of $G$, then the magnetic equivariant K-theory
$\K_{G}^*(X)$ of the $G$-space $X$ is $4$-periodic.
\end{proposition}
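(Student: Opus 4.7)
The plan is to leverage the degree shift isomorphism of Thm.~\ref{degree shift}: when the extension $\mathbb{Z}/2\to\widehat{G}\to G$ splits, I will show that the $(\widehat{G},\widehat{\sigma})$-twisted theory is canonically isomorphic to the untwisted magnetic equivariant K-theory, so that ${}^{(\widehat{G},\widehat{\sigma})}\K^{*+4}_{G}(X)\cong \K^{*+4}_{G}(X)$, which together with Thm.~\ref{degree shift} gives the asserted 4-periodicity.

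First I would fix a splitting $s:G\to\widehat{G}$ with $\widehat{\phi}\circ s=\phi$, so that $\widehat{G}\cong G\times\mathbb{Z}/2$ as magnetic groups, with $\widehat{\phi}=\phi\circ\pi_1$ and the central $\mathbb{Z}/2$ sitting inside the core $\widehat{G}_0$. Because $\mathbb{Z}/2\subset \widehat{G}_0$, its action on every magnetic $\widehat{G}$-equivariant vector bundle is complex-linear, and the character $\widehat{\sigma}$ is precisely the sign character of this central $\mathbb{Z}/2$.

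Second I would define a functor
\begin{align}
\Phi:\MVec_{\mg{G}}(X)\longrightarrow {}^{(\widehat{G},\widehat{\sigma})}\MVec_{\mg{G}}(X)
\end{align}
sending a magnetic $(G,\phi)$-equivariant vector bundle $E\to X$ to the same underlying complex vector bundle, equipped with the $\widehat{G}$-action
\begin{align}
(g,\epsilon)\cdot e := (-1)^{\epsilon}\,(g\cdot e),\qquad (g,\epsilon)\in G\times\mathbb{Z}/2,\ e\in E.
\end{align}
This is well defined because multiplication by $-1$ is complex-linear and central, and the twisting condition holds by construction. The inverse functor restricts a $(\widehat{G},\widehat{\sigma})$-twisted bundle along the splitting $s$. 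The compositions are naturally isomorphic to the respective identity functors, so $\Phi$ is an equivalence of categories and induces a natural isomorphism
\begin{align}
\K^{*}_{G}(X)\;\cong\;{}^{(\widehat{G},\widehat{\sigma})}\K^{*}_{G}(X).
\end{align}

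Finally, combining this with the degree shift isomorphism of Thm.~\ref{degree shift} yields the chain
\begin{align}
\K^{*}_{G}(X)\;\stackrel{\cong}{\longrightarrow}\;{}^{(\widehat{G},\widehat{\sigma})}\K^{*+4}_{G}(X)\;\stackrel{\cong}{\longrightarrow}\;\K^{*+4}_{G}(X),
\end{align}
which is exactly the claimed 4-periodicity. The only point requiring care is the compatibility of $\Phi$ with the linear/antilinear action encoded by $\widehat{\phi}$; but since $s$ is a morphism of magnetic groups, the identity $\widehat{\phi}(s(g))=\phi(g)$ guarantees that the antilinear behaviour on both sides matches automatically, so this is not a genuine obstacle and the remaining verifications (compatibility with morphisms, direct sums, and stable equivalence) are routine.
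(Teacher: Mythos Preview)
Your proof is correct and follows essentially the same approach as the paper: both use the splitting to identify $\widehat{G}\cong G\times\mathbb{Z}/2$, establish the isomorphism ${}^{(\widehat{G},\widehat{\sigma})}\K^{*}_{G}(X)\cong\K^{*}_{G}(X)$ by noting that the central $\mathbb{Z}/2$ acting by the sign representation can be freely added or forgotten, and then invoke Thm.~\ref{degree shift}. The only cosmetic difference is that you explicitly write down the functor $\Phi$ from untwisted to twisted bundles, whereas the paper phrases the same equivalence in the reverse direction (forgetting the $\mathbb{Z}/2$-action on a $(G\times\mathbb{Z}/2,\mathrm{sgn})$-bundle).
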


\begin{proof}
Let $\alpha: G \to \widehat{G}$ be the splitting and $G \times \mathbb{Z}/2 \to \widehat{G} $, $(g, m) \mapsto \alpha(g) \circ (1_G,m)$ the induced isomorphism. Then any $(\widehat{G},\widehat{\sigma})$-twisted  magnetic$G$-equivariant
vector bundle translates to a magnetic $(G \times \mathbb{Z}/2, sgn)$-equivariant vector bundle
where the $\mathbb{Z}/2$ component acts on the fibers by the sign representation (multiplication by $-1$). Forgetting
this action of $\mathbb{Z}/2$ gives the desired isomorphism
\begin{align}
{}^{(\widehat{G}, \widehat{\sigma})}\K_{G}^*(X) = {}^{(G \times \mathbb{Z}/2, sgn)}\K_G^*(X) \cong \K_G^*(X).
\end{align}
The degree shift isomorphism of Thm. \ref{degree shift} provides the desired periodicity of degree $4$:
\begin{align}
\K_G^*(X) \stackrel{\cong}{\to }   {}^{(\widehat{G}, \widehat{\sigma})}\K_{G}^{*+4}(X)  \stackrel{\cong}{\to} \K_G^{*+4}(X).
\end{align}
\end{proof}

\begin{example}
Take the cyclic magnetic group $\mathbb{Z}/2n$ for $n>1$. Then the extension 
\begin{align}
\widehat{\mathbb{Z}/2n}  = \{ (a,b) \in \mathbb{Z}/2n \times \mathbb{Z}/4 \colon a+b = 0  \ mod_2 \}
\end{align}
splits with isomorphism
\begin{align}
\widehat{\mathbb{Z}/2n} \cong \mathbb{Z}/2n \times \mathbb{Z}/2,  \ \ \  ((a,b) \mapsto (a, a+b).
\end{align}
Therefore the  magnetic $\mathbb{Z}/2n$-equivariant K-theory $\K_{\mathbb{Z}/2n}^*(X)$ is $4$-periodic. 
\end{example}

\subsection{Complexes of vector bundles and Thom isomorphism}

In this section, we elaborate on another familiar construction of the groups $\K_{\mg{G}}(X,Y)$, this time in terms of complexes of $(\mg{G},\phi)$-vector bundles. This formulation is crucial for the computation of the coefficients $\K^*_{\mg{G}}$ and for proving the Thom isomorphism. The construction and proof align with the corresponding results for $KU_{G_0}(X)$, as detailed in Sections 3 and Appendix A of \cite{segal}. The antilinear structure is naturally incorporated into the complex framework, following the same path.

\begin{definition}
    Let $(X,A)$ be a $G$-pair of locally compact spaces. The category $C_{G}(X,A)$ of \textbf{complexes on $X$ acyclic on} $A$ has
    \begin{itemize}
        \item Objects: sequences 
\begin{align}E^* : \ldots \overset{d}{\longrightarrow}E^{i-1}\overset{d}{\longrightarrow} E^i \overset{d}{\longrightarrow} E^{i-1} \overset{d}{\longrightarrow} \ldots
\end{align}
        of $(\mg{G},\phi)$-vector bundles on $X$ such that $E^i=0$ when $|i|$ is large, homomorphisms $d$ such that $d\circ d=0$, and 
\begin{align}
\operatorname{supp}(E^*):=\{x\in X\, |\, \text{the sequence of vector spaces } E^*_x \text{ is not exact}\}
\end{align}
 is a compact subset of $X-A$.
    \item Morphisms:  $f:E^*\longrightarrow F^*$ are sequences of morphisms $f^i:E^i\longrightarrow F^i$ such that $f^id=df^i$.
    \end{itemize}
\end{definition}

The direct sum of vector bundles gives a natural sum of complexes
\[(E^*\oplus F^*)^n=E^n\oplus F^n\]
given by the direct sum of vector bundles on each level.

\begin{definition}

   Denote the set of isomorphism classes of complexes on $X$ acyclic on $A$ by $\Lm_{\mg{G}}(X,A)$. This is naturally a semigroup under the sum of complexes.
\begin{itemize}    
\item Two elements $E^*,F^*\in \Lm_{\mg{G}}(X,A)$ are \textbf{homotopic}, $E^*\simeq F^*$, if there is an element $H^*\in \Lm_{\mg{G}}(X\times I, A\times I)$ such that $E^*=H^*|_{X\times \{0\} }$ and $F^*=H^*|_{X\times \{1\}}$.
    \item Two elements $E^*,F^*\in \Lm_{\mg{G}}(X,A)$ are \textbf{equivalent}, $E^*\sim F^*$, if there are elements $F^*_0,F^*_1\in \Lm_{\mg{G}}(X,X)$ such that $E^*\oplus F^*_0\simeq F^*\oplus F^*_1$ are homotopic. 
\end{itemize}
     
\end{definition}

\begin{theorem}
    There is an isomorphism
    \begin{align}
\Lm_{\mg{G}}(X,A)/\!\sim\,\,\overset{\cong}{\longrightarrow} \K_{\mg{G}}(X,A)
\end{align}
\end{theorem}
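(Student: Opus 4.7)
The plan is to construct mutually inverse homomorphisms
$\chi \colon \Lm_{\mg{G}}(X,A)/\!\sim\, \to \K_{\mg{G}}(X,A)$ and $\psi$ in the opposite direction, following the strategy of Appendix A of \cite{segal} while tracking the antilinear part of the magnetic action through the equivariant lemmas developed earlier in this section. The main new ingredient, relative to Segal, is that every averaging, splitting and trivialization step must be compatible with the antilinear action of the coset $\mg{G} \setminus G_0$, which is exactly what the equivariant extension/metric lemmas were designed to ensure.

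For the forward map, given $E^* \in \Lm_{\mg{G}}(X,A)$, choose invariant Hermitian metrics on each $E^i$ using Lem.~\ref{metrica} and roll $E^*$ into the two-term $\mathbb{Z}/2$-graded complex $E^{ev} \xrightarrow{\,d+d^*\,} E^{od}$ with $E^{ev} := \bigoplus_i E^{2i}$ and $E^{od} := \bigoplus_i E^{2i+1}$. Since $E^*$ is acyclic on $A$, the operator $d+d^*$ is a $(\mg{G},\phi)$-equivariant isomorphism on $A$, so the virtual class $[E^{ev}]-[E^{od}]$ lies in the kernel of the restriction $\K_{\mg{G}}(X)\to \K_{\mg{G}}(A)$ and, by the exact sequence of Lem.~\ref{sucesionlarga}, lifts to $\K_{\mg{G}}(X,A)$; the isomorphism $d+d^*$ on $A$ canonically selects such a lift via the collapse construction of Cor.~\ref{trivializationiso}. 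Define $\chi(E^*)$ to be this lift. Well-definedness on $/\!\sim$ reduces to three standard checks: independence of the metric (the space of invariant Hermitian metrics is convex, so any two are connected by a $G$-homotopy to which Cor.~\ref{homotopyinvoftriv} applies); homotopy invariance (Lem.~\ref{invhom} applied to $H^* \in \Lm_{\mg{G}}(X \times I, A \times I)$); and vanishing on complexes in $\Lm_{\mg{G}}(X,X)$, since there $d+d^*$ is a global isomorphism and the two terms cancel in $\K_{\mg{G}}(X,A)$.

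For the inverse $\psi$, use Prop.~\ref{complement} and Cor.~\ref{reduses} to represent any class in $\K_{\mg{G}}(X,A) = \widetilde{\K}_{\mg{G}}(X/A)$ as $[E]-[\mathbf{V}]$ with $\mathbf{V}$ a trivial $(\mg{G},\phi)$-bundle on $X$ and a chosen equivariant trivialization $\alpha \colon E|_A \xrightarrow{\cong} \mathbf{V}|_A$, and set
$\psi([E]-[\mathbf{V}]) := [\,0 \to \mathbf{V} \xrightarrow{\alpha^{-1}} E \to 0\,] \in \Lm_{\mg{G}}(X,A)/\!\sim.$
Independence of the representative is exactly absorption of homotopy classes of equivariant trivializations, which is Cor.~\ref{homotopyinvoftriv}. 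The identity $\chi \circ \psi = \mathrm{id}$ is then visible directly from the two-term construction.

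The main obstacle is the other composition, $\psi \circ \chi = \mathrm{id}$, that is, showing that every complex in $\Lm_{\mg{G}}(X,A)$ is equivalent under $\sim$ to its rolled-up two-term version. My plan is to induct on the length of the complex: use Lem.~\ref{extensionofiso} to extend a local equivariant splitting of an intermediate differential $d^i$ to a $G$-invariant neighborhood of $A$, then promote it to a global $\mg{G}$-equivariant splitting using Lem.~\ref{extofsec} combined with the averaging argument of Lem.~\ref{maschke}; this produces an equivariant acyclic subcomplex that can be peeled off as an element of $\Lm_{\mg{G}}(X,X)$, leaving a strictly shorter complex with the same class in $\Lm_{\mg{G}}(X,A)/\!\sim$. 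Iterating reduces everything to the two-term case, where the identity is immediate. Once this reduction is in place, the remaining formal manipulations are identical to those in Appendix A of \cite{segal}, with every bundle-theoretic input replaced by its $(\mg{G},\phi)$-equivariant analog already established in this section.
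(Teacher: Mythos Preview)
Your proposal is correct and takes essentially the same approach as the paper, which simply defers to Segal's proof (Prop.~3.1 of \cite{segal}) after remarking that the antilinear part of the magnetic action causes no new difficulties; your write-up just makes that strategy explicit (two-term rolling via $d+d^*$, difference-bundle lift, length reduction by peeling off acyclic summands). Two small imprecisions worth flagging: the collapse you invoke through Cor.~\ref{trivializationiso} carries a $G$-contractibility hypothesis on $A$ that you do not have, although the underlying $E/\alpha$ construction goes through once a trivialization is already supplied, so this is only a citation issue; and in your definition of $\psi$ the map $\alpha^{-1}$ is a priori defined only on $A$ and must first be extended to a morphism on all of $X$ via Lem.~\ref{extofsec} (applied to the Hom bundle) before it can serve as the differential of a complex in $\Lm_{\mg{G}}(X,A)$.
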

\begin{proof}
    The proof is equivalent to the one given in Prop. 3.1 of \cite{segal}. The antilinear action of the magnetic groups do not play an important role. When $X$ is compact and $A=\emptyset$, then the isomorphism is simply $E^*\longmapsto \sum_k (-1)^k E^k$.
\end{proof}
Now we are going to describe the product in terms of complexes.

\begin{definition}
    The product of two complexes $E^*,F^*$ on $X$ acyclic on $A$ is 
    \begin{align}
(E^*\otimes F^*)^n=\oplus_{i+j=n} E^i\otimes F^j.
\end{align}
\end{definition}
We have $\operatorname{supp}(E^*\otimes F^*)=\operatorname{supp}(E^*)\cap \operatorname{supp}(F^*)$, see page 139 of \cite{segal}. This defines a product
\begin{align}
\Lm_{\mg{G}}(X,A)\otimes \Lm_{\mg{G}}(X,B) \longrightarrow \Lm_{\mg{G}}(X,A\cup B).
\end{align}

If $A=B=\emptyset$, then this induces the usual product in $\K_{\mg{G}}(X)$.

The product in $\K_{\mg{G}}(X)$ extends to $\K^*_{\mg{G}}(X)$ making it a graded ring. If $\xi_i\in \K^{-p_i}_{\mg{G}}(X)$ (for $i=1,2$) is represented by a complex $E^*_i\in \Lm_{\mg{G}}(X\times \mathbb{R}^{p_i})$ with compact support, then the product $\xi_1\cdot \xi_2\in \K^{-p_1-p_2}_{\mg{G}}(X)$, is represented by the complex $pr_1^*E^*_1\otimes pr_2^*E^*_2$ on $X\times \mathbb{R}^{-p_1-p_2}$, where $pr_i:X\times \mathbb{R}^{p_1}\times \mathbb{R}^{p_2}\longrightarrow X\times \mathbb{R}^{p_i}$.

Now we define the Koszul complex, the key construction to define the Thom isomorphism.
\begin{definition}
    Let $E$ be a $(\mg{G},\phi)$-vector bundle on $X$ and $s$ a $G$-section of $E$ one can form the \textbf{Koszul complex}
    \begin{align}
\cdots \longrightarrow 0 \longrightarrow \mathbb{C} \overset{d}{\longrightarrow} \Lambda^1 E \overset{d}{\longrightarrow} \Lambda^2 E \overset{d}{\longrightarrow} \cdots
\end{align}
    where $d$ is defined by $d(\xi)=\xi\wedge s(x)$ if $\xi \in \Lambda^iE_x$.
\end{definition}
This complex is acyclic at all points $x$ at which $s(x)\not=0$: Let $x\in X$ and suppose $e_1,e_2,\ldots, e_m:=s(x)\in E_x$ is a basis, then if $v=\sum_{I=(i_1, i_2,\ldots,i_k)} \alpha_I e_{i_1}\wedge e_{i_2}\wedge \ldots \wedge e_{i_k}\in \Lambda^{k}E_x $ is such that $v\wedge s(x)=0$, we have 
\begin{align}
0=&\left(\sum_{I=(i_1, i_2,\ldots, i_k)} \alpha_I e_{i_1}\wedge e_{i_2}\wedge \ldots \wedge e_{i_k}\right)\wedge e_m \\ =& \sum_{I=(i_1, i_2,\ldots, i_k)} \alpha_I e_{i_1}\wedge e_{i_2}\wedge \ldots \wedge e_{i_k}\wedge e_m \\ =& \sum_{I=(i_1, i_2,\ldots, i_k), i_K\not= m} \alpha_I e_{i_1}\wedge e_{i_2}\wedge \ldots \wedge e_{I_k}\wedge e_m.
\end{align}
So $\alpha_I=0$ for every $I=(i_1, i_2,\ldots, i_k)$ such that $I_k\not=m$, that is 
\begin{align}
v=\sum_{I=(i_1, i_2,\ldots, m)} \alpha_I e_{i_1}\wedge e_{i_2}\wedge \ldots \wedge e_{m}\in d(\Lambda^{k-1}E_x).
\end{align}
The most important example of a Koszul complex in our case is the following:

\begin{example} \label{Thom class}
    Let $p:E\longrightarrow X$ be a $(\mg{G},\phi)$-vector bundle, the pull-back $p^*E$ on $E$ has a natural section which is the diagonal map $\Delta:E \longrightarrow E\times_X 
    E=p^* E$

    \begin{align}
\xymatrix{
    p^*E \ar[r] \ar[d]^{p} & E \ar[d]^{p} \\ E \ar[r] \ar@(ul,dl)[u]^{\Delta} & X
    }
\end{align}
    This section is zero on the zero-section of $E$. Let us denote by $\Lambda^*_E$ the Koszul complex of $p^*E$ and $\Delta$. If $X$ is compact, $\Lambda^*_E\in \K_{\mg{G}}(E)$ has compact support and is called the \textbf{Thom class}.
\end{example}

\begin{example}
	Let $V$ be a representation of $(\mg{G},\phi)$. The exterior algebra $\Lambda^*V$ of $V$ induces a Koszul complex $\lambda_V$ over $V$:
	\begin{align}
\lambda_V: \xymatrix{V\times \{0\}\ar[r]^{d}& V\times \mathbb{C} \ar[r]^{d} & V\times \Lambda^1 V \ar[r]^{d}& V\times \Lambda^2 V \ar[r]^{d} & \ldots } 
\end{align}
	where $d(v,w)=v\wedge w$. Of course, this complex is exact on $V\backslash \{0\}$.
\end{example}

\begin{definition}
    Let $p:E\longrightarrow X$ be a $(\mg{G},\phi)$-vector bundle over a compact $G$-space $X$. If $F^*$ is a complex with compact support on $X$ then $p^*F^*$ is a complex on $E$ with support $p^{-1}(\operatorname{supp}(F^*))$, and $\Lambda^*_E\otimes p^*F^*$ is a complex with compact support on $E$. The homomorphism 
    \begin{align}
\varphi_*: \K_\mg{G}(X) \longrightarrow \K_{\mg{G}}(E)
    \end{align}
 induced by the assignment $F^*\longmapsto \Lambda^*_E\otimes p^*F^*$ is called the \textbf{Thom homomorphism}.
\end{definition}
One can replace $X$ by $X\times \mathbb{R}^{p}$ and $E$ by $E\times \mathbb{R}^p$ and induce a Thom homomorphism
\[\varphi_*: \K^{-p}_{\mg{G}}(X) \longrightarrow \K^{-p}_{\mg{G}}(E).\]
The main theorem of this section is the following

\begin{theorem}[Thom isomorphism]\label{tiso}
    The Thom 
 \begin{align}
\varphi_*: \K^{-p}_{\mg{G}}(X) \longrightarrow \K^{-p}_{\mg{G}}(E)
\end{align}
 is an isomorphism for any $(\mg{G},\phi)$-vector bundle $E$ on a compact $G$-space $X$.
\end{theorem}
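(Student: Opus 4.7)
The strategy is to adapt the Atiyah--Segal proof of the equivariant Thom isomorphism to the magnetic setting, combining the Bott periodicity theorem (Thm.~\ref{1,1periodicity}), the degree shift isomorphism (Thm.~\ref{degree shift}) and the computation of $\K_{\mg{G}}(\mg{G}/H)$ of Lem.~\ref{cocientes}. First I would verify that $\varphi_*$ is a natural $\K_{\mg{G}}(X)$-module homomorphism which commutes with the connecting maps of the Mayer--Vietoris long exact sequences of Cor.~\ref{exactforlocally}. By the five-lemma, the class of magnetic $(\mg{G},\phi)$-equivariant bundles for which the Thom homomorphism is an isomorphism is stable under restriction to any $G$-invariant closed cover $X = X_1 \cup X_2$.

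Combining this with the Whitney multiplicativity of the Koszul complex
\begin{align}
\Lambda^*_{E_1 \oplus E_2} \cong \pi_1^* \Lambda^*_{E_1} \otimes \pi_2^* \Lambda^*_{E_2},
\end{align}
which shows that $\varphi_*$ for a direct sum factors as the composition of the $\varphi_*$ of the summands, I would reduce first to bundles which are trivial on each piece of a finite $G$-cover of $X$, and then to the case $X = \mg{G}/H$ of a single orbit. Using Lem.~\ref{cocientes}, a magnetic equivariant bundle $E$ over $\mg{G}/H$ corresponds to a (magnetic) $H$-representation $V$, and the Thom iso for $E$ becomes the assertion that multiplication by $[\Lambda^*_V]$ is an isomorphism $\K_H(*) \to \K_H(V)$, where $\K_H(V)$ denotes K-theory with compact supports and $H$ inherits a magnetic structure when $H\not\leq G_0$.

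Using the decomposition of $V$ into irreducible magnetic $H$-representations (Thm.~\ref{wigner}) and the Whitney multiplicativity, I would further reduce to the case $V$ irreducible of pure type $\mathbb{F} \in \{\mathbb{R}, \mathbb{C}, \mathbb{H}\}$. For $V$ of complex type, $V^+$ is a sphere of appropriate $(p,q)$-bidegree in Atiyah's sense and Thm.~\ref{1,1periodicity} iterated in the $V$-directions identifies $[\Lambda^*_V]$ with a power of the Bott element; the real type is handled analogously. For $V$ of quaternionic type, the underlying sphere $V^+$ carries a non-trivial structure that is naturally detected by the twisted K-theory ${}^{(\widehat{H},\widehat{\sigma})}\K^{*+4}_{H}$, and the degree shift isomorphism of Thm.~\ref{degree shift} identifies $[\Lambda^*_V]$ with the degree-shift generator $d_0 = \delta \mathrm{H}_{sp}$, giving the required iso.

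\textbf{Main obstacle.} The most delicate point will be the quaternionic case: the Thom class of an irreducible of quaternionic type does not lie in the range of an ordinary Bott periodicity statement because it involves the $\mathbb{Z}/2$-twist coming from the $\widehat{G}$-extension, so matching it explicitly with the class $d_0 = \delta \mathrm{H}_{sp}$ of Thm.~\ref{degree shift} requires a careful computation. A secondary technical obstacle is that, unlike in the ordinary complex case, magnetic equivariant bundles admit no splitting principle via flag bundles into ``magnetic line bundles'' (one-dimensional irreducibles of quaternionic type do not exist), so the induction must be organized entirely around Prop.~\ref{complement}, Lem.~\ref{cocientes}, and the Mayer--Vietoris reduction rather than around a splitting of $E$ itself.
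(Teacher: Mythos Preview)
Your reduction via Mayer--Vietoris and Lem.~\ref{cocientes} to the case $X=\mg{G}/H$ (equivalently to $\K_H(*)\to\K_H(V)$ for an $H$-representation $V$) is sound and is in fact the standard first move. The gap lies in your treatment of that base case. The $(1,1)$-periodicity of Thm.~\ref{1,1periodicity} gives the Thom isomorphism only for the \emph{trivial} magnetic line representation $\mathbb{R}^{1,1}$ (action on $\mathbb{C}$ through $\phi$ by conjugation); it says nothing about an irreducible $V$ on which $H$ acts through a genuine representation. For such $V$ the sphere $V^+$ is \emph{not} $H$-equivariantly of the form $S^{p,q}$, so ``iterating in the $V$-directions'' has no meaning. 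The same objection applies to your real-type case: a real-type irreducible need not be one-dimensional (it has $\res V$ irreducible over $H_0$, which for non-abelian $H_0$ can be large). For the quaternionic case, the class $d_0=\delta\mathrm{H}_{sp}$ of Thm.~\ref{degree shift} is the Thom class of the specific $\mathbb{Z}/4$-representation $\mathbb{H}$, not of an arbitrary quaternionic-type $H$-irreducible, so the identification you want does not follow.

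The paper anticipates exactly this obstruction: as the Remark following Thm.~\ref{tiso} explains, the usual reduction-to-line-bundles argument fails because abelian magnetic groups already have two-dimensional complex- and quaternionic-type irreducibles, so no splitting principle reaches the $(1,1)$-periodicity. Instead the paper proves a \emph{generalized Bott isomorphism} (Thm.~\ref{bottgeneral}) for an arbitrary representation $V$ by constructing an explicit inverse: one passes to $\PS(V\oplus\mathbf{1})$, forms the Dolbeault complex, and uses the index of the associated Dirac operator (with coefficients) as a map $\K_{\mg{G}}(X\times\PS(V\oplus\mathbf{1}))\to\K_{\mg{G}}(X)$. The Hodge decomposition identifies this index with sheaf cohomology of $\PS$, which one checks inverts multiplication by $\lambda_{V^*}$. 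The general Thom isomorphism then follows formally from this representation-level statement via the frame bundle: one replaces $(\mg{G},\phi)$ by $(\U(n)\times\mg{G},\phi)$, $X$ by $\mathcal{F}(E)$ with its free $\U(n)$-action, and $V$ by the standard $\mathbb{C}^n$, invoking Thm.~\ref{freeaction} to descend back to $X$ and $E$. To complete a proof along your lines, you would still have to supply Thm.~\ref{bottgeneral} (or an equivalent), which is the nontrivial analytic input you are currently missing.
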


The proof is based on the one given in \cite{bottandthom} which uses elliptic differential operators. The guideline of the proof is the following. For every representation $V$ of $(\mg{G},\phi)$:

\begin{itemize}
    \item Construct a morphism $\alpha:\K^{*}_{\mg{G}}(X\times V) \longrightarrow \K^*_{\mg{G}}(X)$
    which will be the inverse of the Thom homomorphism $-\otimes\lambda_{V^*}: \K^*_{\mg{G}}(X) \longrightarrow \K^*_{\mg{G}}(X\times V)$. The isomorphism $\lambda_{V^*}$ generalizes the Bott isomorphism given in Thm. \ref{1,1periodicity}.
    \item In the previous isomorphism replace $(\mg{G},\phi)$ by $(\operatorname{U}(n)\times \mg{G}, \phi)$, $X$ by $\mathcal{F}(E)$, the frame bundle associated to $E$, and $V=\mathbb{C}^{\operatorname{dim}E}$ to obtain the Thom isomorphism
    $\K_{\mg{G}}(X) \longrightarrow \K_{\mg{G}}(E)$.
\end{itemize}
The complex, equivariant and Real cases follow these lines, see 3, 4,and 5 of \cite{bottandthom}.

\begin{remark}
    We adopt the proof of \cite{bottandthom}, based on differential operators and their indexes,  for two principal reasons:
    \begin{itemize}
        \item It gives the most general form of the theorem.
        \item The proofs of the Thom isomorphism given in \cite{atiyahktheory}, \cite{segal} and \cite{atiyahreal} assume \emph{every vector bundle is locally the sum  of line vector bundles}. \emph{If $G$ is an abelian compact Lie group, then every $G$-equivariant vector bundle is locally the sum of line bundles}. This assumption is not true for $(\mg{G},\phi)$-vector bundles because abelian magnetic groups $(\mg{G},\phi)$ could have complex or quaternion type irreducible representations which are of complex dimension two; for example $(\mathbb{Z}/4, mod_2)$. 
    \end{itemize}
\end{remark}

\subsection{Generalized Bott isomorphism}
Here we are going to construct an inverse for $\lambda_{V^*}: \K^*_{\mg{G}}(X) \longrightarrow \K^*_{\mg{G}}(X\times V)$. To construct this  we are going to use the following factorization
\begin{align}
\xymatrix{
\K_{\mg{G}}(X\times V) \ar[rr]^{j} \ar@(ur,ul)[rrrr]^{\alpha} && \K_{\mg{G}}(X\times \operatorname{P}(V\oplus \textbf{1})) \ar[rr]^{\hspace{1cm}\operatorname{index}(D)} && \K_{\mg{G}}(X)
}
\end{align}
Let us describe the first map $j$ and its value at $\lambda_V$. Note we have a natural map
\begin{align}
	\PS(V\oplus \textbf{1}) & \longrightarrow  V^+\\ [v,z]& \longmapsto \begin{cases}
v/z & \text{ if }z\not=0\\ + & \text{ if }z=0.
\end{cases}
\end{align}
It descends to a homeomorphism
\begin{align}
\mathcal{H}: \PS(V\oplus \textbf{1})/\PS(V)\longrightarrow V^+
\end{align}
so we have a homomorphism
\begin{align}
\xymatrix{\K_{\mg{G}}(V) \ar[r]^{\mathcal{H}^*\hspace{3cm}} \ar@(dl,dr)[rr]_{j} & \K_{\mg{G}}(\PS(V\oplus\textbf{1})-\PS(V))\cong\K_{\mg{G}}(\PS(V\oplus\textbf{1}),\PS(V)) \ar[r] & \K_{\mg{G}}(\PS(V\oplus\textbf{1})). }
\end{align}
Let us consider the following commutative diagram
\begin{align}
\xymatrix{V^+\times\Lambda^{k}V \ar[d]& \Lambda^k(\textbf{1}\otimes V) \ar[l] \ar[d] & \Lambda^k(H\otimes V) \ar[l] \ar[d] \\ V^+  & \PS(V\oplus\textbf{1})/\PS(V)) \ar[l]_{\mathcal{H}\hspace{1cm}} & \PS(V\oplus \textbf{1}) \ar[l]_{\hspace{1cm}q} }
\end{align}
where $H$ is the dual of the tautological line bundle on $\operatorname{P}(V\oplus \textbf{1})$. It can be demonstrated that this is a pull-back diagram. More is true, it induces a diagram of Koszul complex on each space, so we get
\begin{align}
j(\lambda^*_{V^*})=\sum_i (-1)^iH^{i} \Lambda^i(V^*).
\end{align}

\begin{theorem}\label{bottgeneral}
    For any compact $G$-space $X$ and any representation $V$ of $(\mg{G},\phi)$, multiplication by $\lambda_{V^*}$ induces an isomorphism
    \begin{align}
\K_{\mg{G}}(X)\longrightarrow \K_{\mg{G}}(X\times V).
\end{align}
\end{theorem}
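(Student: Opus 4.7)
The strategy sketched in the paragraph preceding the theorem is to build an inverse
\begin{align}
\alpha \colon \K_{\mg{G}}(X\times V)\stackrel{j}{\longrightarrow} \K_{\mg{G}}(X\times \PS(V\oplus \mathbf{1})) \stackrel{\operatorname{index}(D)}{\longrightarrow} \K_{\mg{G}}(X)
\end{align}
to multiplication by $\lambda_{V^*}$. The map $j$ has already been described, and the computation $j(\lambda_{V^*})=\sum_{i}(-1)^{i}[H^{i}\otimes \Lambda^{i}V^{*}]$ was carried out using the pullback square for $\mathcal{H}:\PS(V\oplus\mathbf{1})/\PS(V)\longrightarrow V^{+}$. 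The first thing I would verify is that the magnetic group action is compatible with this pullback square (the antilinear action of the $\phi(g)=1$ elements on $V$ corresponds to the induced antilinear action on $H$ and on each $\Lambda^{i}V^{*}$), so that the identification lives in magnetic equivariant K-theory as claimed.

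The substantive step is the construction of $\operatorname{index}(D)$. Following Atiyah--Bott in \cite{bottandthom}, this is the pushforward along the projection $\pi:X\times \PS(V\oplus \mathbf{1})\to X$, realized analytically as the equivariant index of the family of fiberwise Dolbeault operators. In the magnetic setting, the key point is that the group elements $g$ with $\phi(g)=1$ act antiholomorphically on the fibers $\PS(V\oplus \mathbf{1})$, and hence antilinearly on sections of the relevant bundles; this means the family index is naturally an element of $\K_{\mg{G}}(X)$ rather than of $KU_{G_{0}}(X)$. Given this, the composition $\alpha$ inverts $\cdot\lambda_{V^{*}}$ by the usual calculation: the projection formula gives $\alpha(\xi\cdot \lambda_{V^{*}})=\xi\cdot \pi_{!}\!\left(\sum_{i}(-1)^{i}H^{i}\Lambda^{i}V^{*}\right)$, and the last factor equals $[\mathbf{1}]\in \K_{\mg{G}}(X)$ because it is the equivariant Euler characteristic of $\PS(V\oplus \mathbf{1})$ twisted by the Koszul complex, which is the Dolbeault analog of the classical Borel--Weil--Bott vanishing. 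The other direction follows once one extends Thm.~\ref{pediocicitytheorem} to arbitrary rank $V$, giving $\K_{\mg{G}}(X\times \PS(V\oplus \mathbf{1}))$ as a free $\K_{\mg{G}}(X)$-module on $[H^{0}],\ldots,[H^{n}]$ modulo $\sum_{i}(-1)^{i}[H]^{i}\Lambda^{i}V^{*}=0$, whence $j$ is split by $\operatorname{index}(D)$. The transition from a vector bundle $E\to X$ to a representation $V=\mathbb{C}^{n}$ of $\U(n)\times \mg{G}$ acting on the frame bundle $\mathcal{F}(E)$, followed by descent via Thm.~\ref{freeaction}, finally yields the Thom isomorphism stated in Thm.~\ref{tiso}.

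The main obstacle is precisely the one flagged in the preceding remark: magnetic groups admit irreducible representations of complex dimension two (already for $(\mathbb{Z}/4,\mathrm{mod}_{2})$), so one cannot reduce to the line bundle case by a splitting principle as in \cite{atiyahktheory,segal,atiyahreal}. This forces the analytic route through families of elliptic operators, and the delicate point is checking that each ingredient of that construction --- symbol class, Fredholm index, spectral decomposition --- respects antilinearity on the $\phi(g)=1$ component, so that the index lifts from $KU_{G_{0}}$ to $\K_{\mg{G}}$. Since the constructions of \cite{bottandthom} are natural with respect to the group action and the antilinearity is absorbed by definition into $\K_{\mg{G}}$, this should go through without conceptual novelty, and the extended projective bundle theorem gives a purely algebraic backup if one prefers to avoid analysis.
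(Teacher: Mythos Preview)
Your proposal is correct and follows essentially the same approach as the paper: both factor the inverse $\alpha$ through $j$ and $\operatorname{index}(D)$, construct the latter via the fiberwise Dolbeault operator as in \cite{bottandthom}, and rely on the fact that the antilinear action of the $\phi(g)=1$ elements is compatible with the Dolbeault complex (via a $(\mg{G},\phi)$-invariant hermitian metric, Lem.~\ref{metrica}) so that the family index lands in $\K_{\mg{G}}(X)$. The extended projective bundle theorem you mention as an algebraic alternative is not used in the paper, and your final paragraph on the frame bundle reduction belongs to the proof of Thm.~\ref{tiso} rather than to the present statement, but neither affects the validity of your argument here.
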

\begin{proof}
    
As we said before, we are going to construct an inverse $\alpha$. By the previous comments we just focus on the construction of the homomorphism $\operatorname{index}(D)$.

As its name suggests, $\operatorname{index}(D)$ is the index of some Fredholm operator. The construction is a generalization of the one given in sections 2 and 3 of \cite{bottandthom}.

Consider the Dolbeaut complex on the smooth manifold $\operatorname{P}:=\operatorname{P}(V\oplus \textbf{1})$ 
\begin{align}
\xymatrix{
0\ar[r] & \Omega^{0,0}(\PS)  \ar[r]^{\overline{\partial}} & \Omega^{0,1}(\PS)  \ar[r]^{\overline{\partial}} & \Omega^{0,2}(\PS)  \ar[r]^{\overline{\partial}} & \Omega^{0,3}(\PS)  \ar[r]^{\overline{\partial}} & \cdots
}
\end{align}

where $\Omega^{a,b}(\PS)$ is the complex vector space of $(a,b)$-differential forms, i.e. sections of the complex vector bundle $\Lambda^aT^*\PS\otimes \overline{\Lambda^bT^*\PS}$, for the details of this complex see page 23 of \cite{PAG}.

The action of $(\mg{G},\phi)$ on $V\oplus \textbf{1}$ induces a smooth action by diffeomorphisms on $P$. So we have an action on $\Omega^{0,b}(\PS)$, the sections of a $(\mg{G},\phi)$-vector bundle. Of course, the action commutes with $\overline{\partial}$. Choose a hermitian metric on $\PS$, by Lem. \ref{metrica} there is an hermitian $(\mg{G},\phi)$-invariant metric on $\Omega^{0,b}(\PS)$, so we can define the formal adjoint operator 
\begin{align}
\overline{\partial}^*: \Omega^{0,b}(\PS) \longrightarrow \Omega^{0,b-1}(\PS)
\end{align}
of $\overline{\partial}$ which also commutes with $(\mg{G},\phi)$; for a detailed construction see page 76 of \cite{Palais}.

Denote by $\Omega^+(\PS)=\oplus_k \Omega^{0,2k}(\PS)$ and $\Omega^-(\PS)=\oplus_k \Omega^{0,2k+1}(\PS)$ and define the Dirac operator

\begin{align}
D=\overline{\partial}+\overline{\partial}^*: \Omega^+(\PS)\longrightarrow \Omega^-(\PS).
\end{align}

Define the Laplace-Beltrami operator $\square=\overline{\partial}\overline{\partial}^*+\overline{\partial}^*\overline{\partial}=D^2$. This last operator is important for two reasons. One is that it allows to prove that $D$ has a well define index and two, is that it provides  a Hodge decomposition of the Dolbeaut complex. 
The subspace of elements $\omega\in \Omega^{0,b}(\PS)$ such that $\square\omega=0$ will be denoted by $B^{0,b}(\PS)$ and is called the space of $(\mg{G},\phi)$-complex harmonic forms. It is easy to show that $\square\omega=0$ if and only if $\overline{\partial}\omega=0=\overline{\partial}^*\omega$.

By standard arguments, the differential operator $\square=D^2$ is elliptic and hence $D$ is elliptic too, see for example chapter V in \cite{Palais}. So we have a well defined index
\begin{align}
\operatorname{index}(D):=\ker D-\operatorname{coker}D\in \mathbf{R}(G)=\K_{\mg{G}}(*).
\end{align}

As in theorem 15.4.2 in \cite{TMAG} we have a Hodge decomposition
\begin{align}
\Omega^{0,b}(\PS)=\overline{\partial}\Omega^{0,b-1}(\PS)\oplus \overline{\partial}^*\Omega^{0,b+1}(\PS)\oplus B^{0,b}(\PS).
\end{align}
From this decomposition we have
\begin{align}
\operatorname{index}(D)\cong \oplus_k H^{2k}(\PS;\mathbb{C})-\oplus_m H^{2m+1}(\PS;\mathbb{C}).
\end{align}

Now if $Q$ is other $(\mg{G},\phi)$-vector bundle over $\PS$, then $D$ has a natural extension $D_Q$ over the Dolbeaut complex with coefficients in $Q$ and we have a natural $(\mg{G},\phi)$-isomorphism
\begin{align}
\ker D_Q\cong & \oplus_k H^{2k}(\PS;\mathcal{O}(Q))\\
\operatorname{coker}D_Q\cong  & \oplus_k H^{2k+1}(\PS;\mathcal{O}(Q)).
\end{align}
Finally if $Q$ is a $(\mg{G},\phi)$-vector bundle over $\PS\times X$, then we can define the index for each restriction $Q|_{\PS\times \{x\}}$ of $(\mg{G},\phi)$-vector bundles $\forall x\in X$ and produce an element 
\begin{align}
\operatorname{index}(D_Q)\in \K_{\mg{G}}(X).
\end{align}
So we have defined a homomorphism
\begin{align}
\operatorname{index}(D): \K_{\mg{G}}(X\times \PS) \longrightarrow \K_{\mg{G}}(X).
\end{align}
The rest of the proof follows the same steps as the one in section 4 of \cite{bottandthom}. 
\end{proof}

\begin{proof}[Proof of Thm. \ref{tiso}]
    We can reduce the case when $p=0$. Let us recall the classical constructions of the frame and associated bundles. 

    Let $p:E\longrightarrow X$ be a $(\mg{G},\phi)$-vector bundle of rank $n$. Let $x\in X$ and consider the set of linear or antilinear isomorphisms
    \begin{align}
\MGL(\mathbb{C}^n,E_x
    ).
\end{align}
    It possesses a left action of $(\mg{G}\times\U(n),\phi)$ given by 
    \begin{align}
(g,h)\cdot(f)(z)=gf(h^{-1}z).
\end{align}
    The \textbf{frame bundle} is
    \begin{align}
\xymatrix{
    \mathcal{F}(E):= \bigsqcup_x \MGL(\mathbb{C}^n,E_x) \ar[d] \\
    X.
    }
\end{align}

    It is a principal fiber bundle over $X$ with structure group $\GL(n,\mathbb{C})$ and an extra action of the magnetic point group $(\mg{G},\phi)$, see for example section 7 of chapter 8 in \cite{Husemoller}. The action of the normal subgroup $\U(n)$ on $\mathcal{F}(E)$ is free and we have an $(\mg{G},\phi)$-isomorphism
    \begin{align}
  \nonumber      \Psi:\mathcal{F}(E)\times_{\U(n)} \mathbb{C}^n & \longrightarrow E \\ [(f,z)]& \longmapsto f(z).
    \end{align}
    This isomorphism is $(\mg{G},\phi)$-equivariant, indeed
    \begin{align}
        g\Psi([f,z])&=g\cdot f(z)\\
        &=((g,1)\cdot f)(z)\\
        &=\Psi(g\cdot [f,z]).
    \end{align}
Thus we have the following commutative diagram of isomorphisms
    \begin{align} \label{diagram Thom iso}
\xymatrix{
    \K_{\mg{G}\times \U(n)}(\mathcal{F}(E)) \ar[rr]^{\cong} && \K_{\mg{G}\times \U(n)}(\mathcal{F}(E)\times \mathbb{C}^n) \\
 \K_{\mg{G}}(X) \ar@{-->}[rr]^{\cong} \ar[u]^{\cong}&& \K_{\mg{G}}(E) \ar[u]^{\cong}
    }
\end{align}
  where the vertical arrows are given by Thm. \ref{freeaction} and the top arrow is given by Thm. \ref{bottgeneral}.
The Thom Isomorphism of Thm. \ref{tiso}
\begin{align}
\varphi_*: \K^{*}_{\mg{G}}(X) \longrightarrow \K^{*}_{\mg{G}}(E)
\end{align}
then follows from the lower horizontal isomorphism of diagram \eqref{diagram Thom iso}.

\end{proof}

\subsection{Atiyah-Hirzebruch Spectral Sequence} \label{section AHSS}

In this section we describe the Atiyah-Hirzebruch spectral sequence (AHSS) for a $G$-CW complex $X$.
 This is an important tool for computing the groups $\K^*_{G}(X)$ in from 
equivariant Bredon cohomology with coefficients in $\K^*_{G}(*)$. 
The following definition is basically due to G. E. Bredon  \cite{bredon}.

\begin{definition}
    Let $G,\phi)$ be a finite magnetic group. A \textbf{$G$-CW-complex} is a CW-complex $X$ of finite type (finite number of cells on each dimension) with an action of $G$ by cellular maps such that 
    \begin{align}
\{x\in X\,|\, gx=x\}
\end{align}
 is a subcomplex of
    $X$ for every $g\in \mg{G}$.
\end{definition} 
A more concrete description is the following: a $G$-CW-complex $X$ is the union of $G$-subspaces $X^{(n)}$ such that
\begin{itemize}
    \item $X^{(0)}$ is a disjoint union of a finite number of orbits $G/H$ and
    \item $X^{(n+1)}$ is obtained from $X^{(n)}$ by attaching a finite number of $G$-cells $G/H\times D^{n+1}$ along attaching $G$-maps $G/H\times S^{n}\longrightarrow X^{(n)}.$
\end{itemize}
The attaching maps are determined by their restrictions $S^n\longrightarrow (X^{(n)})^H$: Denote by $\Map_{G}(X,Y)$ the set of $G$-equivariant maps from $X$ to $Y$ and by $X^H$ the subspace of fixed points of $X$ under the action of $H\leq G$. Then
\begin{align}
    \Map_{\mg{G}}(\mg{G}/H\times S^n, X^{(n)}) & \longrightarrow \Map\left(S^n, \left(X^{(n)}\right)^H\right) \\ f& \longmapsto \widehat{f}(x):=f(H,x)\\
\end{align}
and
\begin{align}
\Map\left(S^n, \left(X^{(n)}\right)^H\right)& \longrightarrow \Map_{\mg{G}}(\mg{G}/H\times S^n, X^{(n)}) \\ f& \longmapsto \widetilde{f}(gH,x):=g\cdot f(x)
\end{align}
are inverses one of the the other.

Thus we have 
\begin{align}
X^{(0)}=\sqcup_i \mg{G}/H_{D^0_i}
\end{align}
and
\begin{align}
\xymatrix{ \sqcup_j \mg{G}/H_{D^n_j}\times S^{n-1}_j \ar[r] \ar[d]& X^{(n-1)} \ar[d]\\  \mg{G}/H_{D^n_j}\times D^n_j \ar[r] & X^{(n)}. }
\end{align}
Now we see the groups $\widetilde{\K}^*_{G}(X)$ form a $G$-cohomology theory in the sense of T. Matumoto \cite{matumoto}.
\begin{definition}
    A \textbf{reduced $G$-cohomology theory} on the category of $G$-finite $G$-CW complexes with base point and base point preserving $G$-homotopy classes of base point preserving $G$-maps is a sequence of contravariant functors $\{\widetilde{h}^n_{G}\}_{n\in \mathbb{Z}}$ into the category of abelian groups, together with natural transformations 
\begin{align}
\partial^n: \widetilde{h}^n_{G}(A)\longrightarrow \widetilde{h}^{n+1}_{G}(X,A)
\end{align}
for every $G$-pair $(X,A)$
 satisfying the following axioms
    \begin{itemize}
        \item The inclusion $(X,X \cap Y) \to (X \cup Y, Y)$ induces an isomorphism
\begin{align}
\widetilde{h}^{n}_{G} (X \cup Y, Y) \stackrel{\cong}{\longrightarrow}  \widetilde{h}^{n}_{G}(X,X \cap Y)
\end{align}
        \item If $(X,A)$ is a pair of $G$-spaces the following long sequence is exact 
\begin{align}        
\cdots \longrightarrow  \widetilde{h}^{n-1}_{G}(A) \stackrel{\partial^{n-1}}{\longrightarrow }\widetilde{h}^n_{G}(X,A)\longrightarrow \widetilde{h}^n_{G}(X) \longrightarrow \widetilde{h}^n_{G}(A) 
\stackrel{\partial^{n}}{\longrightarrow } \widetilde{h}^{n+1}_{G}(X,A) \longrightarrow  \cdots.
\end{align}
    \end{itemize}

\end{definition}
\begin{theorem}\label{cohomologytheory}
    Let $(\mg{G},\phi)$ be a finite magnetic group. The magnetic $(\mg{G},\phi)$-equivariant K-theory
$\widetilde{\K}^*_{G}$
 is a reduced $G$-cohomology theory.
\end{theorem}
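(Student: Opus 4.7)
The plan is to verify the two axioms of a reduced $G$-cohomology theory in turn, noting that most of the hard analytic work is already done in the preceding sections of the paper and what remains is essentially formal manipulation.

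First, for the exactness axiom, I would simply invoke Lemma \ref{sucesionlarga} together with Corollary \ref{reduses}. The long exact sequence there was proved for arbitrary $G$-pairs via the telescoping cofiber construction with reduced cones and suspensions, and applying this iteratively (replacing $(X,A)$ by $(\Sigma^n X, \Sigma^n A)$) yields an exact sequence extending infinitely in both directions after passing to suspension groups $\widetilde{\K}^{-n}_G$. The connecting homomorphisms $\partial^n : \widetilde{\K}^n_G(A) \to \widetilde{\K}^{n+1}_G(X,A)$ arise naturally from this construction and their naturality with respect to $G$-maps of pairs is immediate from the naturality of the cone and suspension functors.

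Second, for the excision axiom, the plan is to reduce it to a $G$-homeomorphism of quotient spaces. Using the identification $\K_G(Z,B) \cong \widetilde{\K}_G(Z/B)$ recorded in Eqn. \eqref{K(X/Y)=K(X,Y)}, the excision map is induced by the $G$-continuous map $X/(X\cap Y) \to (X\cup Y)/Y$ sending $[x]$ to $[x]$. For $G$-CW complexes $X \cup Y$ with $X, Y$ subcomplexes this map is a $G$-homeomorphism because every point of $(X\cup Y)/Y$ not equal to the basepoint is represented by a unique point of $X \setminus (X \cap Y)$. Thus by the functoriality of $\widetilde{\K}_G$ the induced map on magnetic equivariant K-theory is an isomorphism; more generally the map is a $G$-homotopy equivalence on $G$-CW pairs so one may alternatively apply Lemma \ref{invhom} and Corollary \ref{trivial}. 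The suspensions then give the statement for each $n$.

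The step I expect to require the most care, although still routine, is ensuring that the identification $\K_G(Z,B) = \widetilde{\K}_G(Z/B)$ is compatible with the long exact sequence and with the excision map simultaneously, so that naturality of $\partial^n$ is preserved. This is essentially the content of the commutative diagram in the proof of Corollary \ref{reduses}, and extending it to the suspension groups simply requires applying $(-)\times B^{0,q}$ relative to the appropriate subspaces and observing that the reduced cone and quotient constructions commute up to canonical $G$-homeomorphism. Once these diagrams are assembled, the two axioms follow formally and the theorem is established.
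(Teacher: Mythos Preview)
Your proposal is correct and follows essentially the same approach as the paper: both deduce excision from the identification $\K_G(X,Y)\cong\widetilde{\K}_G(X/Y)$ in Eqn.~\eqref{K(X/Y)=K(X,Y)} (since $X/(X\cap Y)\to(X\cup Y)/Y$ is a $G$-homeomorphism of quotient CW-complexes), and both deduce the long exact sequence from the results already established. Your citation of Lemma~\ref{sucesionlarga} is in fact more precise than the paper's reference to Lemma~\ref{exactade3}, which only gives the three-term sequence; but this is a minor bookkeeping difference, and your added discussion of naturality of $\partial^n$ simply spells out what the paper leaves implicit.
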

\begin{proof}
  The isomorphism of the inclusion follows from the isomorphism $\K_G(X,Y) \cong \K_G(X \slash Y)$ shown in Eqn. \eqref{K(X/Y)=K(X,Y)} and the exactness of the long sequence is shown in Lem. \ref{exactade3}.
\end{proof}

\begin{remark}
        As T. Matumoto points out in p. 54 of \cite{matumoto}, this reduced $G$-cohomology theory produces a \textbf{$G$-cohomology theory} by $\K^n_{\mg{G}}(X,A):=\widetilde{\K}^n_{\mg{G}}(X/A)$.
\end{remark}
\begin{theorem}[Mayer-Vietoris long exact sequence]\label{mayervietoris}
    Let $X$ be a $G$-CW-complex and $A,B$ two sub $G$-CW-complexes such that $X=A\cup B$ and $A\cup B$ is also a sub $G$-CW-complex. Then there exist a long exact sequence
    \begin{align}
\xymatrix{
    \ldots \ar[r] & \K^n_{\mg{G}}(X) \ar[r] & \K^{n}_{\mg{G}}(A)\oplus \K^{n}_{\mg{G}}(B) \ar[r] & \K^{n}_{\mg{G}}(A\cap B) \ar[r] & \K^{n+1}_{\mg{G}}(X) \ar[r]& \ldots.
    }
\end{align}
\end{theorem}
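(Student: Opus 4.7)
The plan is to derive the Mayer--Vietoris sequence from the two axioms of a reduced $G$-cohomology theory already established in Thm.~\ref{cohomologytheory}, namely the excision/inclusion isomorphism and the long exact sequence of a $G$-pair. This is the standard Eilenberg--Steenrod-style argument, and the magnetic structure plays no additional role beyond what has already been packaged into Thm.~\ref{cohomologytheory}.

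First, I would write down the long exact sequence of Lem.~\ref{sucesionlarga} (in its unreduced form, or equivalently the second axiom) applied to the $G$-pair $(X,A)$:
\begin{align*}
\cdots \longrightarrow \K^{n-1}_{\mg{G}}(A) \stackrel{\partial}{\longrightarrow} \K^n_{\mg{G}}(X,A) \longrightarrow \K^n_{\mg{G}}(X) \stackrel{i_A^*}{\longrightarrow} \K^n_{\mg{G}}(A) \longrightarrow \cdots,
\end{align*}
and the analogous long exact sequence for the $G$-pair $(B, A\cap B)$:
\begin{align*}
\cdots \longrightarrow \K^{n-1}_{\mg{G}}(A\cap B) \stackrel{\partial'}{\longrightarrow} \K^n_{\mg{G}}(B, A\cap B) \longrightarrow \K^n_{\mg{G}}(B) \stackrel{j_B^*}{\longrightarrow} \K^n_{\mg{G}}(A\cap B) \longrightarrow \cdots.
\end{align*}
Since $X = A \cup B$ and $A\cap B$ is a sub $G$-CW-complex, the first axiom in the definition of a reduced $G$-cohomology theory gives an excision isomorphism $e^*:\K^n_{\mg{G}}(X,A)\overset{\cong}{\to}\K^n_{\mg{G}}(B, A\cap B)$ induced by the inclusion $(B, A\cap B)\hookrightarrow (X,A)$.

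Next, I would define the two Mayer--Vietoris maps in the usual way. For the first map, take $\alpha = (i_A^*, i_B^*):\K^n_{\mg{G}}(X)\to \K^n_{\mg{G}}(A)\oplus \K^n_{\mg{G}}(B)$ given by restriction to each piece. For the second, take $\beta = j_A^* - j_B^*:\K^n_{\mg{G}}(A)\oplus \K^n_{\mg{G}}(B)\to \K^n_{\mg{G}}(A\cap B)$, where $j_A\colon A\cap B\hookrightarrow A$ and $j_B\colon A\cap B\hookrightarrow B$. The connecting homomorphism $\delta\colon \K^n_{\mg{G}}(A\cap B)\to \K^{n+1}_{\mg{G}}(X)$ is defined as the composition
\begin{align*}
\K^n_{\mg{G}}(A\cap B) \stackrel{\partial'}{\longrightarrow} \K^{n+1}_{\mg{G}}(B, A\cap B) \stackrel{(e^*)^{-1}}{\longrightarrow} \K^{n+1}_{\mg{G}}(X,A) \longrightarrow \K^{n+1}_{\mg{G}}(X).
\end{align*}

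Finally, I would verify exactness at each of the three types of spots ($\K^n_{\mg{G}}(X)$, $\K^n_{\mg{G}}(A)\oplus \K^n_{\mg{G}}(B)$, and $\K^n_{\mg{G}}(A\cap B)$) by a standard diagram chase in the braided diagram formed by the two long exact sequences together with the excision isomorphism. Each of the three exactness statements reduces to the exactness of one of the two long sequences combined with the excision identification; for example, exactness at $\K^n_{\mg{G}}(A)\oplus \K^n_{\mg{G}}(B)$ follows because a pair $(a,b)$ with $j_A^*a = j_B^*b$ produces, via the long exact sequence of $(B,A\cap B)$, a class in $\K^n_{\mg{G}}(B, A\cap B)\cong \K^n_{\mg{G}}(X,A)$ that lifts $b$, which combined with $a$ glues to a class in $\K^n_{\mg{G}}(X)$. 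The main (though entirely routine) obstacle is being careful with signs and with the excision identification in defining $\delta$; everything else is formal.
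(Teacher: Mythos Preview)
Your proposal is correct and follows essentially the same route as the paper: both use the long exact sequences of the $G$-pairs $(X,A)$ and $(B,A\cap B)$ together with the excision isomorphism $\K^n_{\mg{G}}(X,A)\cong \K^n_{\mg{G}}(B,A\cap B)$ (the paper obtains it from the homeomorphism $X/A\cong B/(B\cap A)$), and then defer to a standard diagram chase. The only cosmetic difference is the placement of the sign (the paper puts it on the first map as $(a,-b)$ rather than on the second).
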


\begin{proof}
    Consider the following commutative diagram 

    \begin{align}
\xymatrix{
    \K^n_{\mg{G}}(X) \ar[r]^{a} \ar[d]_{b}& \K^n_{\mg{G}}(A) \ar[r] \ar[d]_{a'} & \K^n_{\mg{G}}(X,A) \ar[r]^{r} \ar[d]^{\cong}_{k} & \K^{n+1}_{\mg{G}}(X) \ar[r] \ar[d] & \K^{n+1}_{\mg{G}}(A) \ar[d] \\ \K^n_{\mg{G}}(B) \ar[r]_{b'} & \K^n_{\mg{G}}(B\cap A) \ar[r]_{c} & \K^n_{\mg{G}}(B,B\cap A) \ar[r] & \K^{n+1}_{\mg{G}}(B) \ar[r] & \K^{n+1}_{\mg{G}}(B\cap A)
    }
\end{align}
    where the horizontal sequences are the long exact sequences of the pairs $(X,A)$ and $(B,B\cap A)$, respectively, and the vertical arrows are given by the natural inclusions. The middle vertical arrow is an isomorphism because we have the natural homeomorphism $X/A\cong B/B\cap A$. So we have the long sequence
    \begin{align}
\xymatrix{
    \ldots \ar[r] & \K^n_{\mg{G}}(X) \ar[r]^{(a,-b)\hspace{1cm}} & \K^{n}_{\mg{G}}(A)\oplus \K^{n}_{\mg{G}}(B) \ar[r]^{\hspace{0.5cm}a'+b'} & \K^{n}_{\mg{G}}(A\cap B) \ar[r]^{r k^{-1}c} & \K^{n+1}_{\mg{G}}(X) \ar[r]& \ldots
    }
\end{align}
    Standard arguments in diagram chasing shows that this sequence is exact.
\end{proof}
Now we start to construct the Atiyah Hirzebruch Spectral Sequence (AHSS). The filtration by
$G$-skeletons 
\begin{align}
X^{(0)}\subset X^{(1)}\subset \ldots \subset X^{(n)}\subset \ldots \subset X
\end{align}
induces the following diagram
\begin{align} \label{diagram K-theory AHSS}
\xymatrix{
& & \K^{*}_{\mg{G}}(X^{(n+1)},X^{(n)})\ar[d]^{j^*_{n+1}} &  \K^{*}_{\mg{G}}(X^{(n)},X^{(n-1)})\ar[d]^{j^*_{n}} & \\
  \K^{*}_{\mg{G}}(X)  \ar[r] & \cdots \ar[r]^{i^*_{n+1}} & \K^{*}_{\mg{G}}(X^{(n+1)}) \ar[r]^{i^*_{n}} \ar[d]^{\partial^{n+1}} & \K^{*}_{\mg{G}}(X^{(n)}) \ar[r]^{i^*_{n-1}} \ar[d]^{\partial^n} & \cdots \\
& &  \K^{*+1}_{\mg{G}}(X^{(n+2)},X^{(n+1)}) &  \K^{*+1}_{\mg{G}}(X^{(n+1)},X^{(n)})&
}
\end{align}
where $i^*_{n}$ and $j^*_{n}$ are induced by the inclusions $X^{(n)}\longrightarrow X^{(n+1)}$ and $(X^{(n)},\emptyset)\longrightarrow (X^{(n)},X^{(n-1)})$, respectively, and $\partial^n$ is the connecting homomorphisms\ from the long exact sequence of the pair $(X^{(n+1)},X^{(n)})$.

If we add over $n$  we obtain the exact triangle
\begin{align}
\xymatrix{
   \oplus_n \K^{*}_{\mg{G}}(X^{(n)}) \ar[rr]^{i=\oplus i^*_n} && \oplus_n \K^{*}_{\mg{G}}(X^{(n)}) \ar[dl]^{k=\oplus_n \partial_n} \\  &E_1=\oplus_n \K^{*}_{\mg{G}}(X^{(n)},X^{(n-1)}) \ar[ul]^{j=\oplus_n j^{*}_n}&
    }
\end{align}
By standard argument in the construction of spectral sequences, see for example \cite{GenCohom}, we obtain an spectral sequence $(E_r,d_r)_{r\in \mathbb{N}}$ whose
elements of the first page are as follows:
\begin{align}
  E_1^{n,t}=& \K^{n+t}_{\mg{G}}(X^{(n)},X^{(n-1)}) \\ \cong&\widetilde{\K}_{\mg{G}}^{n+t}(X^{(n)}/X^{(n-1)}) \text{ Definition} \\\cong& \widetilde{\K}_{\mg{G}}^{n+t}\left(\bigvee_j\,\left( (\mg{G}/H_{D^n_j})^+\wedge S^n_j\right)\right) \text{ Collapse de }n-1 \text{skeleton }\\ \cong & \bigoplus_j \widetilde{\K}_{\mg{G}}^{n+t}\left((\mg{G}/H_{D^n_j})^+\wedge S^n_j\right) \text{Wedge axiom} \\ =& \bigoplus_j \widetilde{\K}_{\mg{G}}^{n+t}\left(\Sigma^n (\mg{G}/H_{D^n_j})^+\right) \text{ Definition of suspension}  \\ \cong & \bigoplus_j \K_{\mg{G}}^{t}(\mg{G}/H_{D^n_j}) \text{ Suspension isomorphism} \\ \cong & \bigoplus_j
  \begin{cases}
      \K_{H_{D^n_j}}^{t}(*) & \text{ if } H_{D^n_j}\not\leq G_0\\ KU_{H_{D^n_j}}^{t}(*) & \text{ if } H_{D^n_j}\leq G_0
  \end{cases} \text{Lem.}\ \ref{cocientes} \\
  = & C^n_{\mg{G}}\left(X;\K_{\mg{G}}^{t}\right).
\end{align}
\begin{remark}\label{disofdifferentials}
    Here $C^n_{\mg{G}}\left(X;\K_{\mg{G}}^{t}\right)$ denotes the \textbf{$n$-dimensional $\mg{G}$-cochain group of the $\mg{G}$-CW-complex $X$ with local coefficients in $\K^t_{\mg{G}}$}, i.e.
    \begin{align}
C^n_{\mg{G}}\left(X;\K_{\mg{G}}^{t}\right):= \bigoplus_j \K^t_{\mg{G}}(\mg{G}/H_{D^n_j}). 
\end{align}
These groups and their coboundary operators were defined by G. E. Bredon in \cite{bredon}. We will describe such coboundary operators following section 2 of \cite{willsonequivariant} by dualizing the constructions given there to reproduce the cohomological counterpart. 
For each integer $n\geq0$ let the equivariant $n$-cells of $X$  be indexed by a set $B_n$; if $b\in B$ then the corresponding cell is $\mg{G}/H_b\times D^n$ adjoined along the $G$-map $f_b: \mg{G}/H_b\times \partial D^n\longrightarrow X^{(n-1)}$. 
We shall define \begin{align}
\delta:C^n_{\mg{G}}\left(X;\K_{\mg{G}}^{t}\right)=\bigoplus_{b\in B_n} \K^t_{\mg{G}}(\mg{G}/H_{D_b}) \longrightarrow C^{n+1}_{\mg{G}}\left(X;\K_{\mg{G}}^{t}\right)=\bigoplus_{c\in B_{n+1}} \K^t_{\mg{G}}(\mg{G}/H_{D_c})
\end{align}
by defining the restrictions 
\begin{align}
\delta_{b,c}:\K^t_{\mg{G}}(\mg{G}/H_b)\longrightarrow \K^t_{\mg{G}}(\mg{G}/H_c)
\end{align}
 for $b\in B_n$ and $c\in B_{n+1}$.
\begin{itemize}
\item If $n=0$, then $D^{n+1}=[0,1]$: Let the image of $f_c|_{\mg{G}/H_c\times \{1\}}$ lie in $\mg{G}/H_{b_1}\times D^0$ and let the image of $f_c|_{\mg{G}/H_c\times \{0\}}$ lie in $\mg{G}/H_{b_0}\times D^0$.
 Then $f_c|_{\mg{G}/H_c\times \partial D^1}$ induces a $G$-map $f_{c,i}: \mg{G}/H_c\longrightarrow \mg{G}/H_{b_i}$ for $i=0,1$. Define for $s\in \K^t_{\mg{G}}(\mg{G}/H_b)$,
\begin{align}
 \delta_{b,c}(s)=\begin{cases}
(-1)^{i+1}\K^t_{\mg{G}}(f_{c,i})(s) & \text{ if } b=b_i \text{ and }b_0\not= b_1\\ \K^t_{\mg{G}}(f_{c,1})(s)-\K^t_{\mg{G}}(f_{c,0})(s) & \text{ if } b=b_0=b_1\\0 & \text{ if } b\not=b_0 \text{ and } b\not=b_1.\\ 
\end{cases}
\end{align}
\item If $n\geq 1$, we observe that the composition 
\begin{align}
k: \mg{G}/H_c\times D^{n+1}\overset{f_c}{\longrightarrow} X^{(n)}\overset{p}{\longrightarrow}(\mg{G}/H_b\times D^n)/(\mg{G}/H_b\times \partial D^n)
\end{align}
is a well-defined $G$-map where $p$ is the canonical projection. It is easily seen, via Theorem 2.1 of \cite{willsonequivariant}, that $k$ may be $\mg{G}$-homotoped to a map $\overline{k}$ such that the induced orbit map $\overline{k}/\mg{G}: \partial D^{n+1}\longrightarrow D^n/\partial D^n$ is transverse regular at $0\in D^n/\partial D^n$. Hence $\overline{k}^{-1}(\mg{G}/H_b\times \{0\})$ is $\mg{G}/H_c\times \{x_1,x_2,\dots,x_m\}$ for some finite set of points $x_i\in \partial D^{n+1}$. For each $i$, $i=1,2,\ldots,m$, let
\begin{align}
\epsilon_i=\begin{cases}
+1 & \text{ if } \overline{k}/\mg{G} \text{ preserves orientation near }x_i\\ -1 & \text{ if } \overline{k}/\mg{G} \text{ reverses orientation near }x_i.
\end{cases}
\end{align}
 Let $k_i: \mg{G}/H_c\times \{x_i\}\longrightarrow \mg{G}/H_b\times \{0\}$ be the $G$-map induced from $\overline{k}$. For $s\in \K^t_{\mg{G}}(\mg{G}/H_b)$ define
\begin{align} 
\delta_{b,c}(s):=\sum_{i=1}^m\epsilon_i \K^t_{\mg{G}}(k_i)(s).
\end{align}
\end{itemize}
 The cohomology of this cochain complex is denoted by 
\begin{align}
H^n_{\mg{G}}(X;\K^t_{\mg{G}}):=H^n\left((C^*(X;\K^t_{\mg{G}}),\delta)\right).
\end{align}
\end{remark}

It is clear that the first differential $d_1:E^{n,t}_1\longrightarrow E^{n+1,t},_1$ can be identified with the coboundary operator $\delta$ of the CW-complex cochain $\left(C^*_{\mg{G}}\left(X;\K_{\mg{G}}^{t}\right),\delta\right)$, thus the second page of the spectral sequence is isomorphic to
\begin{align}
E_2^{n,t}\cong H^n\left(X;\K_{\mg{G}}^{t}\right).
\end{align}
The filtration by skeletons also induces a filtration of $\K^*_{\mg{G}}(X)$ given by
\begin{align}
F^n\K^*_{\mg{G}}(X):=\ker\left(\K^*_{\mg{G}}(X)\longrightarrow \K^*_{\mg{G}}(X^{(n-1)})\right)
\end{align}
satisfying
\begin{align}
\ldots \subset F^{n+1}\K^*_{\mg{G}}(X) \subset  F^{n}\K^*_{\mg{G}}(X)\subset \ldots \subset \K^*_{\mg{G}}(X).
\end{align}

Summarizing we have the following result.
\begin{theorem}[Atiyah-Hirzebruch Spectral Sequence]\label{ahss}
    Let $(\mg{G},\phi)$ be a finite magnetic point group and $X$ a $G$-CW-complex. Then the Atiyah-Hirzebruch spectral sequence,
    \begin{align}
E^{n,t}_2\cong H^n\left(X;\K^t_{G}\right) \Longrightarrow \K^*_{G}(X),
\end{align}
    is a right half-plane spectral sequence with differentials $d_r:E^{n,t}_r\longrightarrow E^{n+r,t-r+1}_r$ that converges conditionally to the $\operatorname{lim}_n \K^*_{G}(X^n)$.
\end{theorem}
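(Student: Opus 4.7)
The plan is to construct the AHSS from the standard exact couple coming from the filtration of $X$ by its $G$-skeletons, and then identify the $E_2$-page with Bredon cohomology with coefficients in $\K^t_{G}$. Most of the ingredients are already in place: the excerpt has set up the magnetic equivariant $K$-theory as a $G$-cohomology theory (Thm.~\ref{cohomologytheory}), established the long exact sequence of a pair (Lem.~\ref{sucesionlarga}, Cor.~\ref{sucesiondetres}), computed $\K_G$ of orbit spaces (Lem.~\ref{cocientes}), and even spelled out the Bredon coboundary operator in Rem.~\ref{disofdifferentials}. So the proof becomes primarily a matter of bookkeeping around the exact couple.

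First, I would assemble the exact couple. The inclusions $X^{(n-1)} \hookrightarrow X^{(n)}$ and the pair sequences give the diagram \eqref{diagram K-theory AHSS} displayed in the excerpt. Summing over $n$ yields an exact couple with
\begin{align*}
D_1^{n,t} = \K_{G}^{n+t}(X^{(n)}), \qquad E_1^{n,t} = \K_{G}^{n+t}(X^{(n)}, X^{(n-1)}),
\end{align*}
and maps $i$, $j$, $k$ of bidegrees $(-1,1)$, $(0,0)$, $(1,0)$ respectively. Standard homological algebra then produces the associated spectral sequence $(E_r, d_r)$ with $d_r : E_r^{n,t} \to E_r^{n+r, t-r+1}$, see e.g.~\cite{GenCohom}.

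Next I would identify the $E_1$-page. Because $X$ is a $G$-CW-complex, the quotient $X^{(n)}/X^{(n-1)}$ is $G$-homeomorphic to the wedge $\bigvee_j (G/H_{D^n_j})^+ \wedge S^n_j$, with the spheres carrying the trivial $G$-action. Combining the isomorphism $\K_{G}^{n+t}(X^{(n)}, X^{(n-1)}) \cong \widetilde{\K}_{G}^{n+t}(X^{(n)}/X^{(n-1)})$ from \eqref{K(X/Y)=K(X,Y)}, the wedge axiom (which follows from the long exact sequence applied inductively, using compactness of the cells), the $n$-fold suspension isomorphism arising from $(p,q)$-periodicity and Thm.~\ref{1,1periodicity}, and finally Lem.~\ref{cocientes} to handle each orbit factor, one obtains the identification
\begin{align*}
E_1^{n,t} \cong \bigoplus_j \K^t_{G}(G/H_{D^n_j}) = C^n_{G}(X; \K_{G}^t),
\end{align*}
matching exactly the Bredon cochain complex of Rem.~\ref{disofdifferentials}.

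The main technical obstacle, and where I would spend most of the care, is showing that the differential $d_1$ induced by the exact couple agrees with the Bredon coboundary $\delta$ described in Rem.~\ref{disofdifferentials}. By definition, $d_1 = j \circ k$, and it unwinds to a map computed from attaching maps of $(n+1)$-cells onto the $n$-skeleton. Concretely, for each $(n+1)$-cell $c$ and each $n$-cell $b$, one composes the attaching map with the collapse $X^{(n)} \to X^{(n)}/(X^{(n)} \setminus (G/H_b \times \mathring{D}^n))$, makes it transverse regular to the central orbit, counts preimages with their local orientations, and reads off a $G$-map $\mg{G}/H_c \to \mg{G}/H_b$ for each preimage point. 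The key verification is that this geometric procedure matches, under the chain of isomorphisms above, the boundary map in the triple $(X^{(n+1)}, X^{(n)}, X^{(n-1)})$. This is essentially Willson's argument in \cite{willsonequivariant}, adapted cohomologically, and the magnetic action plays no extra role because everything happens at the level of $G$-maps of cells. Once $d_1 = \delta$ is verified, the computation $E_2^{n,t} = H^n(X; \K_G^t)$ is immediate.

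Finally, conditional convergence to $\lim_n \K^*_G(X^{(n)})$ follows from the general theory of half-plane exact couples: the filtration $F^n \K^*_G(X) = \ker(\K^*_G(X) \to \K^*_G(X^{(n-1)}))$ is exhaustive and Hausdorff in the appropriate sense, and since $E_1^{n,t} = 0$ for $n<0$ the spectral sequence lives in a right half-plane, so the standard convergence criterion applies.
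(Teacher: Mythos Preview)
Your proposal is correct and follows essentially the same approach as the paper: the construction of the exact couple from the skeletal filtration, the identification of the $E_1$-page with Bredon cochains via the chain of isomorphisms through Lem.~\ref{cocientes}, and the appeal to Boardman's conditional convergence criterion (the paper cites \cite{convergence}, Thm.~12.4) are all exactly what the paper does in the material leading up to and including the proof. If anything, you are more explicit about the identification $d_1=\delta$ than the paper, which simply asserts it is clear after describing $\delta$ in Rem.~\ref{disofdifferentials}.
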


\begin{proof}
    See theorem 12.4 in \cite{convergence} for the conditional convergence issues.
\end{proof}

\begin{remark}
    For most of the physical applications $X$ has a finite number of cells so the spectral sequence converges to $\K^{*}_{G}(X)$.
\end{remark}



\section{Applications in Topological Phases of Matter}

Among the many interesting applications of K-theory in physics, one that has shown to be very influential is the incorporation of K-theory groups into the classification of gapped phases of interacting fermions \cite{Bellissard, ktheoryofC}.
The ten-fold way classification presented  in the periodic table for topological insulators and superconductors by Kitaev \cite{kitaev} after the work of Altland and Zirnbauer \cite{Altland}, paved the way for the immersion of real K-theoretical
invariants in the study and classification of crystals \cite{tenfoldway-survey, twistedmat}.

The approach to study gapped interaction of fermions through non-commutative geometric tools \`a la Connes \cite{Connes} was brilliantly initiated by Bellissard, van Elst and Schulz‐Baldes \cite{Bellissard}  and further developed by many others (see \cite{Prodan-Schulz-Baldes} and the references therein).
It was the work of Freed and Moore \cite{twistedmat} that organized most of the previous information on K-theory
invariants noting that the more general description needed incorporated not only equivariant real K-theory but moreover its twisted versions. Gomi went further  in \cite{Gomi2017FreedMooreK}, and using the language of groupoid, presented the
generalization of several properties of the real equivariant K-theory to the more general magnetic case.

The present work arose as attempt to provide an alternative description of the properties of the magnetic equivariant
K-theory that is better suited for the applications in condensed matter physics. We hope to have accomplished
this task in the previous sections. Now we are going to present some non-trivial calculations of magnetic equivariant K-theory groups that are of relevance in the study of altermagnets \cite{Altermagnetism1,Altermagnetism2,Altermagnetism3}. We believe that these examples
are interesting enough to justify the importance of presenting  the properties of the magnetic equivariant K-theory in an organized manner, and moreover, that they are generically enough to allow the reader to be able to mimic our calculations to other similar systems.

Before presenting the calculations we need to present the symmetries that appear in systems with spin-orbit coupling.

\subsection{Symmetries under spin-orbit interaction}

The magnetic groups that we want to consider are called magnetic point groups in condensed matter physics. 
Moreover, we need to consider magnetic point groups in the spin-orbit interaction setup.

Let us review quickly their definition. Crystals symmetries are three dimensional rigid motions that leave the lattice structure of the crystal fixed; this is the space group of symmetries. The time reversal symmetry $\mathbb{T}$ is always present in these materials.

 If the crystal has internal magnetizations, the time reversal symmetry is broken, but combinations of time reversal symmetry with rigid motions may preserve the lattice structure of the magnetic material.
  This group of symmetries is called the magnetic space group. 
  
  Using the language of Lie groups we have that the rigid motions are $\mathbb{R}^3 \rtimes \mathrm{O}(3)$ and the time reversal symmetry defines a group of order 2. The magnetic space group  $\mathcal{G} $ is thus a subgroup of $[\mathbb{R}^3 \rtimes \mathrm{O}(3) ]\times \mathbb{Z}/2$ that defines the short exact sequence
  of groups that appear on the second row of the following diagram:
  \begin{align}
  \xymatrix{
  \mathbb{R}^3 \ar[r] &[  \mathbb{R}^3 \rtimes \mathrm{O}(3)] \times \mathbb{Z}/2 \ar[r] & \mathrm{O}(3) \times \mathbb{Z}/2    \\
 \Lambda \ar[r] \ar@{^{(}->}[u] & \mathcal{G} \ar[r] \ar@{^{(}->}[u]& G. \ar@{^{(}->}[u]
  }
  \end{align}
  Here the copy of $\mathbb{Z}/2$ means the group generated by time reversal symmetry, $\Lambda \cong \mathbb{Z}^3$ is the group of  translational symmetries of the crystal and the quotient
  $\mathcal{G}/ \Lambda = G$ is known as the magnetic point group. The map $\phi : G \to \mathbb{Z}/2$
  is canonically induced from the composition $G \to \mathrm{O}(3) \times \mathbb{Z}/2 \stackrel{\pi_2}{\to} \mathbb{Z}/2$; this homomorphism separates the elements in $G$ which are built from compositions of time reversal with rigid symmetries from the rigid symmetries alone.
  
  The magnetic $\mathcal{G}$-equivariant vector bundles over $\mathbb{R}^3$ are the ones of interest in
  condensed matter physics since they represent the occupied states of a gapped Hamiltonian. Therefore
  the topological invariants coming from magnetic $\mathcal{G}$-equivariant K-theory are important.

  The magnetic space group $\mathcal{G}$ acts on $\mathbb{R}^3$ and this action induces
  an action of the magnetic point group $G$ on $\mathbb{R}^3/\Lambda \cong (S^1)^3$.
  Whenever the group of crystal symmetries is {\it symmorphic}, namely that the extension $\mathcal{G} \to G$ splits, then we have an isomorphism between the magnetic $\mathcal{G}$-equivariant K-theory of $\mathbb{Z}^3$ and the magnetic $G$-equivariant K-theory of $(S^1)^3$:
  \begin{align}
  \K_{\mathcal{G}}^{*}(\mathbb{R}^3) \cong   \K_{G}^{*}((S^1)^3).
  \end{align}
  
This K-theory group $\K_{G}^{*}((S^1)^3)$ provides topological invariants for the structure of the energy bands in a gapped hamiltonian.

  When studying electronic properties of magnetic materials the spin of the electron needs to be taken into account. The appropriate group of point symmetries becomes the double cover $\mathrm{Pin}_{-}(3)$ of $\mathrm{O}(3)$,
  which is isomorphic to $\mathrm{SU}(2) \times \mathbb{Z}/2$. Here the inversion symmetry $\mathbf{r} \mapsto - \mathbf{r}$ in $\mathrm{O}(3)$ lifts to a symmetry of order two in $\mathrm{Pin}_{-}(3)$, and therefore all mirror symmetries in 
  $\mathrm{O}(3)$ lift in $\mathrm{Pin}_{-}(3)$ to symmetries that square to $-1$.
  
  The time reversal operator, in the presence of spin orbit interaction, squares to $-1$, and this $-1$ must agree with the $-1$ of the square of the lifts of the mirror symmetries. In general 
  the time reversal operator is taken to be $i \sigma_2 \mathbb{K}$ where $\sigma_2$ is the second Pauli matrix and $\mathbb{K}$ is complex conjugation. Therefore the group of point symmetries in the presence of spin orbit interaction is the quotient group
  \begin{align}
 \left( [\mathrm{SU}(2) \times \mathbb{Z}/4 ] / \Delta \right)\times \mathbb{Z}/2
  \end{align}
where $\Delta:= \langle -\mathrm{Id},2 \rangle \cong \mathbb{Z}/2$.

If we consider the group $\mathbb{Z}/4$ as the one generated by the operator $i \sigma_2 \mathbb{K}$,
we see that $(i \sigma_2 \mathbb{K})^2=-\mathrm{Id}$ and therefore it is clear why we need to quotient by the subgroup $\Delta$.

Incorporating the previous information in the following diagram:
\begin{align} \label{SOC extension}
\xymatrix{
\mathbb{Z}/2 \ar[r]  &  \left( [\mathrm{SU}(2) \times \mathbb{Z}/4 ] / \Delta \right)\times \mathbb{Z}/2 \ar[r] & \mathrm{SO}(3) \times \mathbb{Z}/2 \times \mathbb{Z}/2\\
\mathbb{Z}/2 \ar[r] \ar@{=}[u] &  \widetilde{G} \ar[r] \ar@{^{(}->}[u] & G \ar@{^{(}->}[u]
}
\end{align}
we see that the group $ \left( [\mathrm{SU}(2) \times \mathbb{Z}/4 ] / \Delta \right)\times \mathbb{Z}/2 $ defines a $\mathbb{Z}/2$-central extension of $\mathrm{O}(3) \times \mathbb{Z}/2$ (here we are
using the canonical isomorphism $\mathrm{O}(3) \cong \mathrm{SO}(3) \times \mathbb{Z}/2$)
where $\mathbb{Z}/2$ acts by multiplication by $-1$, and $\widetilde{G}$ is the induced extension over $G$.

The group $\widetilde{G}$ incorporates the symmetries that interact accordingly with the spin, and therefore
this group is the one that will act on the fibers of the magnetic vector bundles. Of particular importance is that the kernel of the homomorphism $\widetilde{G} \to G$ acts on the fibers by the sign representation.

\begin{definition} \label{def soc group}
Let $G \subset \mathrm{O}(3) \times \mathbb{Z}/2$ be a magnetic point group. Denote by $\widetilde{G}^{soc}:= \widetilde{G}$ the $\mathbb{Z}/2$-central extension of $G$ defined in diagram \eqref{SOC extension} and call it
the {\it SOC group} of $G$. 
\end{definition}

In the presence of spin orbit interaction, in a crystal whose groups of symmetries is symmorphic, the 
 topological invariant of interest is therefore the $\widetilde{G}^{soc}$-twisted $G$-equivariant K-theory of $(S^1)^3$.
 \begin{definition} \label{definition twisted soc}
 Let $\mathcal{G}$ be a  symmorphic space magnetic group and $G$ its magnetic point group.
 Denote by ${}^{\widetilde{G}^{soc}}\K_G((S^1)^3)$ the $\widetilde{G}^{soc}$-twisted $G$-equivariant K-theory groups of the torus $(S^1)^3$, where the kernel of the homomorphism $\widetilde{G}^{soc}\to G$
 act by multiplication by $-1$.
 \end{definition}

Particular choices of magnetic point groups and actions are of importance in understanding topological invariants of magnetic crystals. In what follows we will only concentrate in the magnetic groups that
are generated by a composition of a four-fold rotation $C_4$ around the $z$-axis and time reversal $\mathbb{T}$, in both the spin and no spin orbit interaction, and in $\mathbb{R}^2$ and $\mathbb{R}^3$.

\subsection{2-dimensional torus with  $C_4\mathbb{T}$ symmetry}

Let us consider the 2-dimensional torus $T^2=S^1\times S^1$ with periodic coordinates $(x,y)$ where
 the 4-fold rotation  action is $C_4(x,y)=(-y,x)$ and the time reversal operator acts by inverting the coordinates
 $\mathbb{T}(x,y)=(-x,-y)$.
 The composition of both operations give the following action:
 \begin{align}
 C_4 \mathbb{T} (x,y) = (y,-x).
 \end{align}
 
 We will be interested in the magnetic cyclic group 
 \begin{align}
 G:= \langle C_4 \mathbb{T} \rangle  \cong \mathbb{Z}/4
 \end{align}
  acting in the 2-dimensional torus $T^2$. We will use the Atiyah-Hirzebruch spectral sequence to calculate both the magnetic equivariant K-theory and the twisted one in the presence of spin orbit interaction. Let us start by showing the $G$-CW decomposition of the torus $T^2$.
  
  The $\mathbb{Z}/4$-CW decomposition of the 2-dimensional torus  $T^2$ can be seen in Fig. \ref{Decomposition of torus} and consists of the following cells:

\begin{align} \label{cell decomposition0}
    0\mathrm{-cells:} &\underbrace{ \left(\mathbb{Z}/4\Big/\mathbb{Z}/4\right)\times D^0}_{\Gamma}  \  \sqcup \  \underbrace{\left(\mathbb{Z}/4\Big/\mathbb{Z}/2 \right) \times D^0}_{A\sqcup A'} \   \sqcup \ \underbrace{\left(\mathbb{Z}/4\Big/\mathbb{Z}/4\right)\times D^0}_{X},\\
    1\mathrm{-cells:} & \underbrace{\left(\mathbb{Z}/4\right) \times D^1}_{\gamma} \ \sqcup \  \underbrace{\left(\mathbb{Z}/4\right) \times D^1}_{\sigma},   \label{cell decomposition1}\\
   2\mathrm{-cells:} &\underbrace{\left(\mathbb{Z}/4\right) \times D^2}_{\Omega}.  \label{cell decomposition2}
\end{align}

  \begin{figure}
    \centering
    \includegraphics[width=0.4\linewidth]{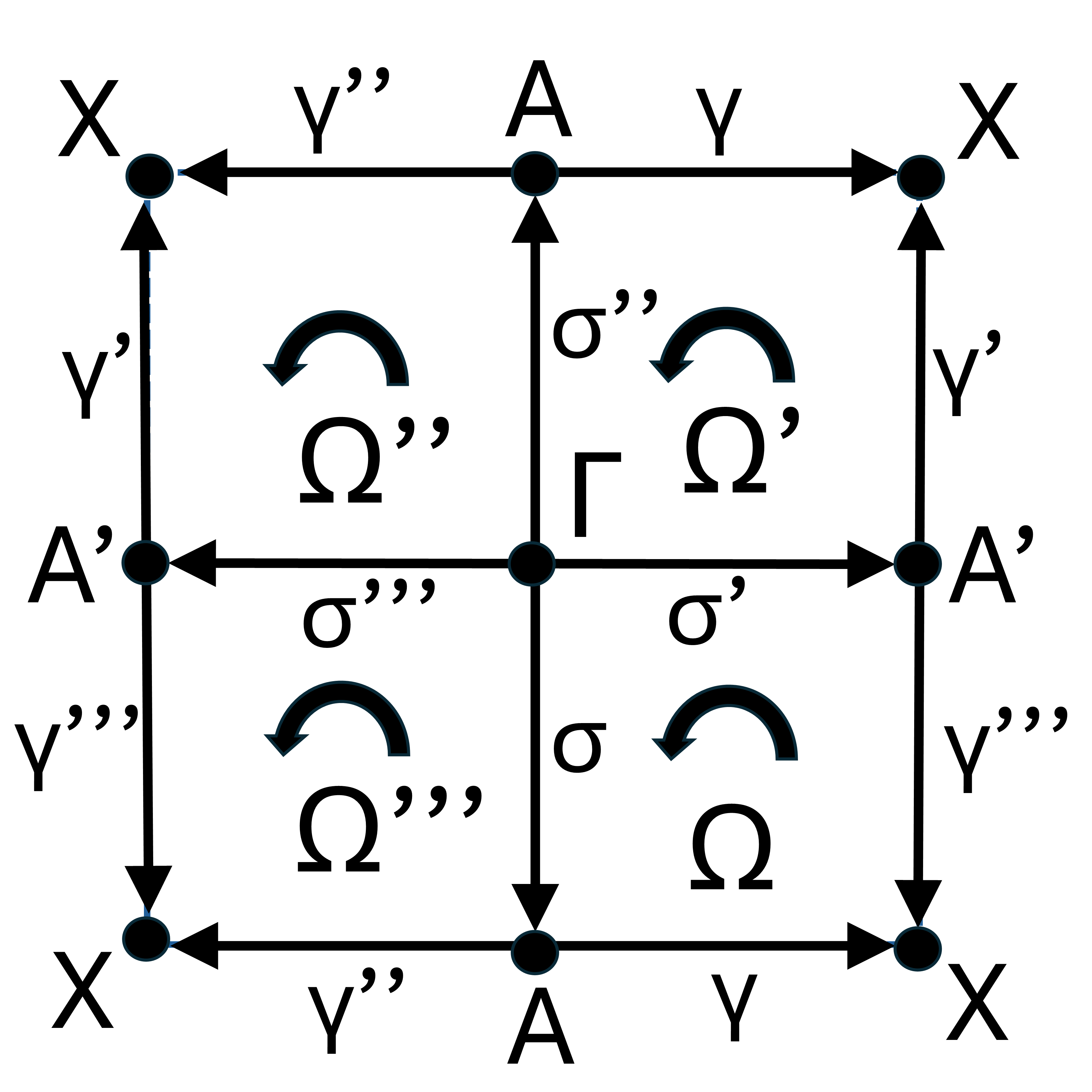}
    \caption{$\mathbb{Z}/4$-CW decomposition of  the torus $T^2=S^1\times S^1$. The 0-skeleton consists of the points $\Gamma, A, A',X$, the 1-skeleton adds the 1-cells generated by the orbits of $\gamma$ and $\sigma$, and the 2-skeleton adds the 2-cell generated by the orbit of $\Omega$. The group }
    \label{Decomposition of torus}
\end{figure}
  
  We are interested in calculating the magnetic equivariant K-theory $\K_{\mathbb{Z}/4}^{*}(T^2)$ in order
  to find out the topological invariants that materials with a point group symmetry generated by $C_4 \mathbb{T}$ may have.
  Since the magnetic group is cyclic of order bigger than 2, then we know from Prop. \ref{4-periodic}
  that the K-theory $\K_{\mathbb{Z}/4}^{*}(T^2)$ is 4-periodic. Therefore we will only calculate
$\K_{\mathbb{Z}/4}^{*}(T^2)$ for $*=0,-1,-2,-3.$

  Applying the procedure of the Atiyah-Hirzebruch spectral sequence of section \S \ref{section AHSS}, we see that the first page 
  becomes
  \begin{align}
 E_1^{0,*} \cong & \K_{\mathbb{Z}/4}^*(\{\Gamma\}) \oplus KU^*_{\mathbb{Z}/2}(\{A\}) \oplus  \K_{\mathbb{Z}/4}^*(\{X\}), \\
  E_1^{1,*} \cong & KU^*(\{\sigma\}) \oplus KU^*(\{\gamma\}), \\
   E_1^{2,*} \cong & KU^*(\{\Omega\}).
  \end{align}
  In Eqn. \eqref{K-theory Z/4} it is shown that $\K_{\mathbb{Z}/4}^* \cong K\mathbb{R}^* \oplus K\mathbb{H}^*$ where $\mathbf{R}(\mathbb{Z}/4) \cong \mathbb{Z}\oplus \mathbb{Z}$ is generated by the irreducible representations defined by complex conjugation $\mathbb{K}$ and the $2\times 2$ matrix $i \sigma_2 \mathbb{K}$; denote them by $R_1$ and $R_2$ respectively. 
  The K-theory group $KU^*_{\mathbb{Z}/2}$ is 2-periodic with  $KU^0_{\mathbb{Z}/2}\cong \mathbb{Z} \oplus \mathbb{Z}$ generated by the irreducible representations of $\mathbb{Z}/2$ which we denote $R_1'$ and $R_2'$, and $KU^*$ is $\mathbb{Z}$ in even degrees.  Hence
the first page of the spectral sequence becomes:

\begin{align}
\begin{tabular}{c c c c c}
  $E_1^{0,0}=\mathbb{Z}^2\oplus \mathbb{Z}^2\oplus  \mathbb{Z}^2$   & $\overset{d^{0,0}_1}{\longrightarrow}$ & $E^{1,0}_1= \mathbb{Z}\oplus \mathbb{Z}$ & $\overset{d^{1,0}_1}{\longrightarrow}$ & $E^{2,0}_1=\mathbb{Z}$, \\
  $E_1^{0,-1}=\mathbb{Z}/2\oplus 0\oplus  \mathbb{Z}/2$   & $\overset{d^{0,-1}_1}{\longrightarrow}$ & $E^{1,-1}_1= 0\oplus 0$ & $\overset{d^{1,-1}_1}{\longrightarrow}$ & $E^{2,-1}_1=0$, \\
  $E_1^{0,-2}=\mathbb{Z}/2\oplus \mathbb{Z}^2\oplus  \mathbb{Z}/2$   & $\overset{d^{0,-2}_1}{\longrightarrow}$ & $E^{1,-2}_1= \mathbb{Z}\oplus \mathbb{Z}$ & $\overset{d^{1,-2}_1}{\longrightarrow}$ & $E^{2,-2}_1=\mathbb{Z}$, \\
  $E_1^{0,-3}=0\oplus 0\oplus  0$   & $\overset{d^{0,-3}_1}{\longrightarrow}$ & $E^{1,-3}_1= 0\oplus 0$ & $\overset{d^{1,-3}_1}{\longrightarrow}$ & $E^{2,-3}_1=0$ .
\end{tabular}
\end{align}

Now we are going to focus on the differentials $d_1^{n,t}$. We use the orientations given in Fig. \ref{Decomposition of torus} and we write the differentials in matrix form:

\begin{align}
d_1^{0,0}=\begin{array}{|c|c|c|c|}
     \Gamma & A & X & \\ \begin{array}{cc}
         R_1 & R_2 
     \end{array} & \begin{array}{cc}
         R'_1 & R'_2 
     \end{array} & \begin{array}{cc}
         R_1 & R_2 
     \end{array} &\\ \hline
     \begin{array}{cc}
         0&0
     \end{array}& \begin{array}{cc}
         -1&-1
     \end{array} & \begin{array}{cc}
         1& 2
     \end{array} & \gamma\\ \hline \begin{array}{cc}
         -1&-2
     \end{array} & \begin{array}{cc}
         1&1
     \end{array} & \begin{array}{cc}
         0 &0
     \end{array} & \sigma
\end{array} \ \ \ \ 
d_1^{0,-2}=\begin{array}{|c|c|}
    A &  \\  \begin{array}{cc}
       R'_1  & R'_2 \end{array}
     & \\
     \hline \begin{array}{cc}
         -1&-1
     \end{array} &  \gamma \\ \hline \begin{array}{cc}
         1&1
     \end{array} & \sigma
\end{array}
\end{align}
 where $R_1=\langle \mathbb{K} \rangle$ maps to $\pm 1$, 
 $R_1=\langle i \sigma_2 \mathbb{K} \rangle$ maps to $\pm 2$
 and $ R_i'$ maps to $\pm 1$ depending whether the vertex is joined to the respective edge.
 
 The differentials $d_{1}^{1,0}$ and $d_{1}^{1,-2}$ need some extra consideration. The boundary of $\Omega$ consists of $\sigma, \gamma, \gamma'''$ and $\sigma'$ where $\sigma'$ and $\gamma'''$
 are the paths obtained from $\sigma$ and $\gamma$ after the action  of $(C_4 \mathbb{T})^3$ and 
$C_4 \mathbb{T}$ respectively. The isomorphism induced by the pullback of $C_4\mathbb{T}$ 
\begin{align}
(C_4\mathbb{T})^{*} : KU^*(\gamma''', \partial \gamma''') \stackrel{\cong}{\to} KU^*(\gamma, \partial \gamma) 
\end{align}
is an isomorphism which could be understood as the composition of the isomorphisms:
\begin{align}
 KU^*(\gamma''', \partial \gamma''') \stackrel{(C_4)^*}{\to} KU^*(\gamma'', \partial \gamma'') 
\stackrel{\mathbb{T}^*}{\to} KU^*(\gamma, \partial \gamma).
\end{align}
 The second isomorphism can be translated to the automorphism in $KU^*$ that complex conjugation defines. This is the identity in $KU^0$ while it is multiplication by $-1$ in $KU^{-2}$. In $KU^0$ the rank of the complex vector bundle is unaffected by complex conjugation, while the Hopf bundle $[H]-\mathbf{1}$ in $KU^{-2}= \widetilde{KU}^0(S^2)$
 is mapped to its conjugate bundle $[\overline{H}] - \mathbf{1}$.
 
 This means that $\mathbb{T}^*$ acts trivially on $KU^{-3}(\gamma, \partial \gamma) \cong KU^{-4}$
 while it acts by multiplication by $-1$ on $KU^{-1}(\gamma, \partial \gamma) \cong KU^{-2}$.
 
 Hence we have the remaining differentials of the first page are::
\begin{align} \label{part2 differentials no soc}
d_1^{1,0}=
\begin{array}{|cc|c|}
    \gamma & \sigma  &   \\
     \hline
        0 & 0 &  \Omega 
\end{array}\ \ \  \ 
d_1^{1,-2}=
\begin{array}{|cc|c|}
    \gamma & \sigma  &   \\
     \hline
        2 & 2 &  \Omega 
\end{array}
\end{align}

Therefore the the second page of the spectral sequence becomes:
\begin{align}
\begin{tabular}{c c c c c}
  $E_2^{0,0}=\mathbb{Z}^4$   &  & $E^{1,0}_2=0$ &  & $E^{2,0}_2=\mathbb{Z}$ \\
  $E_2^{0,-1}=(\mathbb{Z}/2)^2$   &  & $E^{1,-1}_2=0$ &  & $E^{2,-1}_2=0$ \\
  $E_2^{0,-2}=(\mathbb{Z}/2)^2\oplus \mathbb{Z}$   &  & $E^{1,-2}_2= 0$ &  & $E^{2,-2}_2=\mathbb{Z}/2$ \\
  $E_2^{0,-3}=0$   &  & $E^{1,-3}_2= 0$ &  & $E^{2,-3}_2=0$ 
\end{tabular}
\end{align}
implying that all second differentials $d_2^{n,t}$ are zero except for
the case $d_2^{0,-1} : E_2^{0,-1} \to E_2^{2,-2}$. Let us show that
this differential is surjective.

Consider the diagram
\begin{align} \label{diagram first differential}
\xymatrix{
{\mathbf K}_{\mathbb{Z}/4}^{-1}(X_1,X_0) \ar@{>->}[d]^{j_1^*} \ar[rd]^{d_1^{1,-2}}&&& \\
{\mathbf K}_{\mathbb{Z}/4}^{-1}(X_1) \ar[r]^{\partial^{1}}  \ar@{->>}[d]^{i_0^*}&{\mathbf K}_{\mathbb{Z}/4}^{0}(X_2,X_1) \ar[r]^{j_2^*} &{\mathbf K}_{\mathbb{Z}/4}^{0}(X_2) \ar[r] & {\mathbf K}_{\mathbb{Z}/4}^{0}(X_1) \\
{\mathbf K}_{\mathbb{Z}/4}^{-1}(X_0) 
}    
\end{align}
with a horizontal exact sequence,  a vertical short exact sequence, and the diagonal homomorphism
$d_1^{1,-2}$ being the first differential of the spectral sequence. Here we have followed the notation of diagram \eqref{diagram K-theory AHSS}.

The cokernel of the first differential $d_1^{1,-2}$ is the group $\mathbb{Z}/2 = E_2^{2,-2}$ while
the image of the second differential $d_2^{0,-1}$ 
is the quotient group $\frac{\mathrm{Im}(\partial^1)}{\mathrm{Im}(d_1^{1,-2})}$.
We claim that this quotient group $\frac{\mathrm{Im}(\partial^1)}{\mathrm{Im}(d_1^{1,-2})}$ is $\mathbb{Z}/2$, thus implying that $d_2^{0,-1}$ is surjective.
This claim will follow from showing that the image of the homomorphism $j_1^*$
will not generate the torsion free part of ${\mathbf K}_{\mathbb{Z}/4}^{-1}(X_1)$;
let us show this.

Consider the part of the 1-skeleton generated by the orbits of $\sigma$ and $ \gamma$; denote them
 $[\sigma] := \mathbb{Z}/4 \cdot \sigma$ and 
  $[\gamma] := \mathbb{Z}/4 \cdot \gamma$ respectively. These are a $\mathbb{Z}/4$-spaces whose quotient spaces $[\sigma]/(\mathbb{Z}/2)$ 
  $[\gamma]/(\mathbb{Z}/2)$ are both $\mathbb{Z}/2$-homeomorphic to the 1-dimensional ball $B^{1,0}$. Hence the projection
map $[\sigma] \to [\sigma]/(\mathbb{Z}/2) \cong B^{1,0}$, together with the excision isomorphisms, induce the following homomorphisms
among the short exact sequences of K-theories, which are given by applying Cor. \ref{sucesiondetres} to the triples $(B^{1,0},S^{1,0}\cup \{0\},S^{1,0})$ and $(X_1,[\gamma]\cup \{\Gamma\},[\gamma])$:
\begin{align}
\xymatrix{
\mathbb{Z} \cong K\mathbb{R}^{-1}(B^{1,0},S^{1,0}\cup \{0\}) \ar[r]^{\times 2}  \ar[d]^\cong & \mathbb{Z} \cong K\mathbb{R}^{-1}(B^{1,0},S^{1,0}) \ar[r] \ar[d]&
\mathbb{Z}/2 \cong K\mathbb{R}^{-1}(S^{1,0} \cup \{0\}, S^{1,0}) \ar[d]^{\cong} \\
 \mathbb{Z} \cong {\mathbf K}^{-1}_{\mathbb{Z}/4}(X_1,[\gamma]\cup\{\Gamma \}) \ar[r] & {\mathbf K}^{-1}_{\mathbb{Z}/4}(X_1,[\gamma])  \ar[r] &
\mathbb{Z}/2 \cong {\mathbf K}^{-1}_{\mathbb{Z}/4}([\gamma] \cup \{\Gamma\}, [\gamma])   
  }    
\end{align}
thus implying that ${\mathbf K}^{-1}_{\mathbb{Z}/4}(X_1,[\gamma]) \cong \mathbb{Z}$ and that the bottom left horizontal map is given by multiplication by $2$. Therefore the commutativity of the square
\begin{align}
\xymatrix{
\mathbb{Z} \cong {\mathbf K}^{-1}_{\mathbb{Z}/4}(X_1,[\gamma]\cup\{\Gamma \}) \ar[r]^{\times 2} \ar@{>->}[d] & {\mathbf K}^{-1}_{\mathbb{Z}/4}(X_1,[\gamma])  \cong \mathbb{Z} \ar[d] \\
\mathbb{Z}^2 \cong {\mathbf K}^{-1}_{\mathbb{Z}/4}(X_1,X_0) \ar[r] &
{\mathbf K}^{-1}_{\mathbb{Z}/4}(X_1) 
  }    
\end{align}
implies that the image of ${\mathbf K}^{-1}_{\mathbb{Z}/4}(X_1,X_0)$
does not include a generator of the torsion free part of ${\mathbf K}^{-1}_{\mathbb{Z}/4}(X_1)$. From diagram \eqref{diagram first differential}
we have that the image of the first differential does not equal the image of the
connection homomorphism $\partial^1$. We conclude that 
${\mathbf K}^{0}_{\mathbb{Z}/4}(X_2)$ is torsion free and the 
third page of the spectral sequence becomes:
\begin{align}
\begin{tabular}{c c c c c}
  $E_3^{0,0}=\mathbb{Z}^4$   &  & $E^{1,0}_3=0$ &  & $E^{2,0}_3=\mathbb{Z}$ \\
  $E_3^{0,-1}=\mathbb{Z}/2$   &  & $E^{1,-1}_3=0$ &  & $E^{2,-1}_3=0$ \\
  $E_3^{0,-2}=(\mathbb{Z}/2)^2\oplus \mathbb{Z}$   &  & $E^{1,-2}_3= 0$ &  & $E^{2,-2}_3=0$ \\
  $E_3^{0,-3}=0$   &  & $E^{1,-3}_3= 0$ &  & $E^{2,-3}_3=0$ 
\end{tabular}
\end{align}
collapsing on the $E_3$-page, e.i. $E_3=E_{\infty}$. 
We  now look at the filtration of the magnetic K-theory groups and solve the extension problems
\begin{align}
\xymatrix{
0\ar[r] & E_{\infty}^{2,-n-2} \ar[r] & F^1\K^{-n}_{\mathbb{Z}/4}(T^2) \ar[r] & E^{1,-n-1} \ar[r] & 0 
}
\end{align}
\begin{align}
\xymatrix{
0\ar[r] & F^1\K^{-n}_{\mathbb{Z}/4}(T^2) \ar[r] & \K^{-n}_{\mathbb{Z}/4}(T^2) \ar[r] & E^{0,-n}_{\infty} \ar[r] & 0
}
\end{align}
 to recover the $\K_{\mg{G}}^{*}$-groups:
\begin{itemize}
    \item $n=0$: $F^1\K^{0}_{\mathbb{Z}/4}(T^2)=0$ and $\K^{0}_{\mathbb{Z}/4}(T^2)=\mathbb{Z}^4$.
    \item  $n=-1$: $F^1\K^{-1}_{\mathbb{Z}/4}(T^2)=0$ and $\K^{-1}_{\mathbb{Z}/4}(T^2)=\mathbb{Z}/2$.
    \item  $n=-3$: $F^1\K^{-3}_{\mathbb{Z}/4}(T^2)=0$ and $\K^{-3}_{\mathbb{Z}/4}(T^2)=0$.
\end{itemize}

The only extension that needs attention is the one for $n=-2$. Here $F^1\K^{-2}_{\mathbb{Z}/4}(T^2)=\mathbb{Z}$ and the short exact sequence becomes:
\begin{align} \label{non-split sequence}
\xymatrix{
0\ar[r] & \mathbb{Z} \ar[r] & \K^{-2}_{\mathbb{Z}/4}(T^2) \ar[r] & \mathbb{Z} \oplus (\mathbb{Z}/2)^2 \ar[r] & 0.
}
\end{align}
One copy of $\mathbb{Z}/2$ splits from the short exact sequence because the composition of the  $\mathbb{Z}/4$ equivariant
maps $\{ \Gamma \} \hookrightarrow T^2 \to *$ implies that the map $\mathbb{Z}/2 \cong \K^{-2}_{\mathbb{Z}/4}(*) \hookrightarrow \K^{-2}_{\mathbb{Z}/4}(T^2)$ is injective.
We claim that the other copy of $\mathbb{Z}/2$ does not
split.

Consider the quotient $T^2\big/ \mathbb{Z}/2$ where
$\mathbb{Z}/2 = \mathrm{ker}(\mathbb{Z}/4 \to \mathbb{Z}/2)$ and note that it is homeomorphic to $S^{2,1}$, the 2-sphere with the involution given by a rotation of 180 degrees. The quotient map $q: T^2\longrightarrow S^{2,1}$ induces the following commutative diagrams:
\begin{align}
\scalebox{0.9}{
\xymatrix{
0\ar[r]& K\mathbb{R}^{-2}(S^{2,1},\{q(X)\}) \ar[r]  & K\mathbb{R}^{-2}(S^{2,1})=\mathbb{Z}  \ar[r] \ar@{=}[d] & K\mathbb{R}^{-2}(q(X)\})=\mathbb{Z}/2 \ar@{_(->}[d] \ar[r]& 0  \\ 
0\ar[r]& K\mathbb{R}^{-2}(S^{2,1},\{q(X),q(\Gamma)\}) \ar[r] \ar[u] \ar@{_(->}[d] & K\mathbb{R}^{-2}(S^{2,1})=\mathbb{Z}  \ar[r] \ar@{_(->}[d] & K\mathbb{R}^{-2}(\{q(X),q(\Gamma)\})=(\mathbb{Z}/2)^2 \ar[d]^{\cong} &  \\ 
0\ar[r]& \K_{\mathbb{Z}/4}^{-2}(T^2, \{X,\Gamma\}) \ar[r] &\K_{\mathbb{Z}/4}^{-2}(T^2) \ar[r] & \K_{\mathbb{Z}/4}^{-2}(\{X,\Gamma\})=(\mathbb{Z}/2)^2  &   }  
} \label{diagram T2 / Z2}
\end{align}
The first row of diagram \eqref{diagram T2 / Z2}
is exact because the real K-theory of the 
 pair $(S^{2,1},q(X))$ is equivalent to the one of the pair $(B^{2,0},\partial B^{2,0})$ and $K\mathbb{R}^{-1}(B^{2,0},\partial B^{2,0})=0$.
 Since the vertical map in the bottom of the right-hand side of diagram \eqref{diagram T2 / Z2} is an isomorphism, we see that the sequence in the bottom row of of diagram \eqref{diagram T2 / Z2} does not split. We therefore conclude that the sequence in 
 diagram \eqref{non-split sequence} does not split; thus implying that
\begin{align}
\K^{-2}_{\mathbb{Z}/4}(T^2)=(\mathbb{Z}\oplus \mathbb{Z}/2)\oplus \mathbb{Z}.
\end{align}

We conclude that the magnetic $\mathbb{Z}/4
$-equivariant K-theory groups of the 2-dimensional torus.

\begin{proposition} \label{C4T NOSOC}
The magnetic $\mathbb{Z}/4$-equivariant K-theory groups of the 2-dimensional torus $T^2$ are:
\begin{align}
\begin{tabular}{|c|cccc|}
 \hline
  $n$  & $0$ & $-1$ & $-2$ & $-3$   \\
      \hline
 $  \K^n_{\mathbb{Z}/4}(T^2) $ &$ \mathbb{Z}^4$ & $\mathbb{Z}/2$ & $\left( \mathbb{Z} \oplus \mathbb{Z}/2 \right) \oplus \mathbb{Z}$ &$ 0 $
 \\ \hline
\end{tabular}
\end{align}
\end{proposition}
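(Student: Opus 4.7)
The plan is to apply the Atiyah-Hirzebruch spectral sequence of Thm. \ref{ahss} to the $\mathbb{Z}/4$-CW decomposition of $T^2$ described in Eqns. \eqref{cell decomposition0}--\eqref{cell decomposition2} and depicted in Fig. \ref{Decomposition of torus}, and then invoke the $4$-periodicity of Prop. \ref{4-periodic}, which applies because the extension $\widehat{\mathbb{Z}/4} \to \mathbb{Z}/4$ splits for cyclic magnetic groups of order $2n$ with $n>1$. Since the computation is $4$-periodic it suffices to obtain $\K_{\mathbb{Z}/4}^{n}(T^2)$ for $n=0,-1,-2,-3$.

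First I would read off the $E_1$-page from Lem. \ref{cocientes}: the $0$-cells $\Gamma$ and $X$ are fixed by $\mathbb{Z}/4$ and contribute $\K_{\mathbb{Z}/4}^{*}(*)$, the orbit $\{A,A'\}$ contributes $KU_{\mathbb{Z}/2}^{*}(*)$, while all higher cells are free and contribute $KU^{*}(*)$. Using the coefficients of Thm. \ref{coefofK} together with $\mathbf{R}(\mathbb{Z}/4) \cong \mathbf{R}(\mathbb{Z}/4,\mathbb{R}) \oplus \mathbf{R}(\mathbb{Z}/4,\mathbb{H}) \cong \mathbb{Z}\langle R_1\rangle \oplus \mathbb{Z}\langle R_2\rangle$ (with $R_1=\langle\mathbb{K}\rangle$ of real type and $R_2=\langle i\sigma_2\mathbb{K}\rangle$ of quaternionic type, as in Ex. \ref{Z4}), I would write out the $E_1$-page row by row for $t=0,-1,-2,-3$.

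Next I would compute the first differential $d_1$, which by Rem. \ref{disofdifferentials} is built from the cellular attaching data. For $d_1^{0,t}$ the only subtlety is reading off signs and multiplicities from how each edge $\gamma,\sigma$ is glued to the $0$-skeleton, and recording the restrictions $R_i \mapsto \pm \dim R_i$ and $R'_i \mapsto \pm 1$; this yields the two matrices written in the excerpt. For the differentials $d_1^{1,t}\colon KU^{t}(\gamma)\oplus KU^{t}(\sigma) \to KU^{t}(\Omega)$, the key point is that the boundary of $\Omega$ consists of $\sigma,\gamma,\gamma''',\sigma'$ where $\gamma'''$ and $\sigma'$ are translates by $(C_4\mathbb{T})^3$ and $C_4\mathbb{T}$; each such translation factors through ordinary $C_4$-action composed with complex conjugation $\mathbb{T}^*$. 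Since $\mathbb{T}^*$ acts trivially on $KU^{0}$ (ranks are conjugation-invariant) but by $-1$ on $KU^{-2}$ (conjugation sends the Bott class $[H]-\mathbf{1}$ to its conjugate), the contributions of $\gamma$ and $\gamma'''$ cancel in degree $0$ but add in degree $-2$, yielding $d_1^{1,0}=0$ and $d_1^{1,-2}=(2,2)$ as recorded in Eqn. \eqref{part2 differentials no soc}. Taking cohomology of $(E_1,d_1)$ gives the $E_2$-page.

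The main obstacle, and the step requiring genuinely new input, is to identify the nonzero higher differential $d_2^{0,-1}\colon E_2^{0,-1}=(\mathbb{Z}/2)^2 \to E_2^{2,-2}=\mathbb{Z}/2$. The strategy is the one sketched in diagram \eqref{diagram first differential}: to show $d_2^{0,-1}$ is surjective (so its image is the whole $\mathbb{Z}/2$), I would analyze the image of $j_1^*\colon \mathbf{K}^{-1}_{\mathbb{Z}/4}(X_1,X_0)\to \mathbf{K}^{-1}_{\mathbb{Z}/4}(X_1)$ via the auxiliary exact sequence of the triple $(X_1,[\gamma]\cup\{\Gamma\},[\gamma])$ and its comparison, through excision and the orbit projection $[\sigma]\to [\sigma]/(\mathbb{Z}/2)\cong B^{1,0}$, with the real-K-theoretic exact sequence of $(B^{1,0},S^{1,0}\cup\{0\},S^{1,0})$; the torsion class $K\mathbb{R}^{-1}(S^{1,0}\cup\{0\},S^{1,0})\cong\mathbb{Z}/2$ forces the bottom map to be multiplication by $2$, so $j_1^*$ misses a generator of the torsion-free part, forcing $\partial^1$ to hit the extra $\mathbb{Z}/2$ in the cokernel of $d_1^{1,-2}$. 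This collapses the spectral sequence at $E_3$ and gives the groups listed in the table above Prop. \ref{C4T NOSOC}.

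Finally I would resolve the filtration extensions. For $n=0,-1,-3$ there is at most one nonzero column, so the answer reads off directly. For $n=-2$ I would argue that the two $\mathbb{Z}/2$ summands in $E_\infty^{0,-2}$ split off the extension because the composite $\{\Gamma\}\hookrightarrow T^2\to *$ is a $\mathbb{Z}/4$-homotopy equivalence after post-composition with the retraction to a fixed point, so the pullback along $T^2\to *$ gives a splitting $\K_{\mathbb{Z}/4}^{-2}(*)\hookrightarrow \K_{\mathbb{Z}/4}^{-2}(T^2)$ and carries the two $\mathbb{Z}/2$'s faithfully. This assembles to $\K_{\mathbb{Z}/4}^{-2}(T^2)\cong (\mathbb{Z}\oplus(\mathbb{Z}/2)^2)\oplus\mathbb{Z}$ and completes the proof.
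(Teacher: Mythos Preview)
Your proposal is correct and follows essentially the same approach as the paper: the same $\mathbb{Z}/4$-CW decomposition, the same Atiyah--Hirzebruch spectral sequence computation with the complex-conjugation argument for $d_1^{1,t}$, the same exact-triangle analysis to establish surjectivity of $d_2^{0,-1}$, and the same fixed-point splitting to resolve the extension at $n=-2$.
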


Here we want to emphasize that the right hand side copy of $\mathbb{Z}$ in   $\K^{-2}_{\mathbb{Z}/4}(T^2)$ is the only
invariant that depends on the 2-cell in the torus. This invariant is therefore called {\it bulk} invariant in condensed matter physics.

\subsection{2-dimensional torus with  $C_4\mathbb{T}$ symmetry and spin-orbit interaction}

Now we are interested in calculating the magnetic equivariant K-theory groups of the
group generated by $C_4\mathbb{T}$ acting on the 2-dimensional torus in the framework of spin orbit 
interaction. Following Def. \ref{definition twisted soc} we are interested in the
twisted magnetic K-theory groups ${}^{\widetilde{\mathbb{Z}/4}^{soc}}\K_{\mathbb{Z}/4}^*(T^2)$.

In the presence of spin orbit interaction, the rotation $C_4$ lifts to an operator $\widetilde{C_4}$ with the property that $(\widetilde{C_4})^4=-1$, and $\mathbb{T}^2=-1$. Therefore the composition
$\widetilde{C_4} \mathbb{T}$ is of order $8$ with $(\widetilde{C_4} \mathbb{T})^4=-1$.
The SOC group $\widetilde{\mathbb{Z}/4}^{soc}$ of $\mathbb{Z}/4= \langle C_4 \mathbb{T} \rangle$ is therefore the cyclic group $\mathbb{Z}/8 \cong \langle \widetilde{C_4} \mathbb{T} \rangle$ and we will calculate this twisted magnetic equivariant K-theory group
\begin{align}
{}^{\widetilde{\mathbb{Z}/4}^{soc}}\K_{\mathbb{Z}/4}^*(T^2) = {}^{\mathbb{Z}/8}\K_{\mathbb{Z}/4}^*(T^2).
\end{align}

Using the cell decomposition of Eqns. \eqref{cell decomposition0}, \eqref{cell decomposition1} and \eqref{cell decomposition2} we get for first page of the Atiyah-Hirzebruch spectral sequence the groups:
  \begin{align}
 E_1^{0,*} \cong & {}^{\mathbb{Z}/8}\K_{\mathbb{Z}/4}^*(\{\Gamma\}) \oplus {}^{\mathbb{Z}/4}KU^*_{\mathbb{Z}/2}(\{A\}) \oplus  {}^{\mathbb{Z}/8}\K_{\mathbb{Z}/4}^*(\{X\}), \\
  E_1^{1,*} \cong & {}^{\mathbb{Z}/2}KU^*(\{\sigma\}) \oplus {}^{\mathbb{Z}/2}KU^*(\{\gamma\}), \\
   E_1^{2,*} \cong & {}^{\mathbb{Z}/2}KU^*(\{\Omega\}).
  \end{align} 

For the 0-cells $\Gamma$ and $X$ we know that the only non-trivial $\mathbb{Z}/8$-twisted magnetic  irreducible representation of $\mathbb{Z}/4$ is generated by the magnetic matrix $\big(\begin{smallmatrix}
  0 & i\\
  1 & 0
\end{smallmatrix}\big) \mathbb{K}$, and since  it squares to $\big(\begin{smallmatrix}
  i & 0\\
  0 & -i
\end{smallmatrix}\big) $ we see that this representation is of complex type.
Abusing of notation, we obtain:
\begin{align}
{}^{\mathbb{Z}/8}\K^t_{\mathbb{Z}/4}(\{\Gamma\}) = {}^{\mathbb{Z}/8}\K^t_{\mathbb{Z}/4}(\{X\}) =\begin{cases}
  {}^{\mathbb{Z}/8}\mathbf{R}(\mathbb{Z}/4)=  \mathbb{Z} \langle R=\big(\begin{smallmatrix}
  0 & i\\
  1 & 0
\end{smallmatrix}\big) \mathbb{K}\rangle & \text{ if }t\equiv 0\, \operatorname{mod}2 \\ 0 & \text{ if } t\equiv 1 \, \operatorname{mod}2.
\end{cases}
\end{align}

The 0-cells $A\sqcup A'$ have as stabilizer the non magnetic group $\mathbb{Z}/4$ which is an extension of $\mathbb{Z}/2$ by the core group $\mathbb{Z}/2$. The irreducible complex representations of $\mathbb{Z}/4$ where $\mathbb{Z}/2$ acts by multiplication by $-1$ are the ones generated by $i$ and $-i$. If we denote the representations by their generators we get: 
\begin{align}
    {}^{\mathbb{Z}/4}KU^t_{\mathbb{Z}/2}(\{A\})=& \begin{cases}
       {}^{\mathbb{Z}/4}R(\mathbb{Z}/2))=\mathbb{Z}\langle i,-i\rangle & \text{ if } t\equiv 0\,
        \operatorname{mod}2\\ 0 & \text{ if } t\equiv\, 1 \operatorname{mod}2.
    \end{cases}
\end{align}
For the 1-cells and 2-cells where the stabilizer is trivial we simply obtain the complex K-theory group where $\mathbb{Z}/2$ acts on the fibers via the sign representation:
\begin{align}
 {}^{\mathbb{Z}/2}KU^*(\{\sigma\}) =  {}^{\mathbb{Z}/2}KU^*(\{\gamma\})=  {}^{\mathbb{Z}/2}KU^*(\{\Omega\}) =& \begin{cases}
        \mathbb{Z}\langle \operatorname{sgn}\rangle & \text{ if } t\equiv 0\,
        \operatorname{mod}2\\ 0 & \text{ if } t\equiv\, 1 \operatorname{mod}2.
    \end{cases}
\end{align}

 Since the twisted magnetic equivariant K-theory group $ {}^{\mathbb{Z}/8}\K_{\mathbb{Z}/4}^*(T^2)$ is a subgroup of the equivariant magnetic K-theory $\K_{\mathbb{Z}/8}^*(T^2)$, then we know from Prop. \ref{4-periodic}
  that the K-theory groups are  4-periodic. We will only calculate  $ {}^{\mathbb{Z}/8}\K_{\mathbb{Z}/4}^*(T^2)$ for $*=0,-1,-2,-3$.

The first page of the spectral sequence then becomes:
\begin{align}
\begin{array}{l c l c l}
  E_1^{0,0}=\mathbb{Z}\oplus \mathbb{Z}^2 \oplus \mathbb{Z} & \overset{d^{0,0}_1}{\longrightarrow} & E^{1,0}_1=\mathbb{Z}\oplus \mathbb{Z} & \overset{d^{1,0}_1}{\longrightarrow} & E^{2,0}_1=\mathbb{Z} \\
  E_1^{0,-1}=0  & \overset{d^{0,-1}_1}{\longrightarrow} & E^{1,-1}_1=0 & \overset{d^{1,-1}_1}{\longrightarrow} & E^{2,-1}_1=0 \\
  E_1^{0,-2}=\mathbb{Z}\oplus \mathbb{Z}^2 \oplus \mathbb{Z} & \overset{d^{0,-2}_1}{\longrightarrow} & E^{1,-2}_1=\mathbb{Z}\oplus \mathbb{Z} & \overset{d^{1,-2}_1}{\longrightarrow} & E^{2,-2}_1=\mathbb{Z} \\
  E_1^{0,-3}=0  & \overset{d^{0,-3}_1}{\longrightarrow} & E^{1,-3}_1=0 & \overset{d^{1,-3}_1}{\longrightarrow} & E^{2,-3}_1=0  
\end{array}
\end{align}

The differential $d_1^{0,0}$ keeps the rank of the dual of the restrictions. Hence we have:
\begin{align}
d_1^{0,0}=\begin{array}{|c|c|c|c|}
     \Gamma & A & X & \\ R & \begin{array}{cc}
         i & -i 
     \end{array} & R &\\ \hline
     0& \begin{array}{cc}
         -1&-1
     \end{array} & 2 & \gamma\\ -2 & \begin{array}{cc}
         1&1
     \end{array} & 0 & \sigma
\end{array}
\end{align}
The next differential satisfies $d_1^{0,0}=0$ because the boundary of $\Omega$ is zero in terms of $\gamma$ and $\sigma$.

The first differential on the  row $-2$ becomes:
\begin{align}
d_1^{0,-2}=\begin{array}{|c|c|c|c|}
     \Gamma & A\sqcup A' & X & \\ R' & \begin{array}{cc}
         i & -i 
     \end{array} & R' &\\ \hline
     0& \begin{array}{cc}
         -1&-1
     \end{array} & 0 & \gamma\\ 0 & \begin{array}{cc}
         1&1
     \end{array} & 0 & \sigma
\end{array} 
\end{align}
where the zeroes from the first and the third columns come from the fact that
the forgetful homomorphism ${}^{\mathbb{Z}/8}\K_{\mathbb{Z}/4}^{-2}(*) \to KU^{-2}(*)$ is trivial. The reason for this is the following. A $\mathbb{Z}/8$-twisted 
magnetic 
$\mathbb{Z}/4$-equivariant vector bundle over $S^2$ (here the action of $\mathbb{Z}/4$ is trivial) can be split into the eigenbundles of $i$ and $-i$ respectively. The action of the generator $\big(\begin{smallmatrix}
  0 & i\\
  1 & 0
\end{smallmatrix}\big) \mathbb{K}$ maps one to the other, but flips the complex structure. Hence the first Chern class of the underlying complex bundle is
the sum of the Chern class of the two, but they have opposite Chern class. Therefore the
Chern class of the underlying complex vector bundle is zero and therefore the forgetful map is trivial.

The remaining differential has the same structure as the one presented with out spin orbit interaction in Eqn. \eqref{part2 differentials no soc}:
\begin{align}
d^{1,-2}_1=\begin{array}{|c|c|c|}
\sigma& \gamma&\\ \hline 2&2&\Omega
\end{array}
\end{align}

The second page of the spectral sequence is therefore:
\begin{align}
\begin{array}{l c l c l}
  E_2^{0,0}=\mathbb{Z}^2   &  & E^{1,0}_2=\mathbb{Z}/2 &  & E^{2,0}_2=\mathbb{Z} \\ E_2^{0,-1}=0   &  & E^{1,-1}_2=0 &  & E^{2,-1}_2=0 \\ E_2^{0,-2}=\mathbb{Z}^3   &  & E^{1,-2}_2= 0 &  & E^{2,-2}_2=\mathbb{Z}/2 \\
  E_2^{0,-3}=0   &  & E^{1,-3}_2=0 &  & E^{2,-3}_2=0
\end{array}
\end{align}
All the differentials $d_2^{n,t}$ are zero and the spectral sequence collapses at the second page. In this case there are no extension problems for the K-theory groups and we obtain the desired calculation

\begin{proposition} \label{C4T SOC}
The $\mathbb{Z}/8$-twisted magnetic $\mathbb{Z}/4$-equivariant K-theory groups of the 2-dimensional torus $T^2$ are:
\begin{align}
\begin{tabular}{|c|cccc|}
 \hline
  $n$  & $0$ & $-1$ & $-2$ & $-3$   \\
      \hline
 $  {}^{\mathbb{Z}/8}\K^n_{\mathbb{Z}/4}(T^2) $ &$ \mathbb{Z}^2  \oplus \mathbb{Z}/2$ & $0$ & $ \mathbb{Z}^3\oplus \mathbb{Z}$ &$ \mathbb{Z}/2 $
 \\ \hline
\end{tabular}
\end{align}
\end{proposition}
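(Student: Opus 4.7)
The plan is to mimic the Atiyah--Hirzebruch spectral sequence (AHSS) computation carried out for Prop.\ \ref{C4T NOSOC}, but with coefficients replaced by the $\mathbb{Z}/8$-twisted local systems. I will use the same $\mathbb{Z}/4$-CW decomposition of $T^2$ given in Eqns.\ \eqref{cell decomposition0}--\eqref{cell decomposition2}, the identification $\widetilde{\mathbb{Z}/4}^{soc}=\mathbb{Z}/8$ arising from $(\widetilde{C}_4\mathbb{T})^4=-1$, and the $4$-periodicity of ${}^{\mathbb{Z}/8}\K_{\mathbb{Z}/4}^{*}(T^2)$ inherited from that of $\K_{\mathbb{Z}/8}^*(T^2)$ (Prop.\ \ref{4-periodic}), so that only degrees $0,-1,-2,-3$ need be computed.

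First I would determine the local coefficients appearing on the $E_1$-page. For cells with isotropy $\mathbb{Z}/4$ (the points $\Gamma$ and $X$), the $\mathbb{Z}/8$-twisted magnetic representation theory of $\mathbb{Z}/4$ must be worked out: I would invoke Thm.\ \ref{wigner} together with the Schur--Frobenius indicator of Def.\ \ref{Schurindicator} to show that there is a unique irreducible of complex type, represented by $\bigl(\begin{smallmatrix}0 & i\\ 1 & 0\end{smallmatrix}\bigr)\mathbb{K}$, so ${}^{\mathbb{Z}/8}\K^{*}_{\mathbb{Z}/4}(\mathrm{pt})\cong KU^{*}(\mathrm{pt})\otimes\mathbb{Z}$. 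For the cells $A, A'$ with core-isotropy $\mathbb{Z}/2$, the $\mathbb{Z}/4$-twisted equivariant K-theory picks out the two non-trivial characters $i,-i$ of $\mathbb{Z}/4$, giving $\mathbb{Z}^{2}$ in even degrees. For free cells the coefficients are $KU^{*}$ twisted by the sign character of the central $\mathbb{Z}/2$, which again yields $\mathbb{Z}$ in even degrees. With these coefficients in hand the $E_1$-page is assembled exactly as in the untwisted calculation.

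Next I would compute the $d_1$ differentials using the description of Bredon differentials in Rem.\ \ref{disofdifferentials} and the attaching maps of Fig.\ \ref{Decomposition of torus}. The horizontal differential $d_1^{0,0}$ is read off by restricting twisted representations at vertices along the orbits of $\gamma$ and $\sigma$; on the $\mathbb{Z}/4$-isotropy vertices the restriction to the trivial subgroup sends the $2$-dimensional irreducible to twice a sign line, which produces the entries $\pm 2$, while on $A\sqcup A'$ the entries are $\pm 1$. The differential $d_1^{1,0}=0$ reflects that the attaching word of $\Omega$ yields $\gamma-\gamma'''+\sigma'-\sigma$, and $\mathbb{T}^{*}$ acts trivially on $KU^{0}$. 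Conversely for $d_1^{1,-2}$, complex conjugation acts as $-1$ on the Bott element, so this differential is multiplication by $2$ on each factor, exactly as in Eqn.\ \eqref{part2 differentials no soc}. The key subtle point, which I expect to be the main technical obstacle, is the vanishing of the first two entries in $d_1^{0,-2}$: I would argue that the forgetful homomorphism ${}^{\mathbb{Z}/8}\K^{-2}_{\mathbb{Z}/4}(\mathrm{pt})\to KU^{-2}(\mathrm{pt})$ is zero by splitting the bundle over $S^2$ into $\pm i$-eigenbundles of a central generator; the generator $\bigl(\begin{smallmatrix}0 & i\\ 1 & 0\end{smallmatrix}\bigr)\mathbb{K}$ swaps these eigenbundles antilinearly, so the complex conjugate bundle appears with opposite Chern class and the underlying first Chern class cancels.

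Finally I would assemble the $E_2$-page and check that it collapses. Unlike the untwisted case, here I expect no room for non-trivial $d_2$: the position $E_2^{2,-2}=\mathbb{Z}/2$ can only receive a differential from $E_2^{0,-1}$, but this group vanishes because the twisted coefficients at the $\mathbb{Z}/4$- and $\mathbb{Z}/2$-isotropy points are all concentrated in even degree (there are no Kramers-type $\mathbb{Z}/2$ summands, in contrast to the untwisted case). Hence $E_{2}=E_{\infty}$. To conclude I must resolve two potential extensions in degree $0$ (where $E_\infty^{0,0}=\mathbb{Z}^{2}$, $E_\infty^{1,-1}=0$, $E_\infty^{2,-2}=\mathbb{Z}/2$) and in degree $-2$ (where $E_\infty^{0,-2}=\mathbb{Z}^{3}$, $E_\infty^{2,0}=\mathbb{Z}$). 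Both are automatically split: the projection $T^2\to\{\Gamma\}$ produces a retract of the global K-theory onto the representation group of the vertex, ensuring the free summands split off. This yields the claimed groups ${}^{\mathbb{Z}/8}\K^{0}_{\mathbb{Z}/4}(T^2)\cong\mathbb{Z}^{2}\oplus\mathbb{Z}/2$, ${}^{\mathbb{Z}/8}\K^{-1}_{\mathbb{Z}/4}(T^2)=0$, ${}^{\mathbb{Z}/8}\K^{-2}_{\mathbb{Z}/4}(T^2)\cong\mathbb{Z}^{3}\oplus\mathbb{Z}$, and ${}^{\mathbb{Z}/8}\K^{-3}_{\mathbb{Z}/4}(T^2)\cong\mathbb{Z}/2$.
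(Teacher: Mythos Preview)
Your proposal is correct and follows essentially the same route as the paper: the same $\mathbb{Z}/4$-CW decomposition, the same identification of twisted local coefficients (including the complex-type irreducible $\bigl(\begin{smallmatrix}0 & i\\ 1 & 0\end{smallmatrix}\bigr)\mathbb{K}$ at $\Gamma,X$ and the characters $\pm i$ at $A$), the same $d_1$ computations with the key vanishing of the forgetful map ${}^{\mathbb{Z}/8}\K^{-2}_{\mathbb{Z}/4}(\mathrm{pt})\to KU^{-2}(\mathrm{pt})$ via the eigenbundle argument, and collapse at $E_2$ because $E_2^{0,-1}=0$. One small remark: your retract argument for the extensions is unnecessary, since in each degree the quotient in the filtration is free abelian (this is all the paper means by ``there are no extension problems''); and your index $E_\infty^{2,0}$ in degree $-2$ should read $E_\infty^{2,-4}$, though by $4$-periodicity the group is the same.
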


Here we emphasize that both the $\mathbb{Z}/2$ invariant in $   {}^{\mathbb{Z}/8}\K^0_{\mathbb{Z}/4}(T^2)$
as well as the right hand side copy of $\mathbb{Z}$ in   ${}^{\mathbb{Z}/8}\K^{-2}_{\mathbb{Z}/4}(T^2)$ are
invariants that depend on the 2-cell in the torus. These invariants are {\it bulk} invariants of the system.

\begin{remark}
The bulk $\mathbb{Z}/2$ invariant in ${}^{\mathbb{Z}/8}\K^{0}_{\mathbb{Z}/4}(T^2)$ was shown by Gonz\'alez-Hern\'andez and the first
two authors in \cite{gonzalezhernandez2024spinchernnumberaltermagnets} to be the indicator for topological insulators in the presence of the altermagnetic symmetry $C_4 \mathbb{T}$ \cite{Altermagnetism1,Altermagnetism2,Altermagnetism3}. Gapped Hamiltonians preserving the symmetry $C_4 \mathbb{T}$ which represent the non-trivial value 
of $\mathbb{Z}/2$ are insulating states which are topological non-trivial. Hence its classification as topological insulator. 
\end{remark}

\begin{remark}
    Both calculations presented in Props. \ref{C4T NOSOC} and \ref{C4T SOC}
agree with the calculations carried out by Shiozaki
and Ono in \cite{Shiozaki}.
\end{remark}


\section*{Conclusions and further remarks}

We have presented several of the most important properties
of the K-theory of magnetic equivariant vector bundles. We have done so emphasizing simplicity in arguments, proofs and notation. We have carried out explicit calculations
of these K-theory groups for important symmetries in 
condensed matter physics, and we have obtained important
topological invariants associated to these symmetries.

We have complemented and enhanced the works of Freed and Moore \cite{twistedmat} and Gomi \cite{Gomi2017FreedMooreK} in the magnetic equivariant K-theory, and in doing so, we have put forward a better suited name for these K-theory groups.  

We believe that with the properties of the magnetic equivariant K-theory presented in this work, the calculation of these K-theory groups for explicit space magnetic groups becomes accessible for both mathematicians and 
condensed matter physicists. Important here is to notice that only symmorphic magnetic crystal symmetries can be addressed with the results of this work. Non symmorphic magnetic crystal symmetries require the study of twisted magnetic equivariant K-theory where the twists incorporate both information of the point group and of the torus. 

One of the motivations to address and showcase the properties of the magnetic equivariant K-theory was the 
necessity of putting on safe grounds the diverse calculations of magnetic equivariant K-theory groups that were done by the three author in previous works 
\cite{axion, rationalmagnetic, gonzalezhernandez2024spinchernnumberaltermagnets}. 
These works dealt with the topological invariants associated to a prescribed symmetry (in our works these were $C_2 \mathbb{T}$ and $C_4 \mathbb{T}$) and the calculations of the magnetic equivariant K-theory were done 
assuming they possessed the properties we showcase in the previous chapters. 
This work fills this foundational gap.

We hope that our presentation of the properties of the magnetic equivariant K-theory will allow more people to determine topological invariants of magnetic materials. We are sure that further interesting topological features of magnetic crystals are going to be discovered.



\section*{Acknowledgments}
BU acknowledges the continuous financial support of the Max Planck Institute of Mathematics in Bonn, Germany, the International Center for Theoretical Physics in Trieste, Italy, and the Alexander Von Humboldt Foundation in Bonn, Germany.

The authors would like to thank Prof. Ken Shiozaki for letting us know
of an incomplete argument in the calculation of the K-theory groups of the torus with $C_4\mathbb{T}$ symmetry in an earlier draft of this paper. 


\section*{Author's contributions}
This work is an enhancement of HS PhD's thesis advised by both BU and MX.
BU and MX envisioned and organized the subject of the work, HS, BU and MX 
discussed the results, HS wrote the first draft, HS, BU and MX wrote the final draft.



\section*{Funding information}
 All authors acknowledge the support of  CONAHCyT of Mexico through grant number CB-2017-2018-A1-S-30345-F-3125. HS acknowledges the support SECIHTI of Mexico through the PhD scholarship number CVU: 926934.

\bibliographystyle{alpha}
\bibliography{ref}


   \end{document}